\definecolor{yxc}{RGB}{255,0,0}
\newtheorem{thm}{Theorem}
\newtheorem{lemma}{Lemma}
\newtheorem{Definition}{Definition}
\newtheorem{rmk}{Remark}
\newtheorem{assump}{Assumption}
\newcommand{\be}{\begin{equation}}
	\newcommand{\ee}{\end{equation}}
\newcommand{\bea}{\begin{eqnarray}}
	\newcommand{\eea}{\end{eqnarray}}
\newcommand{\beas}{\begin{eqnarray*}}
	\newcommand{\eeas}{\end{eqnarray*}}
\newcommand{\bbR}{\mathbb{R}}
\newcommand{\F}{{\mathrm{F}}}
\newcommand{\argmin}{\mathop{\rm arg\min}}
\newcommand{\argmax}{\mathop{\rm arg\max}}
\newcommand{\bbP}{\mathbb{P}}
\newcommand{\bbE}{\mathbb{E}}
\newcommand{\Algoname}{{\sf High\text{-}order HeteroClustering}}
\newcommand{\Algoabbrev}{{\sf HHC}}
\newcommand{\Algosubspace}{{\sf Thresholded~Deflated\text{-}HeteroPCA}}
\newcommand*{\rom}[1]{\expandafter\@slowromancap\romannumeral #1@}
\title{Heteroskedastic Tensor Clustering}
\author{Yuchen Zhou\thanks{Department of Statistics and Data Science, Wharton School, University of Pennsylvania, Philadelphia, PA 19104, USA.} 	\and
	Yuxin Chen\footnotemark[1] \thanks{Department of Electrical and Systems Engineering, University of Pennsylvania, Philadelphia, PA 19104, USA.} 
}
\date{\today}
\begin{document}
	\maketitle
	\begin{abstract}
		Tensor clustering, which seeks to extract underlying cluster structures from noisy tensor observations, has gained increasing attention. 
		One extensively studied  model for tensor clustering is the tensor block model, 
		which postulates the existence of clustering structures along each mode and has found broad applications in areas like multi-tissue gene expression analysis and multilayer network analysis. However, currently available  computationally feasible methods for tensor clustering either are limited to handling i.i.d.~sub-Gaussian noise or suffer from suboptimal statistical performance, 
		which restrains their utility in applications that have to deal with heteroskedastic data and/or low signal-to-noise-ratio (SNR).

		To overcome these challenges, we propose a two-stage method, named \Algoname~(\Algoabbrev), 
		which starts by performing tensor subspace estimation via a novel spectral algorithm called \Algosubspace, followed by approximate $k$-means to obtain cluster nodes. 
		Encouragingly, our algorithm provably achieves exact clustering as long as the SNR exceeds the computational limit (ignoring logarithmic factors); 
		here, the SNR refers to the ratio of the pairwise disparity between nodes to the noise level, and the computational limit indicates the lowest SNR that enables exact clustering with polynomial runtime. Comprehensive simulation and real-data experiments suggest that our algorithm outperforms existing algorithms across various settings, delivering more reliable clustering performance.
	\end{abstract}

\noindent \textbf{Keywords:} tensor clustering, heteroskedastic noise, tensor block model, spectral clustering
	\tableofcontents

	\section{Introduction}

The past few years have witnessed a surge of interest in tensor data analysis across various domains, including recommendation systems \citep{bi2018multilayer, nasiri2014fuzzy}, neuroimaging \citep{wozniak2007neurocognitive, zhou2013tensor}, computational imaging \citep{li2010tensor, zhang2020denoising}, medical imaging \citep{fu20163d}, signal processing \citep{cichocki2015tensor,sidiropoulos2017tensor}, among other things. Compared to vectors and matrices, tensors (or multiway data arrays) offer the ability to characterize complex interrelations and interactions across multiple dimensions, allowing one to simultaneously capture the effects brought about by multiple factors. 
The growing prevalence of tensor data has sparked  in-depth statistical research --- from both methodological and theoretical perspectives --- into various tensor estimation and learning problems (see, e.g., \cite{zhou2013tensor,richard2014statistical,yuan2016tensor,xia2021statistically,cai2022nonconvex,liu2020tensor,bi2021tensors,han2022optimal,deng2023correlation}).

Within this body of research, one important problem  that has garnered increasing attention is tensor clustering, 
which aims to extract the underlying cluster structures inherent in the observed tensor data.
A model of this kind that has received widespread adoption is the {\em tensor block model} \citep{wang2019multiway,chi2020provable,han2022exact}. 
Concretely, suppose that the observed data takes the form of an order-three tensor $\bm{\mathcal{Y}} \in \bbR^{n_1 \times n_2 \times n_3}$, drawn from the following data generating mechanism. 
\begin{itemize}
	\item {\em Cluster structure.}
In each mode $1\leq i \leq 3$, 
the $n_i$ indices (or nodes) are divided into $k_i$ clusters, and the cluster memberships of these nodes are encoded by a {\em cluster assignment vector} $\bm{z}_i^\star=[z_{i,j}^\star]_{1\leq j\leq n_i} \in [k_i]^{n_i}$ such that
\begin{equation}
	z_{i,j}^\star = \ell ~~\text{if the }j\text{th node falls within cluster }\ell \qquad (1\leq j\leq n_i).
\end{equation}
Here and throughout, we denote by $[d] = \{1, \dots, d\}$ for any positive integer $d$.

	\item {\em Observations.}
		The entries of the observed tensor $\bm{\mathcal{Y}}=[Y_{i,j,\ell}]$ obey
\begin{align}\label{eq:model}
	Y_{i, j, \ell} = S^\star_{z_{1, i}^\star, z_{2, j}^\star, z_{3, \ell}^\star} + E_{i, j, \ell},\qquad \forall (i, j, \ell) \in [n_1] \times [n_2] \times [n_3].
\end{align}
		Here, $S^\star_{j_1, j_2, j_3}$ is the $(j_1, j_2, j_3)$-th entry of an underlying core tensor $\bm{\mathcal{S}}^\star \in \bbR^{k_1 \times k_2 \times k_3}$ (which often has much lower dimension than $\bm{\mathcal{Y}}$), whereas the $E_{i, j, \ell}$'s represent independent zero-mean noise contaminating the measurements. 
		Alternatively, we can rewrite this model in the tensor form as
		\begin{align}\label{model:additive}
			\bm{\mathcal{Y}} = \bm{\mathcal{X}}^{\star} + \bm{\mathcal{E}} \in \bbR^{n_1 \times n_2 \times n_3},
		\end{align}
		where $\bm{\mathcal{E}}=[E_{i,j,\ell}\big]_{(i, j, \ell) \in [n_1] \times [n_2] \times [n_3]}$ stands for a noise tensor, and
		the underlying tensor 
		\begin{equation}
			\bm{\mathcal{X}}^{\star} = \big[S^\star_{z_{1, i}^\star, z_{2, j}^\star, z_{3, \ell}^\star}\big]_{(i, j, \ell) \in [n_1] \times [n_2] \times [n_3] }
		\end{equation}
		exhibits block --- and hence low-rank --- structures. 
		Crucially, for any set of indices $(i,j,\ell)\in [n_1] \times [n_2] \times [n_3]$, 
		the mean of the observed entry $Y_{i, j, \ell}$ is determined by the cluster membership of $(i,j,\ell)$, 
		making it possible to retrieve the cluster assignment information from the observed tensor as long as the noise level is not overly large.


	\item {\em Goal.} The aim is to reconstruct the underlying cluster structure along each mode --- namely, recovering each $\bm{z}_i^\star$ $(1\leq i\leq 3)$ --- on the basis of the observation $\bm{\mathcal{Y}}$. 

\end{itemize}
Notably, this tensor block model finds a diverse range of applications. For instance, in multi-tissue gene expression analysis \citep{wang2019three,wang2019multiway,han2022exact}, the expression levels of  numerous genes are measured from various tissues across multiple individuals, and there could be natural group structures for genes,  tissues, and individuals, respectively, which can be captured by the above model. Another instance arises from multilayer network analysis \citep{lei2020consistent}, wherein multiple (directed or undirected) graphs with identical vertices are gathered from various scenarios or experiments, inherently forming a tensor. A task stemming from this kind of data is identifying the clustering structures among the vertices and across different layers based on their connectivity patterns.

While numerous clustering algorithms have been studied in the literature, directly applying traditional clustering methods, such as $k$-means, to (the unfoldings of) the tensor data $\bm{\mathcal{Y}}$ may fail to capture the inherent tensor structures and therefore lead to unsatisfactory results. 
To overcome this issue, \cite{han2022exact} proposed a spectral clustering method called {\sf High-order Spectral Clustering (HSC)}, which starts by projecting the tensor data onto their estimated top singular subspaces along each mode, followed by an approximate $k$-means procedure to cluster nodes.  Informally speaking, the singular subspace estimation procedure adopted by {\sf HSC} directly calculates the left singular subspaces of the unfoldings of $\bm{\mathcal{Y}}$ along each mode --- which we shall refer to as a vanilla SVD-based approach in the sequel.   
To further improve the spectral estimates,  \cite{han2022exact} also came up with an algorithm called  {\sf High-order Lloyd Algorithm (HLloyd)} 
to iteratively refine the block membership estimates. When the noise tensor $\bm{\mathcal{E}}$ has i.i.d.~sub-Gaussian noise entries,  
{\sf HSC} (resp.~{\sf HSC} followed by {\sf HLloyd}) provably achieves consistent (resp.~exact) clustering results while accommodating a near-optimal range of signal-to-noise ratio (SNR) conditions (among polynomial-time algorithms) \citep{han2022exact}.




However, the {\sf HSC}  algorithm and the intriguing theory developed by \citet{han2022exact} fall short of accommodating  heteroskedastic data, 
a common scenario in practice where variances of noise entries vary across locations.  
It has now been widely recognized that the vanilla SVD-based approach mentioned above could generate highly sub-optimal subspace estimates in the face of 
heteroskedastic noise \citep{zhang2022heteroskedastic,cai2021subspace,zhou2023deflated}; as a consequence, {\sf HSC}, which is initialized based on this approach, becomes statistically sub-optimal. 
This issue severely hinders the performance of {\sf HSC} in, say, a broad array of applications with discrete-valued observations --- including multi-tissue gene expression data analysis and multilayer network data analysis --- which often have to deal with heterogeneous data. To the best of our knowledge, no computationally efficient algorithm has been shown to achieve consistent estimation --- not to mention exact recovery --- of the underlying cluster structure under the widest possible SNR conditions.


\subsection{Main contributions}

Aimed at addressing the challenges resulting from heteroskedastic data, 
this paper proposes a new tensor clustering algorithm called  \Algoname~(\Algoabbrev). 
The key innovation compared to {\sf HSC} lies in the development of a new paradigm for  estimating the top singular subspaces of the unfolded tensor, 
in the hope of tackling heteroskedasticity. In a nutshell, the proposed \Algoabbrev~algorithm encompasses two stages: 
\begin{itemize}
	\item[1.] \emph{Subspace estimation.} 
		This stage seeks to estimate the column subspaces of the unfoldings of $\bm{\mathcal{X}}^\star$ along each mode. 
		Inspired by a spectral algorithm {\sf Deflated-HeteroPCA} that proves effective in the face of heteroskedastic noise \citep{zhou2023deflated}, 
		we propose a new variant, called  \Algosubspace, that combines {\sf Deflated-HeteroPCA} with a {\em data-driven} thresholding procedure. 
		Our procedure only attempts to estimate the ``useful'' part of the column subspaces --- that is, the subspace associated with reasonably large singular values --- 
		of the unfolded tensors, 
		which plays a crucial role in achieving statistical guarantees that are independent of the magnitude of the smallest singular value of $\bm{\mathcal{S}}^\star$.

	

	\item[2.] \emph{Approximate $k$-means.} Armed with the above subspace estimates, 
		the second stage projects the unfolding of $\bm{\mathcal{Y}}$ onto the estimated subspace for denoising purposes, 
		followed by an approximate  $k$-means algorithm to cluster nodes. 
\end{itemize}
Encouragingly, 
the proposed \Algoabbrev~algorithm allows for exact clustering as long as a certain ``necessary'' SNR condition holds (up to logarithmic factors), 
where the SNR is captured by the ratio of certain ``pairwise'' difference between nodes to the noise level.  
Here, a ``necessary" SNR condition refers to a condition that is essential to ensure that the cluster assignment vectors $\bm{z}_i^\star$ can be exactly recovered in polynomial time. Empirically, we conduct simulation experiments and find that \Algoabbrev~can reliably estimate the cluster structures, and that \Algoabbrev~combined with {\sf HLloyd} \citep{han2022exact} enables enhanced numerical performance. We also apply our method (\Algoabbrev~+ {\sf HLloyd}) and {\sf HSC + HLloyd} to the flight route network data, 
in which our method leads to better  clustering results. It is noteworthy that: 
while the current paper focuses on three-way tensor for simplicity of presentation, both our algorithm and the proof can be straightforwardly extended to accommodate general higher-order tensors.

\paragraph{Paper organization.} The rest of this article is organized as follows. 
Section~\ref{sec:setting} formulates the mathematical model and introduces the key assumptions, 
while Section~\ref{sec:algorithm} presents the proposed algorithm. 
The theoretical guarantees for our algorithm are provided in Section~\ref{sec:theory}, 
with the analysis deferred to the appendix. 
Numerical performance on both synthetic and real data is reported in Section~\ref{sec:simulation}.

\subsection{Notation}\label{subsection:notation}
Throughout the paper, we denote $[n] \coloneqq \{1, \dots, n\}$ for any integer $n>0$. 
We often use  bold capital letters (e.g., $\bm{X}, \bm{Y}, \bm{Z}$) and  bold lowercase letters (e.g., $\bm{x}, \bm{y}, \bm{z}$) to denote matrices and vectors, respectively, 
and employ boldface calligraphic letters (e.g., $\bm{\mathcal{X}}$, $\bm{\mathcal{Y}}$, $\bm{\mathcal{Z}}$) to represent tensors. 
For any matrix $\bm{X} \in \bbR^{n_1 \times n_2}$,  we let $\lambda_{i}(\bm{X})$ and $\sigma_{i}(\bm{X})$ denote the $i$-th largest eigenvalue (in magnitude) and the $i$-th largest singular value of $\bm{A}$, respectively. Define $\|\cdot\|_{\F}$ for Frobenious norm and $\|\cdot\|$ for spectral norm. We denote by $\bm{A}_{i,:}$ and $\bm{A}_{:,j}$ the $i$-th column and the $j$-th row of a matrix $\bm{A}$, respectively, 
and define its $\ell_{2,\infty}$ norm as $\|\bm{A}\|_{2,\infty} \coloneqq \max_{i \in [n_1]}\|\bm{A}_{i,:}\|_2$. Let $\mathcal{O}^{n, r} \coloneqq \{\bm{U} \in \bbR^{n \times r}: \bm{U}^\top\bm{U} = \bm{I}_r\}$ denote the set containing all $n$-by-$r$ matrices with orthonormal columns. We use $\mathcal{P}_{\sf diag}$ to represent the projection that keeps all diagonal entries and zeros out all non-diagonal entries, and  define $\mathcal{P}_{\sf off\text{-}diag}(\bm{M}) \coloneqq \bm{M} - \mathcal{P}_{\sf diag}(\bm{M})$ for any $\bm{M} \in \bbR^{n \times n}$. For any vector $\bm{a} = [a_i]_{1\leq i\leq n}$, we denote by ${\sf diag}(\bm{a})$ the diagonal matrix whose $(i,i)$-th entry is $a_{i}$. 
We let $C, c, C_0, c_0, \dots$ denote absolute constants whose values may change from line to line.

For any two matrices $\bm{A} \in \mathbb{R}^{m \times n}$ and $\bm{B} \in \mathbb{R}^{p \times q}$, we define the Kronecker product of them as
\begin{align*}
	\bm{A} \otimes \bm{B} \coloneqq \begin{bmatrix}
		a_{11}\bm{B}& \cdots& a_{1n}\bm{B}\\
		\vdots & \ddots &\vdots\\
		a_{m1}\bm{B} & \cdots & a_{mn}\bm{B}
	\end{bmatrix}.
\end{align*}
For any tensor $\bm{\mathcal{G}} \in \bbR^{r_1 \times r_2 \times r_3}$ and any matrix $\bm{V}_1 \in \bbR^{n_1 \times r_1}$, the multi-linear product $\times_1$ is defined as
\begin{align*}
	\bm{\mathcal{G}} \times_1 \bm{V}_1 = \bigg(\sum_{j=1}^{r_1}G_{j, i_2, i_3}V_{i_1, j}\bigg)_{i_1 \in [n_1], i_2 \in [r_2], i_3 \in [r_3]}.
\end{align*} 
We can also define $\times_2$ and $\times_3$ analogously. 
For any tensor $\bm{\mathcal{X}} \in \bbR^{n_1 \times n_2 \times n_3}$ and $1\leq j \leq 3$, let $\mathcal{M}_j(\bm{\mathcal{X}}) \in \bbR^{n_j \times (n_1n_2n_3/n_j)}$ denote the $j$-th matricization of $\bm{\mathcal{X}}$ such that 
\begin{align*}
	\big[\mathcal{M}_1\left(\bm{\mathcal{X}}\right)\big]_{i_1, i_2 + n_2\left(i_3 - 1\right)} = \big[\mathcal{M}_2\left(\bm{\mathcal{X}}\right)\big]_{i_2, i_3 + n_3\left(i_1 - 1\right)} = \big[\mathcal{M}_3\left(\bm{\mathcal{X}}\right)\big]_{i_3, i_1 + n_1\left(i_2 - 1\right)} = X_{i_1, i_2, i_3}
\end{align*} 
%
for all $(i_1, i_2, i_3) \in [n_1] \times [n_2] \times [n_3]$.
We further define the Frobenious norm of a tensor $\bm{\mathcal{X}} \in \bbR^{n_1 \times n_2 \times n_3}$ as
\begin{align*}
	\left\|\bm{\mathcal{X}}\right\|_{\F} = \Bigg(\sum_{i=1}^{n_1}\sum_{j=1}^{n_2}\sum_{k=1}^{n_3}X_{i,j,k}^2\Bigg)^{1/2}.
\end{align*}

In addition, we say that $f(x) \lesssim g(x)$ or $f(x) = O(g(x))$ if $|f(x)| \leq Cg(x)$ for some constant $C > 0$; we let $f(x) \gtrsim g(x)$ denote $f(x) \geq C|g(x)|$ for some constant $C > 0$; we say $f(x) \asymp g(x)$ or $f(x) = \Omega(g(x))$ if $f(x) \lesssim g(x)$ and $f(x) \gtrsim g(x)$ hold; we use the notation $f(x) \ll g(x)$ to represent that $f(n_1, n_2) \leq cg(n_1, n_2)$  holds for some sufficiently small constant $c > 0$, and we say $f(n_1, n_2) \gg g(n_1, n_2)$ if $g(n_1, n_2) \ll f(n_1, n_2)$. In addition, we use $f(n_1, n_2) = o(g(n_1, n_2))$ to indicate that $f(n_1, n_2)/g(n_1, n_2) \to 0$ as $\min\{n_1, n_2\} \to \infty$. For any $a, b \in \bbR$, let $a \wedge b \coloneqq \min\{a, b\}$ and $a \vee b \coloneqq \max\{a, b\}$.
%
Moreover, denote by $\Phi$  the set of all permutations $\phi: [k] \to [k]$. For any $\bm{z}=[z_j]_{1\leq j\leq n}, \overline{\bm{z}}  =[\widehat{z}_j]_{1\leq j\leq n} \in [k]^n$, we define the misclassification rate as follows:
\begin{align}
	{\sf MCR}\left(\bm{z}, \widehat{\bm{z}}\right) \coloneqq \inf_{\phi \in \Phi}\frac{1}{n}\sum_{j =1}^{n}\mathbbm{1}\big\{\widehat{z}_{j} \neq \phi\left(z_{j}\right)\big\}.
	\label{eq:defn-MCR}
\end{align}
Also, for any $\phi \in \Phi$, we use the notation $\widehat{\bm{z}} = \phi(\bm{z})$ to mean that
\begin{align*}
	\widehat{z}_j = \phi\left(z_j\right),~\quad~\forall j \in [n]. 
\end{align*}
Clearly, ${\sf MCR}\left(\bm{z}, \widehat{\bm{z}}\right) = 0$ holds  if and only if $\widehat{\bm{z}} = \phi(\bm{z})$ for some $\phi \in \Phi$.

	\section{Problem formulation}
\label{sec:setting}

\subsection{Models}  
Recall that the vectors $\bm{z}_i^\star$'s encode the cluster assignment. 
The tensor block model~\eqref{eq:model} can be equivalently written as 
\begin{align}\label{eq:model_equivalence}
	\bm{\mathcal{Y}} = \bm{\mathcal{X}}^\star + \bm{\mathcal{E}} \in \bbR^{n_1 \times n_2 \times n_3}, 
\end{align}
where  the low-rank tensor $\bm{\mathcal{X}}^\star$ can be decomposed as 
$\bm{\mathcal{X}}^\star = \bm{\mathcal{S}}^\star \times_1 \bm{M}_1^\star \times_2 \bm{M}_2^\star \times_3 \bm{M}_3^\star$. 
Here, $\bm{\mathcal{S}}^\star \in \bbR^{k_1 \times k_2 \times k_3}$ 
stands for the core tensor,  and $\bm{M}_i^\star \in \{0, 1\}^{n_i \times k_i}$ ($1\leq i\leq 3$) represents a membership matrix satisfying 
\begin{align}\label{eq:membership_matrix}
	\left(\bm{M}_i^\star\right)_{j, \ell} = \begin{cases}
		1, \quad & \text{if}~z_{i, j}^\star = \ell,\\
		0, \quad & \text{else}.
	\end{cases}
\end{align} 
%
The goal is to recover the cluster assignment vectors $\{\bm{z}_i^\star\}$, or equivalently, the membership matrices $\{\bm{M}_i^\star\}$, based on the observation $\bm{\mathcal{Y}}$.

We would like to immediately single out two special cases of the above model, 
which represent two distinctive types of noise models of important practical value (see more discussions in \cite{han2022exact}). 
%
%
\begin{itemize}
	\item[1.] \emph{Sub-Gaussian tensor block models.} 
	In this scenario, the entries of the noise tensor $\bm{\mathcal{E}}$ are zero-mean sub-Gaussian random variables 
	generated independently, which capture, say, random contamination during the data collection process for measuring $\bm{\mathcal{X}}^\star$.  
	Importantly, we allow the noise to be heteroskedastic --- namely, the variance of the noise components can be location-varying --- 
	a remarkable extension of the one studied in \cite{han2022exact} (recall that the noise is assumed to be i.i.d.~therein).  
	
	\item[2.] \emph{Stochastic tensor block models.} 
	Consider another scenario where each entry of the core tensor $\bm{\mathcal{S}}^\star$ falls within $[0, 1]$, 
	and the observed entries $\{Y_{i, j, \ell}\}$ are independent Bernoulli random variables satisfying
	\begin{align}\label{model:stochastic_tensor_block}
		Y_{i, j, \ell} = \begin{cases}
			1,~\quad~&\text{with probability}~S^\star_{z_{1, i}^\star, z_{2, j}^\star, z_{3, \ell}^\star},\\
			0,~\quad~&\text{with probability}~1 - S^\star_{z_{1, i}^\star, z_{2, j}^\star, z_{3, \ell}^\star}.
		\end{cases}
	\end{align}
	This scenario can be understood as a generalization of the classical bipartite stochastic block model (e.g., \citet{florescu2016spectral}).   
	Informally, each binary variable $Y_{i, j, \ell}$ encodes whether there is a hyper-edge connecting the vertices $(i,j,\ell)\in[n_1]\times [n_2]\times [n_3]$, 
	and the probability that such a hyper-edge is present is determined  by the clusters they belong to. 
	Clearly, this noise model is, in general, heteroskedastic.  
\end{itemize}
We seek to develop a suite of theory and algorithms that can readily accommodate these two important scenarios.

\subsection{Assumptions and definitions} 
Next, let us introduce a couple of definitions and assumptions that will be used throughout this paper. Before proceeding, we find it helpful to define
\begin{align}\label{def:n_k}
	n = \max_{1 \leq i \leq 3}n_i,~\quad~\text{and}~\quad~k = \max_{1 \leq i \leq 3}k_i,
\end{align}
and denote
\begin{align}\label{def:variance_max}
	\omega_{i_1,i_2,i_3}^2 \coloneqq \bbE\left[E_{i_1, i_2, i_3}^2\right]~\quad~\text{and}~\quad~\omega_{\sf max}^2 \coloneqq \max_{i_1 \in [n_1], i_2 \in [n_2], i_3 \in [n_3]}\omega_{i,j,k}^2.
\end{align}

We start by imposing the following assumption on the noise tensor $\bm{\mathcal{E}}$. 
\begin{assump}\label{assump:noise}
	Suppose that the noise components satisfy the following conditions: 
	\begin{itemize}
		\item[1.] The $E_{i_1,i_2,i_3}$'s are independent and zero-mean;
		\item[2.] For every $(i_1,i_2,i_3)$, one has $\bbP(|E_{i_1,i_2,i_3}| > B) \leq n^{-24}$ for some quantity $B$ satisfying
		$$B \leq C_{\sf b}\omega_{\sf max}\frac{\left(n_1n_2n_3\right)^{1/4}}{\log n},$$
		where  $C_{\sf b} > 0$ is some universal constant.
	\end{itemize}
\end{assump}
\begin{rmk}
	Here, the tail condition $\bbP(|E_{i_1,i_2,i_3}| > B) \leq n^{-24}$ can be relaxed to $\bbP(|E_{i_1,i_2,i_3}| > B) \leq n^{-c}$ for any constant $c \geq 4$. 
	We choose the exponent 24 to streamline the presentation of the proof a little bit. 
\end{rmk}
%

Notably, Assumption~\ref{assump:noise} is very mild and accommodates a broad range of scenarios of interest. 
For example, all $\omega_{\sf max}$-sub-Gaussian random variables (see, e.g., \cite{vershynin2018high}) 
satisfy Condition 2 of Assumption \ref{assump:noise} with $B \asymp \omega_{\sf max}\sqrt{\log n}$;   
centered Poisson random variables also easily satisfy this condition \citep{boucheron2013concentration,zhang2020non}. 
In addition, the aforementioned stochastic tensor block model \eqref{model:stochastic_tensor_block} obeys Assumption \ref{assump:noise} as long as the following conditions hold:
\begin{equation}
	\frac{2\log^2 n}{C_{\sf b}^2(n_1n_2n_3)^{1/2}} \leq S_{i_1, i_2, i_3}^\star \leq 1 - \frac{2\log^2 n}{C_{\sf b}^2(n_1n_2n_3)^{1/2}},~\quad~\forall (i_1, i_2, i_3) \in [k_1] \times [k_2] \times [k_3].
	\label{eq:S-i123-range}
\end{equation}
Recognizing that $(n_1n_2n_3)^{1/2} \gg \log^2 n$, we see that the validity of Condition~\eqref{eq:S-i123-range} is guaranteed 
as long as the entries of the center tensor $\bm{\mathcal{S}}^\star$ are not extremely close to $0$ or $1$.  

In addition, we introduce the following parameter that reveals cluster size information. 
\begin{Definition}[Balance of cluster sizes]\label{assump:cluster_size}
	Let $\beta \leq 1$ denote the largest quantity such that
	%
	%
	\begin{align}\label{def:beta}
		\big|\big\{j \in [n_i]: \left(\bm{z}_i^\star\right)_j = \ell\big\}\big| \geq \beta n_i/k_i, \qquad 
		1\leq i\leq 3, \quad 1\leq \ell \leq k_i.
	\end{align}
\end{Definition}
\noindent 
In words, the parameter $\beta$ measures how balanced these cluster sizes are, with a larger $\beta$ indicating more balanced cluster sizes; 
for instance, $\beta=1$ corresponds to the scenario where all clusters are of the same size.

Another quantity that plays an important role in our theory is concerned with the separation condition. 
\begin{Definition}[Separation]
	\label{defn-separation}
	For any $1\leq i\leq 3$, define
	\begin{align}
		\Delta_i^2 \coloneqq \min_{1 \leq j_1 \neq j_2 \leq k_i}\Big\| \big[ \mathcal{M}_i\left(\mathcal{S}^\star\right) \big]_{j_1,:} - \big[\mathcal{M}_i\left(\mathcal{S}^\star\right)\big]_{j_2,:}\Big\|_2^2
		\label{eq:defn-Delta-i}
	\end{align}
	to be the minimum distance between the rows of $\mathcal{M}_i(\mathcal{S})$, the $i$-th matrizication of the core tensor. 
	We can also define
	\begin{align}
		\Delta_{\sf min}^2 \coloneqq \min\left\{\Delta_1^2, \Delta_2^2, \Delta_3^2\right\}.
		\label{eq:defn-Delta-min}
	\end{align}
\end{Definition}
\noindent 
In a nutshell, the above separation between clusters captures the ``signal strength,'' 
which determines the degree of noise variability that can be tolerated without compromising the feasibility of exact clustering. 
In contrast to generic low-rank tensor estimation problems where the signal strength is typically represented by the least singular value of $\bm{\mathcal{S}}^\star$, 
the above separation condition is more natural in capturing the differentiability between two different clusters. 
Noteworthily,  having a desirable separation condition does not necessarily imply that the least singular value of $\bm{\mathcal{S}}^\star$ is sufficiently large.

Armed with the above separation metrics, we can readily introduce the following quantity to quantify the ``signal-to-noise ratio'':
\begin{align}\label{def:snr}
	\mathsf{SNR} \coloneqq \Delta_{\sf min}/\omega_{\sf max}.
\end{align}
An ideal tensor clustering algorithm would allow for exact clustering for the widest possible range of SNRs.


	\section{Algorithm: \Algoname}\label{sec:algorithm}

In this section, we introduce the proposed procedure for tensor clustering.  
Akin to other spectral-method-based tensor clustering schemes, our algorithm begins by performing subspace estimation with the aid of matricization, 
followed by an application of the (approximate) $k$-means algorithm to estimate clustering assignment.


\subsection{Stage 1: subspace estimation via \Algosubspace}

First of all, we would like to estimate the ``important" column subspaces of
$\bm{\mathcal{X}}$ after matricization along each dimension, 
namely, the important column subspace of $\bm{X}_i = \mathcal{M}_i\left(\bm{\mathcal{X}}\right)$ for each $1\leq i\leq 3$. 
It is noteworthy that: it might not be necessary to estimate the entire rank-$k_i$ column subspace for $\mathcal{M}_i\left(\bm{\mathcal{X}}\right)$, 
given that those singular vectors corresponding to overly small singular values might only exert a negligible impact on the final clustering outcome. 
Instead, for each $1\leq i \leq 3$, 
it often suffices to find a suitable estimator $\bm{U}_i \in \mathcal{O}^{n_i, r_i}$, for some $r_i \leq k_i$, 
that can reliably estimate the subspace formed by the singular vectors associated with large enough singular values of $\bm{X}_i$. 
As alluded to previously, however, the vanilla SVD-based approach (i.e., directly computing the SVD of $\bm{X}_i$) might result in unsatisfactory subspace estimation results when the noise is heteroskedastic. 
This motivates us to develop a more sophisticated algorithm, inspired by our recent work \cite{zhou2023deflated}.


\paragraph{Review:~{\sf Deflated-HeteroPCA}.} 
The recently proposed {\sf Deflated-HeteroPCA} algorithm is particularly effective in subspace estimation in the face of heteroskedastic noise \citep{zhou2023deflated}, 
which we briefly review here. 
%
%
Let $\bm{Y}_i = \mathcal{M}_i\left(\bm{\mathcal{Y}}\right)$ denote the $i$-th matricization of $\bm{Y}_i$. 
Starting from a diagonal-deleted gram matrix $\bm{G}_0 = \mathcal{P}_{\sf off\text{-}diag}\left(\bm{Y}_i\bm{Y}_i^\top\right)$, the main idea of {\sf Deflated-HeteroPCA} is to sequentially choose ranks $0 < r_1 < \cdots < r_{k_{\sf max}} = r$ that divide the eigenvalues of $\bm{X}_i^\star\bm{X}_i^{\star\top}$ into ``well-conditioned" and sufficiently separated subblocks, 
and progressively improve the estimation accuracy. 
Informally, we sequentially incorporate new subblocks into consideration and invoke the {\sf HeteroPCA} algorithm \citep{zhang2022heteroskedastic} to gradually improve 
the estimation accuracy of both the column subspace and the diagonal entries of $\bm{X}_i^\star\bm{X}_i^{\star\top}$. 
%
%
As proven in \cite{zhou2023deflated}, {\sf Deflated-HeteroPCA} enjoys theoretical guarantees that are condition-number-free and accommodate the widest possible range of SNRs, 
all of which are appealing for the tensor clustering application. 
%
%
However, existing theory of {\sf Deflated-HeteroPCA} requires the least singular value of $\bm{X}_i^\star$ to exceed a certain level (depending on the noise variance), 
which might oftentimes be unnecessary for clustering applications. 
In fact, even in the presence of a large separation metric \eqref{eq:defn-Delta-min}, 
we cannot preclude the possibility of  $\bm{X}_i^\star$ having a (nearly) zero singular value, 
thus limiting the utility of {\sf Deflated-HeteroPCA}.

	
	\paragraph{Proposed procedure: \Algosubspace.}To address the aforementioned issue, we incorporate a thresholding procedure into {\sf Deflated-HeteroPCA} in order to make sure we only include important subblocks. More specifically, suppose that in the $k$-th round, we choose rank $r_k$ and perform {\sf HeteroPCA} with rank $r_k$ and initialization $\bm{G}_{k-1}$ to obtain $\bm{G}_k$, 
	where both $\bm{G}_{k-1}$ and $\bm{G}_{k}$ are intermediate estimates of the gram matrix $\bm{X}_i^\star\bm{X}_i^{\star\top}$. 
	We then decide whether to proceed to the $(k+1)$-th round based on whether the condition $\sigma_{r_k+1}(\bm{G}_k) > \tau$ is met for some pre-determined threshold $\tau$. With a properly chosen $\tau$, we can extract sufficient information needed for clustering. 
	The details of \Algosubspace~can be found in Algorithm~\ref{algorithm:sequential_heteroPCA}. 
	A theoretically-guided procedure for selecting the tuning parameter $\tau$ is deferred to Section~\ref{sec:tau_selection}. 

\begin{algorithm}[t]
	\caption{\Algosubspace($\bm{Y}, r, \tau, \{t_j\}_{j \geq 1}$)} \label{algorithm:sequential_heteroPCA}
	\DontPrintSemicolon
	\textbf{input:} data matrix $\bm{Y}$, rank $r$, threshold $\tau$, maximum numbers of iterations $\{t_j\}_{j\geq 1}$.\\
	\textbf{initialization:} $j = 0, r_0 = 0, \bm{G}_0 = \mathcal{P}_{\sf off\text{-}diag}\left(\bm{Y}\bm{Y}^\top\right)$.\\
	{\color{blue}\tcc{sequentially invoke HeteroPCA until  eigenvalues fall below the threshold.}}
	\While{$r_j < r$ {\rm and} $\sigma_{r_j+1}(\bm{G}_j) > \tau$}{
		$j \leftarrow j+1$.\\
		compute $r_j = \mathsf{Rank Selection}(\bm{G}_{j-1}, r, r_{j-1})$.\\
		$\left(\bm{G}_j, \bm{U}_j\right) = ${\sf HeteroPCA}$\left(\bm{G}_{j-1}, r_j, t_j\right)$.		
	}
	\textbf{output:} subspace estimate $\bm{U} = \bm{U}_j$.
\end{algorithm}

\begin{algorithm}[t]
	\caption{{\sf RankSelection($\bm{G}_{j-1}$, $r$, $r_{j-1}$)}} \label{algorithm:rank_selection}
	\DontPrintSemicolon 
	\textbf{input:} rank $r$, selected rank $r_{j-1}$, matrix $\bm{G}_{j-1}$.\\
	{\color{blue}\tcc{identify a subblock of eigenvalues that are well-conditioned and well-separated from the remaining eigenvalues.}}
	\textbf{output} rank 
	\vspace{-1em}
	\begin{align}
		r_j = \begin{cases}
			\max\mathcal{R}_j, &\text{ if } \mathcal{R}_j \neq \emptyset,\\
			r, & \text{ otherwise},\end{cases}\label{eq:rank_selection}
	\end{align}
	where 
	\begin{align}
		\mathcal{R}_j \coloneqq \bigg\{r': r_{j-1} < r' \leq r,~ \frac{ \sigma_{r_{j-1}+1}\left(\bm{G}_{j-1}\right) }{\sigma_{r'}\left(\bm{G}_{j-1}\right) } \leq 4\notag~\text{and}~ \sigma_{r'}\left(\bm{G}_{j-1}\right) - \sigma_{r'+1}\left(\bm{G}_{j-1}\right) \geq \frac{1}{r}\sigma_{r'}\left(\bm{G}_{j-1}\right)\bigg\}.
	\end{align}
\end{algorithm}

\begin{algorithm}[t]
	\caption{{\sf HeteroPCA($\bm{G}_{\mathsf{in}}$, $r$, $t_{\sf max}$)} ~\citep{zhang2022heteroskedastic}} \label{algorithm:heteroPCA}
	\DontPrintSemicolon
	\textbf{input:} symmetric matrix $\bm{G}_{\mathsf{in}}$, rank $r$, number of iterations $t_{\sf max}$.\\
	\textbf{initialization:} $\bm{G}^{0} = \bm{G}_{\mathsf{in}}$.\\
	\For{$t = 0, 1, \dots, t_{\sf max}$} 
	{
		$\bm{U}^{t}\bm{\Lambda}^{t}\bm{U}^{t\top}$ $\,\leftarrow\,$  rank-$r$ leading eigendecompostion of $\bm{G}^t$.\\
		$\bm{G}^{t+1} = \mathcal{P}_{\sf off\text{-}diag}\left(\bm{G}^{t}\right) + \mathcal{P}_{\sf diag}\left(\bm{U}^{t}\bm{\Lambda}^{t}\bm{U}^{t\top}\right)$.\\
	}
	\textbf{output:} matrix estimate $\bm{G} = \bm{G}^{t_{\sf max}}$ and subspace estimate $\bm{U} = \bm{U}^{t_{\sf max}}$.    
\end{algorithm}

\subsection{Stage 2: approximate $k$-means}

Having obtained the subspace estimates $\bm{U}_j, 1\leq j\leq 3$, we would like to extract clustering information based on these subspace estimates as well as the observation $\bm{\mathcal{Y}}$. 
More specifically, let us construct the following matrix
\begin{equation}
	\widehat{\bm{B}}_i = \bm{U}_i\bm{U}_i^\top\mathcal{M}_i({\bm{\mathcal{Y}}})\big(\bm{U}_{i+2} \otimes \bm{U}_{i+1}\big) \in \bbR^{n_i \times (r_1r_2r_3/r_i)}
\end{equation}
for each $1\leq i\leq 3$, 
and we propose to 
apply the (approximate) $k$-means algorithm on the rows of $\widehat{\bm{B}}_i $ to estimate the cluster assignment vectors. 
Here, the indices $i+1$ and $i+2$ are computed module 3. 
This procedure is equivalent to applying (approximate) $k$-means on the rows of the $i$-th matricization of the tensor estimate $\widehat{\bm{\mathcal{Y}}} = \bm{\mathcal{Y}} \times_1 \bm{U}_1\bm{U}_1^\top \times_2 \bm{U}_2\bm{U}_2^\top \times_3 \bm{U}_3\bm{U}_3^\top$, but operates upon matrices with significantly reduced sizes. 
Recognizing that performing exact $k$-means can be computationally intractable, 
we instead employ an $M$-approximate $k$-means approach for some $M > 1$. To be more specific, we find a cluster assignment vector estimate $\widehat{z}_i$ and centroids $\{\widehat{\bm{b}}_j^{(i)}\}$ that satisfy \eqref{ineq:k_means_approximate} for each $1\leq i\leq 3$. 
This relaxed version of $k$-means can be efficiently solved by a number of algorithms. For example, 
for $M = O(\log k)$, this problem can be solved with running time $O(nk^3)$ by $k$-means++ \citep{bahmani2012scalable,arthur2007k}\footnote{More generally, the time complexity of $k$-means++ for $n$ points in $\mathbb{R}^d$ is $O(nkd)$. Here, the dimension $d = r_1r_2r_3/r_i \leq k^2$ and thus the time complexity does not exceed $O(nk^3)$.}. 
The requirement \eqref{ineq:minimum_label} ensures the ``optimality" of the cluster estimates given the centroids.

\subsection{Full procedure} 

The full procedure of the proposed \Algoname~(\Algoabbrev) is summarized in Algorithm~\ref{algorithm:k_means}. Since both \Algosubspace~and approximate $k$-means are polynomial-time algorithms, our proposed method is computationally efficient. Empirically, we recommend using the {\sf high-order Lloyd Algorithm (HLloyd)} \citep{han2022exact} to further refine the clustering results obtained by \Algoabbrev, 
where the description of this refinement procedure is deferred to Algorithm~\ref{algorithm:HLloyd} in Appendix~\ref{sec:procedure_HLloyd}. As will be demonstrated in Section~\ref{sec:simulation}, the combined application of our algorithm with {\sf HLloyd} can yield superior empirical performance when compared to other methods. 


\begin{algorithm}[t]
	\caption{\Algoname~(\Algoabbrev)} \label{algorithm:k_means}
	\textbf{input:} observed tensor $\bm{\mathcal{Y}}$, numbers of clusters $k_1, k_2, k_3$, numbers of iterations $\{t_{i,j}\}_{1\leq i\leq 3, j \geq 1}$, thresholds $\tau_1, \tau_2, \tau_3$, relaxation factor $M > 1$.\\
	{\color{blue}\tcc{Stage 1: subspace estimation}}
	\textbf{subspace estimation:} for each $1\leq i\leq 3$, compute $\bm{U}_i \in \mathcal{O}^{n_i, k_i}$ as follows
	$$\bm{U}_i = \begin{cases}
		\Algosubspace\big(\mathcal{M}_i({\bm{\mathcal{Y}}}), k_i, \tau_i, \{t_{i,j}\}_{j \geq 1}\big), \qquad &k_i \geq 2,\\
		(1, \dots, 1)^\top/\sqrt{n_i}, &k_i = 1,
	\end{cases} $$ 
	and set $\bm{U}_4=\bm{U}_1$ and $\bm{U}_5=\bm{U}_2$ for convenience. \\
	{\color{blue}\tcc{Stage 2: approximate $k$-means}}
	\For{$i = 1, 2, 3$}{
    	compute $\widehat{\bm{B}}_i = \bm{U}_i\bm{U}_i^\top\mathcal{M}_i({\bm{\mathcal{Y}}})\big(\bm{U}_{i+2} \otimes \bm{U}_{i+1}\big) \in \bbR^{n_i \times (r_1r_2r_3/r_i)}.$\\
	perform approximate $k$-means on the rows of $\widehat{\bm{B}}_i$, i.e., find a cluster assignment vector estimate $\widehat{\bm{z}}_i \in [k_i]^{n_i}$ and center estimates $\widehat{\bm{b}}^{(i)}_l \in \bbR^{r_1r_2r_3/r_i}, l \in [k_i]$ such that 
    	\begin{subequations}
    		\begin{align}
    			\sum_{j=1}^{n_i}\left\|\big(\widehat{\bm{B}}_i\big)_{j,:}^\top - \widehat{\bm{b}}^{(i)}_{\widehat{z}_{i,j}}\right\|_2^2 &\leq M\min_{\bm{b}_1, \dots, \bm{b}_{k_i} \in \bbR^{r_1r_2r_3/r_i}\atop \bm{z}_i \in [k_i]^{n_i}}\sum_{j=1}^{n_i}\left\|\big(\widehat{\bm{B}}_i\big)_{j,:}^\top - \bm{b}_{z_{i,j}}\right\|_2^2,\label{ineq:k_means_approximate}\\
    				\widehat{z}_{i,j} &\in \argmin_{\ell \in [k_i]}\left\|\big(\widehat{\bm{B}}_i\big)_{j,:}^\top - \widehat{\bm{b}}^{(i)}_{\ell}\right\|_2,
				\qquad \forall j\in[n_i].\label{ineq:minimum_label}
    		\end{align}
    	\end{subequations}
}
\textbf{output:} estimates $\widehat{\bm{z}}_1, \widehat{\bm{z}}_2, \widehat{\bm{z}}_3$ of cluster assignment vectors.
\end{algorithm}

     \section{Main theory}\label{sec:theory}

In this section, we develop theoretical performance guarantees for our algorithm proposed in Section~\ref{sec:algorithm}. 
Before proceeding, we find it convenient to introduce the following additional notation for each $1\leq i\leq 3$: 
\begin{itemize}
	\item $\sigma_{i,j}^\star$: the $j$-th largest singular value of $\mathcal{M}_i(\bm{\mathcal{S}}^\star)$ (i.e., the $i$-th matricization of $\bm{\mathcal{S}}^\star$); 
	\item $\{r_{i,j}\}_{j \geq 1}$: the ranks selected in Algorithm~\ref{algorithm:sequential_heteroPCA} with the input matrix $\bm{Y} = \mathcal{M}_i(\bm{\mathcal{Y}})$, the rank $r = k_i$, the threshold $\tau=\tau_i$, and the numbers of iterations $\{t_{i,j}\}_{j\geq 1}$; 
	\item $r_{i, j_{\sf max}^i}$: the largest rank selected by Algorithm~\ref{algorithm:sequential_heteroPCA} with the above inputs.

\end{itemize}
%


 \subsection{Theoretical guarantees for exact clustering}
 The first theorem below demonstrates that \Algoabbrev~achieves intriguing exact recovery guarantees, as long as the tuning parameters $\{\tau_i\}_{1\leq i\leq 3}$ are suitably selected. 
 \begin{thm}\label{thm:cluster_tensor_simplified}
	 Assume that $k_i \lesssim 1$ for each $1\leq i\leq 3$, $\beta \asymp 1$, and
 	\begin{subequations}
 		\begin{align}
 			n_1n_2n_3 &\geq c_1n^2, \label{ineq:dimension_assumption_simplified}\\
 			c_{\tau}\left(n_1n_2n_3\right)^{1/2}\log^2 n \leq \tau_i/\omega_{\sf max}^2 &\leq C_{\tau}\left(n_1n_2n_3\right)^{1/2}\log^2 n,~\quad~\forall 1\leq i \leq 3,\label{ineq:threshold_tensor_simplified}\\
 			\mathsf{SNR}~= \Delta_{\sf min}/\omega_{\sf max} &\geq C_1\sqrt{M}\left(n_1n_2n_3\right)^{-1/4}\log n,\label{ineq:signal_to_noise_ratio_simplified}
 		\end{align}
 	\end{subequations}
 where $C_1, c_1, C_{\tau}$ and $c_{\tau}$ are some large enough positive constants. 
	Suppose Assumption \ref{assump:noise} holds. 
 	If the numbers of iterations satisfy
 	\begin{subequations}
 		\begin{align}
 			t_{i,j} &\geq \log\left(C\frac{\sigma_{i, r_{i, j-1}+1}^{\star2}}{\sigma_{i, r_{i, j}+1}^{\star2}}\right),~\quad~1 \leq j \leq j_{\sf max}^i - 1,\label{iter1_simple}\\
 			t_{i,j_{\sf max}^i} &\geq \log\left(n^3\frac{\sigma_{i, r_{i, j_{\sf max}^i-1}+1}^{\star2}}{\omega_{\sf max}^2}\right),\label{iter2_simple}
 		\end{align}
 	\end{subequations}
	 for all $1\leq i \leq 3$ with $C > 0$ some large enough constant, then with probability exceeding $1 - O(n^{-10})$, the misclassification rate (cf.~\eqref{eq:defn-MCR}) of the outputs $\{\widehat{\bm{z}}_i\}$ returned by Algorithm \ref{algorithm:k_means} satisfy
 	\begin{align*}
 		\mathsf{MCR}\big(\widehat{\bm{z}}_i, \bm{z}_i^\star\big) = 0,~\quad~\forall 1\leq i\leq 3.
 	\end{align*}
 \end{thm}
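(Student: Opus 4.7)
The proof proceeds in two coupled stages that mirror Algorithm~\ref{algorithm:k_means}. At a high level, the plan is to reduce exact cluster recovery, for each mode $i$, to an $\ell_{2,\infty}$-type perturbation bound on the projected matrix $\widehat{\bm{B}}_i$ around an idealized piecewise-constant matrix $\bm{B}_i^{\diamond}$ whose rows indexed by the same true cluster agree and whose rows from different clusters are separated by $\gtrsim \sqrt{n_{i+1}n_{i+2}/(k_{i+1}k_{i+2})}\,\Delta_{\sf min}$ (using $\beta \asymp 1$ and $k_i \lesssim 1$). A standard oracle inequality for $M$-approximate $k$-means, in the spirit of \citet{han2022exact}, then guarantees $\mathsf{MCR}(\widehat{\bm{z}}_i,\bm{z}_i^\star)=0$ as soon as the maximum row perturbation of $\widehat{\bm{B}}_i$ around $\bm{B}_i^{\diamond}$ falls below a constant fraction of this cluster separation.

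For Stage 1, I would invoke the guarantees for Deflated-HeteroPCA from \cite{zhou2023deflated} applied to each mode-$i$ matricization $\bm{Y}_i = \mathcal{M}_i(\bm{\mathcal{Y}})$. The tail condition in Assumption~\ref{assump:noise} lets one couple $\bm{\mathcal{E}}$ to its truncation at level $B$ at $O(n^{-10})$ cost, after which the sub-Gaussian-type concentration inputs needed by Deflated-HeteroPCA apply. The conclusion is that the output $\bm{U}_i$ is a faithful estimate---in both operator and $\ell_{2,\infty}$ senses---of an ideal orthonormal basis $\bm{U}_i^{\diamond}$ spanning exactly those left singular directions of $\mathcal{M}_i(\bm{\mathcal{X}}^\star)$ whose singular values exceed $\sqrt{\tau_i} \asymp \omega_{\sf max}(n_1n_2n_3)^{1/4}\log n$. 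The iteration counts \eqref{iter1_simple}--\eqref{iter2_simple} ensure that each inner HeteroPCA has shrunk its diagonal error below the noise floor, and the threshold \eqref{ineq:threshold_tensor_simplified} is designed so that any singular direction excluded from $\bm{U}_i^{\diamond}$ carries signal already dominated by the noise.

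Stage 2 performs a perturbation expansion of $\widehat{\bm{B}}_i$ around $\bm{B}_i^{\diamond} \coloneqq \bm{U}_i^{\diamond}\bm{U}_i^{\diamond\top}\mathcal{M}_i(\bm{\mathcal{X}}^\star)(\bm{U}_{i+2}^{\diamond}\otimes\bm{U}_{i+1}^{\diamond})$, organizing the difference into three contributions: (i) the projected noise $\bm{U}_i\bm{U}_i^\top\mathcal{M}_i(\bm{\mathcal{E}})(\bm{U}_{i+2}\otimes\bm{U}_{i+1})$; (ii) subspace-mismatch terms of the form $(\bm{U}_i\bm{U}_i^\top - \bm{U}_i^{\diamond}\bm{U}_i^{\diamond\top})\mathcal{M}_i(\bm{\mathcal{X}}^\star)(\cdots)$ together with their cross-mode analogues; and (iii) a discarded-signal residual which needs to be absorbed into the piecewise-constant structure of $\bm{B}_i^{\diamond}$. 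Term (i) is a bilinear form in independent truncated variables whose codomain has dimension at most $k^2$, so a Bernstein plus $\epsilon$-net argument controls it row-wise at level $\omega_{\sf max}\sqrt{k^2\log n}$. Term (ii) is handled by multiplying the Stage 1 $\ell_{2,\infty}$ estimate of $\bm{U}_j - \bm{U}_j^{\diamond}\bm{O}_j$ (for some rotations $\bm{O}_j$) against the row-wise norm $\|\mathcal{M}_i(\bm{\mathcal{X}}^\star)\|_{2,\infty} \lesssim \sqrt{n_{i+1}n_{i+2}}\|\mathcal{M}_i(\bm{\mathcal{S}}^\star)\|_{2,\infty}$. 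Combining these bounds and invoking the SNR condition \eqref{ineq:signal_to_noise_ratio_simplified} yields a per-row error that is $o(\sqrt{n_{i+1}n_{i+2}/k^2}\,\Delta_{\sf min})$, which activates the $k$-means oracle inequality.

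The principal obstacle lies in the missing-signal analysis for term (iii), which is delicate because the threshold is pegged to the noise level rather than to the least singular value of $\bm{\mathcal{S}}^\star$---a key departure from the settings in \cite{han2022exact,zhou2023deflated}. Concretely, I would have to certify that for any two distinct clusters $j_1 \neq j_2$ on mode $i$, the row difference $[\mathcal{M}_i(\bm{\mathcal{S}}^\star)]_{j_1,:}-[\mathcal{M}_i(\bm{\mathcal{S}}^\star)]_{j_2,:}$, once lifted to $\mathcal{M}_i(\bm{\mathcal{X}}^\star)$ via the membership matrices and projected onto the directions excluded by $\bm{U}_i^{\diamond}$ and by $\bm{U}_{i+1}^{\diamond},\bm{U}_{i+2}^{\diamond}$, contributes squared energy that is $o(n_{i+1}n_{i+2}\Delta_{\sf min}^2/k^2)$. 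The intended argument rests on the fact that every excluded singular value is at most $\sqrt{\tau_i}$ and the number of excluded directions is at most $k$, so the total excluded spectral energy is $\lesssim k\tau_i \asymp \omega_{\sf max}^2\sqrt{n_1n_2n_3}\log^2 n$, which is dwarfed by $n_{i+1}n_{i+2}\Delta_{\sf min}^2/k^2$ precisely under the joint consequences of \eqref{ineq:dimension_assumption_simplified} and \eqref{ineq:signal_to_noise_ratio_simplified}. Once this missing-signal bound is in place, $\bm{B}_i^{\diamond}$ retains its piecewise-constant structure with essentially the full inter-cluster gap, and the $k$-means oracle inequality closes the argument on the event of probability $1 - O(n^{-10})$.
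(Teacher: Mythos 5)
Your structural plan is sound: project, bound the perturbation in $\ell_{2,\infty}$, and invoke a $k$-means oracle inequality. However, your proposed handling of terms~(ii) and~(iii) is quantitatively insufficient to close the argument under the stated SNR condition, and in fact both steps are precisely where the paper introduces its main technical innovation.

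For term~(iii), the bound ``total excluded spectral energy $\lesssim k\tau_i$'' controls only an operator/Frobenius-type quantity. Projecting the cluster-indicator difference $\bm{e}_{j_1}-\bm{e}_{j_2}$ onto the excluded left singular directions gives a discarded contribution of size at most $\lesssim \sqrt{\tau_i} \asymp \omega_{\sf max}(n_1n_2n_3)^{1/4}\log n$, and for this to be dominated by the separation $\asymp \sqrt{n_{-i}}\,\Delta_{\sf min}$ you would need $\Delta_{\sf min}/\omega_{\sf max}\gtrsim \sqrt{n_i}(n_1n_2n_3)^{-1/4}\log n$, which is a factor $\sqrt{n_i}$ stronger than~\eqref{ineq:signal_to_noise_ratio_simplified}. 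The paper instead works with specific rows of $\mathcal{M}_i(\bm{\mathcal{X}}^\star)$ (indexed by nodes, not clusters) and exploits the incoherence $\|\bm{U}^\star\|_{2,\infty}\lesssim\sqrt{1/n_i}$ to obtain $\|(\bm{I}-\bm{U}_i^\diamond\bm{U}_i^{\diamond\top})\mathcal{M}_i(\bm{\mathcal{X}}^\star)\|_{2,\infty}\leq \|\bm{U}^\star\|_{2,\infty}\,\sigma_{r_{k_{\sf max}}+1}^\star \lesssim \sqrt{\tau_i/n_i}$, which supplies exactly the missing $\sqrt{1/n_i}$ factor. Your energy-counting argument discards this incoherence and cannot recover the sharp threshold.

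For term~(ii), multiplying the Stage-1 $\ell_{2,\infty}$ subspace error by $\|\mathcal{M}_i(\bm{\mathcal{X}}^\star)\|_{2,\infty}$ (which scales with $\|\mathcal{M}_i(\bm{\mathcal{S}}^\star)\|$, hence $\sigma_1^\star$) reintroduces a condition number of $\bm{\mathcal{S}}^\star$, which the theorem does not assume bounded. The paper bypasses this precisely by proving a direct, condition-number-free $\ell_{2,\infty}$ bound on the combined object $(\bm{U}\bm{U}^\top - \bm{U}^\star\bm{U}^{\star\top})\bm{X}^\star$ via the contour-integral expansion of Lemma~\ref{lm:space_estimate_expansion}; no decoupling into a product of a subspace error and a row norm occurs. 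This lemma (together with the rank-selection and thresholding analysis of Theorem~\ref{thm:residual}) is the core ingredient, and without an analogous device your bound cannot be made condition-number-free.

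A secondary structural difference: the paper's $k$-means oracle step compares against the unprojected centers $\bm{\theta}^{(i)\star}$, i.e., rows of $\mathcal{M}_i(\bm{\mathcal{X}}^\star)$, rather than against a doubly-projected $\bm{B}_i^{\diamond}$. This sidesteps the need to show that the projected centers retain the full inter-cluster separation; the discarded signal then enters only through $\|\widehat{\bm{\mathcal{X}}}-\bm{\mathcal{X}}^\star\|_{\rm F}^2$ (for the oracle inequality in Step~3) and through $\|(\bm{I}-\bm{U}_i\bm{U}_i^\top)\bm{X}_i^\star\|_{2,\infty}$ (for the pointwise exact-recovery argument in Step~4), both of which are controlled by Theorem~\ref{thm:residual}.
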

 In words, this theorem asserts that Algorithm \ref{algorithm:k_means} enables exact clustering under the assumptions imposed above.  
A more general version of Theorem~\ref{thm:cluster_tensor_simplified} --- which allows $k_i$ and $\beta$ grows with $n$ --- as well as its proof can be found in Section~\ref{sec:proof_thm_cluster_tensor_simplified}.

Let us now take a moment to discuss the conditions assumed in Theorem~\ref{thm:cluster_tensor_simplified}. 
Condition~\eqref{ineq:dimension_assumption_simplified} assumes that the dimensions of the observed tensor are not extremely unbalanced; 
for instance,  it holds in the scenario where $n_1 \lesssim n_2n_3$, $n_2 \lesssim n_1n_3$ and $n_3 \lesssim n_1n_2$. 
In the regime where $n_1 \asymp n_2 \asymp n_3 \asymp n$  and $M \asymp \log k \asymp 1$, the signal-to-noise ratio condition~\eqref{ineq:signal_to_noise_ratio_simplified} simplifies to 
\begin{equation}
	\mathsf{SNR} \gtrsim n^{-3/4}\log n. \label{eq:SNR-simpler}
\end{equation}
Interestingly, this condition \eqref{eq:SNR-simpler} is almost necessary among polynomial-time algorithms; as shown in \citet[Theorem 7]{han2022exact}, if $\mathsf{SNR}~= n^{\gamma}$ for any $\gamma < -3/4$, then there exists no polynomial-time algorithm that can exactly recover the cluster assignment vectors. Furthermore, combining Theorem \ref{thm:cluster_tensor_simplified} with \citet[Theorem~2]{han2022exact}, we know that \Algoabbrev + {\sf HLloyd} and \Algoabbrev~can achieve the same theoretical guarantees in terms of the exact cluster recovery. In other words, applying {\sf HLloyd} to further refinement would not degrade the theoretical performance at all. As we will illustrate in Section \ref{sec:simulation}, \Algoabbrev~combined with {\sf HLloyd} might sometimes achieve improved empirical results compared to \Algoabbrev~on its own.

\paragraph{Comparisons with {\sf HSC} and {\sf HSC} + {\sf HLloyd}.} To highlight the advantages of our algorithm and theory, we make comparisons with the state-of-the-art prior work \cite{han2022exact}, 
which proposed the {\sf HSC} algorithm and its combination with a follow-up {\sf HLloyd} procedure. 
Firstly, in stark contrast to {\sf HSC} and {\sf HSC} + {\sf HLloyd} --- which assumes {\em identical variances} of the noise entries in order to guarantee their desired theoretical results \citet[Theorems 3 and 4]{han2022exact} --- our algorithm \Algoabbrev~is able to handle heteroskedastic noise efficiently without compromising the applicable range of SNRs. Secondly, in comparison with \citet[Theorems 3 and 4]{han2022exact} that assume sub-Gaussian noise, our assumption is more mild and can accommodate a wider range of applications, including those with binary or count data outcomes. In addition, \Algoabbrev~can exactly recover the cluster assignment vectors without the aid of {\sf HLloyd}, a feature that stands in contrast to {\sf HSC}.

\paragraph{Other prior results.} In addition to \cite{han2022exact}, the tensor block model has been studied in several other past works. Focusing on sub-Gaussian noise, \cite{wang2019multiway} characterized the misclassification rate and the tensor estimation error for the the least-square estimator; this estimator, however, is computationally intractable. \cite{chi2020provable} proposed a convex method and provided theoretical guarantees for the tensor estimation error,  but they did not establish misclassification-rate-based theory for their proposed method. \cite{agterberg2022estimating} further investigated a more general model, called the tensor mixed-membership block model, in the presence of sub-Gaussian noise. 
When applied to Model \eqref{eq:model} with $k \asymp 1$ and $\beta \asymp 1$, their signal-to-noise ratio condition becomes $$\Delta/\omega_{\sf max} \gtrsim \kappa^2\frac{n\sqrt{\log n}}{(n_1n_2n_3)^{1/2}(\min_{i}n_i)^{1/4}},$$ where $\kappa$ is the condition number of the tensor $\mathcal{S}^\star$ satisfying $\kappa \lesssim (\min_{i}n_i)^{1/8}$. In comparison, Theorem \ref{thm:cluster_tensor_simplified} does not require any assumptions on $\kappa$ and our signal-to-noise ratio condition is less stringent.

\paragraph{A glimpse of proof highlights.} To prove that \Algoabbrev~alone is enough to achieve exact clustering, 
a crucial step lies in carefully controlling the magnitude of $\|(\bm{I} - \bm{U}_i\bm{U}_i^\top)\bm{X}_i^\star\|_{2,\infty}$. Here, $\bm{X}_i^\star = \mathcal{M}_i\left(\bm{\mathcal{X}}^\star\right)$ is the $i$-th matricization of $\bm{\mathcal{X}}^\star$, and $\bm{U}_i$ is the subspace estimator. Unlike the subspace/matrix estimation problems considered in the literature, we aim to derive sharp and condition-number-free $\ell_{2,\infty}$ guarantees for $\left(\bm{I} - \bm{U}_i\bm{U}_i^\top\right)\bm{X}_i^\star$ without imposing any restrictions on the condition number and the least singular value of $\bm{X}_i^\star$. In this context, existing techniques for deriving $\ell_{2,\infty}$ guarantees are inadequate for reaching our target bound; 
for instance, existing leave-one-out analyses \citep{zhong2018near, chen2021spectral, ma2020implicit} require the condition number to not be overly large, 
whereas the subspace representation theorem \citep{xia2021normal,zhou2023deflated} relies on assumptions on the least singular value.  

To establish the desired performance guarantees, we develop a new technique (i.e., Lemma \ref{lm:space_estimate_expansion}) that allows for effective control of $\|(\bm{I} - \bm{U}_i\bm{U}_i^\top)\bm{X}_i^\star\|_{2,\infty}$ via bounding an infinite sum of $\ell_{2, \infty}$ norms of polynomials of the error matrix $\bm{E}_i = \mathcal{M}_i(\bm{\mathcal{E}})$, alongside other terms that can be easily bounded. Using a strategy akin to, but more intricate than, the one used in \cite{zhou2023deflated}, we are able to control those $\ell_{2, \infty}$ norms of the error polynomials and, in turn, achieve the desired guarantees.

\subsection{Data-driven selection of the thresholds $\{\tau_i\}$}\label{sec:tau_selection}

As shown in Theorem \ref{thm:cluster_tensor_simplified}, \Algoabbrev~can successfully recover the cluster assignment vectors of interest if the tuning parameters $\{\tau_i\}$ satisfy \eqref{ineq:threshold_tensor_simplified}. However, the maximum variance $\omega_{\sf max}^2$ is usually unknown {\em a priori} and, therefore, we need to carefully choose $\tau_i$. In what follows, we discuss how to select these tuning parameters.

Without loss of generality, we assume $n_1 \leq n_2 \leq n_3$ and  let
\begin{align}\label{def:sigma_hat}
	\widehat{\omega} = \sigma_{k_1 + 1}\big(\mathcal{M}_1\left(\bm{\mathcal{Y}}\right)\big)/\sqrt{n_2n_3},
\end{align}
which can be used to estimate the order of the noise level. Then the threshold $\tau_i$ can be chosen as follows: 
\begin{align}\label{def:tuning_selection}
	\tau_i = \tau = \overline{C}_{\tau}\left(n_1n_2n_3\right)^{1/2}\widehat{\omega}^2\log^2 n = \overline{C}_{\tau}\sqrt{\frac{n_1}{n_2n_3}}\sigma_{k_1 + 1}^2\big(\mathcal{M}_1\left(\bm{\mathcal{Y}}\right)\big)\log^2 n,~\qquad~\forall 1\leq i \leq 3, 
\end{align}
where $\overline{C}_{\tau}>0$ is some sufficiently large constant. The following theorem asserts that the $\tau_i$'s computed in \eqref{def:tuning_selection} satisfy the desired property \eqref{ineq:threshold_tensor_simplified}.
\begin{thm}\label{thm:tuning_selection}
	Suppose that Assumption \ref{assump:noise} holds, and $\min\{n_2/k_2, n_3/k_3\} \geq C\sqrt{\log n}$ holds for some sufficiently large constant $C > 0$. Assume that either of the following conditions is satisfied:
	\begin{itemize}
		\item[1.] For all $i \in [n_1]$, there exists some numerical constant $c > 0$ such that
		\begin{align}\label{assump:variance_same_order}
			\sum_{j = 1}^{n_2}\sum_{\ell=1}^{n_3}\omega_{i,j,\ell}^2 \geq cn_2n_3\omega_{\sf max}^2~\quad~\text{where}~\quad~\omega_{i,j,\ell}^2 = {\sf Var}\left[E_{i,j,\ell}\right];
		\end{align}
	    \item[2.] The observation model is the stochastic tensor block model \eqref{model:stochastic_tensor_block}, with the numbers of clusters satisfying $k_i \lesssim 1$ for each $1\leq i \leq 3$ and the balance parameter obeying $\beta \asymp 1$.
	\end{itemize}
	Then with probability exceeding $1 - O(n^{-10})$, the thresholds $\{\tau_i\}$ defined in \eqref{def:tuning_selection} satisfy \eqref{ineq:threshold_tensor_simplified}.
\end{thm}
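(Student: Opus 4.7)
The plan is to reduce the claim to the quantitative estimate $\widehat{\omega} \asymp \omega_{\sf max}$ on a $1 - O(n^{-10})$ probability event, which when substituted into $\tau_i = \overline{C}_{\tau}(n_1 n_2 n_3)^{1/2}\widehat{\omega}^2 \log^2 n$ immediately yields \eqref{ineq:threshold_tensor_simplified}. Set $\bm{X}_1^\star = \mathcal{M}_1(\bm{\mathcal{X}}^\star)$ and $\bm{E}_1 = \mathcal{M}_1(\bm{\mathcal{E}})$, so that $\bm{Y}_1 \coloneqq \mathcal{M}_1(\bm{\mathcal{Y}}) = \bm{X}_1^\star + \bm{E}_1$. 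Since $\rank(\bm{X}_1^\star) \leq k_1$, two applications of Weyl's inequality for singular values give the sandwich
\begin{align*}
\sigma_{2k_1+1}(\bm{E}_1) \;\leq\; \sigma_{k_1+1}(\bm{Y}_1) \;\leq\; \|\bm{E}_1\|,
\end{align*}
reducing the proof to the twin estimates $\|\bm{E}_1\| \lesssim \omega_{\sf max}\sqrt{n_2 n_3}$ and $\sigma_{2k_1+1}(\bm{E}_1) \gtrsim \omega_{\sf max}\sqrt{n_2 n_3}$.

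For the upper bound, I would first invoke Assumption~\ref{assump:noise} to pass, on a $1 - O(n^{-10})$ event, to truncated entries $|E_{i_1,i_2,i_3}| \leq B \lesssim \omega_{\sf max}(n_1 n_2 n_3)^{1/4}/\log n$, and then apply a sharp spectral-norm bound for heteroskedastic rectangular random matrices (e.g., a Bandeira--van Handel-type inequality or matrix Bernstein) to obtain
\begin{align*}
\|\bm{E}_1\| \;\lesssim\; \omega_{\sf max}\big(\sqrt{n_1} + \sqrt{n_2 n_3}\big) \;\lesssim\; \omega_{\sf max}\sqrt{n_2 n_3},
\end{align*}
where the last step uses $n_1 \leq n_2 n_3$ under the ordering $n_1 \leq n_2 \leq n_3$.

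The lower bound, which is the most delicate step, would be handled by a diagonal-approximation argument for the Gram matrix $\bm{E}_1 \bm{E}_1^\top$. Let $\bm{D} = \bbE[\bm{E}_1 \bm{E}_1^\top]$, a diagonal matrix with $D_{ii} = \sum_{j, \ell}\omega_{i, j, \ell}^2$. Applying matrix Bernstein to the sum $\bm{E}_1 \bm{E}_1^\top - \bm{D}$ of $n_2 n_3$ independent mean-zero rank-one perturbations, together with the truncation level $B$ and the assumption $\min\{n_2/k_2, n_3/k_3\} \gtrsim \sqrt{\log n}$, should yield
\begin{align*}
\big\|\bm{E}_1 \bm{E}_1^\top - \bm{D}\big\| \;=\; o\big(\omega_{\sf max}^2 n_2 n_3\big).
\end{align*}
Combined with Weyl applied to $\bm{E}_1 \bm{E}_1^\top$ versus $\bm{D}$, this reduces the task to proving $\sigma_{2k_1+1}(\bm{D}) \gtrsim \omega_{\sf max}^2 n_2 n_3$, i.e., to showing that at least $2k_1 + 1$ diagonal entries of $\bm{D}$ exceed $c\,\omega_{\sf max}^2 n_2 n_3$.

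I would verify this counting statement separately in the two regimes. Under Condition~1 every $D_{ii}$ already satisfies $D_{ii} \geq c\, n_2 n_3\, \omega_{\sf max}^2$ by hypothesis, so the bound is immediate. Under Condition~2, let $(a^*, b^*, c^*)$ attain $\omega_{\sf max}^2 = S^\star_{a^*, b^*, c^*}(1 - S^\star_{a^*, b^*, c^*})$; for any $i$ with $z^\star_{1, i} = a^*$, balanced clusters with $\beta \asymp 1$ and $k_i \lesssim 1$ yield
\begin{align*}
D_{ii} \;\geq\; \big|\{j : z^\star_{2, j} = b^*\}\big| \cdot \big|\{\ell : z^\star_{3, \ell} = c^*\}\big| \cdot \omega_{\sf max}^2 \;\gtrsim\; n_2 n_3\, \omega_{\sf max}^2,
\end{align*}
and the number of such indices is $\geq \beta n_1 / k_1 \asymp n_1 \gg 2k_1 + 1$. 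The technically most delicate step I anticipate is establishing the Bernstein-type bound $\|\bm{E}_1 \bm{E}_1^\top - \bm{D}\| = o(\omega_{\sf max}^2 n_2 n_3)$ at the sharp rate (i.e., without any surplus $\sqrt{\log n}$ slack in $\widehat{\omega}$ vs.\ $\omega_{\sf max}$), since the $\log^2 n$ factor in the definition of $\tau_i$ is precisely what \eqref{ineq:threshold_tensor_simplified} already absorbs, leaving no room for extra logarithmic losses in the concentration step.
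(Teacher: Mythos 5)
Your reduction via the Weyl sandwich
\begin{align*}
\sigma_{2k_1+1}(\bm{E}_1)\;\leq\;\sigma_{k_1+1}(\bm{Y}_1)\;\leq\;\|\bm{E}_1\|
\end{align*}
is a genuine and attractive simplification over the paper's lower-bound argument, which instead projects $\bm{Y}_1$ onto $\mathcal{P}_{\bm{U}_{3\perp}^\star}\otimes\mathcal{P}_{\bm{U}_{2\perp}^\star}$ to kill $\bm{X}_1^\star$ and must then separately control three cross-terms $\|\bm{E}_1(\bm{U}_3^\star\otimes\bm{U}_2^\star)\|$, $\|\bm{E}_1(\bm{U}_{3\perp}^\star\otimes\bm{U}_2^\star)\|$, $\|\bm{E}_1(\bm{U}_3^\star\otimes\bm{U}_{2\perp}^\star)\|$ using incoherence of $\bm{U}_2^\star,\bm{U}_3^\star$ and a dedicated noise lemma. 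Your version avoids all of that; the only price is asking for $2k_1+1$ rather than $k_1+1$ large diagonal entries of $\bm{D}$, which is free. Your counting argument under both Conditions 1 and 2 is also correct and in substance identical to the paper's.

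However, there is a concrete gap in the concentration step as you have sketched it. Applying matrix Bernstein directly to $\bm{E}_1\bm{E}_1^\top-\bm{D}=\sum_{k=1}^{n_2n_3}\bigl(\bm{e}_k\bm{e}_k^\top-\bbE[\bm{e}_k\bm{e}_k^\top]\bigr)$ requires an almost-sure bound $L$ on $\|\bm{e}_k\bm{e}_k^\top-\bbE[\bm{e}_k\bm{e}_k^\top]\|$, and after truncation the best you can say is $L\lesssim\|\bm{e}_k\|_2^2\lesssim n_1B^2\lesssim n_1\omega_{\sf max}^2(n_1n_2n_3)^{1/2}/\log^2 n$. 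The linear term $L\log n$ in the Bernstein tail then gives $\omega_{\sf max}^2 n_1^{3/2}(n_2n_3)^{1/2}/\log n$, and for this to be $o(n_2n_3\omega_{\sf max}^2)$ you would need $n_1^3\ll n_2n_3\log^2 n$; since $n_2n_3\geq n_1^2$, this amounts to $n_1\lesssim\log^2 n$ and therefore fails in essentially every regime of interest. The obstruction is not the variance term but the fact that a single column contributes a rank-one matrix of spectral norm $\approx n_1B^2$, not $B^2$. The paper's proof sidesteps exactly this by splitting $\bm{E}_1\bm{E}_1^\top-\bm{D}$ into its off-diagonal part $\mathcal{P}_{\sf off\text{-}diag}(\bm{E}_1\bm{E}_1^\top)$, controlled by \citet[Lemma 7]{zhou2023deflated} at rate $B^2\log^2 n+\sqrt{n_1n_2n_3}\,\omega_{\sf max}^2\log n$ (a diagonal-deleted Gram-matrix bound that goes beyond naive matrix Bernstein), and its diagonal part $\mathcal{P}_{\sf diag}(\bm{E}_1\bm{E}_1^\top)-\bm{D}$, controlled entrywise by scalar Bernstein for each row-sum $\sum_{j,\ell}(E_{i,j,\ell}^2-\omega_{i,j,\ell}^2)$. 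You correctly flagged the concentration step as the delicate one, but the fix is not merely a sharper constant: you need the off-diagonal/diagonal decomposition (or an equivalent refinement) to avoid the $n_1B^2$ obstruction, and then your Weyl-sandwich argument goes through cleanly.
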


\begin{rmk}
	Here, Condition \eqref{assump:variance_same_order} posits that the average variance for each row of $\mathcal{M}_1\left(\bm{\mathcal{E}}\right)$ is on the same order as $\omega_{\sf max}^2$. This condition is met when the noise is not excessively spiky. For example, any noise tensor $\mathcal{E}$ with variances $\omega_{i,j,k}^2 \asymp \omega_{\sf max}^2$ satisfies this condition.
\end{rmk}

The proof of Theorem \ref{thm:tuning_selection} can be found in Section \ref{proof:tuning_selection}. Putting Theorem \ref{thm:cluster_tensor_simplified} and Theorem \ref{thm:tuning_selection} together, we arrive at the following result:
\begin{thm}\label{thm:final}
	Suppose that Assumption \ref{assump:noise} holds, $k_i \lesssim 1$ for every $1\leq i \leq 3$, and $\beta \asymp 1$. Assume that either the following conditions is satisfied:
	\begin{itemize}
		\item[1.] Condition \eqref{assump:variance_same_order} holds;
		\item[2.] The observation model is the stochastic tensor block model \eqref{model:stochastic_tensor_block}.
	\end{itemize}
    We further assume that 
    \begin{align*}
    	n_1n_2n_3 &\geq c_1n^2, \\
    	\mathsf{SNR}~= \Delta_{\sf min}/\sigma_{\sf max} &\geq C_1\sqrt{M}\left(n_1n_2n_3\right)^{-1/4}\log n
    \end{align*}
    for some large enough constants $C_1, c_1 > 0$.
	If we choose the tuning parameter $\tau$ as in \eqref{def:tuning_selection} and the numbers of iterations satisfy \eqref{iter1_simple} and \eqref{iter2_simple}, then with probability exceeding $1 - O(n^{-10})$, \Algoabbrev~achieves exact clustering, i.e., the misclassification rate (cf.~\eqref{eq:defn-MCR}) obeys
    \begin{align*}
    	\mathsf{MCR}\big(\widehat{\bm{z}}_i, \bm{z}_i^\star\big) = 0,~\quad~\forall 1\leq i\leq 3.
    \end{align*}
\end{thm}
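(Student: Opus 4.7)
The plan is to derive Theorem~\ref{thm:final} as an essentially immediate corollary of the two preceding theorems via a union-bound argument. I would first invoke Theorem~\ref{thm:tuning_selection} to argue that the data-driven threshold $\tau$ defined in \eqref{def:tuning_selection} satisfies the bound \eqref{ineq:threshold_tensor_simplified} with probability at least $1 - O(n^{-10})$, and then, on that high-probability event, invoke Theorem~\ref{thm:cluster_tensor_simplified} to obtain exact recovery with another $1 - O(n^{-10})$ probability loss. A union bound yields the claimed overall probability.

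The substantive task is therefore to verify that every hypothesis of the two earlier theorems follows from the hypotheses of Theorem~\ref{thm:final}. Assumption~\ref{assump:noise}, the dimension condition $n_1 n_2 n_3 \geq c_1 n^2$, the SNR condition $\Delta_{\sf min}/\omega_{\sf max} \gtrsim \sqrt{M}(n_1n_2n_3)^{-1/4}\log n$, and the iteration-count conditions \eqref{iter1_simple}--\eqref{iter2_simple} are inherited verbatim; the two alternative structural assumptions of Theorem~\ref{thm:final} line up one-to-one with the two alternatives of Theorem~\ref{thm:tuning_selection}. The only piece that is not purely cosmetic is the auxiliary requirement $\min\{n_2/k_2, n_3/k_3\} \geq C\sqrt{\log n}$ in Theorem~\ref{thm:tuning_selection}. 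Taking (without loss of generality) $n_1 \leq n_2 \leq n_3 = n$, the dimension condition gives $n_1 n_2 \geq c_1 n$, whence $n_2^2 \geq n_1 n_2 \geq c_1 n$ and thus $n_2 \geq \sqrt{c_1 n}$; combined with $k_2, k_3 \lesssim 1$ this implies $\min\{n_2/k_2, n_3/k_3\} \gtrsim \sqrt{n} \gg \sqrt{\log n}$ for all $n$ beyond an absolute constant.

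Once the hypotheses are verified, the formal assembly is short. Let $\mathcal{A}$ denote the event on which the data-driven $\tau_1 = \tau_2 = \tau_3 = \tau$ from \eqref{def:tuning_selection} satisfies \eqref{ineq:threshold_tensor_simplified}; Theorem~\ref{thm:tuning_selection} gives $\bbP(\mathcal{A}) \geq 1 - O(n^{-10})$. Conditional on $\mathcal{A}$, the algorithm run in Theorem~\ref{thm:final} is an instance of the algorithm analyzed in Theorem~\ref{thm:cluster_tensor_simplified} with admissible threshold parameters, so with conditional probability $1 - O(n^{-10})$ its outputs satisfy $\mathsf{MCR}(\widehat{\bm{z}}_i, \bm{z}_i^\star) = 0$ for each $i \in \{1,2,3\}$. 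A union bound then delivers the stated conclusion with unconditional probability $1 - O(n^{-10})$.

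I do not anticipate any real obstacle: the proof is essentially bookkeeping that aligns two previously proven results. One subtle point worth flagging is that the threshold $\tau$ in \eqref{def:tuning_selection} is itself data-dependent (it is built from $\sigma_{k_1+1}(\mathcal{M}_1(\bm{\mathcal{Y}}))$), so one must be careful that the high-probability event produced by Theorem~\ref{thm:tuning_selection} refers to precisely this random $\tau$; since Theorem~\ref{thm:cluster_tensor_simplified} only needs the deterministic sandwich bound \eqref{ineq:threshold_tensor_simplified} on $\tau$ to hold, and this is exactly what $\mathcal{A}$ guarantees, the composition goes through cleanly.
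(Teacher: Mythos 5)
Your proposal is correct and matches the paper's approach: the paper itself simply states that Theorem~\ref{thm:final} follows by ``Putting Theorem~\ref{thm:cluster_tensor_simplified} and Theorem~\ref{thm:tuning_selection} together,'' i.e.\ exactly the union-bound composition you describe. Your explicit check that $\min\{n_2/k_2, n_3/k_3\} \gtrsim \sqrt{n} \gg \sqrt{\log n}$ follows from $n_1 n_2 n_3 \geq c_1 n^2$ together with $k_i \lesssim 1$, and your observation about why the data-dependence of $\tau$ does not obstruct the composition, are both accurate and slightly more careful than the paper's terse statement of the deduction.
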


Theorem \ref{thm:final} shows that our data-driven procedure can still achieve the exact clustering under the same signal-to-noise ratio condition as in Theorem \ref{thm:cluster_tensor_simplified}, provided that the noise condition \eqref{assump:variance_same_order} is satisfied. If the model of interest is the stochastic tensor block model, no extra assumptions on the noise are needed to justify the validity of the data-driven choices of $\{\tau_i\}$.

    \section{Empirical studies}\label{sec:simulation}

In this section, we conduct a series of numerical experiments to evaluate the practical effectiveness of the proposed algorithms: \Algoabbrev, and \Algoabbrev~+ {\sf HLloyd}. 
Throughout this section, the thresholds are chosen in a data-driven manner as
\begin{align}\label{eq:tuning_selection_empirical}
	\tau_i \equiv \tau = 1.1\left(n_1n_2n_3\right)^{1/2}\widehat{\omega}^2 = 1.1\sqrt{\frac{n_1}{n_2n_3}}\sigma_{k_1 + 1}^2\big(\mathcal{M}_1\left(\bm{\mathcal{Y}}\right)\big),
\end{align}
where $\widehat{\omega}$ is defined in \eqref{def:sigma_hat}.

\subsection{Experiments on synthetic data}
First, we carry out numerical experiments on synthetic data to corroborate the efficacy of \Algoabbrev~and \Algoabbrev~+ {\sf HLloyd}. Following the settings in \cite{han2022exact}, we set the dimensions to be $n_1 = n_2 = n_3 = n$ and the numbers of clusters as $k_1 = k_2 = k_3 = k$, and let the cluster sizes be balanced. The following four methods are considered: (1) {\sf HSC}: the high-order spectral clustering algorithm proposed in \cite{han2022exact}; (2) {\sf HSC + HLloyd}: the procedure that uses {\sf HSC} to obtain initial clustering results, followed by a 10-iteration high-order Lloyd algorithm \citep{han2022exact} for refinement; (3) \Algoabbrev: the method proposed in Algorithm \ref{algorithm:k_means} with the numbers of iterations $t_{i,j} = 10$; (4) \Algoabbrev~+ {\sf HLloyd}: the procedure that employs \Algoabbrev~(where $t_{i,j} = 10$) as initial cluster assignment vector estimators and then applies {\sf HLloyd} with the iteration number $t = 10$ to compute the final clustering results. 
To evaluate the clustering performance,  we calculate, for each method, the empirical clustering error rate (CER), which is one minus the adjusted random index \citep{milligan1986study}. A lower CER indicates a better clustering result. Specifically, an exact recovery of clustering is achieved when CER equals $0$. All results are averaged over 100 independent replicates.

\paragraph{Sub-Gaussian tensor block models.}
Let us begin by considering Model \eqref{eq:model_equivalence} with Gaussian noise. We fix the dimensions $n_1 = n_2 = n_3 = n \in \{100, 150\}$, generate a random tensor $\overline{\bm{\mathcal{S}}} \in \mathcal{R}^{k, k, k}$ with independent entries $\overline{S}_{i_1, i_2, i_3} \sim \mathcal{N}(0,1)$ and the core tensor $\bm{\mathcal{S}}^\star$ is obtained by rescaling $\overline{\bm{\mathcal{S}}}$ such that $\Delta_{\sf min} = 40n^{-\delta}$ (so that {\sf SNR} decreases as $\delta$ increases). We randomly generate the cluster assignment vectors $\bm{z}_i \in [k]^n$, $1\leq i \leq 3$. We generate three vectors $\bm{\alpha}, \bm{\beta}, \bm{\gamma}$ such that $\{\alpha_i\}, \{\beta_j\}, \{\gamma_k\}$ are independently and uniformly drawn from $[0,2]$. The entries of the noise tensor $\bm{\mathcal{E}} \in \bbR^{n \times n \times n}$ are generated independently with $E_{i,j,k} \sim \mathcal{N}(0, \alpha_i^2\beta_j^2\gamma_k^2)$. For each method, we report the averaged CER and the percentage of exact recovery for cluster assignment vectors. The results for $n = 100$ and $n = 150$ are illustrated in Figures~\ref{figure:sub-Gaussian1} and \ref{figure:sub-Gaussian2}, respectively. As can be seen, \Algoabbrev~and \Algoabbrev~+ {\sf HLloyd} achieve much smaller CER compared with {\sf HSC} and {\sf HSC + HLloyd}. In terms of the percentage of exact recovery, \Algoabbrev~+ {\sf HLloyd} achieves the best performance among all these four methods.
\begin{figure}[t]
	\centering
	\begin{minipage}[t]{\linewidth}
		\centering
		\subfigure[$k=3$, averaged CER]{
			\includegraphics[width=0.32\linewidth]{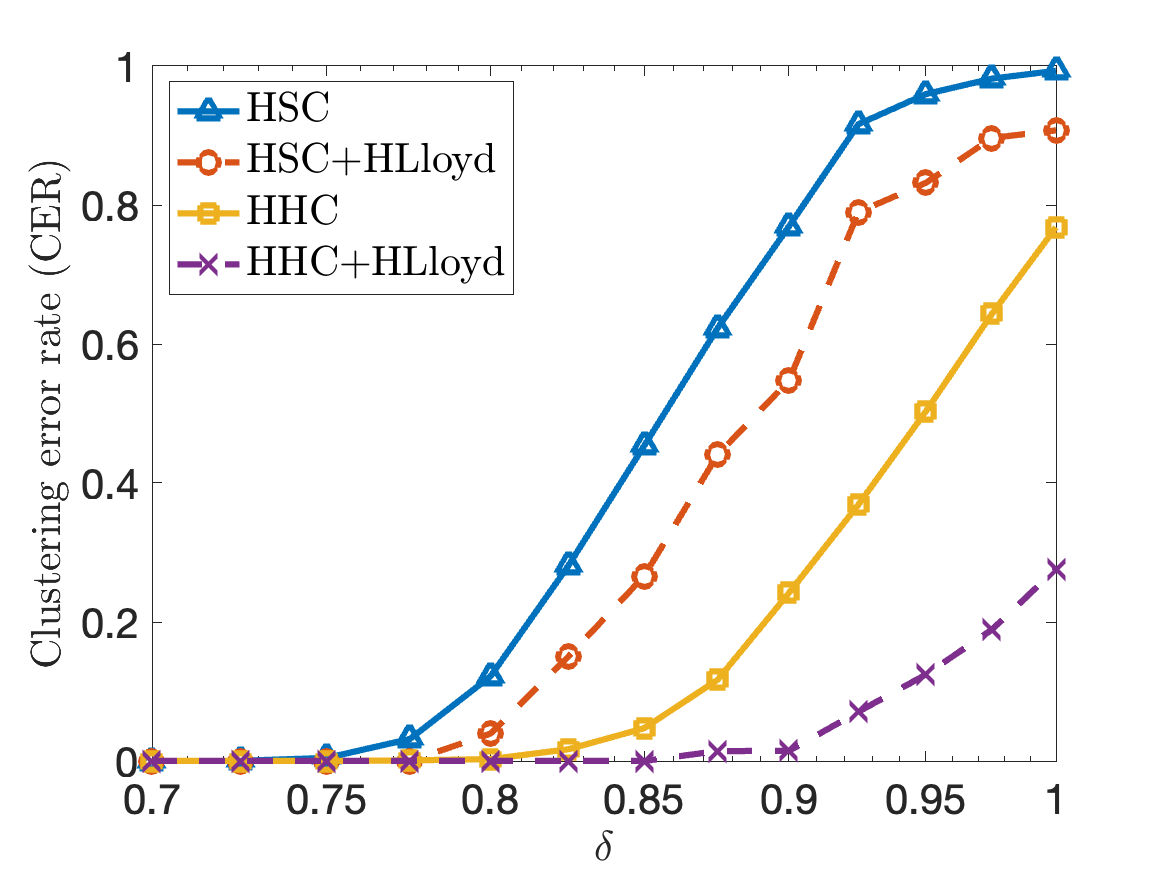}}
		\subfigure[$k=3$, percentage of exact recovery]{
			\includegraphics[width=0.32\linewidth]{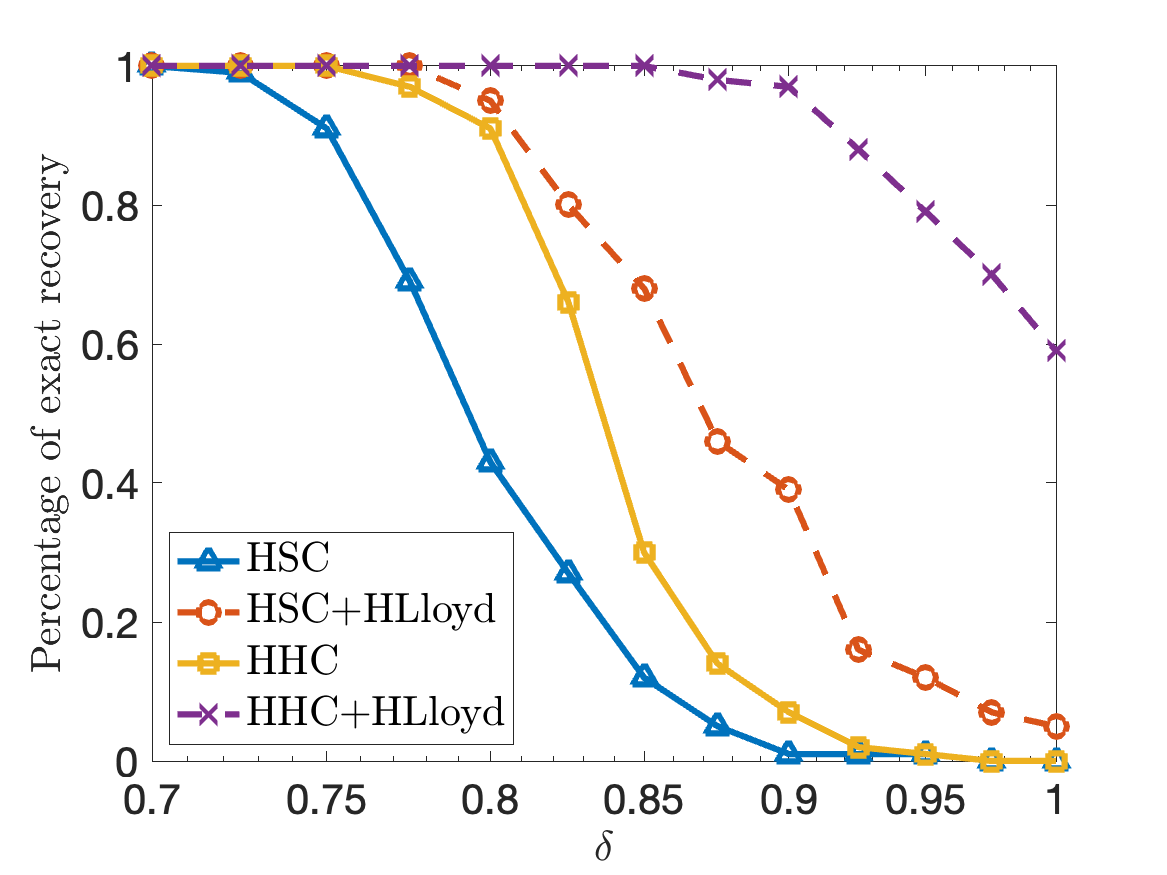}}\\
		\subfigure[$k = 5$, averaged CER]{
			\includegraphics[width=0.32\linewidth]{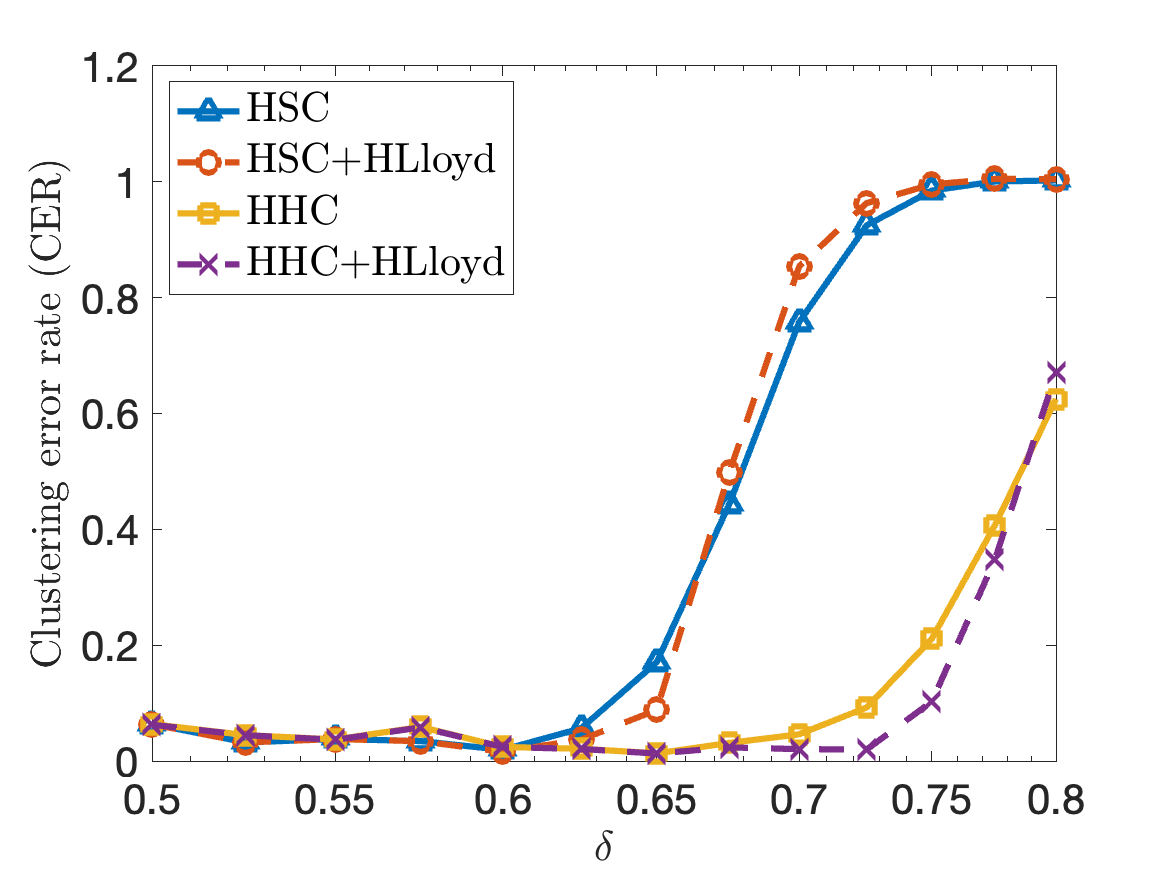}}
		\subfigure[$k = 5$, percentage of exact recovery]{
			\includegraphics[width=0.32\linewidth]{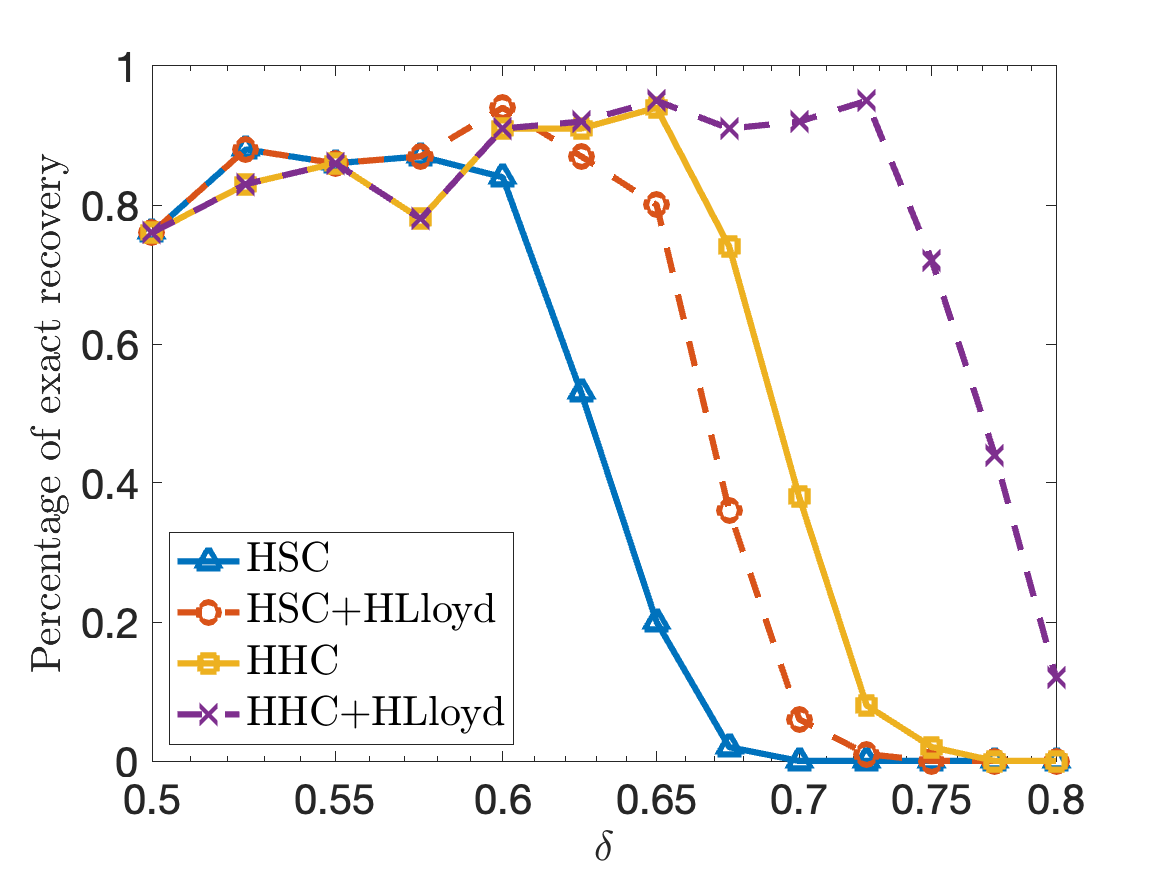}}
	\end{minipage}\caption{Averaged CER and percentage of exact community recovery for {\sf HSC}, {\sf HSC + HLloyd}, \Algoabbrev~and \Algoabbrev~+ {\sf HLloyd} under the sub-Gaussian tensor block models with $n = 100$. Here, $\Delta_{\sf min} = 40n^{-\delta}$.}
\label{figure:sub-Gaussian1}
\end{figure}

\begin{figure}[t]
	\centering
	\begin{minipage}[t]{\linewidth}
		\centering
		\subfigure[$k=3$, averaged CER]{
			\includegraphics[width=0.32\linewidth]{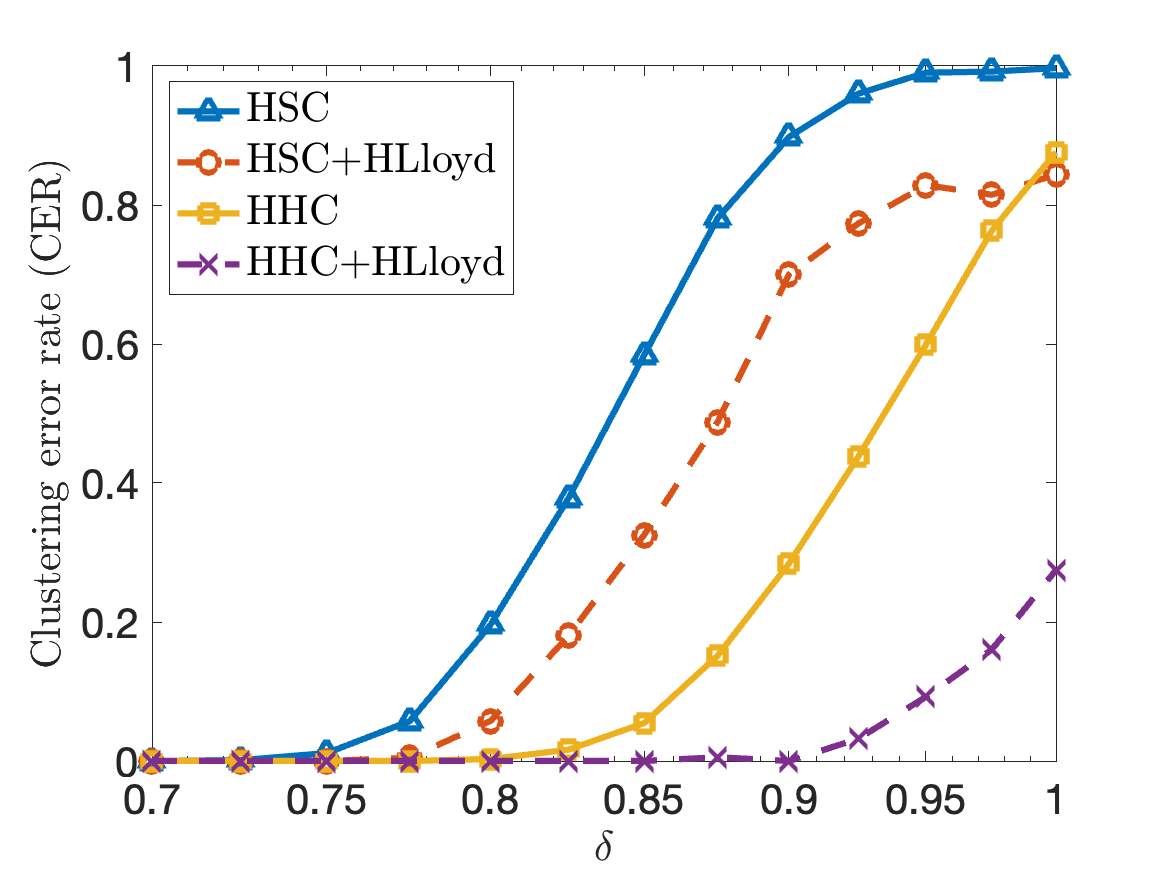}}
		\subfigure[$k=3$, percentage of exact recovery]{
			\includegraphics[width=0.32\linewidth]{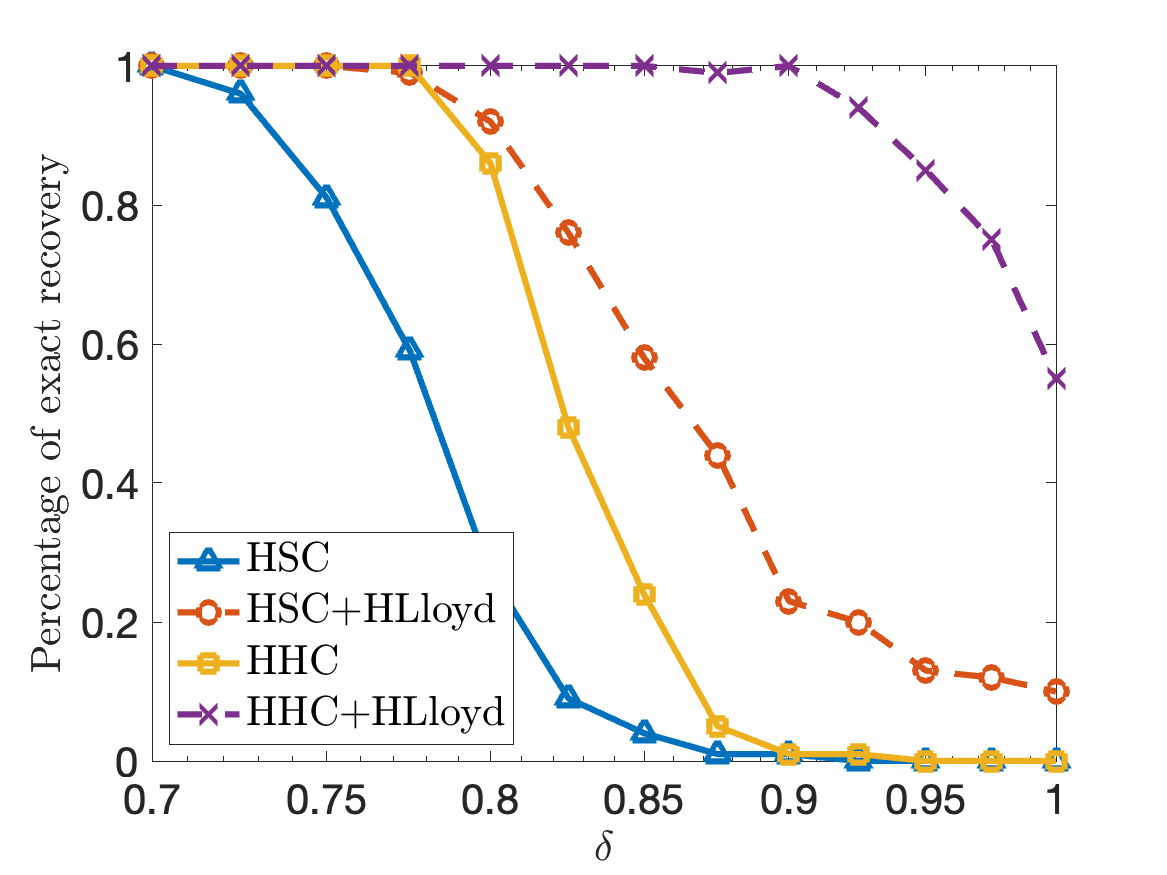}}\\
		\subfigure[$k = 5$, averaged CER]{
			\includegraphics[width=0.32\linewidth]{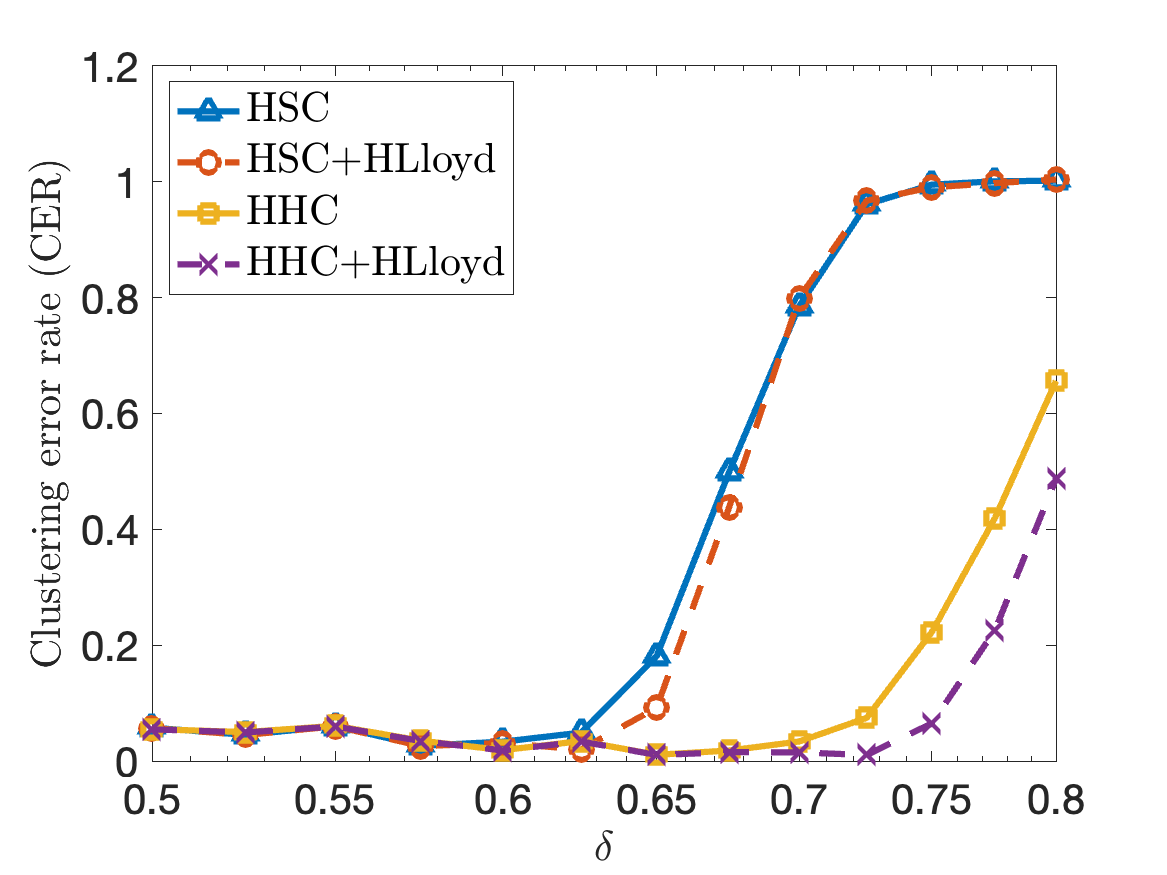}}
		\subfigure[$k = 5$, percentage of exact recovery]{
			\includegraphics[width=0.32\linewidth]{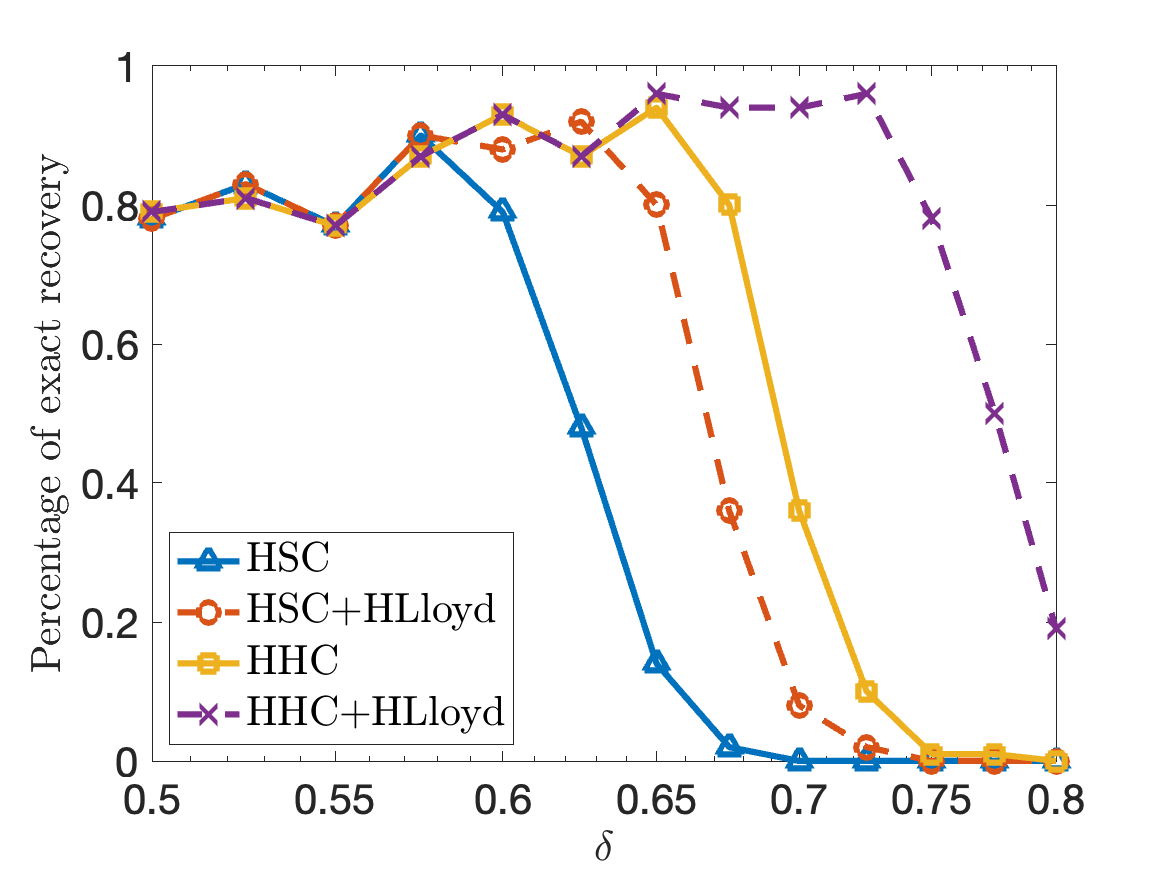}}
	\end{minipage}\caption{Averaged CER and percentage of exact community recovery for {\sf HSC}, {\sf HSC + HLloyd}, \Algoabbrev~and \Algoabbrev~+ {\sf HLloyd} under the sub-Gaussian tensor block models with $n = 150$. Here, $\Delta_{\sf min} = 40n^{-\delta}$.}
\label{figure:sub-Gaussian2}
\end{figure}
\paragraph{Stochastic tensor block models.} Next, we study the stochastic block model \eqref{model:stochastic_tensor_block}. We choose the core tensor $\bm{\mathcal{S}}^\star \in \mathbb{R}^{k \times k \times k}$ satisfying
\begin{align}\label{simulation:STBM_core_tensor}
	S_{i_1,i_2,i_3}^\star = \begin{cases}
		10a\cdot n^{-3/2}\left(1 - \frac{i_1 - 1}{2(k - 1)}\right),~\quad~&i_1 = i_2 = i_3,\\
		0.1a\cdot n^{-3/2},~\quad~&\text{otherwise}.
	\end{cases}
\end{align}
Here, $a$ is a scalar. When $a$ is not too large, {\sf SNR} increases with $a$. The empirical results of the above four methods for $n = 100$ and $n = 150$ are displayed in Figures \ref{figure:stochastic1} and \ref{figure:stochastic2}. From these plots, one sees that \Algoabbrev~and \Algoabbrev~+ {\sf HLloyd} achieve more accurate clustering results, and \Algoabbrev~+ {\sf HLloyd} outperforms all other methods  in achieving the highest percentage of the exact recovery for the cluster assignment vectors. 
\begin{figure}[t]
	\centering
	\begin{minipage}[t]{\linewidth}
		\centering
		\subfigure[$k=3$, averaged CER]{
			\includegraphics[width=0.32\linewidth]{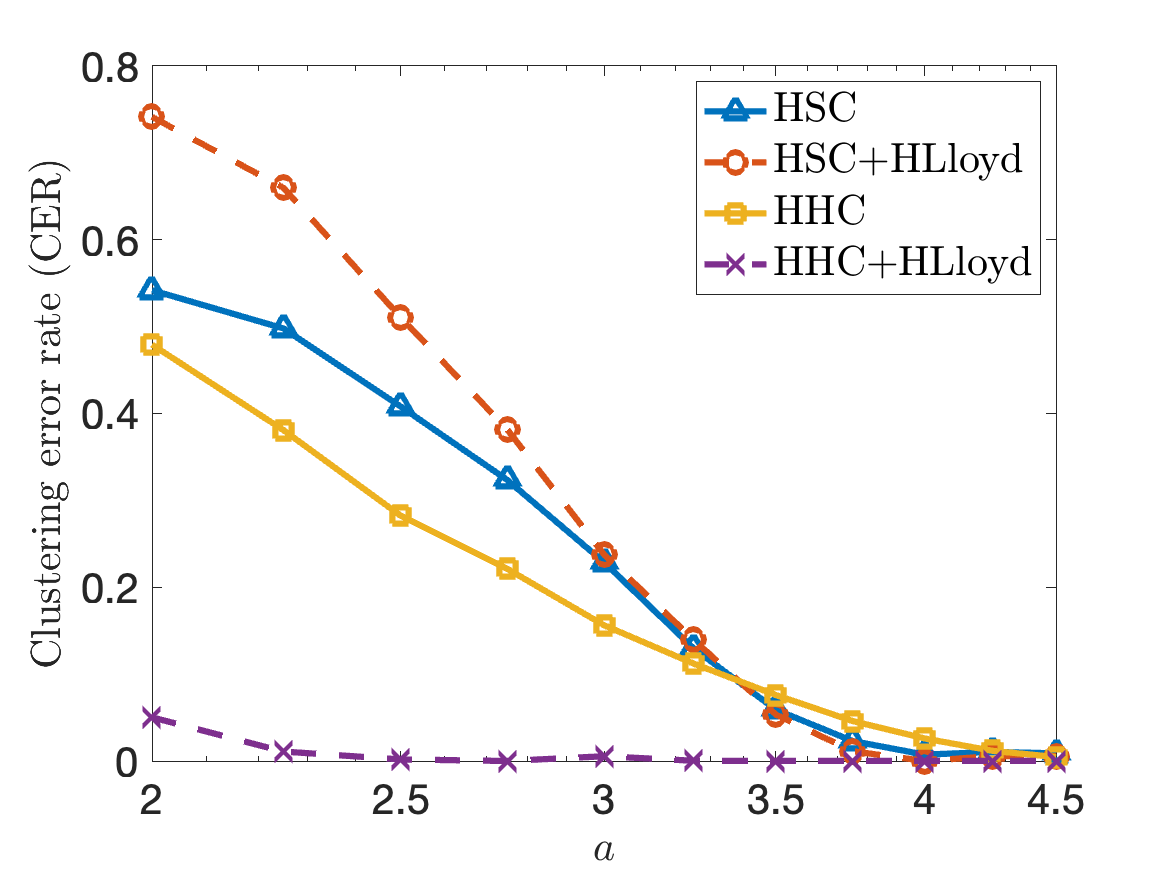}}
		\subfigure[$k=3$, percentage of exact recovery]{
			\includegraphics[width=0.32\linewidth]{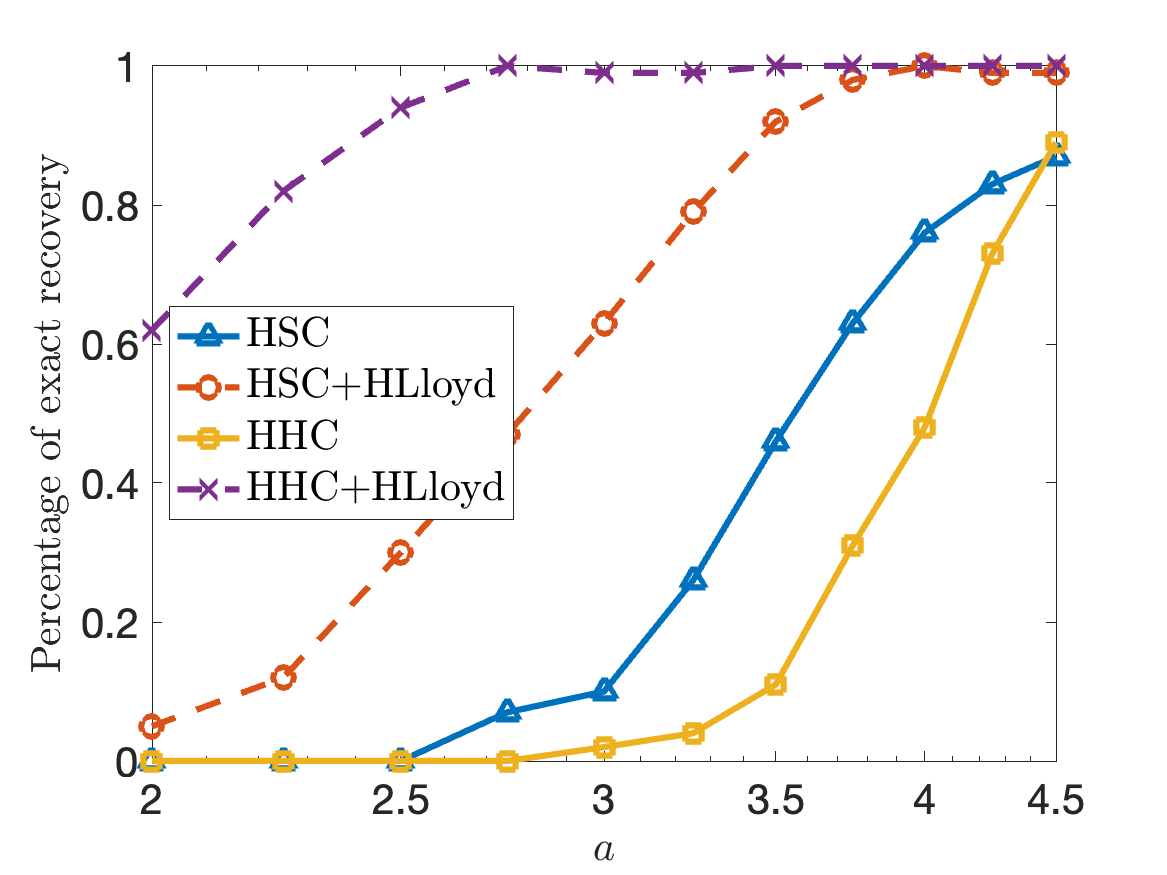}}\\
		\subfigure[$k = 4$, averaged CER]{
			\includegraphics[width=0.32\linewidth]{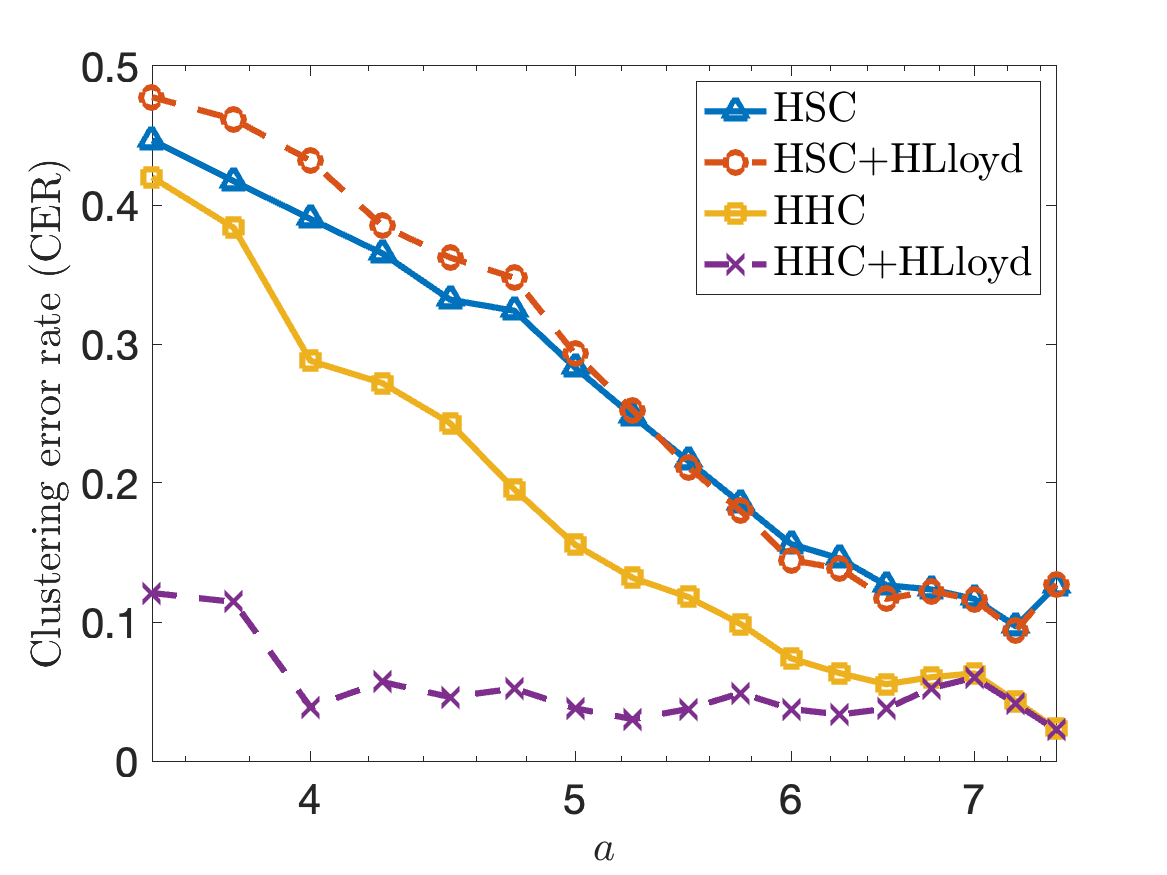}}
		\subfigure[$k = 4$, percentage of exact recovery]{
			\includegraphics[width=0.32\linewidth]{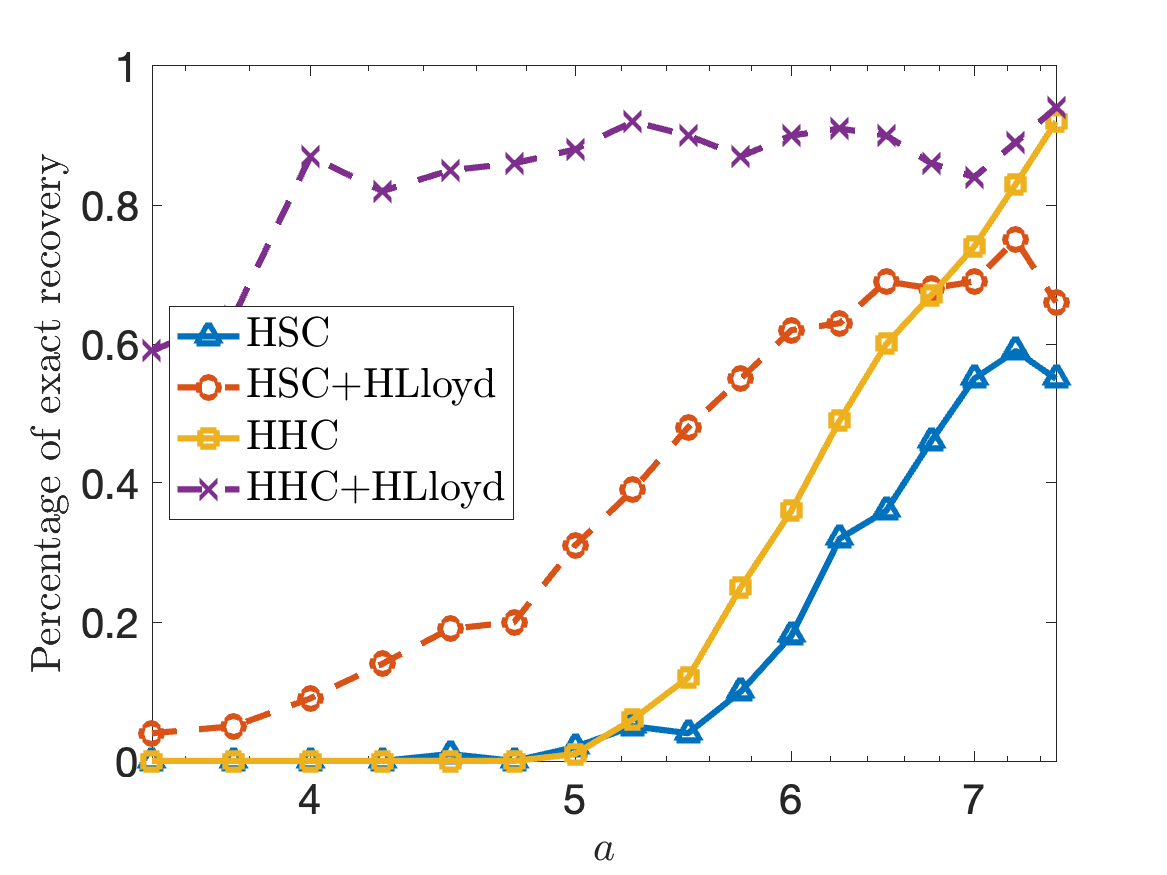}}
	\end{minipage}
    \caption{Averaged CER and percentage of exact community recovery for {\sf HSC}, {\sf HSC + HLloyd}, \Algoabbrev~and \Algoabbrev~+ {\sf HLloyd} under the Stochastic tensor block models with $n = 100$. Here, the quantity $a$ satisfies \eqref{simulation:STBM_core_tensor}.}
    \label{figure:stochastic1}
\end{figure}

\begin{figure}[t]
	\centering
	\begin{minipage}[t]{\linewidth}
		\centering
		\subfigure[$k=3$, averaged CER]{
			\includegraphics[width=0.32\linewidth]{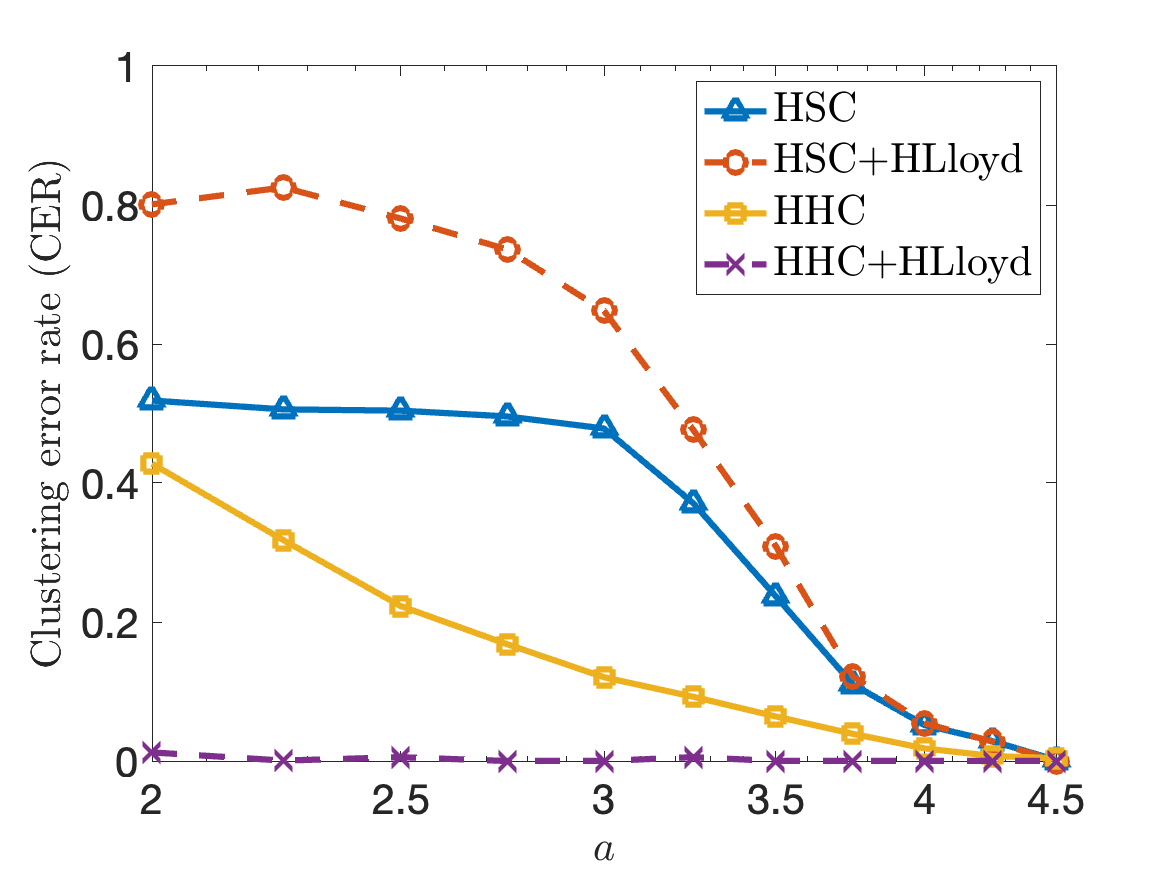}}
		\subfigure[$k=3$, percentage of exact recovery]{
			\includegraphics[width=0.32\linewidth]{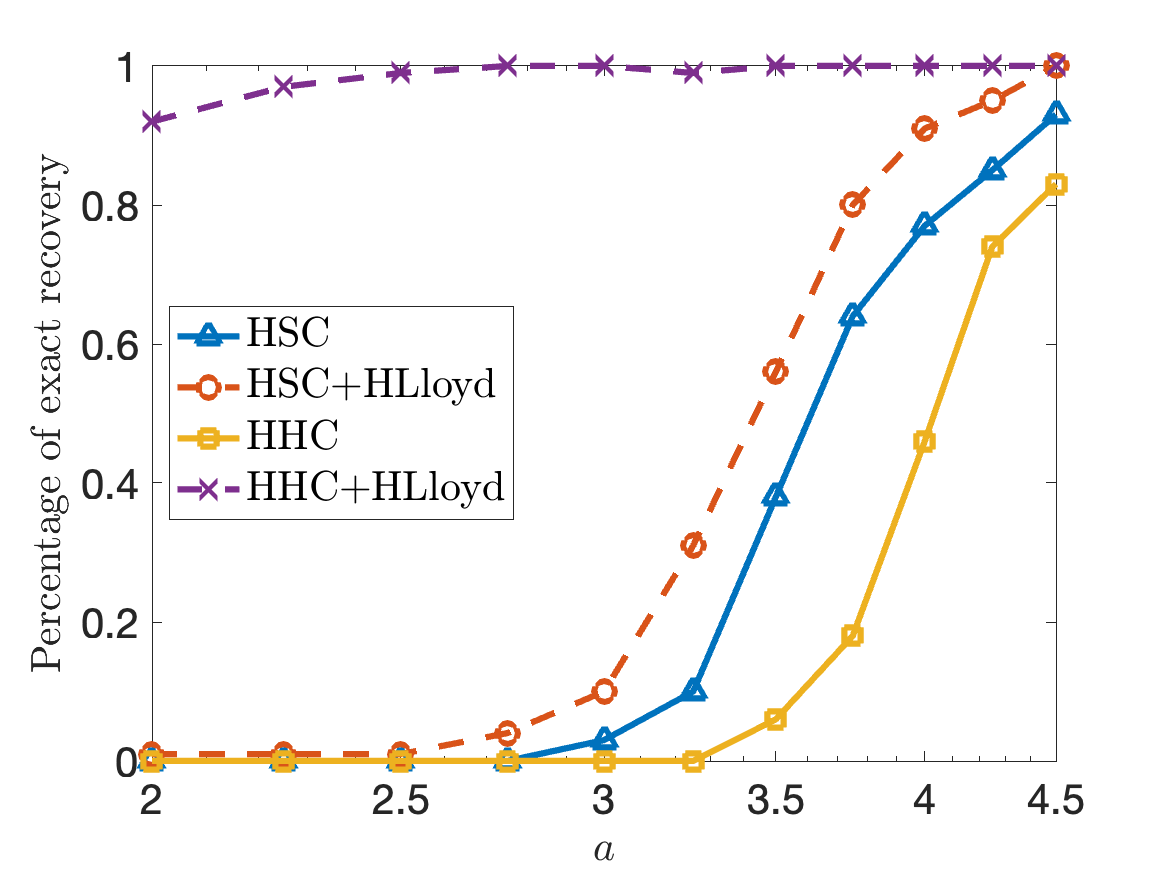}}\\
		\subfigure[$k = 4$, averaged CER]{
			\includegraphics[width=0.32\linewidth]{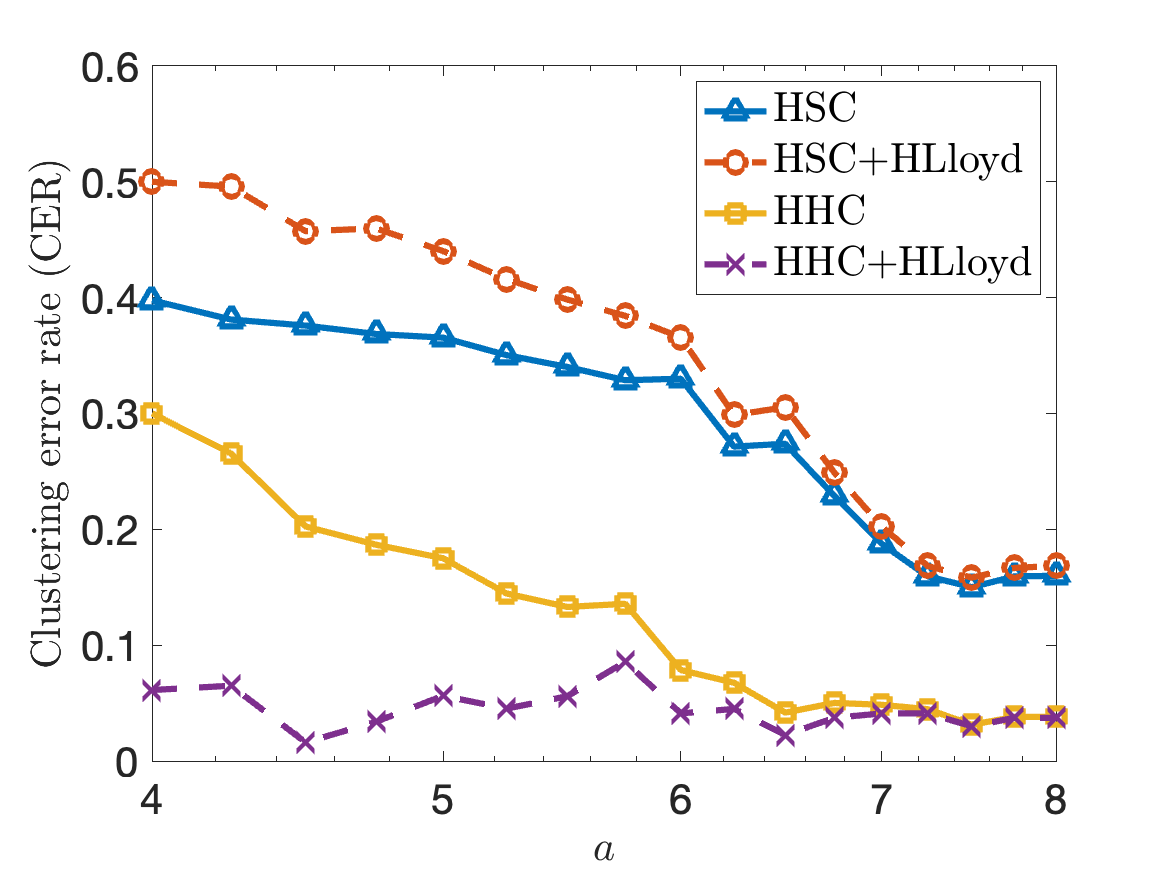}}
		\subfigure[$k = 4$, percentage of exact recovery]{
			\includegraphics[width=0.32\linewidth]{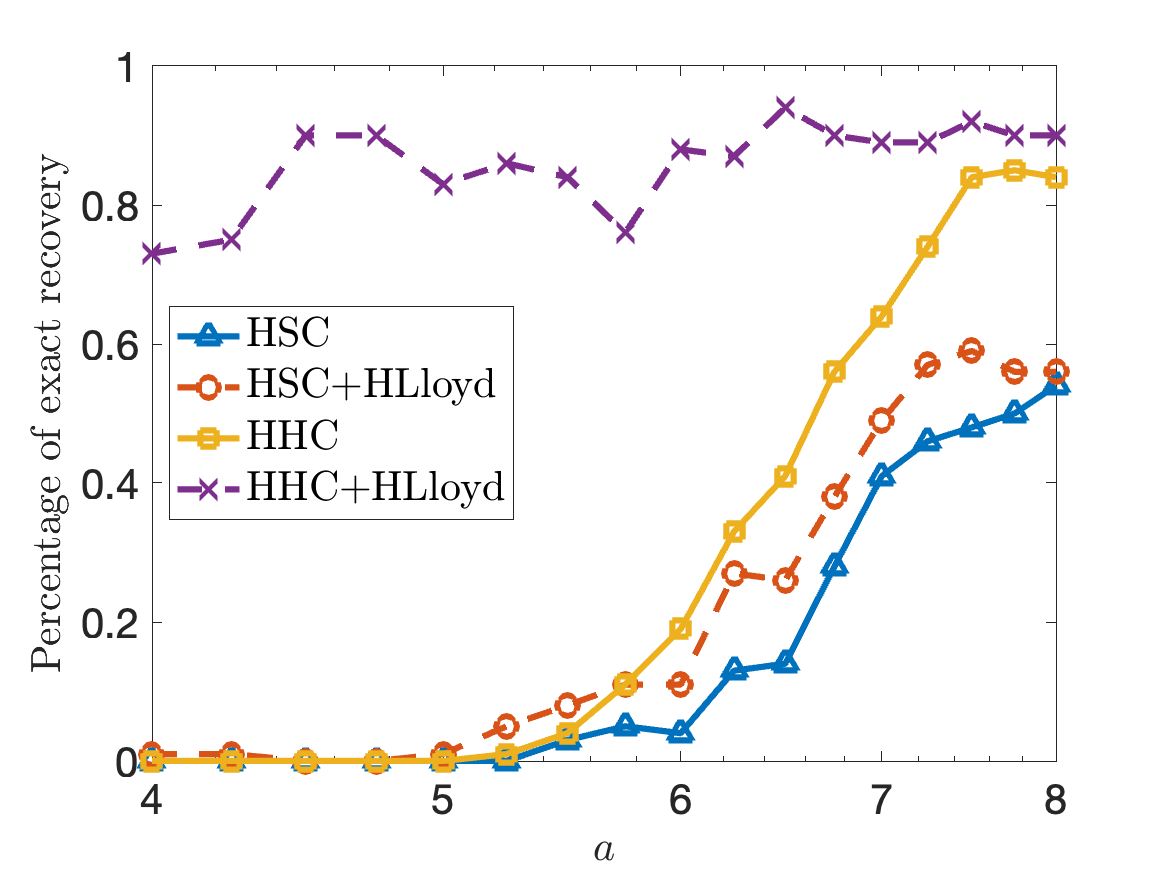}}
	\end{minipage}
    \caption{Averaged CER and percentage of exact community recovery for {\sf HSC}, {\sf HSC + HLloyd}, \Algoabbrev~and \Algoabbrev~+ {\sf HLloyd} under the Stochastic tensor block models with $n = 150$. Here, the quantity $a$ satisfies \eqref{simulation:STBM_core_tensor}.}
    \label{figure:stochastic2}
\end{figure}

\subsection{Real data analysis and real-data-inspired simulation studies}

\paragraph{Real data example: the flight route network.} We now turn attention to the flight route network data studied in \cite{han2022exact}. In adherence to their setup, we also take into account the top 50 airports based on the number of flight routes.\footnote{The original database at \url{https://openflights.org/data.html\#route}. Here, we use the processed data provided at \url{https://github.com/Rungang/HLloyd/blob/master/experiment/flight_route.RData}.} This results in a $39 \times 50 \times 50$ tensor $\bm{\mathcal{Y}}$ with binary entries, where the first mode represent airlines, and the remaining two modes represent airports. The entries of the tensor $\bm{\mathcal{Y}}$ satisfies
\begin{align}
	Y_{i,j,k} = \begin{cases}
		1,\qquad&\text{if airline}~i~\text{operates a flight route from airport}~j~\text{to airport}~k,\\
		0,\qquad&\text{otherwise}.
	\end{cases}
\end{align}
We select the clustering sizes based on the Bayesian information criterion (BIC) as described in \cite{wang2019multiway, han2022exact}. This criterion suggests the numbers of clusters $(k_1, k_2, k_3) = (5, 5, 5)$. 
 We apply \Algoabbrev~+ {\sf HLloyd} and {\sf HSC + HLloyd} to the data $\bm{\mathcal{Y}}$, with results summarized in Tables \ref{table:airline_cluster}-\ref{table:airport_cluster_rungang}.\footnote{For each method, we run 100 independent replicates and choose the result that occurs most frequently.} Tables~\ref{table:airline_cluster} and \ref{table:airport_cluster} reveal that \Algoabbrev~+ {\sf HLloyd} produces reasonable clustering results, effectively grouping airlines/airports from China, Europe, and the United States. A comparison of Tables \ref{table:airline_cluster} and \ref{table:airline_cluster_rungang} indicates that \Algoabbrev~+ {\sf HLloyd} outperforms {\sf HSC + HLloyd} in clustering European and US airlines. For instance, Cluster 2 in both tables shows \Algoabbrev~+ {\sf HLloyd} grouping three US airlines together, whereas {\sf HSC + HLloyd} includes only two (AA and US); in Cluster 3, \Algoabbrev~+ {\sf HLloyd} groups three European airlines, but {\sf HSC + HLloyd} gives a mixture of US and European airlines. For airport clustering, our results in Table \ref{table:airport_cluster} appear more reasonable than those for {\sf HSC + HLloyd} in Table \ref{table:airport_cluster_rungang}. Notably, with regards to Cluster 3 in both tables, \Algoabbrev~+ {\sf HLloyd} identifies a cluster of airports from four major European cities along with ATL (a hub). In contrast, {\sf HSC + HLloyd} groups only CDG (France) and ATL (USA) together.

\begin{table}[t]
		\begin{center}
			\scalebox{1}{\begin{tabular}{c|c}
					& Airlines\\
					\hline
					Cluster 1 & CA, MU, CZ, HU, 3U, ZH (China)\\
					\hline 
					Cluster 2 & AA, UA, US (USA)\\
					\hline
					Cluster 3 & AF, AZ, KL (Europe)\\
					\hline
					Cluster 4 & BA, AY, IB (Europe), DL (USA)\\
					\hline
					Cluster 5 & SU, AB, AI, AM, NH, AC, AS, FL, DE, ET, etc. (Mixture)
			\end{tabular}}
		\end{center}
	 \caption{Airline clustering results using \Algoabbrev~+ {\sf HLloyd}.}
	 \label{table:airline_cluster}
\end{table}

\begin{table}[t]
	\begin{center}
		\scalebox{1}{\begin{tabular}{c|c}
				& Airlines\\
				\hline
				Cluster 1 & CA, MU, CZ, HU, 3U, ZH (China)\\
				\hline 
				Cluster 2 & AA, US (USA)\\
				\hline
				Cluster 3 & AF, AZ, KL (Europe), DL (USA)\\
				\hline
				Cluster 4 & BA, AY, IB (Europe), UA (USA)\\
				\hline
				Cluster 5 & SU, AB, AI, AM, NH, AC, AS, FL, DE, ET, etc. (Mixture)
		\end{tabular}}
	\end{center}
	\caption{Airline clustering results for {\sf HSC + HLloyd}.}
	\label{table:airline_cluster_rungang}
\end{table}

\begin{table}[t]
	\begin{center}
		\scalebox{1}{\begin{tabular}{c|c}
				& Airlines\\
				\hline
				Cluster 1 & BRU, DUS, MUC, MAN, LGW, AMS, BCN, VIE, etc. (Mixture)\\
				\hline 
				Cluster 2 &  LAX, MIA, DFW, PHL, JFK, ORD, CLT (USA)\\
				\hline
				\multirow{2}{*}{Cluster 3} & Europe: LHR (London), MAD (Madrid), CDG (Paris), FCO (Rome)\\
				& USA: ATL (Atlanta)\\
				\hline
				Cluster 4 & PEK, CAN, XIY, KMG, HGH, CKG, CTU, PVG (China)\\
				\hline 
				\multirow{2}{*}{Cluster 5} & PHX, SFO, EWR, IAH, DEN, LAS (USA)\\
				& YYZ (Canada), FRA (Germany), MEX (Mexico)
		\end{tabular}}
	\end{center}
	\caption{Airport clustering results using \Algoabbrev~+ {\sf HLloyd}.}
	\label{table:airport_cluster}
\end{table}

\begin{table}[t]
	\begin{center}
		\scalebox{1}{\begin{tabular}{c|c}
				& Airlines\\
				\hline
				Cluster 1 & BRU, MUC, LGW, AMS, BCN, VIE, ZRH, DXB, etc. (Mixture)\\
				\hline 
				Cluster 2 & LHR (UK), MIA, DFW, PHL, JFK, ORD, CLT (USA)\\
				\hline
				Cluster 3 & CDG (France), ATL (USA)\\
				\hline
				Cluster 4 & PEK, CAN, XIY, KMG, HGH, CKG, CTU, PVG (China)\\
				\hline
				\multirow{2}{*}{Cluster 5} &  YYZ, FRA, DUS, MAN, MAD, FCO (Europe), MEX (Mexico)\\
				& PHX, SFO, LAX, EWR, IAH, DEN, LAS (USA)
		\end{tabular}}
	\end{center}
	\caption{Airport clustering results for {\sf HSC + HLloyd}.}
	\label{table:airport_cluster_rungang}
\end{table}

\paragraph{Real-data-inspired numerical studies.} While \Algoabbrev~+ {\sf HLloyd} appears to yield more reasonable real data results, a challenge arises due to the absence of a known ground truth for validation. To draw a more convincing conclusion, we adopt real-data-inspired numerical studies to establish a quantitative comparison between \Algoabbrev~+ {\sf HLloyd} and {\sf HSC + HLloyd}. Recall that in the real data example, \Algoabbrev~+ {\sf HLloyd} 
gives us the following estimates: the centroid tensor $\widehat{\bm{\mathcal{S}}}^{\Algoabbrev + {\sf HLloyd}} \in [0, 1]^{5 \times 5 \times 5}$ and the cluster assignment vector estimates $\widehat{\bm{z}}_i^{\Algoabbrev + {\sf HLloyd}}$. In contrast, {\sf HSC + HLloyd} provides $\widehat{\bm{\mathcal{S}}}^{{\sf HSC + HLloyd}} \in [0, 1]^{5 \times 5 \times 5}$ and $\widehat{\bm{z}}_i^{{\sf HSC + HLloyd}}$. We then generate stochastic tensor block models, setting the truth $\bm{\mathcal{S}}^\star = \widehat{\bm{\mathcal{S}}}^{\Algoabbrev + {\sf HLloyd}}$ and $\bm{z}_i^\star = \widehat{\bm{z}}_i^{\Algoabbrev + {\sf HLloyd}}$ for the first scenario, and $\bm{\mathcal{S}}^\star = \widehat{\bm{\mathcal{S}}}^{{\sf HSC + HLloyd}}$ and $\bm{z}_i^\star = \widehat{\bm{z}}_i^{{\sf HSC + HLloyd}}$) for the second. We apply the four methods --- \Algoabbrev, \Algoabbrev~+ {\sf HLloyd}, {\sf HSC} and {\sf HSC + HLloyd} --- to the generated data. The results are averaged over 100 Monte Carlo runs and are reported in Table \ref{table:real_data_inspired1} and Table \ref{table:real_data_inspired2}, respectively. From these results, it becomes evident that under the model with $\bm{\mathcal{S}}^\star = \widehat{\bm{\mathcal{S}}}^{\Algoabbrev + {\sf HLloyd}}$ and $\bm{z}_i^\star = \widehat{\bm{z}}_i^{\Algoabbrev + {\sf HLloyd}}$, \Algoabbrev~+ {\sf HLloyd} outperforms in terms of estimation error and recovery rate. For the second model, while all four methods demonstrate comparable exact recovery percentages, \Algoabbrev~and \Algoabbrev~+ {\sf HLloyd} have noticeably smaller estimation errors. This means that even for data that best fits {\sf HSC + HLloyd}, our methods can achieve better clustering performance. 
\begin{table}[t]
	\begin{center}
		\scalebox{1}{\begin{tabular}{c|c|c|c}
				& error mean & standard deviation & recovery rate\\
				\hline 
				{\sf HSC}  & 0.0225 & 0.0269 & 0.36\\
				\hline
				{\sf HSC + HLloyd} & 0.0129 & 0.0275  & 0.69\\
				\hline
				\Algoabbrev & 0.0181 & 0.0472 & 0.6\\
				\hline
				\Algoabbrev~+ {\sf HLloyd} & 0.0115 & 0.0453 & 0.83
		\end{tabular}}
	\end{center}
	\caption{Real-data-inspired numerical experiments: $\bm{\mathcal{S}}^\star = \widehat{\bm{\mathcal{S}}}^{\Algoabbrev + {\sf HLloyd}}$ and $\bm{z}_i^\star = \widehat{\bm{z}}_i^{\Algoabbrev + {\sf HLloyd}}$.}
	\label{table:real_data_inspired1}
\end{table}

\begin{table}[t]
	\begin{center}
		\scalebox{1}{\begin{tabular}{c|c|c|c}
				& error mean & standard deviation & recovery rate\\
				\hline 
				{\sf HSC}  & 0.0273 & 0.0942  & 0.89\\
				\hline
				{\sf HSC + HLloyd} &0.0311 & 0.0959    & 0.84\\
				\hline
				\Algoabbrev & 0.0120 &0.0386   & 0.85\\
				\hline
				\Algoabbrev~+ {\sf HLloyd} & 0.0124 & 0.0419  & 0.88
		\end{tabular}}
	\end{center}
	\caption{Real-data-inspired nemerical experiments: $\bm{\mathcal{S}}^\star = \widehat{\bm{\mathcal{S}}}^{{\sf HSC + HLloyd}}$ and $\bm{z}_i^\star = \widehat{\bm{z}}_i^{{\sf HSC + HLloyd}}$.}
	\label{table:real_data_inspired2}
\end{table}

    \section{Related work}\label{sec:related_work}

The tensor clustering problem considered in the current paper is closely related to several classical clustering problems, 
which we briefly review here. Among the most commonly studied clustering models are stochastic and censored block models \citep{holland1983stochastic,rohe2011spectral,mossel2014belief,lei2015consistency,abbe2015exact,mossel2015reconstruction,hajek2016achieving,hajek2016achievingE,cai2015robust,abbe2015community,chin2015stochastic,zhang2016minimax,florescu2016spectral,chen2016community,guedon2016community,abbe2017community,gao2017achieving,deshpande2017asymptotic,amini2018semidefinite,li2021convex,cai2021subspace}, 
synchronization \citep{singer2011angular,javanmard2016phase,bandeira2017tightness,chen2018projected,zhong2018near,gao2021exact,li2022non,li2023approximate,celentano2023local}, and (sub-)Gaussian mixture models \citep{lu2016statistical,cai2018rate,li2020birds,chen2021cutoff,ndaoud2022sharp,abbe2022lp,han2023eigen}. 
For all these models, spectral clustering algorithms have emerged as a powerful paradigm 
and have achieved both theoretical and empirical success \citep{von2007tutorial,kannan2009spectral,abbe2017community,chen2021spectral}. 
Sharp and intriguing statistical guarantees have recently been derived for spectral clustering \citep{lei2015consistency,chin2015stochastic,abbe2020entrywise,loffler2021optimality,zhang2022leave,zhang2023fundamental}. However, direct applications of these methods and theories to the tensor block model might yield highly sub-optimal signal-to-noise ratio conditions, 
while in the meantime introducing unnecessary assumptions on the condition number. 

Turning to the tensor block model, \cite{wang2019multiway} investigated the theoretical properties for the MLE estimator, which is, however, computationally intractable. \cite{chi2020provable} considered a convex procedure and derived its theoretical guarantees concerning the tensor estimation error. Under the i.i.d.~noise setting, \cite{han2022exact} proved the existence of a statistical-computational gap for this problem and proposed a polynomial-time algorithm that can achieve exact clustering if the signal-to-noise ratio exceeds the computational limit (ignoring logarithmic factors). However, in the presence of heteroskedastic noise, these methods fall short of statistical efficiency. Going beyond this model, \cite{hu2023multiway} considered degree-corrected tensor block models and \cite{agterberg2022estimating} studied mixed-membership tensor block models. However, the methods and theoretical results in these two papers either lean on i.i.d.~noise assumptions or require the underlying tensor to be well-conditioned, both of which can be relaxed using our approach. 
In addition to the model considered in this paper, a couple of other tensor clustering problems have been proposed and studied in the literature; see, e.g., \citet{jegelka2009approximation,sun2019dynamic,wu2019essential,lyu2022optimal,mai2022doubly}.

Our work is also closely related to the tensor PCA models \citep{richard2014statistical,hopkins2015tensor,anandkumar2017homotopy,zhang2018tensor,arous2019landscape,han2022optimal,cai2022nonconvex,cai2022uncertainty,xia2022inference,zhou2022optimal}, which aim to estimate the true tensor or the associated subspaces based on noisy observations. To accomplish this task, a commonly used strategy is to apply spectral methods \citep{chen2021spectral} to obtain initial subspace estimates, followed by further refinement steps \citep{de2000best,zhang2018tensor,han2022optimal,tong2022scaling,cai2022nonconvex}. Some popular initialization methods include the vanilla SVD-based approach \citep{cai2018rate,zhang2018tensor}, diagonal-deleted/reweighted PCA \citep{lounici2014high,florescu2016spectral,montanari2018spectral,cai2021subspace,cai2022nonconvex} and {\sf HeteroPCA} \citep{zhang2022heteroskedastic,yan2021inference,han2022optimal}. However, in contrast to the tensor PCA models, the tensor block models studied in the present paper do not impose any assumptions on the least singular value or on the singular gaps of the true tensor. Therefore, directly applying these tensor PCA methods may not yield subspace estimates with the desired statistical accuracy.

Recently, it has been shown that sharp $\ell_{2,\infty}$ or $\ell_{\infty}$ guarantees for singular subspaces play a pivotal role for proving that spectral clustering (with or without the help of $k$-means) can achieve exact recovery or optimal mis-clustering rates for many clustering problems \citep{abbe2020entrywise,cai2021subspace,abbe2022lp,zhang2023fundamental}. To derive such subspace estimation guarantees, a powerful and perhaps the most popular tool is the leave-one-out analysis \citep{zhong2018near,ma2020implicit,chen2019spectral,abbe2020entrywise,lei2019unified,chen2020noisy,chen2019inference,chen2021bridging,cai2021subspace,chen2021convex,cai2022nonconvex,abbe2022lp,yan2021inference,ling2022near,ke2022optimal,zhang2022leave,yang2022optimal}. However, the results obtained using the leave-one-out analysis are often sub-optimal with respect to the condition number of the truth. This can lead to unsatisfactory results under the tensor block models, especially since there is no assumption made on the singular value of the underlying tensor $\bm{\mathcal{S}}^\star$.


    \section{Discussion}

In this paper, we have studied the tensor clustering problem in the presence of heteroskedastic noise. 
To better deal with heteroskedastic noise and improve statistical performance, 
we have proposed a novel method called \Algoname~(\Algoabbrev), which first employs \Algosubspace~to obtain subspace estimates and then applies approximate $k$-means for clustering.  The proposed method provably achieves exact clustering for a wide range of signal-to-noise ratio conditions that are essentially unimprovable among polynomial-time algorithms. Empirically, we have evaluated the numerical performance of \Algoabbrev, and \Algoabbrev~followed by the high-order Lloyd algorithm ({\sf HLloyd}, \cite{han2022exact}), on both synthetic and real data. Both of these two methods achieve low empirical mis-clustering rates, with \Algoabbrev~+ {\sf HLloyd} outperforming other existing methods proposed in the literature.

Moving beyond, there are numerous future directions that are worth investigating. For instance, thus far our theory has focused primarily on exact clustering; 
it remains unclear whether our algorithm can achieve optimal mis-classification rates when only partial recovery is feasible. 
In addition, our signal-to-noise ratio condition might be sub-optimal if the clusters are highly-unbalanced, a scenario where the balance parameter $\beta$ is exceedingly small. It would be interesting to investigate the plausibility of further improvement under such imbalanced settings. Furthermore, it would be worthwhile to explore the feasibility of extending our paradigm to tackle the tensor mixed-membership block model \citep{agterberg2022estimating}, with the aim of achieving optimal statistical performance without being affected by the condition number of the true tensor.

\section*{Acknowledgements}

Y.~Chen is supported in part by the Alfred P.~Sloan Research Fellowship,   
and the NSF grants CCF-1907661, DMS-2014279, IIS-2218713 and IIS-2218773.

\appendix
    \section{Procedure of {\sf High-order Lloyd Algorithm (HLloyd)}}\label{sec:procedure_HLloyd}
This section provides a formal description of the procedure of {\sf High-order Lloyd Algorithm (HLloyd)} proposed by \citep{han2022exact}; see Algorithm \ref{algorithm:HLloyd}.

\begin{algorithm}[h]
	\caption{{\sf High-order Lloyd Algorithm (HLloyd)} \citep{han2022exact}} \label{algorithm:HLloyd}
	\textbf{input:} observed tensor $\bm{\mathcal{Y}}$, numbers of clusters $k_1, k_2, k_3$, initial cluster assignment vector estimates $\{\widehat{\bm{z}}_\ell^{(0)}\}_{1 \leq \ell \leq 3}$, number of iterations $T$.\\
	\For{$t = 0, \dots, T - 1$}{\textbf{block mean update:} calculate $\widehat{\bm{\mathcal{S}}}^{(t)} \in \bbR^{k_1 \times k_2 \times k_3}$ such that
		\begin{align*}
			\widehat{\bm{S}}_{i_1, i_2, i_3}^{(t)} =~\text{Average}\left(\Big\{\bm{\mathcal{Y}}_{j_1, j_2, j_3}: \widehat{z}_{\ell, j_\ell}^{(t)} = i_\ell, \forall \ell \in [3]\Big\}\right),~\quad~\forall i_{\ell} \in [k_\ell], \ell \in [3].
		\end{align*}\\
		calculate $\widehat{\bm{\mathcal{B}}}_1^{(t)} \in \bbR^{n_1 \times k_2 \times k_3}, \widehat{\bm{\mathcal{B}}}_2^{(t)} \in \bbR^{k_1 \times n_2 \times k_3}, \widehat{\bm{\mathcal{B}}}_3^{(t)} \in \bbR^{k_1 \times k_2 \times n_3}$ such that
		\begin{align*}
			\Big(\widehat{\bm{\mathcal{B}}}_1^{(t)}\Big)_{j_1, i_2, i_3} &= \text{Average}\left(\Big\{\bm{\mathcal{Y}}_{j_1, j_2, j_3}: \widehat{z}_{\ell, j_\ell}^{(t)} = i_\ell, \ell = 2, 3\Big\}\right),~\quad~\forall j_1 \in [n_1], i_2 \in [k_2], i_3 \in [k_3],\\
			\Big(\widehat{\bm{\mathcal{B}}}_2^{(t)}\Big)_{i_1, j_2, i_3} &= \text{Average}\left(\Big\{\bm{\mathcal{Y}}_{j_1, j_2, j_3}: \widehat{z}_{\ell, j_\ell}^{(t)} = i_\ell, \ell = 1, 3\Big\}\right),~\quad~\forall i_1 \in [k_1], j_2 \in [n_2], i_3 \in [k_3],\\
			\Big(\widehat{\bm{\mathcal{B}}}_3^{(t)}\Big)_{i_1, i_2, j_3} &= \text{Average}\left(\Big\{\bm{\mathcal{Y}}_{j_1, j_2, j_3}: \widehat{z}_{\ell, j_\ell}^{(t)} = i_\ell, \ell = 1, 2\Big\}\right),~\quad~\forall i_1 \in [k_1], i_2 \in [k_2], j_3 \in [n_3].
		\end{align*}
		\textbf{cluster update:} calculate cluster assignment vector estimates $\{\widehat{\bm{z}}_{i}^{(t+1)}\}_{i \in [3]}$:
		\begin{align}
			\widehat{z}_{i,j}^{(t+1)} \in \argmin_{\ell \in [k_i]}\Big\|\Big(\mathcal{M}_i\big(\widehat{\bm{\mathcal{B}}}_i^{(t)}\big)\Big)_{j, :} - \Big(\mathcal{M}_i\big(\widehat{\bm{\mathcal{S}}}^{(t)}\big)\Big)_{\ell, :}\Big\|_2,\quad~\forall i \in [3], j \in [n_i].
		\end{align}
	}
	\textbf{output:} cluster assignment vector estimates $\widehat{\bm{S}}= \widehat{\bm{S}}^{(T - 1)}, \widehat{\bm{z}}_1 = \widehat{\bm{z}}_1^{(T)}, \widehat{\bm{z}}_2 = \widehat{\bm{z}}_2^{(T)}, \widehat{\bm{z}}_3 = \widehat{\bm{z}}_2^{(T)}$.
\end{algorithm}

\section{Proof of Theorem \ref{thm:cluster_tensor_simplified}}\label{sec:proof_thm_cluster_tensor_simplified}

In this section, we present the proof of our main result Theorem~\ref{thm:cluster_tensor_simplified}, 
by establishing a more general version as follows. Here and throughout, we define $k_{-i} = k_1k_2k_3/k_i$ and $n_{-i} = n_1n_2n_3/n_i$ for $i \in [3]$. 
\begin{thm}\label{thm:cluster_tensor}
	Suppose that Assumption \eqref{assump:noise} holds, and assume that for all $1\leq i\leq 3$, 
	\begin{subequations}
		\begin{align}
			n_1n_2n_3 &\gtrsim k^4n^2, \label{ineq:dimension_assumption}\\
			n_i &\geq \frac{c_1k^4}{\beta^2},\label{ineq:dimension_assumption2}\\
			k_i &\gtrsim k_{-i},\label{ineq:cluster_requirement}\\
			c_{\tau}k_i^2\left(n_1n_2n_3\right)^{1/2}\log^2 n &\leq \tau_i/\omega_{\sf max}^2 \leq C_{\tau}k_i^2\left(n_1n_2n_3\right)^{1/2}\log^2 n,\label{ineq:threshold_tensor}\\
			\frac{\Delta_{\sf min}}{\omega_{\sf max}} &\geq C_1\sqrt{M}\left(\frac{k^{9/2}}{\beta^{5/2}}\left(n_1n_2n_3\right)^{-1/4}\log n + \frac{k^{9}}{\beta^{5}}\left(n_1n_2n_3/n\right)^{-1/2}\sqrt{\log n}\right),\label{ineq:signal_to_noise_ratio}
		\end{align}
	\end{subequations}
	where $C_1, c_1, C_{\tau}$ and $c_{\tau}$ are some large enough constants.
	If we choose the numbers of iterations to obey
	\begin{subequations}
		\begin{align}
			t_{i,j} &\geq \log\left(C\frac{k^3}{\beta^3}\frac{\sigma_{i, r_{i, j-1}+1}^{\star2}}{\sigma_{i, r_{i, j}+1}^{\star2}}\right),~\quad~1 \leq j \leq j_{\sf max}^i - 1\label{iter1}\\
			t_{i,j_{\sf max}^i} &\geq \log\left(Cn^3\frac{\sigma_{i, r_{i, j_{\sf max}^i-1}+1}^{\star2}}{\omega_{\sf max}^2}\right)\label{iter2}
		\end{align}
	\end{subequations}
	for all $1\leq i \leq 3$, then with probability exceeding $1 - O(n^{-10})$, 
	the misclassification rate (cf.~\eqref{eq:defn-MCR}) of the outputs $\{\widehat{\bm{z}}_i\}$ returned by Algorithm \ref{algorithm:k_means} satisfy 
 	\begin{align*}
 		\mathsf{MCR}\big(\widehat{\bm{z}}_i, \bm{z}_i^\star\big) = 0,~\quad~\forall 1\leq i\leq 3.
 	\end{align*}
\end{thm}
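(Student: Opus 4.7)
The plan is to reduce exact recovery to a single sharp row-wise subspace-perturbation bound, and then to propagate that bound through the projection and $k$-means steps. Fix $1 \leq i \leq 3$ and write $\bm{X}_i^\star = \mathcal{M}_i(\bm{\mathcal{X}}^\star)$ and $\bm{E}_i = \mathcal{M}_i(\bm{\mathcal{E}})$. Because of the block structure
$$\bm{X}_i^\star = \bm{M}_i^\star\, \mathcal{M}_i(\bm{\mathcal{S}}^\star)\, (\bm{M}_{i+2}^\star \otimes \bm{M}_{i+1}^\star)^\top,$$
two rows of $\bm{X}_i^\star$ indexed by nodes in different clusters of mode $i$ differ by a vector whose squared norm is at least $\beta^2\, (n_{i+1}n_{i+2})/(k_{i+1}k_{i+2})\,\Delta_i^2$ (up to constants governed by $\beta$). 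Hence if the projected observations $\widehat{\bm B}_i$ stay close to $\bm{U}_i\bm{U}_i^\top \bm{X}_i^\star(\bm{U}_{i+2}\otimes \bm{U}_{i+1})$ row-wise, and this latter matrix preserves the between-cluster gap, the $M$-approximate $k$-means step \eqref{ineq:k_means_approximate}--\eqref{ineq:minimum_label} recovers $\bm{z}_i^\star$ exactly up to a permutation. My target is therefore to control, for every $1 \leq i \leq 3$:
$$\bigl\|(\bm{I} - \bm{U}_i\bm{U}_i^\top)\bm{X}_i^\star\bigr\|_{2,\infty} \quad\text{and}\quad \bigl\|\bm{U}_i\bm{U}_i^\top \bm{E}_i (\bm{U}_{i+2}\otimes \bm{U}_{i+1})\bigr\|_{2,\infty}.$$

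For the first quantity, I would invoke the guarantees of \Algosubspace\ applied to $\mathcal{M}_i(\bm{\mathcal{Y}})$ with threshold $\tau_i$ chosen as in \eqref{ineq:threshold_tensor}. With that threshold, the sequential deflation rounds terminate exactly when the next eigenvalue of the gram estimate falls below roughly $\omega_{\sf max}^2\,k_i^2(n_1n_2n_3)^{1/2}\log^2 n$, so $\mathrm{range}(\bm{U}_i)$ captures all singular directions of $\bm{X}_i^\star$ associated with singular values exceeding this noise floor, and ignores the remainder. The key technical ingredient is the expansion Lemma~\ref{lm:space_estimate_expansion}, which decomposes $(\bm{I} - \bm{U}_i\bm{U}_i^\top)\bm{X}_i^\star$ into an infinite series of matrix polynomials in $\bm{E}_i$ and $\bm{X}_i^\star$, together with a residual orthogonal to the retained signal subspace. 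I would then bound each polynomial term entrywise using the heteroskedastic concentration machinery of \cite{zhou2023deflated}, adapted to track how the thresholding in Algorithm~\ref{algorithm:sequential_heteroPCA} interacts with the deflation bookkeeping, and verify that the iteration counts \eqref{iter1}--\eqref{iter2} are sufficient to drive the HeteroPCA fixed-point error below $\omega_{\sf max}^2/n^3$ in each round.

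For the second quantity, since $\bm{U}_{i+2} \otimes \bm{U}_{i+1}$ has orthonormal columns and $\bm{U}_i$ is nearly statistically independent of the entries $E_{j,\cdot,\cdot}$ in any fixed row after a standard leave-one-out decoupling, each row of $\bm{U}_i\bm{U}_i^\top \bm{E}_i(\bm{U}_{i+2}\otimes \bm{U}_{i+1})$ concentrates at scale $\omega_{\sf max}\sqrt{r_1r_2r_3/r_i}\cdot \mathrm{polylog}(n)$, using the truncation level $B$ from Assumption~\ref{assump:noise} and a Bernstein inequality. Combining both pieces yields a uniform row-wise distance
$$\max_{1\le j\le n_i}\bigl\|(\widehat{\bm B}_i)_{j,:} - [\bm{X}_i^\star(\bm{U}_{i+2}\otimes \bm{U}_{i+1})]_{j,:}\bigr\|_2$$
that is dominated by the inter-cluster margin $\sqrt{\beta n_{i+1}n_{i+2}/(k_{i+1}k_{i+2})}\,\Delta_i$ under the SNR condition \eqref{ineq:signal_to_noise_ratio}; the extra factor $\sqrt{M}$ there absorbs the $M$-approximate relaxation of $k$-means via the standard argument that distorts the centroid estimates by at most $\sqrt{M}$ times the row-wise perturbation. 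A union bound over $i\in[3]$ and $j\in[n_i]$, powered by the $n^{-24}$ tail in Assumption~\ref{assump:noise}, yields failure probability $O(n^{-10})$.

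The hard part will be obtaining the condition-number-free $\ell_{2,\infty}$ bound on $(\bm{I}-\bm{U}_i\bm{U}_i^\top)\bm{X}_i^\star$ without any lower bound on the smallest nonzero singular value of $\mathcal{M}_i(\bm{\mathcal{S}}^\star)$. Standard leave-one-out analyses and the subspace representation formula both blow up in the reciprocal of that singular value, which we cannot control: a large separation $\Delta_{\sf min}$ is compatible with $\mathcal{M}_i(\bm{\mathcal{S}}^\star)$ having singular values arbitrarily close to zero. The novelty must come from the combination of (i) the thresholding in \Algosubspace, which excises precisely those singular directions that would cause the blow-up, and (ii) Lemma~\ref{lm:space_estimate_expansion}, which writes the projection residual as a telescoping polynomial expansion whose terms can be controlled without ever dividing by the smallest singular value. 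Balancing the expansion's convergence against the thresholding level, while retaining the cluster-separation information in $\bm{X}_i^\star$, is the central technical challenge; all remaining steps (spectral noise bound, projection decomposition, $M$-approximate $k$-means analysis, union bound) are standard once that estimate is secured.
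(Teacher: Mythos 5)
Your plan correctly identifies the two $\ell_{2,\infty}$ quantities that drive the row-wise analysis, correctly pinpoints Lemma~\ref{lm:space_estimate_expansion} and the thresholding in \Algosubspace\ as the tools that make the residual bound condition-number-free, and correctly flags this as the central technical obstacle. However, there is a genuine gap in how you pass from these row-wise bounds to exact recovery through the $M$-approximate $k$-means step. The approximate $k$-means guarantee~\eqref{ineq:k_means_approximate} constrains only the \emph{aggregate} (Frobenius-type) objective $\sum_j \|(\widehat{\bm B}_i)_{j,:}^\top - \widehat{\bm b}^{(i)}_{\widehat z_{i,j}}\|_2^2$; it says nothing row-wise about the estimated centroids $\widehat{\bm b}^{(i)}_\ell$, and the final classification rule~\eqref{ineq:minimum_label} compares each row of $\widehat{\bm B}_i$ to the \emph{estimated} centroids, not to $\bm{X}_i^\star(\bm{U}_{i+2}\otimes\bm{U}_{i+1})$. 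Your claim that the $\sqrt{M}$ factor ``distorts the centroid estimates by at most $\sqrt{M}$ times the row-wise perturbation'' is not the right mechanism. The correct chain, as in the paper's Steps 2--4, is: first convert the $\ell_{2,\infty}$ residual and noise bounds into a Frobenius bound on $\|\widehat{\bm{\mathcal{X}}}-\bm{\mathcal{X}}^\star\|_{\rm F}^2$; then use~\eqref{ineq:k_means_approximate} to bound the aggregate centroid error; then divide by the cluster size (using the balance parameter $\beta$ and the fact that, by a Markov-type counting argument, most of each cluster is retained) to obtain a per-centroid error of order $\sqrt{M k/\beta}\cdot(\text{row-wise error})$; and only then combine with the $\ell_{2,\infty}$ bounds and~\eqref{ineq:minimum_label} to upgrade from partial to exact recovery. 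Skipping the intermediate Frobenius/cluster-size step loses the $\beta$- and $k$-dependent prefactors in~\eqref{ineq:signal_to_noise_ratio} and, more importantly, leaves the estimated-centroid quality uncontrolled, so the row-wise margin comparison does not close.

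A smaller discrepancy: for $\|\bm{U}_i\bm{U}_i^\top\bm{E}_i(\bm{U}_{i+2}\otimes\bm{U}_{i+1})\|_{2,\infty}$ you propose leave-one-out decoupling plus Bernstein. The paper instead peels off $\|\bm{U}_i\|_{2,\infty}$ and bounds $\sup_{\bm{U}}\|\bm{U}_i^\top\bm{E}_i(\bm{U}_{i+2}\otimes\bm{U}_{i+1})\|$ uniformly over incoherent $\bm{U}$'s via an epsilon-net argument (Lemma~\ref{lm:noise}), precisely to avoid the condition-number sensitivity that the paper attributes to leave-one-out. For this particular noise functional (which does not involve $\bm{X}_i^\star$) a leave-one-out route might be salvageable, but it would require care to decouple all three estimated subspaces simultaneously, and it is not the approach the paper takes.
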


The rest of this section is dedicated to proving Theorem~\ref{thm:cluster_tensor}.

\subsection{Several key results under the matrix setting}
To begin with, we first consider the following model: suppose we observe
\begin{align}\label{model:matrix}
	\bm{Y} = \bm{X}^\star + \bm{E} \in \bbR^{m_1 \times m_2},
\end{align}
where the noise matrix $\bm{E}$ has independent and zero-mean entries, and $\bm{X}^\star$ is a matrix with rank not exceeding $r$ and admits the following SVD decomposition:
\begin{align}\label{eq:signal_matrix}
	\bm{X}^\star = \bm{U}^\star\bm{\Sigma}^\star\bm{V}^{\star\top} = \sum_{i=1}^{r}\sigma_i^\star\bm{u}_i^\star\bm{v}_i^{\star\top},
\end{align}
where $\sigma_1^\star \geq \cdots \geq \sigma_r^\star \geq 0$ are the singular values of $\bm{X}$, $\bm{U}^\star = [\bm{u}_1^\star, \dots, \bm{u}_r^\star] \in \mathcal{O}^{m_1, r}$ (resp.~$\bm{V} = [\bm{v}_1^\star, \dots, \bm{v}_r^\star] \in \mathcal{O}^{m_2, r}$) is the column (resp.~row) subspace of $\bm{X}^\star$, and $\bm{\Sigma}^\star = {\sf diag}\left(\sigma_1^\star, \dots, \sigma_r^\star\right)$.
In addition, we define the incoherence parameter
\begin{align}
	(\text{Incoherence parameter})~\qquad~\mu = \mu\left(\bm{X}^\star\right) := \max\left\{\frac{m_1}{r}\max_{i \in [m_1]}\left\|\bm{U}_{i,:}^\star\right\|_2^2, \frac{m_2}{r}\max_{j \in [m_2]}\left\|\bm{V}_{j,:}^\star\right\|_2^2\right\}.
\end{align}
For notational convenience, we also define
\begin{align}\label{eq:dimension}
	m \coloneqq \max\{m_1, m_2\}~\qquad~\text{and}~\qquad~\sigma^\star_{r+1} = 0.
\end{align}
Furthermore, we impose the noise assumption on the noise matrix $\bm{E}$:
\begin{assump}\label{assump:noise_matrix}
	Suppose that the following conditions on the noise matrix $\bm{E}$ hold:
	\begin{itemize}
		\item[1.] The $E_{i,j}$'s, the entries of $\bm{E}$, are independently generated and satisfy $\bbE[E_{i,j}] = 0$;
		\item[2.] $\bbP\left(\left|E_{i,j}\right| > B\right) \leq m^{-12}$, where $B$ is some quantity satisfying 
		\begin{align*}
			B \leq C_{\sf b}\omega_{\sf max}\frac{\min\big\{(m_1m_2)^{1/4}, \sqrt{m_2}\big\}}{\log m}.
		\end{align*}
	\end{itemize}
\end{assump}
One can immediately find that $\mathcal{M}_i\left(\bm{\mathcal{E}}\right)$ obeys the conditions in Assumption \ref{assump:noise_matrix} with dimension $m_1 = n_i$ and $m_2 = n_{-i}$. 
Moreover, we define
\begin{align}\label{def:M}
	\bm{M} = \left(\bm{U}^\star\bm{\Sigma}^\star + \bm{E}\bm{V}^\star\right)\left(\bm{U}^\star\bm{\Sigma}^\star + \bm{E}\bm{V}^\star\right)^\top
\end{align}
and
\begin{align}\label{def:oracle_matrix}
	\bm{M}^{\sf oracle} = \bm{M} +  \mathcal{P}_{\sf off\text{-}diag}\left(\bm{E}\bm{E}^\top - \bm{E}\bm{V}^\star\bm{V}^{\star\top}\bm{E}^\top\right) = \mathcal{P}_{\sf off\text{-}diag}\left(\bm{Y}\bm{Y}^\top\right) + \mathcal{P}_{\sf diag}\left(\bm{M}\right).
\end{align}
Let $\bm{U}^{\sf oracle} \in \mathcal{O}^{m_1, r}$ denote the leading-$r$ eigenvector of $\bm{M}^{\sf oracle}$. The following theorem shows that $\bm{U}_{:,1: r'}^{\sf oracle}$ and $\bm{U}^\star_{:, 1: r'}$ are reasonably close if there is a sufficiently large gap between $\sigma^\star_{r'}$ and $\sigma^\star_{r'+1}$; the proof is deferred to Section~\ref{sec:proof_theorem_oracle_two_to_infty}.
\begin{thm}\label{thm:oracle_two_to_infty}
	Suppose that $r \geq 2$, Assumption \ref{assump:noise_matrix} holds and
	\begin{subequations}
		\begin{align}
			\sigma_{1}^\star/\omega_{\sf max} &\geq 2C_0r\big[(m_1m_2)^{1/4} + rm_1^{1/2}\big]\log m\label{ineq:snr_matrix}\\
			\mu &\leq c_0\frac{m_1}{r^3}\label{ineq:incoherence}
		\end{align}
	\end{subequations}
	hold for some sufficiently large (resp.~small) constant $C_0 > 0$ (resp.~$c_0 > 0$). 
	
	(a) The set defined below
	\begin{align}\label{def:A}
		\mathcal{A} = \left\{j: 1 \leq j \leq r, \sigma_j^\star \geq \frac{4r}{4r - 1}\sigma_{j+1}^\star \vee C_0r\big[(m_1m_2)^{1/4} + rm_1^{1/2}\big]\omega_{\sf max}\log m\right\}
	\end{align}
	is non-empty.
	
	(b) With probability exceeding $1 - O(n^{-10})$, for all $r' \in \mathcal{A}$, we have
	\begin{subequations}
		\begin{align}
			\left\|\bm{U}_{:,1: r'}^{\sf oracle}\bm{U}_{:,1: r'}^{\sf oracle\top} - \widetilde{\bm{U}}_{:,1: r'}\widetilde{\bm{U}}_{:,1: r'}^\top\right\|_{2,\infty} &\lesssim \sqrt{\frac{\mu r^3}{m_1}}\left(\frac{r^2\sqrt{m_1}\omega_{\sf max}\log m}{\sigma_{r'}^\star} + \frac{r^2 \sqrt{m_1m_2}\omega_{\sf max}^2\log^2 m}{\sigma_{r'}^{\star2}}\right),\label{ineq:oracle_tilde}\\
			\big\|\bm{U}_{:,1: r'}^{\sf oracle}\bm{U}_{:,1: r'}^{\sf oracle\top} - \bm{U}_{:,1: r'}^{\star}\bm{U}_{:,1: r'}^{\star\top}\big\|_{2,\infty} &\lesssim \sqrt{\frac{\mu r^3}{m_1}}\left(\frac{r^2\sqrt{m_1}\omega_{\sf max}\log m}{\sigma_{r'}^\star} + \frac{r^2 \sqrt{m_1m_2}\omega_{\sf max}^2\log^2 m}{\sigma_{r'}^{\star2}}\right).\label{ineq:oracle}
		\end{align}
	\end{subequations}
Here, $\widetilde{\bm{U}}_{:,1: r'}$ is the leading rank-$r'$ left singular space of $\bm{U}_{:, 1:\overline{r}}^\star\bm{\Sigma}_{1:\overline{r}, 1:\overline{r}}^\star + \bm{E}\bm{V}_{:, 1:\overline{r}}^\star$ with $\overline{r} = \max\mathcal{A}$.
\end{thm}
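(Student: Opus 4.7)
The proof splits into two parts with very different flavors.

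\textbf{Part (a).} Set $T \coloneqq C_0 r\big[(m_1 m_2)^{1/4} + r m_1^{1/2}\big]\omega_{\sf max}\log m$, so that \eqref{ineq:snr_matrix} reads $\sigma_1^\star \geq 2T$. A short pigeonhole suffices. Suppose $\mathcal{A} = \emptyset$; then every $j$ with $\sigma_j^\star \geq T$ must violate the gap condition, i.e.\ $\sigma_{j+1}^\star > \tfrac{4r-1}{4r}\sigma_j^\star$. Either $\sigma_r^\star \geq T$, in which case $\sigma_{r+1}^\star = 0$ makes the gap trivial and $r \in \mathcal{A}$ — contradiction — or $j^\star \coloneqq \max\{j : \sigma_j^\star \geq T\} < r$; chaining the ratio inequality for $j = 1,\ldots,j^\star$ gives $\sigma_{j^\star+1}^\star > \bigl(\tfrac{4r-1}{4r}\bigr)^{j^\star}\sigma_1^\star \geq \tfrac{1}{2}\sigma_1^\star \geq T$, contradicting the maximality of $j^\star$. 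The only estimate needed is the elementary $\bigl(\tfrac{4r-1}{4r}\bigr)^r \geq \tfrac{1}{2}$ for all $r\geq 1$.

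\textbf{Part (b), strategy.} Fix $r' \in \mathcal{A}$. I would split the projector error via the triangle inequality through the intermediate projector $\widetilde{\bm{U}}_{:,1:r'}\widetilde{\bm{U}}_{:,1:r'}^\top$, so that both \eqref{ineq:oracle_tilde} and \eqref{ineq:oracle} follow from bounding (i) $\|\bm{U}^{\sf oracle}_{:,1:r'}\bm{U}^{\sf oracle\top}_{:,1:r'} - \widetilde{\bm{U}}_{:,1:r'}\widetilde{\bm{U}}_{:,1:r'}^\top\|_{2,\infty}$ and (ii) $\|\widetilde{\bm{U}}_{:,1:r'}\widetilde{\bm{U}}_{:,1:r'}^\top - \bm{U}^\star_{:,1:r'}\bm{U}^{\star\top}_{:,1:r'}\|_{2,\infty}$ at the stated order. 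The key structural observation is that the matrix \eqref{def:oracle_matrix} differs from $\bm{M}$ only through the off-diagonal perturbation $\mathcal{N} \coloneqq \mathcal{P}_{\sf off\text{-}diag}\bigl(\bm{E}(\bm{I}-\bm{V}^\star\bm{V}^{\star\top})\bm{E}^\top\bigr)$, and the top eigenvectors of $\bm{M}$ coincide with the left singular vectors of $\bm{U}^\star\bm{\Sigma}^\star + \bm{E}\bm{V}^\star$, which, restricted to the rank-$\overline{r}$ truncation of the row subspace, are exactly $\widetilde{\bm{U}}$ (the gap at $\overline{r}\in\mathcal{A}$ controls the tail). Membership $r'\in\mathcal{A}$ supplies an eigengap of order $\sigma_{r'}^{\star 2}$ at position $r'$, while Bernstein-type concentration on quadratic forms of $\bm{E}$ under Assumption \ref{assump:noise_matrix} gives $\|\mathcal{N}\| \lesssim \sqrt{m_1 m_2}\,\omega_{\sf max}^2\log m$; the SNR condition then makes the gap dominate the perturbation, so a spectral-norm Davis–Kahan/Wedin bound is immediate.

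\textbf{Part (b), $\ell_{2,\infty}$ upgrade.} The heart of the proof is turning spectral-norm control into $\ell_{2,\infty}$ control without condition-number dependence. For (i), I would invoke the expansion identity Lemma~\ref{lm:space_estimate_expansion} (foreshadowed in the introduction), which represents the projector difference as an infinite series whose terms are monomials in $\mathcal{N}$ sandwiched between resolvents of $\bm{M}$ truncated above the $r'$-th eigenvalue. Each monomial is controlled in $\ell_{2,\infty}$ by combining (a) the incoherence of $\bm{U}^\star$ to extract the leading factor $\sqrt{\mu r^3/m_1}$; (b) Bernstein/moment bounds on the requisite polynomials in $\bm{E}$, extending the strategy of \cite{zhou2023deflated}; and (c) powers of $1/\sigma_{r'}^\star$ from the truncated resolvents. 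Geometric summation produces the two additive terms in \eqref{ineq:oracle_tilde}, of orders $\sqrt{m_1}\,\omega_{\sf max}\log m/\sigma_{r'}^\star$ (linear in $\bm{E}$) and $\sqrt{m_1 m_2}\,\omega_{\sf max}^2\log^2 m/\sigma_{r'}^{\star 2}$ (quadratic in $\bm{E}$). For (ii), since $\widetilde{\bm{U}}$ arises from the SVD of $(\bm{X}^\star + \bm{E})\bm{V}^\star_{:,1:\overline{r}}$ — a clean additive perturbation of $\bm{U}^\star_{:,1:\overline{r}}\bm{\Sigma}^\star_{1:\overline{r},1:\overline{r}}$ — a first-order/contour expansion combined with the incoherence of $\bm{U}^\star, \bm{V}^\star$ gives the analogous bound at the same order.

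\textbf{Main obstacle.} The crux is the $\ell_{2,\infty}$ upgrade in (i). Standard tools — leave-one-out and Xia's normal representation — inevitably pick up either a condition-number factor $\sigma_1^\star/\sigma_r^\star$ or an explicit lower bound on the smallest singular value of $\bm{X}^\star$, neither admissible here since $\bm{X}^\star$ may have arbitrarily small trailing singular values. The point of truncating at $r'\in\mathcal{A}$ is that every resolvent in the series involves only spectrum above $\sigma_{r'}^\star$, so only $\sigma_{r'}^\star$ appears in the final bound; verifying convergence of the series and sharp $\ell_{2,\infty}$ control of each monomial of $\mathcal{N}$ (with the right power of $\omega_{\sf max}$ and the right polylog factor) is the primary technical hurdle.
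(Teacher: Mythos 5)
Your part (a) argument is correct, and is in fact slightly cleaner than the paper's: the paper instead works backward from the smallest $i \geq \overline r + 1$ where the multiplicative gap holds, chains up to $\overline r + 1$, and then uses $\sigma_1^\star \geq 2T$ to force $\overline r \neq 0$, but both arguments rest on the same pigeonhole and the same elementary bound $\bigl(\tfrac{4r-1}{4r}\bigr)^r \geq \tfrac12$.

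For part (b), the high-level plan — split through the intermediate projector $\widetilde{\bm{U}}_{:,1:r'}\widetilde{\bm{U}}_{:,1:r'}^\top$, treat (ii) by the exact SVD perturbation structure of $\bm{U}^{\star(1)}\bm{\Sigma}^{\star(1)} + \bm{E}\bm{V}^{\star(1)}$, and treat (i) by a contour-integral/series expansion in the spirit of Lemma~\ref{lm:space_estimate_expansion} — is the same as the paper's. However, your description of step (i) has a genuine gap. You declare the reference matrix to be $\bm{M}$ and the only perturbation to be $\mathcal{N} = \bm{Z}_3 = \mathcal{P}_{\sf off\text{-}diag}\bigl(\bm{E}(\bm{I} - \bm{V}^\star\bm{V}^{\star\top})\bm{E}^\top\bigr)$, and you assert that the leading eigenvectors of $\bm{M}$ "restricted to the rank-$\overline r$ truncation" are exactly $\widetilde{\bm{U}}$. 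That is false: the eigenvectors of $\bm{M}$ are the left singular vectors of the full rank-$r$ matrix $\bm{U}^\star\bm{\Sigma}^\star + \bm{E}\bm{V}^\star$, which differ from $\widetilde{\bm{U}}$ (the left singular vectors of $\bm{U}^{\star(1)}\bm{\Sigma}^{\star(1)} + \bm{E}\bm{V}^{\star(1)}$) by the contribution of the tail term $\bm{U}^{\star(2)}\bm{\Sigma}^{\star(2)} + \bm{E}\bm{V}^{\star(2)}$, which is not negligible in $\ell_{2,\infty}$ relative to the target bound. The paper avoids this by building an auxiliary matrix $\widetilde{\bm{M}}$ (see \eqref{eq:decomposition_M_oracle}) — not $\bm{M}$ — whose exact rank-$r'$ leading eigenspace is $\widetilde{\bm{U}}_{:,1:r'}$, and writing $\bm{M}^{\sf oracle} = \widetilde{\bm{M}} + \bm{Z}_1 + \bm{Z}_2 + \bm{Z}_3$, so that the series expansion is driven by $\bm{Z} = \bm{Z}_1 + \bm{Z}_2 + \bm{Z}_3$. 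The terms $\bm{Z}_1$ (cross terms with the tail spectrum) and $\bm{Z}_2$ (correction projecting $\bm{U}^{\star(2)}$ onto $\widetilde{\bm{U}}^{(1)}$) are precisely what you have dropped. Relatedly, the $\ell_{2,\infty}$ control of each monomial requires incoherence of the reference eigenvectors $\overline{\bm{P}}_j$ appearing in the expansion; with reference $\widetilde{\bm{M}}$ these are $\widetilde{\bm{U}}^{(1)}$ and $\widetilde{\bm{U}}^{(2)}$, and establishing their incoherence is a nontrivial step (Lemma~\ref{lm:event_collection} and the computation of $\|\widetilde{\bm{U}}^{(2)}\|_{2,\infty}$) that cannot be bypassed by invoking incoherence of $\bm{U}^\star$ alone. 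If you insist on $\bm{M}$ as the reference, you would additionally need incoherence of $\bm{M}$'s eigenvectors and a separate comparison from the eigenspace of $\bm{M}$ to $\widetilde{\bm{U}}_{:,1:r'}$, neither of which appears in your proposal. So the approach as written does not close.
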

With the aid of Theorem \ref{thm:oracle_two_to_infty}, we are able to develop an upper bound on $\left\|\left(\bm{I}_{m_1} - \bm{U}\bm{U}^\top\right)\bm{X}^\star\right\|_{2, \infty}$ if the threshold $\tau$ is properly chosen, as asserted by the following theorem.
\begin{thm}\label{thm:residual}
	Suppose that $r \geq 2$, Assumption \ref{assump:noise_matrix} holds, and
	\begin{subequations}
		\begin{align}
			c_{\tau}r^2\big[(m_1m_2)^{1/2} + r^2m_1\big]\log^2 m &\leq \tau/\omega_{\sf max}^2 \leq C_{\tau}r^2\big[(m_1m_2)^{1/2} + r^2m_1\big]\log^2 m\label{ineq:threshold}\\
			\sigma_{1}^\star/\omega_{\sf max} &\geq C_1r\big[(m_1m_2)^{1/4} + rm_1^{1/2}\big]\log m\label{ineq:snr_matrix1}\\
			\mu &\leq c_1\frac{m_1}{r^3}\label{ineq:incoherence1}
		\end{align}
	\end{subequations}
	hold for some sufficiently large (resp.~small) constant $C_1, C_{\tau}, c_{\tau} > 0$ satisfying $C_1^2/2 > C_{\tau} > c_{\tau}$ (resp.~$c_1 > 0$). If the numbers of iterations obey
	\begin{subequations}
		\begin{align}
			t_k &> \log\left(C\frac{\sigma_{r_{k-1}}^{\star2}}{\sigma_{r_{k}}^{\star2}}\right),~\qquad~1 \leq k < k_{\sf max}\label{iter1_matrix}\\
			t_{k_{\sf max}} &> \log\left(C\frac{\sigma_{r_{k_{\sf max}-1} + 1}^{\star2}}{\omega_{\sf max}^2}\right)\label{iter2_matrix}
		\end{align}
	\end{subequations}
for some sufficiently large constants $C > 0$, then with probability exceeding $1 - O(m^{-10})$, the output of Algorithm \ref{algorithm:sequential_heteroPCA} satisfies
\begin{subequations}
	\begin{align}
		\big\|\bm{U}\bm{U}^\top - \bm{U}_{:, 1:r_{k_{\sf max}}}^\star\bm{U}_{:, 1:r_{k_{\sf max}}}^{\star\top}\big\| &\lesssim \sqrt{\frac{\mu r^3}{m_1}},\label{ineq:two_to_infty_crude}\\
		\big\|\big(\bm{U}\bm{U}^\top - \bm{U}_{:, 1:r_{k_{\sf max}}}^\star\bm{U}_{:, 1:r_{k_{\sf max}}}^{\star\top}\big)\bm{X}^\star\big\|_{2,\infty} &\lesssim \sqrt{\frac{\mu r^3}{m_1}}\left(r^2\sqrt{m_1}\omega_{\sf max}\log m + r(m_1m_2)^{1/4}\omega_{\sf max}\log m\right),\label{ineq:two_to_infty_residual_matrix1}\\
		\left\|\left(\bm{I}_{m_1} - \bm{U}\bm{U}^\top\right)\bm{X}^\star\right\|_{2, \infty} &\lesssim \sqrt{\frac{\mu r^3}{m_1}}\left(r^2\sqrt{m_1}\omega_{\sf max}\log m + r(m_1m_2)^{1/4}\omega_{\sf max}\log m\right)\label{ineq:two_to_infty_residual_matrix2}.
	\end{align}
\end{subequations}
Here, $r_0 = 0. r_1, \dots, r_{k_{\sf max}}$ are the ranks selected in Algorithm \ref{algorithm:sequential_heteroPCA} and $k_{\sf max}$ satisfies $r_{k_{\sf max}} = r$ or $\sigma_{r_{k_{\sf max}}+1}(\bm{G}_{k_{\sf max}}) \leq \tau$.
\end{thm}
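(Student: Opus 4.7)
The overall strategy is to argue, by induction on the round index $k$, that the iterate $\bm{G}_k$ produced by Algorithm~\ref{algorithm:sequential_heteroPCA} is a faithful estimate of the oracle matrix $\bm{M}^{\sf oracle}$ from~\eqref{def:oracle_matrix}, restricted to its leading $r_k$ eigencomponents, and then to translate this into the advertised $\ell_{2,\infty}$ guarantees via Theorem~\ref{thm:oracle_two_to_infty}. The latter is the main workhorse: it supplies condition-number-free bounds between the oracle subspace and the true subspace for every rank $r'$ lying in the set $\mathcal{A}$ of ``well-separated'' ranks defined in~\eqref{def:A}.

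For the inductive step, I would observe that the rank-selection subroutine (Algorithm~\ref{algorithm:rank_selection}) pins down a subblock whose eigenvalues of $\bm{G}_{k-1}$ have condition number at most $4$ and relative gap at least $1/r$. Since $\bm{G}_{k-1}$ is (by the inductive hypothesis) close to $\bm{M}^{\sf oracle}$, and $\bm{M}^{\sf oracle}$ is in turn close to $\bm{X}^\star\bm{X}^{\star\top}$ on the scale $\omega_{\sf max}^2\,\mathrm{poly}(r,m)\log^2 m$ by Assumption~\ref{assump:noise_matrix} and standard concentration for $\|\bm{E}\|$, $\|\bm{E}\bm{V}^\star\|$, and $\|\mathcal{P}_{\sf off\text{-}diag}(\bm{E}\bm{E}^\top - \bm{E}\bm{V}^\star\bm{V}^{\star\top}\bm{E}^\top)\|$, a Weyl argument transfers the ratio/gap conditions to the true singular values, forcing $r_k\in\mathcal{A}$. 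Running {\sf HeteroPCA} for the prescribed $t_k$ iterations~\eqref{iter1_matrix} geometrically contracts the diagonal error from the scale $\sigma_{r_{k-1}+1}^{\star 2}$ down to $\sigma_{r_k+1}^{\star 2}$---the only component that off-diagonal deletion does not by itself control---closing the induction. This portion adapts the argument of~\cite{zhou2023deflated} to the present setting.

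At termination we have either $r_{k_{\sf max}}=r$ or $\sigma_{r_{k_{\sf max}}+1}(\bm{G}_{k_{\sf max}})\leq \tau$; the strengthened last iteration count~\eqref{iter2_matrix} ensures that in either case the oracle spectrum is reproduced up to $\omega_{\sf max}^2$, so $\sigma_{r_{k_{\sf max}}+1}^{\star 2} \lesssim \tau$ by the upper bound in~\eqref{ineq:threshold}. Inequality~\eqref{ineq:two_to_infty_crude} then follows from a Davis--Kahan argument applied to the operator-norm proximity of $\bm{G}_{k_{\sf max}}$ to the rank-$r_{k_{\sf max}}$ part of the oracle, while~\eqref{ineq:two_to_infty_residual_matrix1} is a direct consequence of Theorem~\ref{thm:oracle_two_to_infty}(b) with $r'=r_{k_{\sf max}}$. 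For~\eqref{ineq:two_to_infty_residual_matrix2}, I would decompose
\begin{align*}
    (\bm{I}_{m_1} - \bm{U}\bm{U}^\top)\bm{X}^\star = \big(\bm{U}^\star_\flat\bm{U}^{\star\top}_\flat - \bm{U}\bm{U}^\top\big)\bm{X}^\star + \big(\bm{I}_{m_1} - \bm{U}^\star_\flat\bm{U}^{\star\top}_\flat\big)\bm{X}^\star
\end{align*}
with $\bm{U}^\star_\flat := \bm{U}^\star_{:,1:r_{k_{\sf max}}}$; the first summand is bounded by~\eqref{ineq:two_to_infty_residual_matrix1}, and the second equals $\sum_{j > r_{k_{\sf max}}} \sigma_j^\star \bm{u}_j^\star \bm{v}_j^{\star\top}$ so its $\ell_{2,\infty}$-norm is at most $\sqrt{\mu r/m_1}\cdot\sigma_{r_{k_{\sf max}}+1}^\star$ by incoherence, which is of the target order.

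The delicate point will be propagating row-wise ($\ell_{2,\infty}$) information across rounds without accumulating a factor of $\sigma_1^\star/\sigma_{r_{k_{\sf max}}}^\star$: a naive per-round contraction in operator norm would let this condition number creep into the final bound, yet the target bounds are condition-number-free. The fix is to re-anchor the oracle reference subspace to the rank $r_k$ chosen at the \emph{current} round so that each round's contraction factor only involves adjacent subblocks of singular values. This is the more intricate variant of the \cite{zhou2023deflated} strategy alluded to in the paper, and it is where the rank-selection definition~\eqref{eq:rank_selection} and the iteration-count requirements~\eqref{iter1_matrix}--\eqref{iter2_matrix} are designed to fit together.
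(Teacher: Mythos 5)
Your outline of Steps~1--4 (the round-by-round induction, the claim that each selected rank $r_k$ lands in $\mathcal{A}$, the geometric contraction of the diagonal error, and the Davis--Kahan-plus-oracle argument for~\eqref{ineq:two_to_infty_crude}) is essentially faithful to the paper's Steps~1--4, and your head/tail decomposition for~\eqref{ineq:two_to_infty_residual_matrix2} matches the paper's Step~5.4. However, there is a real gap in your treatment of~\eqref{ineq:two_to_infty_residual_matrix1}.

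You assert that~\eqref{ineq:two_to_infty_residual_matrix1} is ``a direct consequence of Theorem~\ref{thm:oracle_two_to_infty}(b) with $r'=r_{k_{\sf max}}$,'' but that theorem only bounds $\|\bm{U}^{\sf oracle}_{:,1:r'}\bm{U}^{\sf oracle\top}_{:,1:r'}-\bm{U}^\star_{:,1:r'}\bm{U}^{\star\top}_{:,1:r'}\|_{2,\infty}$, whereas~\eqref{ineq:two_to_infty_residual_matrix1} concerns $\|(\bm{U}\bm{U}^\top-\bm{U}^\star_{:,1:r_{k_{\sf max}}}\bm{U}^{\star\top}_{:,1:r_{k_{\sf max}}})\bm{X}^\star\|_{2,\infty}$ where $\bm{U}$ is the algorithm's output. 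Passing from the former to the latter by multiplying by $\|\bm{X}^\star\|=\sigma_1^\star$ would pick up a factor $\sigma_1^\star/\sigma_{r_{k_{\sf max}}}^\star$ (and worse), precisely the condition-number dependence the target bound is supposed to avoid; your final paragraph acknowledges this ``delicate point'' but the ``re-anchoring across rounds'' you invoke only controls the diagonal estimation errors $F_k^t$, $D_k^t$ --- it does not by itself supply a condition-number-free bound on the residual applied to $\bm{X}^\star$. The paper closes this gap with a separate and substantial argument (Step~5): it splits $\bm{X}^\star$ into the head $\bm{U}^{\star(1)}\bm{\Sigma}^{\star(1)}\bm{V}^{\star(1)\top}$ and the tail, splits the projection difference through the intermediate subspace $\widetilde{\bm{U}}_{k_{\sf max}}$ and the oracle subspace $\bm{U}^{\sf oracle}_{k_{\sf max}}$ to form the three terms $\alpha_1,\alpha_2,\alpha_3$, and then bounds $\alpha_1$ via the \emph{weighted} residual expansion~\eqref{ineq:residual_expansion} of Lemma~\ref{lm:space_estimate_expansion} --- a contour-integral identity specifically engineered so that each term carries the factor $\overline{\lambda}_{r_1}$ rather than $\overline{\lambda}_1$, together with Lemma~\ref{lm:1} to handle the operator-norm piece. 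Your proposal does not mention this weighted expansion or any substitute mechanism, so the step from Theorem~\ref{thm:oracle_two_to_infty}(b) to~\eqref{ineq:two_to_infty_residual_matrix1} is missing the key technical device.
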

The proof of Theorem \ref{thm:residual} can be found in Section \ref{proof:thm_residual}. With Theorem \ref{thm:residual} in hand, we are now  positioned to prove Theorem \ref{thm:cluster_tensor}. The proof consists of four steps, to be detailed next. 
\subsection{Main steps for proving Theorem \ref{thm:cluster_tensor}}
\paragraph{Step 1: verifying \eqref{ineq:threshold} - \eqref{ineq:incoherence1}.} To apply Theorem \ref{thm:residual}, one needs to verify the conditions \eqref{ineq:threshold} - \eqref{ineq:incoherence1} for $\mathcal{M}_1(\bm{\mathcal{Y}}) = \mathcal{M}_1(\bm{\mathcal{X}}) + \mathcal{M}_i(\bm{\mathcal{E}})$ with dimensions $m_1 = n_i$, $m_2 = n_{-i}$ and the rank $r = k_i$.
\paragraph{Step 1.1: verifying \eqref{ineq:threshold}.} Noting that $n_1n_2n_3 \geq k^4n^2$, we have $(n_1n_2n_3)^{1/2} + k_i^2n_i \asymp (n_1n_2n_3)^{1/2}$. Then we know from \eqref{ineq:threshold_tensor} that \eqref{ineq:threshold} is valid.
\paragraph{Step 1.2: verifying \eqref{ineq:incoherence1}.} For notational convenience, we let 
\begin{align*}
	\overline{\bm{M}}_i^\star = \bm{M}_i^\star\left(\bm{M}_i^{\star\top}\bm{M}_i^{\star}\right)^{-1/2} \in \mathcal{O}^{n_i, k_i}, ~\quad~\forall i \in [3]
\end{align*}
and
\begin{align*}
	\overline{\bm{\mathcal{S}}}^\star = \bm{\mathcal{S}}^\star \times_1 \left(\bm{M}_1^{\star\top}\bm{M}_1^{\star}\right)^{1/2} \times_2 \left(\bm{M}_2^{\star\top}\bm{M}_2^{\star}\right)^{1/2} \times_3 \left(\bm{M}_3^{\star\top}\bm{M}_3^{\star}\right)^{1/2}.
\end{align*}
In addition, for any $\bm{U} \in \mathcal{O}^{n, r}$, we define the projection matrix 
\begin{align}\label{def:subspace_projection}
	\mathcal{P}_{\bm{U}} = \bm{U}\bm{U}^\top.
\end{align}
Recognizing that
\begin{align*}
    \mathcal{M}_i(\bm{\mathcal{X}}^\star) = \bm{M}_i^\star\mathcal{M}_i\left(\bm{\mathcal{S}}^\star\right)\left(\bm{M}_{i+2}^\star \otimes \bm{M}_{i+1}^\star\right)^\top = \overline{\bm{M}}_i^\star\mathcal{M}_i\big(\overline{\bm{\mathcal{S}}}^\star\big)\big(\overline{\bm{M}}_{i+2}^\star \otimes \overline{\bm{M}}_{i+1}^\star\big)^\top,
\end{align*}
We let $\bm{U}_{\bm{X}_i^\star}\bm{\Sigma}_{\bm{X}_i^\star}\bm{V}_{\bm{X}_i^\star}^{\top}$ denote the SVD of $\bm{X}_i^\star := \mathcal{M}_i(\bm{\mathcal{X}}^\star)$, where $\bm{\Sigma}_{\bm{X}_i^\star} = {\sf diag}(\sigma_{1}\left(\bm{X}_i^\star\right), \dots, \sigma_{k_i}\left(\bm{X}_i^\star\right))$ is a diagonal matrix containing all singular values of $\bm{X}_i^\star$ (in decreasing order), and $\bm{U}_{\bm{X}_i^\star} \in \mathcal{O}^{n_i, k_i}$ (resp.~$\bm{V}_{\bm{X}_i^\star} \in \mathcal{O}^{n_{-i}, k_i}$) denotes the left (resp.~right) singular subspace of $\bm{X}_i^\star$ and satisfies 
\begin{align}\label{eq16}
	\bm{U}_{\bm{X}_i^\star} = \overline{\bm{M}}_i^\star\bm{A}_i~\quad~(\text{resp.}~\bm{V}_{\bm{X}_i^\star} = \big(\overline{\bm{M}}_{i+2}^\star \otimes \overline{\bm{M}}_{i+1}^\star\big)\bm{B}_i)
\end{align}
for some $\bm{A}_i \in \mathcal{O}^{k_i, k_i}$ and $\bm{B} \in \mathcal{O}^{k_{-i}, k_i}$. Then it follows immediately that
\begin{align}\label{ineq155}
	\left\|\bm{U}_{\bm{X}_i^\star}\right\|_{2,\infty} \leq \big\|\overline{\bm{M}}_i^\star\big\|_{2,\infty} \leq \left\|\bm{M}_i^\star\right\|_{2,\infty}\left\|\sigma_{k_i}\left(\bm{M}_i^{\star\top}\bm{M}_i^{\star}\right)\right\|^{-1/2} \leq 1\cdot \sqrt{\frac{k_i}{\beta n_i}}.
\end{align}
Here, the second inequality comes from the fact that  $\bm{M}_i^{\star\top}\bm{M}_i^{\star}$ is a diagonal matrix and its diagonal entries
\begin{align}\label{ineq154}
	\left(\bm{M}_i^{\star\top}\bm{M}_i^{\star}\right)_{\ell, \ell} = \big|\big\{j \in [n_i]: z_{i, j}^\star = \ell\big\}\big| \geq \beta n_i/k_i.
\end{align}
Similarly, one can bound $\|\bm{V}_{\bm{X}_i^\star}\|_{2,\infty}$ as follows:
\begin{align}\label{ineq156}
	\left\|\bm{V}_{\bm{X}_i^\star}\right\|_{2,\infty} \leq \big\|\overline{\bm{M}}_{i+1}^\star\big\|_{2,\infty}\big\|\overline{\bm{M}}_{i+2}^\star\big\|_{2,\infty} \leq \sqrt{\frac{k_{i+1}}{\beta n_{i+1}}}\sqrt{\frac{k_{i+2}}{\beta n_{i+2}}} = \sqrt{\frac{\left(\frac{k_{-i}}{\beta^2k_i}\right)k_i}{n_{-i}}}.
\end{align}
We then arrive at
\begin{align}\label{ineq157}
	\mu_i = \max\left\{\frac{1}{\beta}, \frac{k_{-i}}{\beta^2k_i}\right\} \stackrel{\eqref{ineq:dimension_assumption2}}{\leq} c_1\frac{n_i}{k_i^3}.
\end{align}

\paragraph{Step 1.3: verifying \eqref{ineq:snr_matrix1}.} Next, let us validate Condition \eqref{ineq:snr_matrix1}. Note that for any $1 \leq j_1 \neq j_2 \leq k_i$,
\begin{align}\label{ineq158}
	\Delta_i^2 &\leq \big\|\mathcal{M}_i\left(\mathcal{S}^\star\right)_{j_1,:} - \mathcal{M}_i\left(\mathcal{S}^\star\right)_{j_2,:}\big\|_2^2\notag\\
	&= \left\|\left(\bm{e}_{j_1} - \bm{e}_{j_2}\right)^\top\mathcal{M}_i\left(\mathcal{S}^\star\right)\right\|_2^2\notag\\
	&\leq \left\|\mathcal{M}_i\left(\mathcal{S}^\star\right)\right\|^2\left\|\bm{e}_{j_1} - \bm{e}_{j_2}\right\|_2^2\notag\\
	&= 2\left\|\mathcal{M}_i\left(\mathcal{S}^\star\right)\right\|^2,
\end{align}
where $e_{j}$ is the $j$-th canonical basis of $\bbR^{k_i}$. Furthermore, recognizing that $\bm{M}_i^{\star\top}\bm{M}_i^{\star}$ is a diagonal matrix with diagonal entries satisfying \eqref{ineq154}, we know that
\begin{align}\label{ineq159}
	\sigma_{k_i}\left(\bm{M}_i\right) \geq \sqrt{\frac{\beta n_i}{k_i}}, 
\end{align}
and consequently
\begin{align}\label{ineq160}
	\left\|\bm{X}_i^\star\right\| = \big\|\bm{M}_i^\star\mathcal{M}_i\left(\bm{\mathcal{S}}^\star\right)\left(\bm{M}_{i+2}^\star \otimes \bm{M}_{i+1}^\star\right)^\top\big\| \geq \left\|\mathcal{M}_i\left(\bm{\mathcal{S}}^\star\right)\right\|\prod_{i=1}^{3}\sigma_{k_i}\left(\bm{M}_i^\star\right) \geq \sqrt{\frac{\beta^3n_1n_2n_3}{k_1k_2k_3}}\left\|\mathcal{M}_i\left(\bm{\mathcal{S}}^\star\right)\right\|.
\end{align}
Combining \eqref{ineq158}, \eqref{ineq160} and the assumption $\Delta_i /\sigma_{\sf max} \gg k^{5/2}\left(n_1n_2n_3\right)^{-1/4}\log m/\beta^{3/2}$, one has
\begin{align}
	\left\|\bm{X}_i^\star\right\| \geq \sqrt{\frac{\beta^3n_1n_2n_3}{2k_1k_2k_3}}\Delta_i \gg k_i\left(n_1n_2n_3\right)^{1/4}\log n \asymp k_i\left[\left(n_1n_2n_3\right)^{1/4} + k_in_i^{1/2}\right]\log n,
\end{align}
and thus condition \eqref{ineq:snr_matrix1} holds. Here, the last inequality holds since $n_1n_2n_3 \geq k^4n^2$.

\paragraph{Step 2: bounding $\|\bm{\mathcal{Y}} \times_1 \mathcal{P}_{\bm{U}_1} \times_2 \mathcal{P}_{\bm{U}_2} \times_3 \mathcal{P}_{\bm{U}_3} - \bm{\mathcal{X}}^\star\|_{\rm F}^2$.} We define 
\begin{align*}
	\widehat{\bm{\mathcal{X}}} = \bm{\mathcal{Y}} \times_1 \mathcal{P}_{\bm{U}_1} \times_2 \mathcal{P}_{\bm{U}_2} \times_3 \mathcal{P}_{\bm{U}_3}.
\end{align*}
Then the triangle inequality and the fact $\|\mathcal{P}_{\bm{U}_i}\| = 1$ lead to the following upper bound: 
\begin{align}\label{ineq161}
	\big\|\widehat{\bm{\mathcal{X}}} - \bm{\mathcal{X}}^\star\big\|_{\rm F}^2 &= \left\|\bm{\mathcal{Y}} \times_1 \mathcal{P}_{\bm{U}_1} \times_2 \mathcal{P}_{\bm{U}_2} \times_3 \mathcal{P}_{\bm{U}_3} - \bm{\mathcal{X}}^\star\right\|_{\rm 
	F}^2\notag\\
&\leq 2\left(\left\|\bm{\mathcal{X}}^\star  \times_1 \mathcal{P}_{\bm{U}_1} \times_2 \mathcal{P}_{\bm{U}_2} \times_3 \mathcal{P}_{\bm{U}_3} - \bm{\mathcal{X}}^\star\right\|_{\rm F}^2 + \left\|\bm{\mathcal{E}}  \times_1 \mathcal{P}_{\bm{U}_1} \times_2 \mathcal{P}_{\bm{U}_2} \times_3 \mathcal{P}_{\bm{U}_3}\right\|_{\rm F}^2\right)\notag\\
&\leq 6\big(\left\|\bm{\mathcal{X}}^\star \times_1 \left(\bm{I}_{n_1} - \mathcal{P}_{\bm{U}_1}\right) \times_2 \mathcal{P}_{\bm{U}_2} \times_3 \mathcal{P}_{\bm{U}_3}\right\|_{\rm F}^2 + \left\|\bm{\mathcal{X}}^\star \times_2 \left(\bm{I}_{n_2} - \mathcal{P}_{\bm{U}_2}\right) \times_3 \mathcal{P}_{\bm{U}_3}\right\|_{\rm F}^2\notag\\&\hspace{1cm} + \left\|\bm{\mathcal{X}}^\star \times_3 \left(\bm{I}_{n_3} - \mathcal{P}_{\bm{U}_3}\right)\right\|_{\rm F}^2\big) + 2\left\|\bm{\mathcal{E}}  \times_1 \mathcal{P}_{\bm{U}_1} \times_2 \mathcal{P}_{\bm{U}_2} \times_3 \mathcal{P}_{\bm{U}_3}\right\|_{\rm F}^2\notag\\
&\leq 6\left(\left\|\left(\bm{I}_{n_1} - \bm{U}_1\bm{U}_1^\top\right)\bm{X}_1^\star\right\|_{\rm F}^2 + \left\|\left(\bm{I}_{n_2} - \bm{U}_2\bm{U}_2^\top\right)\bm{X}_2^\star\right\|_{\rm F}^2 + \left\|\left(\bm{I}_{n_3} - \bm{U}_3\bm{U}_3^\top\right)\bm{X}_3^\star\right\|_{\rm F}^2\right)\notag\\
&\quad + 2\left\|\bm{\mathcal{E}}  \times_1 \mathcal{P}_{\bm{U}_1} \times_2 \mathcal{P}_{\bm{U}_2} \times_3 \mathcal{P}_{\bm{U}_3}\right\|_{\rm F}^2.
\end{align}
Recognizing that for any $1 \leq \ell \leq k_i$, we have
\begin{align*}
	\sigma_{\ell}\left(\mathcal{M}_i\left(\bm{\mathcal{X}}^\star\right)\right) = \sigma_{\ell}\left(\bm{M}_i^\star\mathcal{M}_i\left(\bm{\mathcal{S}}^\star\right)\left(\bm{M}_{i+2}^\star \otimes \bm{M}_{i+1}^\star\right)^\top\right) \geq \prod_{i=1}^{3}\sigma_{k_i}\left(\bm{M}_i^\star\right)\sigma_{\ell}\left(\mathcal{M}_i\left(\bm{\mathcal{S}}^\star\right)\right) \stackrel{\eqref{ineq159}}{\geq} \sqrt{\frac{\beta^3n_1n_2n_3}{k_1k_2k_3}}\sigma_{\ell}\left(\mathcal{M}_i\left(\bm{\mathcal{S}}^\star\right)\right)
\end{align*}
and
\begin{align*}
	\sigma_{\ell}\left(\mathcal{M}_i\left(\bm{\mathcal{X}}^\star\right)\right) \leq \prod_{i=1}^{3}\left\|\bm{M}_i^\star\right\|\sigma_{\ell}\left(\mathcal{M}_i\left(\bm{\mathcal{S}}^\star\right)\right) \leq \sqrt{n_1n_2n_3}\sigma_{\ell}\left(\mathcal{M}_i\left(\bm{\mathcal{S}}^\star\right)\right).
\end{align*}
In view of Theorem \ref{thm:residual}, by choosing the numbers of iterations as in \eqref{iter1} and \eqref{iter2}, 
we have 
\begin{align}\label{ineq162}
	\left\|\left(\bm{I}_{n_1} - \bm{U}_1\bm{U}_1^\top\right)\bm{X}_1^\star\right\|_{\rm F}^2 &\leq n_1\left\|\left(\bm{I}_{n_1} - \bm{U}_1\bm{U}_1^\top\right)\bm{X}_1^\star\right\|_{2,\infty}^2\notag\\ &\lesssim n_1\cdot \frac{\mu_1k_1^3}{n_1}\left(k_1^2\sqrt{n_1}\omega_{\sf max}\log n + k_1\left(n_1n_2n_3\right)^{1/4}\omega_{\sf max}\log n\right)^2\notag\\
	&\stackrel{\eqref{ineq:dimension_assumption}~\text{and}~\eqref{ineq157}}{\lesssim} \frac{k^6}{\beta^2}\left(n_1n_2n_3\right)^{1/2}\omega_{\sf max}^2\log^2 n 
\end{align}
with probability at least $1 - O(n^{-10})$. 
Similarly, one has, with probability exceeding $1 - O(n^{-10})$,
\begin{subequations}
	\begin{align}
		\left\|\left(\bm{I}_{n_2} - \bm{U}_2\bm{U}_2^\top\right)\bm{X}_2^\star\right\|_{\rm F}^2 \lesssim \frac{k^6}{\beta^2}\left(n_1n_2n_3\right)^{1/2}\omega_{\sf max}^2\log^2 n,\label{ineq170a}\\
		\left\|\left(\bm{I}_{n_2} - \bm{U}_2\bm{U}_2^\top\right)\bm{X}_2^\star\right\|_{\rm F}^2 \lesssim \frac{k^6}{\beta^2}\left(n_1n_2n_3\right)^{1/2}\omega_{\sf max}^2\log^2 n.\label{ineq170b}
	\end{align}
\end{subequations}
Moreover, we learn from Theorem \ref{thm:residual} that with probability at least $1 - O(n^{-10})$,
\begin{align}\label{ineq:incoherence_estimate}
	\left\|\bm{U}_i\right\|_{2,\infty} = \left\|\bm{U}_i\bm{U}_i^\top\right\|_{2,\infty} \leq \left\|\bm{U}_i\bm{U}_i^\top - \bm{U}^\star\bm{U}^{\star}\right\|_{2,\infty} + \left\|\bm{U}^\star\right\|_{2,\infty} \leq 2\sqrt{\frac{\mu_ik_i^3}{n_i}},~\quad~\forall i \in [3].
\end{align}
Applying Lemma \ref{lm:noise} yields that with probability at least $1 - O(n^{-10})$,
\begin{align}\label{ineq171}
	\left\|\bm{\mathcal{E}}  \times_1 \mathcal{P}_{\bm{U}_1} \times_2 \mathcal{P}_{\bm{U}_2} \times_3 \mathcal{P}_{\bm{U}_3}\right\|_{\rm F}^2 &\leq k_1\left\|\bm{U}_1^\top\mathcal{M}_1\left(\bm{\mathcal{E}}\right)\left(\bm{U}_3 \otimes \bm{U}_2\right)\right\|^2\notag\\ &\lesssim kn\left(\mu_1k_1^2\right)\left(\mu_2k_2^2\right)\left(\mu_3k_3^2\right)k^3\omega_{\sf max}^2\log n\notag\\ &\stackrel{\eqref{ineq157}}{\lesssim} \frac{k^{13}}{\beta^6}n\omega_{\sf max}^2\log n.
\end{align}
Putting \eqref{ineq161} - \eqref{ineq171} together, we obtain 
\begin{align}\label{ineq172}
	\big\|\widehat{\bm{\mathcal{X}}} - \bm{\mathcal{X}}^\star\big\|_{\rm F}^2 \lesssim \frac{k^6}{\beta^2}\left(n_1n_2n_3\right)^{1/2}\omega_{\sf max}^2\log^2 n + \frac{k^{13}}{\beta^6}n\omega_{\sf max}^2\log n
\end{align}
with probability exceeding $1 - O(n^{-10})$. 
\paragraph{Step 3: deriving estimation accuracy of the center estimates.} We let
\begin{align}\label{def:1}
    \widehat{\bm{\theta}}_\ell^{(i)} = \big(\bm{U}_{i+2} \otimes \bm{U}_{i+1}\big)\widehat{\bm{b}}_{\ell}^{(i)} \in \bbR^{n_{-i}},~\quad~\forall i \in [3], \ell \in [k_i],
\end{align}
and also define
\begin{align}\label{def:2}
	\bm{\theta}_\ell^{(i)\star} = \left(\mathcal{M}_i\left(\bm{\mathcal{X}}^\star\right)\right)_{j, :}^\top \in \bbR^{n_{-i}},~\quad~\forall i \in [3], \ell \in [k_i].
\end{align}
Here, $j \in [n_i]$ is any index satisfying $z_{i, j}^\star = \ell$ and the $\widehat{\bm{b}}_\ell^{(i)}$'s are the center estimates satisfying \eqref{ineq:k_means_approximate}. Recalling that $\widehat{\bm{B}}_i = \bm{U}_i\bm{U}_i^\top\mathcal{M}_i({\bm{\mathcal{Y}}})(\bm{U}_{i+2} \otimes \bm{U}_{i+1})$, we have
\begin{align*}
	\mathcal{M}_i\big(\widehat{\bm{\mathcal{X}}}\big) = \widehat{\bm{B}}_i\big(\bm{U}_{i+2}^\top \otimes \bm{U}_{i+1}^\top\big),~\quad~\forall i \in [3].
\end{align*}
This allows one to show that
\begin{align}
	\sum_{j=1}^{n_i}\left\|\big(\mathcal{M}_i\big(\widehat{\bm{\mathcal{X}}}\big)\big)_{j:}^\top - \widehat{\bm{\theta}}_{\widehat{z}_{i,j}}^{(i)}\right\|_2^2 &= \sum_{j=1}^{n_i}\left\|\big(\bm{U}_{i+2} \otimes \bm{U}_{i+1}\big)\left(\big(\widehat{\bm{B}}_i\big)_{j:}^\top - \widehat{\bm{b}}^{(i)}_{\widehat{z}_{i,j}}\right)\right\|_2^2\notag\\
	&= \sum_{j=1}^{n_i}\left\|\big(\widehat{\bm{B}}_i\big)_{j:}^\top - \widehat{\bm{b}}^{(i)}_{\widehat{z}_{i,j}}\right\|_2^2\notag\\ &\stackrel{\eqref{ineq:k_means_approximate}}{\leq} M\min_{\bm{b}_1, \dots, \bm{b}_{k_i} \in \bbR^{r_1r_2r_3/r_i}\atop \bm{z}_i \in [k_i]^{n_i}}\sum_{j=1}^{n_i}\left\|\big(\widehat{\bm{B}}_i\big)_{j:}^\top - \bm{b}_{z_{i,j}}\right\|_2^2\notag\\
	&= M\min_{\bm{\theta}_1, \dots, \bm{\theta}_{k_i} \in \bbR^{n_{-i}}\atop \bm{z}_i \in [k_i]^{n_i}}\sum_{j=1}^{n_i}\left\|\big(\mathcal{M}_i\big(\widehat{\bm{\mathcal{X}}}\big)\big)_{j:}^\top - \bm{\theta}_{z_{i,j}}\right\|_2^2\notag\\
	&\leq M\sum_{j=1}^{n_i}\left\|\big(\mathcal{M}_i\big(\widehat{\bm{\mathcal{X}}}\big)\big)_{j:}^\top - \bm{\theta}^{(i)\star}_{z_{i,j}^\star}\right\|_2^2\notag\\
	&= M\sum_{j=1}^{n_i}\left\|\big(\mathcal{M}_i\big(\widehat{\bm{\mathcal{X}}}\big)\big)_{j:}^\top - \left(\mathcal{M}_i\left(\bm{\mathcal{X}}^\star\right)\right)_{j:}^\top\right\|_2^2\notag\\
	&= M\big\|\widehat{\bm{\mathcal{X}}} - \bm{\mathcal{X}}^\star\big\|_{\rm F}^2.\label{ineq173}
\end{align}
Here, the fourth line makes use of \citet[Eqn. (38)]{han2022exact}. As a result, we have
\begin{align}\label{ineq176}
	\sum_{j=1}^{n_i}\left\|\widehat{\bm{\theta}}_{\widehat{z}_{i,j}}^{(i)} - \bm{\theta}^{(i)\star}_{z_{i,j}^\star}\right\|_2^2 &\leq 2\left(\sum_{j=1}^{n_i}\left\|\big(\mathcal{M}_i\big(\widehat{\bm{\mathcal{X}}}\big)\big)_{j:}^\top - \widehat{\bm{\theta}}_{\widehat{z}_{i,j}}^{(i)}\right\|_2^2 + \sum_{j=1}^{n_i}\left\|\big(\mathcal{M}_i\big(\widehat{\bm{\mathcal{X}}}\big)\big)_{j:}^\top - \bm{\theta}^{(i)\star}_{z_{i,j}^\star}\right\|_2^2\right)\notag\\
	&\leq 2\left(M\big\|\widehat{\bm{\mathcal{X}}} - \bm{\mathcal{X}}^\star\big\|_{\rm F}^2 + \big\|\widehat{\bm{\mathcal{X}}} - \bm{\mathcal{X}}^\star\big\|_{\rm F}^2\right)\notag\\
	&\leq 4M\big\|\widehat{\bm{\mathcal{X}}} - \bm{\mathcal{X}}^\star\big\|_{\rm F}^2.
\end{align}
Noting that $\bm{M}_i^{\star\top}\bm{M}_i^{\star}$ is a diagonal matrix and the diagonal entries
\begin{align*}
	\left(\bm{M}_i^{\star\top}\bm{M}_i^{\star}\right)_{\ell, \ell} = \sum_{j=1}^{n_i}\left(\bm{M}_i^\star\right)_{j,\ell}^2 = \left\{j \in [n_i]: z_{i, j}^\star = \ell\right\} \geq \beta n_i/k_i,
\end{align*}
we have
\begin{align}\label{ineq174}
	\sigma_{k_i}\left(\bm{M}_i^{\star}\right) \geq \sqrt{\beta n_i/k_i},~\quad~\forall i \in [3].
\end{align}
As a consequence, for all $\ell_1 \neq \ell_2 \in [k_i]$, we can derive
\begin{align}\label{ineq175}
	\left\|\bm{\theta}_{\ell_1}^{(i)\star} - \bm{\theta}_{\ell_2}^{(i)\star}\right\| &= \big\|\left(\mathcal{M}_i\left(\bm{\mathcal{X}}\right)\right)_{j_1, :} - \left(\mathcal{M}_i\left(\bm{\mathcal{X}}\right)\right)_{j_2, :}\big\|\notag\\
	&= \left\|\big(\left(\mathcal{M}_i\left(\bm{\mathcal{S}}\right)\right)_{\ell_1, :} - \left(\mathcal{M}_i\left(\bm{\mathcal{S}}\right)\right)_{\ell_2, :}\big)\left(\bm{M}_{i+2}^\star \otimes \bm{M}_{i+1}^\star\right)\right\|\notag\\
	&\geq \big\|\left(\mathcal{M}_i\left(\bm{\mathcal{S}}\right)\right)_{\ell_1, :} - \left(\mathcal{M}_i\left(\bm{\mathcal{S}}\right)\right)_{\ell_2, :}\big\|\sigma_{k_{i+1}}\left(\bm{M}_{i+1}^{\star}\right)\sigma_{k_{i+2}}\left(\bm{M}_{i+2}^{\star}\right)\notag\\
	&\geq \beta\sqrt{\frac{n_{-i}}{k_{-i}}}\Delta_i,
\end{align}
where $j_1$ (resp.~$j_2$) is any index such that $z_{i, j_1}^\star = \ell_1$ (resp.~$z_{i, j_2}^\star = \ell_2$). We define
\begin{align}\label{def:S_i}
	\mathcal{S}_i := \left\{j \in [n_i]: \left\|\widehat{\bm{\theta}}_{\widehat{z}_{i,j}}^{(i)} - \bm{\theta}^{(i)\star}_{z_{i, j}^\star}\right\|_2 \geq \frac{\beta}{2}\sqrt{\frac{n_{-i}}{k_{-i}}}\Delta_i\right\}.
\end{align}
In view of \eqref{ineq172} and \eqref{ineq176}, with probability exceeding $1 - O(n^{-10})$,
\begin{align}\label{ineq177}
    \left|\mathcal{S}_i\right| &\leq \frac{\sum_{j=1}^{n_i}\left\|\widehat{\bm{\theta}}_{\widehat{z}_{i,j}}^{(i)} - \bm{\theta}^{(i)\star}_{z_{i, j}^\star}\right\|_2^2}{\left(\frac{\beta}{2}\sqrt{\frac{n_{-i}}{k_{-i}}}\Delta_i\right)^2}\notag\\&\leq \frac{4M\cdot C_6\left(\frac{k^6}{\beta^2}\left(n_1n_2n_3\right)^{1/2}\omega_{\sf max}^2\log^2 n + \frac{k^{13}}{\beta^6}n\omega_{\sf max}^2\log n\right)}{\left(\frac{\beta}{2}\sqrt{\frac{n_{-i}}{k_{-i}}}\Delta_i\right)^2}\notag\\
    &\leq \frac{\beta}{2}\frac{n_i}{k_i},
\end{align}
provided that 
$$\Delta_i \geq C_1\sqrt{M}\left(\frac{k^{9/2}}{\beta^{5/2}}\left(n_1n_2n_3\right)^{-1/4}\omega_{\sf max}\log n + \frac{k^{8}}{\beta^{9/2}}\left(n_1n_2n_3/n\right)^{-1/2}\omega_{\sf max}\sqrt{\log n}\right).$$ 
For each $1\leq i\leq 3$ and $\ell \in [k_i]$, denote by $\mathcal{N}_{i,\ell}$ the following set:
\begin{align}
	\mathcal{N}_{i,\ell} := \left\{j \in [n_i]: z_{i, j}^\star = \ell, j \in \mathcal{S}^c\right\}.
\end{align}
Then we can verify that with probability exceeding $1 - O(n^{-10})$, the following two important properties of the $\mathcal{N}_{i,\ell}$'s hold:
\begin{itemize}
	\item[1]. The $\mathcal{N}_{i,\ell}$'s are nonempty:
	\begin{align}\label{ineq178}
		\left|\mathcal{N}_{i,\ell}\right| \geq \big|\big\{j \in [n_i]: z_{i, j}^\star = \ell\big\}\big| - \left|\mathcal{S}_i\right| \stackrel{\eqref{def:beta}~\text{and}~\eqref{ineq177}}{\geq} \beta\frac{n_i}{k_i} - \frac{\beta}{2}\frac{n_i}{k_i} = \frac{\beta}{2}\frac{n_i}{k_i} > 0.
	\end{align}
    \item[2.] For any $i \in [3]$, the sets $\{\widehat{z}_{i,j}: j \in \mathcal{N}_{i, \ell}\}, \ell \in [k_i]$ are disjoint: for all $\ell_1 \neq \ell_2 \in [k_i], j_1 \in \mathcal{N}_{i, \ell_1}, j_2 \in \mathcal{N}_{i, \ell_2}$, 
    \begin{align*}
    	\left\|\widehat{\bm{\theta}}^{(i)}_{\widehat{z}_{i,j_1}} - \widehat{\bm{\theta}}^{(i)}_{\widehat{z}_{i,j_2}}\right\|_2 &\geq \left\|{\bm{\theta}}^{(i)\star}_{z_{i, j_1}^\star} - {\bm{\theta}}^{(i)\star}_{z_{i, j_2}^\star}\right\|_2 - \left\|\widehat{\bm{\theta}}^{(i)}_{\widehat{z}_{i,j_1}} - {\bm{\theta}}^{(i)\star}_{z_{i, j_1}^\star}\right\|_2 - \left\|\widehat{\bm{\theta}}^{(i)}_{\widehat{z}_{i,j_2}} - {\bm{\theta}}^{(i)\star}_{z_{i, j_2}^\star}\right\|_2\\
    	&= \left\|{\bm{\theta}}^{(i)\star}_{\ell_1} - {\bm{\theta}}^{(i)\star}_{\ell_2}\right\|_2 - \left\|\widehat{\bm{\theta}}^{(i)}_{\widehat{z}_{i,j_1}} - {\bm{\theta}}^{(i)\star}_{z_{i, j_1}^\star}\right\|_2 - \left\|\widehat{\bm{\theta}}^{(i)}_{\widehat{z}_{i,j_2}} - {\bm{\theta}}^{(i)\star}_{z_{i, j_2}^\star}\right\|_2\\
    	&\stackrel{\eqref{ineq175}}{>} \beta\sqrt{\frac{n_{-i}}{k_{-i}}}\Delta_i - \frac{\beta}{2}\sqrt{\frac{n_{-i}}{k_{-i}}}\Delta_i - \frac{\beta}{2}\sqrt{\frac{n_{-i}}{k_{-i}}}\Delta_i = 0,
    \end{align*}
which implies $\widehat{z}_{i,j_1} \neq \widehat{z}_{i,j_2}$ and further tells us $\{\widehat{z}_{i,j}: j \in \mathcal{N}_{i, \ell_1}\} \cap \{\widehat{z}_{i,j}: j \in \mathcal{N}_{i, \ell_2}\} = \emptyset$.
\end{itemize}
Therefore, with probability at least $1 - O(n^{-10})$, for any $i \in [3]$, there exists a permutation $\phi_i: [k_i] \to [k_i]$ such that
\begin{align}\label{permutation}
	\widehat{z}_{i,j} = \phi_i\left(z_{i, j}^\star\right),~\quad~\forall j \in \mathcal{S}_i^c, i \in [3].
\end{align}
In view of \eqref{ineq177}, with probability at least $1 - O(n^{-10})$,  one has
\begin{align}\label{ineq:loss}
	{\sf MCR}\left(\widehat{\bm{z}}_i, \bm{z}_i^\star\right) &\leq \frac{1}{n_i}\left|\left\{j \in [n_i]: \widehat{z}_{i,j} \neq \phi_i\left(z_{i, j}^\star\right)\right\}\right| \leq \frac{1}{n_i}\left|\mathcal{S}_i\right|\notag\\
	&\leq \frac{4M\cdot C_6\left(\frac{k^6}{\beta^2}\left(n_1n_2n_3\right)^{1/2}\omega_{\sf max}^2\log^2 n + \frac{k^{13}}{\beta^6}n\omega_{\sf max}^2\log n\right)}{n_i\left(\frac{\beta}{2}\sqrt{\frac{n_{-i}}{k_{-i}}}\Delta_i\right)^2}
\end{align}
for all $i \in [3]$, and   
\begin{align}\label{ineq:theta_difference}
	\left\|\widehat{\bm{\theta}}_{\phi_i\left(\ell\right)}^{(i)} - \bm{\theta}^{(i)\star}_{\ell}\right\|_2^2 &\leq \frac{\sum_{j=1}^{n_i}\left\|\widehat{\bm{\theta}}_{\widehat{z}_{i,j}}^{(i)} - \bm{\theta}^{(i)\star}_{z_{i, j}^\star}\right\|_2^2}{\left|\left\{j \in [n_i]: \widehat{z}_{i,j} = \phi_i\left(\ell\right), z_{i, j}^\star = \ell\right\}\right|}\notag\\
	&\leq \frac{\sum_{j=1}^{n_i}\left\|\widehat{\bm{\theta}}_{\widehat{z}_{i,j}}^{(i)} - \bm{\theta}^{(i)\star}_{z_{i, j}^\star}\right\|_2^2}{\left|\left\{j \in [n_i]: z_{i, j}^\star = \ell, j \in \mathcal{S}^c\right\}\right|}\notag\\
	&= \frac{\sum_{j=1}^{n_i}\left\|\widehat{\bm{\theta}}_{\widehat{z}_{i,j}}^{(i)} - \bm{\theta}^{(i)\star}_{z_{i, j}^\star}\right\|_2^2}{\left|\mathcal{N}_{i,\ell}\right|}\notag\\
	&\leq \frac{4M\cdot C\left(\frac{k^6}{\beta^2}\left(n_1n_2n_3\right)^{1/2}\omega_{\sf max}^2\log^2 n + \frac{k^{13}}{\beta^6}n\omega_{\sf max}^2\log n\right)}{\frac{\beta n_i}{2k_i}}\notag\\
	&\leq \frac{C_7M\left(\frac{k^7}{\beta^3}\left(n_1n_2n_3\right)^{1/2}\omega_{\sf max}^2\log^2 n + \frac{k^{14}}{\beta^7}n\omega_{\sf max}^2\log n\right)}{n_i}
\end{align}
for all $\ell \in [k_i]$, 
provided that $$\Delta_i \geq C_1\sqrt{M}\left(\frac{k^{9/2}}{\beta^{5/2}}\left(n_1n_2n_3\right)^{-1/4}\omega_{\sf max}\log n + \frac{k^{8}}{\beta^{9/2}}\left(n_1n_2n_3/n\right)^{-1/2}\omega_{\sf max}\sqrt{\log n}\right).$$ 
Here, the fourth line of \eqref{ineq:theta_difference} makes use of \eqref{ineq172}, \eqref{ineq176} and \eqref{ineq178}.

\paragraph{Step 4: proving $\mathsf{MCR}\big(\widehat{\bm{z}}_i, \bm{z}_i^\star\big) = 0$.} Finally, we would like to show that with probability exceeding $1 - O(n^{-10})$, $\widehat{\bm{z}}_i = \phi_i(\bm{z}_i^\star)$ for all $i \in [3]$. In view of Theorem \ref{thm:residual}, Lemma \ref{lm:noise}, \eqref{ineq157} and \eqref{ineq:incoherence_estimate}, with probability at least $1 - O(n^{-10})$, 
\begin{align}\label{ineq179}
	\left\|\left(\bm{I}_{n_i} - \bm{U}_i\bm{U}_i^\top\right)\bm{X}_i^\star\right\|_{2,\infty} &\lesssim \sqrt{\frac{\mu_ik_i^3}{n_i}}\left(k_i^2\sqrt{n_1}\omega_{\sf max}\log n + k_i\left(n_1n_2n_3\right)^{1/4}\omega_{\sf max}\log n\right)\notag\\
	&\stackrel{\eqref{ineq157}}{\lesssim} \frac{k^3}{\beta}\frac{\left(n_1n_2n_3\right)^{1/4}}{n_i^{1/2}}\omega_{\sf max}\log n
\end{align}
and
\begin{align}\label{ineq180}
	\left\|\bm{U}_i\bm{U}_i^\top\bm{E}_i\left(\bm{U}_{i+2}\bm{U}_{i+2}^\top \otimes \bm{U}_{i+1}\bm{U}_{i+1}^\top\right)\right\|_{2,\infty} &\leq \left\|\bm{U}_i\right\|_{2,\infty}\left\|\bm{U}_i^\top\bm{E}_i\left(\bm{U}_{i+2} \otimes \bm{U}_{i+1}\right)\right\|\notag\\
	&\stackrel{\eqref{ineq:incoherence_estimate}~\text{and}~\text{Lemma}~\ref{lm:noise}}{\lesssim} \sqrt{\frac{\mu_ik_i^3}{n_i}}\omega_{\sf max}\sqrt{n\left(\mu_1k_1^2\right)\left(\mu_2k_2^2\right)\left(\mu_3k_3^2\right)k^3\log n}\notag\\
	&\stackrel{\eqref{ineq157}}{\lesssim} \sqrt{\frac{k^4}{\beta^2n_i}}\frac{k^6}{\beta^3}\omega_{\sf max}\sqrt{n\log n}\notag\\
	&\leq \frac{k^8}{\beta^4}\omega_{\sf max}\sqrt{\frac{n\log n}{n_i}}
\end{align}
holds for all $i \in [3]$. 
Here, $\bm{X}_i^\star  = \mathcal{M}_i(\bm{\mathcal{X}}^\star)$ and $\bm{E}_i  = \mathcal{M}_i(\bm{\mathcal{E}})$. By virtue of \eqref{ineq:minimum_label}, we know that for any $i \in [3]$,
\begin{align}\label{ineq183}
	\left\{j \in [n_1]: \widehat{z}_{i,j} \neq \phi_i\left(z_{i, j}^\star\right)\right\} \subseteq \left\{j \in [n_1]: \exists \ell \in [k_i]~\text{s.t.}~\left\|\big(\widehat{\bm{B}}_i\big)_{j:}^\top - \widehat{\bm{b}}^{(i)}_{\ell}\right\|_2 \leq \left\|\big(\widehat{\bm{B}}_i\big)_{j:}^\top - \widehat{\bm{b}}^{(i)}_{\phi_i(z_{i, j}^\star)}\right\|_2\right\}.
\end{align}
For any fixed $\ell \neq \phi_i(z_{i, j}^\star) \in [k_i]$, recalling that $\widehat{\bm{\theta}}_\ell^{(i)} = \big(\bm{U}_{i+2} \otimes \bm{U}_{i+1}\big)\widehat{\bm{b}}_{\ell}^{(i)}$, one has
\begin{align}\label{ineq182}
	&\mathbbm{1}\left\{\left\|\big(\widehat{\bm{B}}_i\big)_{j:}^\top - \widehat{\bm{b}}^{(i)}_{\ell}\right\|_2 \leq \left\|\big(\widehat{\bm{B}}_i\big)_{j:}^\top - \widehat{\bm{b}}^{(i)}_{\phi_i(z_{i, j}^\star)}\right\|_2\right\}\notag\\ &\quad = \mathbbm{1}\left\{\left\|\big(\widehat{\bm{B}}_i\big)_{j:}^\top - \widehat{\bm{b}}^{(i)}_{\ell}\right\|_2 + \left\|\big(\widehat{\bm{B}}_i\big)_{j:}^\top - \widehat{\bm{b}}^{(i)}_{\phi_i(z_{i, j}^\star)}\right\|_2 \leq 2\left\|\big(\widehat{\bm{B}}_i\big)_{j:}^\top - \widehat{\bm{b}}^{(i)}_{\phi_i(z_{i, j}^\star)}\right\|_2\right\}\notag\\
	&\quad \leq \mathbbm{1}\left\{\left\|\widehat{\bm{b}}^{(i)}_{\ell} - \widehat{\bm{b}}^{(i)}_{\phi_i(z_{i, j}^\star)}\right\|_2 \leq 2\left\|\big(\widehat{\bm{B}}_i\big)_{j:}^\top - \widehat{\bm{b}}^{(i)}_{\phi_i(z_{i, j}^\star)}\right\|_2\right\}\notag\\
	&\quad \leq \mathbbm{1}\left\{\left\|\widehat{\bm{\theta}}^{(i)}_{\ell} - \widehat{\bm{\theta}}^{(i)}_{\phi_i(z_{i, j}^\star)}\right\|_2 \leq 2\left\|\big(\widehat{\bm{B}}_i\big)_{j:}^\top - \widehat{\bm{b}}^{(i)}_{\phi_i(z_{i, j}^\star)}\right\|_2\right\}\notag\\
	&\quad = \mathbbm{1}\left\{\left\|\widehat{\bm{\theta}}^{(i)}_{\ell} - \widehat{\bm{\theta}}^{(i)}_{\phi_i(z_{i, j}^\star)}\right\|_2 \leq 2\left\|\left(\bm{U}_i\right)_{j,:}\bm{U}_i^\top\bm{Y}_i\left(\bm{U}_{i+2} \otimes \bm{U}_{i+1}\right) - \widehat{\bm{b}}^{(i)\top}_{\phi_i(z_{i, j}^\star)}\right\|_2\right\},
\end{align}
where $\bm{Y}_i = \mathcal{M}_i(\bm{\mathcal{Y}}) = \bm{X}_i^\star + \bm{E}_i$. Note that 
\begin{align*}
	&\left\|\left(\bm{U}_i\right)_{j,:}\bm{U}_i^\top\bm{Y}_i\left(\bm{U}_{i+2} \otimes \bm{U}_{i+1}\right) - \widehat{\bm{b}}^{(i)\top}_{\phi_i(z_{i, j}^\star)}\right\|_2\\
	&\quad \leq \left\|\left(\bm{U}_i\right)_{j,:}\bm{U}_i^\top\bm{E}_i\left(\bm{U}_{i+2} \otimes \bm{U}_{i+1}\right)\right\|_2 + \left\|\left(\bm{U}_i\right)_{j,:}\bm{U}_i^\top\bm{X}_i^\star\left(\bm{U}_{i+2} \otimes \bm{U}_{i+1}\right) - \widehat{\bm{b}}^{(i)\top}_{\phi_i(z_{i, j}^\star)}\right\|_2\\
	&\quad \leq \left\|\bm{U}_i\bm{U}_i^\top\bm{E}_i\left(\bm{U}_{i+2} \otimes \bm{U}_{i+1}\right)\right\|_{2,\infty} + \left\|\left(\bm{U}_i\right)_{j,:}\bm{U}_i^\top\bm{X}_i^\star\left(\bm{U}_{i+2} \otimes \bm{U}_{i+1}\right) - \widehat{\bm{\theta}}^{(i)\top}_{\phi_i(z_{i, j}^\star)}\left(\bm{U}_{i+2} \otimes \bm{U}_{i+1}\right)\right\|_2\\
	&\quad \leq \left\|\bm{U}_i\bm{U}_i^\top\bm{E}_i\left(\bm{U}_{i+2} \otimes \bm{U}_{i+1}\right)\right\|_{2,\infty} + \left\|\left(\bm{U}_i\right)_{j,:}\bm{U}_i^\top\bm{X}_i^\star - \widehat{\bm{\theta}}^{(i)\top}_{\phi_i(z_{i, j}^\star)}\right\|_2\\
	&\quad \leq \left\|\bm{U}_i\bm{U}_i^\top\bm{E}_i\left(\bm{U}_{i+2} \otimes \bm{U}_{i+1}\right)\right\|_{2,\infty} + \left\|\left(\bm{U}_i\right)_{j,:}\bm{U}_i^\top\bm{X}_i^\star - \left(\bm{X}_i^\star\right)_{j,:}\right\|_2 + \left\|\left(\bm{X}_i^\star\right)_{j,:} - \widehat{\bm{\theta}}^{(i)\top}_{\phi_i(z_{i, j}^\star)}\right\|_2\\
	&\quad \leq \left\|\bm{U}_i\bm{U}_i^\top\bm{E}_i\left(\bm{U}_{i+2} \otimes \bm{U}_{i+1}\right)\right\|_{2,\infty} + \left\|\left(\bm{I}_{n_i} - \bm{U}_i\bm{U}_i^\top\right)\bm{X}_i^\star\right\|_{2,\infty} + \left\|\left(\bm{X}_i^\star\right)_{j,:} - \widehat{\bm{\theta}}^{(i)\top}_{\phi_i(z_{i, j}^\star)}\right\|_2\\
	&\quad = \left\|\bm{U}_i\bm{U}_i^\top\bm{E}_i\left(\bm{U}_{i+2} \otimes \bm{U}_{i+1}\right)\right\|_{2,\infty} + \left\|\left(\bm{I}_{n_i} - \bm{U}_i\bm{U}_i^\top\right)\bm{X}_i^\star\right\|_{2,\infty} + \left\|\widehat{\bm{\theta}}^{(i)}_{\phi_i(z_{i, j}^\star)} - \bm{\theta}^{(i)\star}_{z_{i, j}^\star}\right\|_2\\
	&\quad \leq \left\|\bm{U}_i\bm{U}_i^\top\bm{E}_i\left(\bm{U}_{i+2} \otimes \bm{U}_{i+1}\right)\right\|_{2,\infty} + \left\|\left(\bm{I}_{n_i} - \bm{U}_i\bm{U}_i^\top\right)\bm{X}_i^\star\right\|_{2,\infty} + \sup_{a \in [k_i]}\left\|\widehat{\bm{\theta}}^{(i)}_{\phi_i(a)} - \bm{\theta}^{(i)\star}_{a}\right\|_2\\
\end{align*}
and
\begin{align*}
	\left\|\widehat{\bm{\theta}}^{(i)}_{\ell} - \widehat{\bm{\theta}}^{(i)}_{\phi_i(z_{i, j}^\star)}\right\|_2 &\geq \left\|\bm{\theta}^{(i)\star}_{\phi_i^{-1}(\ell)} - \bm{\theta}^{(i)\star}_{z_{i, j}^\star}\right\|_2 - \left\|\widehat{\bm{\theta}}^{(i)}_{\ell} - \bm{\theta}^{(i)\star}_{\phi_i^{-1}(\ell)}\right\|_2 - \left\|\widehat{\bm{\theta}}^{(i)}_{\phi_i(z_{i, j}^\star)} - \bm{\theta}^{(i)\star}_{z_{i, j}^\star}\right\|_2\\
	&\stackrel{\eqref{ineq175}}{\geq} \beta\sqrt{\frac{n_{-i}}{k_{-i}}}\Delta_i - 2\sup_{a \in [k_i]}\left\|\widehat{\bm{\theta}}^{(i)}_{\phi_i(a)} - \bm{\theta}^{(i)\star}_{a}\right\|_2.
\end{align*}
Putting the previous two inequalities, \eqref{ineq:theta_difference}, \eqref{ineq179} and \eqref{ineq180} together yields: with probability at least $1 - O(n^{-10})$, 
\begin{align}\label{ineq181}
	&\left\|\widehat{\bm{\theta}}^{(i)}_{\ell} - \widehat{\bm{\theta}}^{(i)}_{\phi_i(z_{i, j}^\star)}\right\|_2 - 2\left\|\left(\bm{U}_i\right)_{j,:}\bm{U}_i^\top\bm{Y}_i\left(\bm{U}_{i+2} \otimes \bm{U}_{i+1}\right) - \widehat{\bm{b}}^{(i)\top}_{\phi_i(z_{i, j}^\star)}\right\|_2\notag\\
	&\quad \geq \beta\sqrt{\frac{n_{-i}}{k_{-i}}}\Delta_i - 2\sup_{a \in [k_i]}\left\|\widehat{\bm{\theta}}^{(i)}_{\phi_i(a)} - \bm{\theta}^{(i)\star}_{a}\right\|_2\notag\\
	&\qquad - 2\left(\left\|\bm{U}_i\bm{U}_i^\top\bm{E}_i\left(\bm{U}_{i+2} \otimes \bm{U}_{i+1}\right)\right\|_{2,\infty} + \left\|\left(\bm{I}_{n_i} - \bm{U}_i\bm{U}_i^\top\right)\bm{X}_i^\star\right\|_{2,\infty} + \sup_{a \in [k_i]}\left\|\widehat{\bm{\theta}}^{(i)}_{\phi_i(a)} - \bm{\theta}^{(i)\star}_{a}\right\|_2\right)\notag\\
	&\quad \geq \beta\sqrt{\frac{n_{-i}}{k_{-i}}}\Delta_i - 4\sqrt{\frac{C_7M\left(\frac{k^7}{\beta^3}\left(n_1n_2n_3\right)^{1/2}\omega_{\sf max}^2\log^2 n + \frac{k^{14}}{\beta^7}n\omega_{\sf max}^2\log n\right)}{n_i}}\notag\\
	&\qquad - C_8\frac{k^3}{\beta}\frac{\left(n_1n_2n_3\right)^{1/4}}{n_i^{1/2}}\omega_{\sf max}\log n - C_8\frac{k^8}{\beta^4}\omega_{\sf max}\sqrt{\frac{n\log n}{n_i}}\notag\\
	&\quad > 0
\end{align}
holds for all $\ell \in [k_i]$, 
provided that $$\Delta_i/\omega_{\sf max} \geq C_1\sqrt{M}\left(\frac{k^{9/2}}{\beta^{5/2}}\left(n_1n_2n_3\right)^{-1/4}\log n + \frac{k^{9}}{\beta^{5}}\left(n_1n_2n_3/n\right)^{-1/2}\sqrt{\log n}\right).$$

Combining \eqref{ineq183}, \eqref{ineq182} and \eqref{ineq181}, we arrive at
\begin{align*}
	\mathsf{MCR}\big(\widehat{\bm{z}}_i, \bm{z}_i^\star\big) = 0,~\quad~\forall i \in [3]
\end{align*}
with probability exceeding $1 - O(n^{-10})$.

    \section{Proof of Theorem \ref{thm:oracle_two_to_infty}}\label{sec:proof_theorem_oracle_two_to_infty}
\paragraph{Part (a): proving $\mathcal{A} \neq \emptyset$.} Let
\begin{align}\label{def:bar_r}
	\overline{r} = \begin{cases}
		\max\mathcal{A}, \quad & \text{if } \mathcal{A} \neq \emptyset;\\
		0, \quad & \text{otherwise}.
	\end{cases},
\end{align}
We claim that 
\begin{align}\label{ineq1}
	\sigma_{\overline{r}+1}^\star \leq 2C_0r\big[(m_1m_2)^{1/4} + rm_1^{1/2}\big]\omega_{\sf max}\log m.
\end{align}
In fact, if $\sigma_{\overline{r}+1}^\star  = 0$, then \eqref{ineq1} clearly holds. If $\sigma_{\overline{r}+1}^\star  > 0$, then we must have $\overline{r} < r$. Let
\begin{align*}
	i = \min\left\{j: \overline{r}+1 \leq j \leq r, \sigma_j^\star \geq \frac{4r}{4r - 1}\sigma_{j+1}^\star\right\}.
\end{align*}
Note that such $i$ does exist as the largest $j \leq r$ satisfying $\sigma_{j}^\star > 0$ must obey $\sigma_j^\star \geq \frac{4r}{4r - 1}\sigma_{j+1}^\star$. The definition of $\overline{r}$ immediately tells us that $\sigma_i^\star \leq C_0r[(m_1m_2)^{1/4} + rm_1^{1/2}]\log m$ and consequently one has
\begin{align*}
	\sigma_{\overline{r}+1}^\star = \sigma_i^\star\prod_{j=\overline{r}+1}^{i-1}\frac{\sigma_j^\star}{\sigma_{j+1}^\star} \leq \sigma_i^\star\left(\frac{4r}{4r-1}\right)^{i-\overline{r} - 1} \leq \sigma_i^\star\cdot \left(1 + \frac{1}{3r}\right)^r \leq 2C_0r\big[(m_1m_2)^{1/4} + rm_1^{1/2}\big]\omega_{\sf max}\log m.
\end{align*}
The first inequality holds due to the definition of $i$. This combined with the assumption on $\sigma_1^\star$ reveals that $\overline{r} \neq 0$, i.e., $\mathcal{A} \neq \emptyset$ and
\begin{align}\label{def:bar_r_new}
	\overline{r} = \max\mathcal{A}.
\end{align}

\paragraph{Part (b): proving \eqref{ineq:oracle}.} The rest of the proof is devoted to proving \eqref{ineq:oracle}. Letting
\begin{subequations}
\begin{align}\label{def:U_1}
	\bm{U}^{\star(1)} &= \left[\bm{u}_1^\star, \dots, \bm{u}_{\overline{r}}^\star\right],~\qquad~\bm{\Sigma}^{\star(1)} = {\sf diag}\left(\sigma_1^\star, \dots, \sigma_{\overline{r}}^\star\right),~\qquad~\bm{V}^{\star(1)} = \left[\bm{v}_1^\star, \dots, \bm{v}_{\overline{r}}^\star\right],\\
	\bm{U}^{\star(2)} &= \left[\bm{u}_{\overline{r}+1}^\star, \dots, \bm{u}_{r}^\star\right],~\qquad~\bm{\Sigma}^{\star(2)} = {\sf diag}\left(\sigma_{\overline{r}+1}^\star, \dots, \sigma_{r}^\star\right),~\qquad~\bm{V}^{\star(2)} = \left[\bm{v}_{\overline{r}+1}^\star, \dots, \bm{v}_{r}^\star\right],
\end{align}
\end{subequations}
we can derive 
\begin{align}
	\bm{U}^\star = \big[\bm{U}^{\star(1)}\ \bm{U}^{\star(2)}\big],~\qquad~\bm{V}^\star = \big[\bm{V}^{\star(1)}\ \bm{V}^{\star(2)}\big],~\qquad~\bm{\Sigma}^\star = \begin{bmatrix}
		\bm{\Sigma}^{\star(1)}  &\bm{0}\\
		\bm{0}  &\bm{\Sigma}^{\star(2)}
	\end{bmatrix}.
\end{align}
Let the SVD of $\bm{U}^{\star(1)}\bm{\Sigma}^{\star(1)} + \bm{E}\bm{V}^{\star(1)}$ be denoted by 
\begin{align}\label{svd1}
	\widetilde{\bm{U}}^{(1)}\widetilde{\bm{\Sigma}}^{(1)}\widetilde{\bm{W}}^{(1)\top} = \bm{U}^{\star(1)}\bm{\Sigma}^{\star(1)} + \bm{E}\bm{V}^{\star(1)}.
\end{align}
Here, $\widetilde{\bm{U}}^{(1)} \in \mathcal{O}^{n_1, \overline{r}}$, $\widetilde{\bm{\Sigma}}^{(1)} = {\sf diag}(\widetilde{\sigma}_1, \dots, \widetilde{\sigma}_{\overline{r}})$ where $\widetilde{\sigma}_1 \geq \cdots \geq \widetilde{\sigma}_{\overline{r}} \geq 0$ , $\widetilde{\bm{W}}^{(1)} \in \mathcal{O}^{\overline{r},\overline{r}}$. Then one has
\begin{align}\label{svd2}
	\big(\bm{U}^{\star(1)}\bm{\Sigma}^{\star(1)} + \bm{E}\bm{V}^{\star(1)}\big)\big(\bm{U}^{\star(1)}\bm{\Sigma}^{\star(1)} + \bm{E}\bm{V}^{\star(1)}\big)^\top = \widetilde{\bm{U}}^{(1)}\big(\widetilde{\bm{\Sigma}}^{(1)}\big)^2\widetilde{\bm{U}}^{(1)\top}.
\end{align}
We can then write $\bm{M}^{\sf oracle}$ as follows:
\begin{align}\label{eq:decomposition_M_oracle}
	\bm{M}^{\sf oracle} 
	&= \left(\bm{X}^\star + \bm{E}\right)\bm{V}^\star\bm{V}^{\star\top}\left(\bm{X}^\star + \bm{E}\right)^\top + \mathcal{P}_{\sf off\text{-}diag}\left(\bm{E}\bm{E}^\top - \bm{E}\bm{V}^\star\bm{V}^{\star\top}\bm{E}^\top\right)\notag\\
	&= \left(\bm{X}^\star + \bm{E}\right)\bm{V}^{\star(1)}\bm{V}^{\star(1)\top}\left(\bm{X}^\star + \bm{E}\right)^\top + \left(\bm{X}^\star + \bm{E}\right)\bm{V}^{\star(2)}\bm{V}^{\star(2)\top}\left(\bm{X}^\star + \bm{E}\right)^\top\notag\\
	&\quad + \mathcal{P}_{\sf off\text{-}diag}\left(\bm{E}\bm{E}^\top - \bm{E}\bm{V}^\star\bm{V}^{\star\top}\bm{E}^\top\right)\notag\\
	&= \underbrace{\big(\bm{U}^{\star(1)}\bm{\Sigma}^{\star(1)} + \bm{E}\bm{V}^{\star(1)}\big)\big(\bm{U}^{\star(1)}\bm{\Sigma}^{\star(1)} + \bm{E}\bm{V}^{\star(1)}\big)^\top + \mathcal{P}_{(\widetilde{\bm{U}}^{(1)})_{\perp}}\bm{U}^{\star(2)}\big(\bm{\Sigma}^{\star(2)}\big)^2\bm{U}^{\star(2)\top}\mathcal{P}_{(\widetilde{\bm{U}}^{(1)})_{\perp}}}_{=: \widetilde{\bm{M}}}\notag\\
	&\quad + \underbrace{\bm{U}^{\star(2)}\bm{\Sigma}^{\star(2)}\bm{V}^{\star(2)\top}\bm{E}^\top + \bm{E}\bm{V}^{\star(2)}\bm{\Sigma}^{\star(2)}\bm{U}^{\star(2)\top} + \bm{E}\bm{V}^{\star(2)}\bm{V}^{\star(2)\top}\bm{E}^\top}_{=:\bm{Z}_1}\notag\\
	&\quad + \underbrace{\mathcal{P}_{\widetilde{\bm{U}}^{(1)}}\bm{U}^{\star(2)}\big(\bm{\Sigma}^{\star(2)}\big)^2\bm{U}^{\star(2)\top}\mathcal{P}_{\big(\widetilde{\bm{U}}^{(1)}\big)_{\perp}} + \bm{U}^{\star(2)}\big(\bm{\Sigma}^{\star(2)}\big)^2\bm{U}^{\star(2)\top}\mathcal{P}_{\widetilde{\bm{U}}^{(1)}}}_{=:\bm{Z}_2}\notag\\
	&\quad + \underbrace{\mathcal{P}_{\sf off\text{-}diag}\left(\bm{E}\bm{E}^\top - \bm{E}\bm{V}^\star\bm{V}^{\star\top}\bm{E}^\top\right)}_{=:\bm{Z}_3}.
\end{align}
For convenience, we shall also let 
\begin{align}\label{eq:Z}
	\bm{Z} = \bm{Z}_1 + \bm{Z}_2 + \bm{Z}_3.
\end{align}

\subsection{Several key lemmas}
Before proceeding, we first introduce the following lemma, which allows us to bound an infinite sum of $\ell_{2,\infty}$ norms of perturbation matrix polynomials instead of bounding $\left\|\bm{U}_{:,1: r'}^{\sf oracle}\bm{U}_{:,1: r'}^{\sf oracle\top} - \widetilde{\bm{U}}_{:,1: r'}\widetilde{\bm{U}}_{:,1: r'}^{\top}\right\|_{2,\infty}$ directly.
\begin{lemma}\label{lm:space_estimate_expansion}
	Suppose that $\bm{M} = \overline{\bm{M}} + \bm{Z} \in \bbR^{n \times n}$, where $\overline{\bm{M}}$ and $\bm{Z}$ are both symmetric matrices. 
	Assume that $\overline{\bm{M}}$  is a matrix with rank not exceeding $r$ and has eigenvalues $\overline\lambda_1 \geq \dots \geq \overline\lambda_{r} \geq 0 $  
	and rank-$r$ leading eigenspace $\overline{\bm{U}} = [\overline{\bm{u}}_1,\ \dots,\ \overline{\bm{u}}_r]$ (so that $\overline{\bm{u}}_i$ is the eigenvector associated with $\overline{\lambda}_i$). 
	If there exists some $r_1$ obeying $1 \leq r_1 \leq r$ and
	\begin{align}
		\overline\lambda_{r_1} - \overline\lambda_{r_1 + 1} > 2\|\bm{Z}\|,
	\end{align}
	then it holds that
\begin{subequations}
		\begin{align}
		\big\|\overline{\bm{U}}_1\overline{\bm{U}}_1^\top - \bm{U}_1\bm{U}_1^\top\big\|_{2,\infty} &\leq \frac{8}{\pi}\sum_{k \geq 1}\left(\frac{2}{\overline{\lambda}_{r_1} - \overline{\lambda}_{r_1+1}}\right)^{k}\sum_{0 \leq j_1, \dots, j_{k+1} \leq r\atop \left(j_1, ..., j_{k+1}\right) \neq \bm{0}}\left\|\overline{\bm{P}}_{j_1}\bm{Z}\overline{\bm{P}}_{j_2}\bm{Z}\cdots\bm{Z}\overline{\bm{P}}_{j_{k+1}}\right\|_{2,\infty},\label{ineq:space_estimate_expansion}\\
		\big\|\big(\overline{\bm{U}}_1\overline{\bm{U}}_1^\top - \bm{U}_1\bm{U}_1^\top\big)\overline{\bm{M}}\big\|_{2,\infty} &\leq \frac{40}{\pi}\sum_{k \geq 1}\overline{\lambda}_{r_1}\left(\frac{2}{\overline{\lambda}_{r_1} - \overline{\lambda}_{r_1+1}}\right)^{k}\sum_{0 \leq j_1, \dots, j_{k+1} \leq r\atop \left(j_1, ..., j_{k+1}\right) \neq \bm{0}}\left\|\overline{\bm{P}}_{j_1}\bm{Z}\overline{\bm{P}}_{j_2}\bm{Z}\cdots\bm{Z}\overline{\bm{P}}_{j_{k+1}}\right\|_{2,\infty}. \label{ineq:residual_expansion}
	\end{align} 
\end{subequations}
	Here, $\overline{\bm{U}}_1$ and $\bm{U}_1$ denote the rank-$r_1$ leading eigen-subspace of $\overline{\bm{M}}$ and $\bm{M}$, respectively;  
	and we denote $\overline{\bm{P}}_j = \overline{\bm{u}}_j\overline{\bm{u}}_j^\top$ for any $1 \leq j \leq r$ 
	and $\overline{\bm{P}}_0 = \overline{\bm{U}}_{\perp}\overline{\bm{U}}_{\perp}^\top$. 
\end{lemma}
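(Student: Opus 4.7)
The plan is to combine the Riesz/Dunford contour-integral representation of the spectral projector with a Neumann-series expansion of the perturbed resolvent, in the spirit of Kato's analytic perturbation theory. Under the hypothesis $\overline\lambda_{r_1} - \overline\lambda_{r_1+1} > 2\|\bm{Z}\|$, Weyl's inequality ensures that the top $r_1$ eigenvalues of both $\bm{M}$ and $\overline{\bm{M}}$ remain separated from their respective remaining eigenvalues by at least $\rho := (\overline\lambda_{r_1}-\overline\lambda_{r_1+1})/2 > \|\bm{Z}\|$, so one may pick a positively oriented simple closed contour $\Gamma \subset \mathbb{C}$ enclosing exactly the top $r_1$ eigenvalues of each matrix while maintaining distance at least $\rho$ from every $\overline\lambda_j$. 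The Dunford/Riesz formula then gives
\[
\bm{U}_1\bm{U}_1^\top - \overline{\bm{U}}_1\overline{\bm{U}}_1^\top = \frac{1}{2\pi i}\oint_\Gamma \bigl[(z\bm{I}-\bm{M})^{-1} - (z\bm{I}-\overline{\bm{M}})^{-1}\bigr]\,dz.
\]

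I would next substitute the Neumann expansion $(z\bm{I}-\bm{M})^{-1} - (z\bm{I}-\overline{\bm{M}})^{-1} = \sum_{k\geq 1}[(z\bm{I}-\overline{\bm{M}})^{-1}\bm{Z}]^k (z\bm{I}-\overline{\bm{M}})^{-1}$, which converges uniformly on $\Gamma$ because $\|\bm{Z}\|/\rho<1$, and then expand each resolvent in its spectral basis via $(z\bm{I}-\overline{\bm{M}})^{-1} = \sum_{j=0}^{r}\overline{\bm{P}}_j/(z-\overline\lambda_j)$ (with the convention $\overline\lambda_0=0$ and $\overline{\bm{P}}_0 = \overline{\bm{U}}_\perp\overline{\bm{U}}_\perp^\top$). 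Swapping the absolutely convergent sums with the contour integral produces
\[
\bm{U}_1\bm{U}_1^\top - \overline{\bm{U}}_1\overline{\bm{U}}_1^\top = \sum_{k\geq 1}\sum_{j_1,\dots,j_{k+1}}\alpha_{k;\bm{j}}\,\overline{\bm{P}}_{j_1}\bm{Z}\overline{\bm{P}}_{j_2}\bm{Z}\cdots\bm{Z}\overline{\bm{P}}_{j_{k+1}},
\]
where $\alpha_{k;\bm{j}} := \frac{1}{2\pi i}\oint_\Gamma \prod_{l=1}^{k+1}(z-\overline\lambda_{j_l})^{-1}\,dz$. The all-zero tuple $\bm{j}=\bm{0}$ contributes integrand $z^{-(k+1)}$ with $k+1\geq 2$, whose integral vanishes, justifying its exclusion from the sum. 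Using $|z-\overline\lambda_{j_l}|\geq\rho$ uniformly on $\Gamma$, a direct arc-length estimate yields $|\alpha_{k;\bm{j}}| \lesssim \rho^{-k} = (2/(\overline\lambda_{r_1}-\overline\lambda_{r_1+1}))^k$. Combining this with the triangle inequality for $\|\cdot\|_{2,\infty}$ and the submultiplicative estimate $\|\bm{A}\bm{B}\|_{2,\infty}\leq\|\bm{A}\|_{2,\infty}\|\bm{B}\|$ yields \eqref{ineq:space_estimate_expansion}, and the constant $8/\pi$ arises by tracking arc-length and orientation factors.

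For \eqref{ineq:residual_expansion}, right-multiplying the expansion by $\overline{\bm{M}}$ converts each trailing projector via $\overline{\bm{P}}_{j_{k+1}}\overline{\bm{M}} = \overline\lambda_{j_{k+1}}\overline{\bm{P}}_{j_{k+1}}$, so the new coefficient becomes $\overline\lambda_{j_{k+1}}\alpha_{k;\bm{j}}$. The trick for extracting the sharp prefactor $\overline\lambda_{r_1}$ (rather than the naive $\overline\lambda_1$) is the algebraic identity $\overline\lambda_{j_{k+1}}/(z-\overline\lambda_{j_{k+1}}) = z/(z-\overline\lambda_{j_{k+1}}) - 1$, which converts the extra eigenvalue factor either into an integrand of the same shape with a single $z$ pulled outside, or into an integral with one fewer resolvent factor (of lower order in $\rho^{-1}$). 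A residue analysis of the resulting contour integrals --- using the gap condition to bound the differences $|\overline\lambda_{j_l} - \overline\lambda_{j_m}|$ that appear in the denominators --- shows that every nonzero residue is of the form $\overline\lambda_{j_m}/\prod_{l\neq m}(\overline\lambda_{j_m}-\overline\lambda_{j_l})$ with $j_m$ inside $\Gamma$, and is bounded by $\overline\lambda_{r_1}\rho^{-k}$ (the key observation being that when $\overline\lambda_{j_m} = \overline\lambda_1 \gg \overline\lambda_{r_1}$ the large numerator is dominated by correspondingly large gaps in the denominator, whereas when $j_m$ corresponds to a smaller top eigenvalue one loses at most $\overline\lambda_{r_1}$). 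The constant $40/\pi$ then falls out of the same arc-length/orientation bookkeeping.

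The main technical obstacle is engineering a contour $\Gamma$ (or, more realistically, a small family of contours treated via residues) that simultaneously (i) encloses exactly $\overline\lambda_1,\dots,\overline\lambda_{r_1}$ for both $\bm{M}$ and $\overline{\bm{M}}$, (ii) maintains the uniform distance $\rho$ to every $\overline\lambda_j$, and (iii) delivers the sharp $\overline\lambda_{r_1}$ prefactor in \eqref{ineq:residual_expansion} --- a single circle across the spectral gap cannot both enclose $\overline\lambda_1$ and satisfy $|z|\lesssim\overline\lambda_{r_1}$, so a more delicate (e.g.\ rectangular or piecewise) contour together with a careful residue analysis is required. A secondary care point is that every estimate must be executed at the $\ell_{2,\infty}$ level, which is why the expansion is carried all the way down to individual monomials $\overline{\bm{P}}_{j_1}\bm{Z}\overline{\bm{P}}_{j_2}\bm{Z}\cdots\bm{Z}\overline{\bm{P}}_{j_{k+1}}$ before the triangle inequality is applied, rather than stopping at a cruder operator-norm bound on $\|(z\bm{I}-\overline{\bm{M}})^{-1}\bm{Z}(z\bm{I}-\overline{\bm{M}})^{-1}\cdots\|$.
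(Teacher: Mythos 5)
Your overall architecture — Riesz/Dunford projector representation, Neumann expansion of the resolvent difference, spectral decomposition of the unperturbed resolvent, and term-by-term $\ell_{2,\infty}$ estimates after pulling the contour-integral coefficients outside — is indeed the skeleton of the paper's argument. But there is a concrete gap at the central estimate, and it is the one that makes the lemma interesting.

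You claim that ``using $|z - \overline\lambda_{j_l}| \geq \rho$ uniformly on $\Gamma$, a direct arc-length estimate yields $|\alpha_{k;\bm{j}}| \lesssim \rho^{-k}$.'' This is false as stated. Any single contour $\Gamma$ that encloses $\overline\lambda_1, \dots, \overline\lambda_{r_1}$ while maintaining distance $\geq \rho$ from every $\overline\lambda_j$ must have arc length at least $\sim \overline\lambda_1 - \overline\lambda_{r_1}$, so the direct estimate gives $|\alpha_{k;\bm{j}}| \lesssim \bigl(\overline\lambda_1 - \overline\lambda_{r_1}\bigr)\,\rho^{-(k+1)}$, not $\rho^{-k}$. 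That surplus factor $(\overline\lambda_1-\overline\lambda_{r_1})/\rho$ is precisely the condition-number blow-up the lemma is engineered to avoid; getting $\rho^{-k}$ requires shrinking the contour, which a single $\Gamma$ chosen independently of the index tuple $\bm{j}$ cannot do. You flag the need for a ``more delicate'' contour only in connection with the $\overline\lambda_{r_1}$ prefactor in \eqref{ineq:residual_expansion}, but the same obstruction already sinks \eqref{ineq:space_estimate_expansion}: the bound $|\alpha_{k;\bm{j}}| \lesssim \rho^{-k}$ is not a bookkeeping issue, it is the crux.

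The paper resolves this by deforming the contour \emph{per monomial}. If no index $j_\ell$ lies in $\{1,\dots,r_1\}$ the integrand is analytic inside $\gamma_1$ and the term vanishes outright by Cauchy's theorem. Otherwise, one contracts to a rectangle $\gamma_2$ that straddles only the enclosed eigenvalues $\{\overline\lambda_j : j \in \mathcal{J}\}$, of length $L(\gamma_2) \asymp (\overline\lambda_{j_{\sf min}} - \overline\lambda_{j_{\sf max}}) + \rho$. The $\overline\lambda_{j_{\sf min}} - \overline\lambda_{j_{\sf max}}$ contribution to the arc length is then cancelled by exploiting that the two extreme poles $\overline\lambda_{j_{\sf min}}$, $\overline\lambda_{j_{\sf max}}$ cannot both be close to $\eta \in \gamma_2$ simultaneously — yielding a product bound of the form $\rho^{-1}\cdot \big(\rho + \overline\lambda_{j_{\sf min}} - \overline\lambda_{j_{\sf max}}\big)^{-1}$ for those two factors (cf.\ the use of $\min\{a/b,\,c/d\} \leq (a+c)/(b+d)$). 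For \eqref{ineq:residual_expansion} the analysis bifurcates further (whether the enclosed indices cluster or spread out, whether one of the two extremes is a singleton), with residue-level computations and an even more involved contour partition ($\gamma_3$–$\gamma_7$). None of that is present in your sketch, which stops at the arc-length bound that does not hold. The idea you flagged — that a naive contour cannot produce the sharp prefactor — is the right instinct, but it needs to be applied already to \eqref{ineq:space_estimate_expansion}, and the tuple-dependent contour deformation is the missing mechanism.
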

The proof of Lemma \ref{lm:space_estimate_expansion} can be found in Section \ref{proof:lm_space_estimate_expansion}. In addition, the following lemmas deliver sharp $\ell_{2,\infty}$ guarantees for some polynomials of the noise matrix.
\begin{lemma}[\cite{zhou2023deflated}, Lemma 2]\label{lm:power_V}
	Suppose that Assumption \ref{assump:noise_matrix} holds. Then we have, with probability at least $1 - O(m^{-10})$,
	\begin{align}\label{ineq:power_V}
		\left\|\left[\mathcal{P}_{\sf off\text{-}diag}\left(\bm{E}\bm{E}^\top\right)\right]^k\bm{E}\bm{V}^\star\right\|_{2,\infty} \leq C_3\sqrt{\mu r}\left(C_3\left(\sqrt{m_1m_2} + m_1\right)\omega_{\sf max}^2\log^2 m\right)^k\omega_{\sf max}\log m
	\end{align}
    for all $0 \leq k \leq \log n$, where $C_3$ is some sufficiently large constant.
\end{lemma}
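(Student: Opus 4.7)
\textbf{Proof plan for Lemma \ref{lm:power_V}.} My plan is to argue by induction on $k$, combining a carefully iterated leave-one-out construction with matrix Bernstein concentration at each level. Write $\bm{W}^{(k)} \coloneqq \bigl[\mathcal{P}_{\sf off\text{-}diag}(\bm{E}\bm{E}^\top)\bigr]^{k}\bm{E}\bm{V}^\star \in \bbR^{m_1\times r}$, and set $\alpha_k \coloneqq C_3\sqrt{\mu r}\bigl(C_3(\sqrt{m_1m_2}+m_1)\omega_{\sf max}^2\log^2 m\bigr)^{k}\omega_{\sf max}\log m$ for the target bound on $\|\bm{W}^{(k)}\|_{2,\infty}$. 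The base case $k=0$ reduces to bounding $\|\bm{E}\bm{V}^\star\|_{2,\infty}$: each row $(\bm{E}\bm{V}^\star)_{i,:}$ is an $r$-vector of independent sums of zero-mean entries with variance $\leq \omega_{\sf max}^2$ in each coordinate, whose magnitudes can be controlled by truncation at level $B$ (Assumption \ref{assump:noise_matrix}) followed by Bernstein; combined with the incoherence $\|\bm{V}^\star\|_{2,\infty}\leq \sqrt{\mu r/m_2}$, a union bound over the $m_1 r$ scalar entries yields $\|\bm{E}\bm{V}^\star\|_{2,\infty}\lesssim \omega_{\sf max}\sqrt{\mu r}\log m$, matching $\alpha_0$.

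For the inductive step, fix $i\in[m_1]$ and decompose the $i$-th row
\begin{align*}
\bm{W}^{(k+1)}_{i,:} = \sum_{j\neq i}\bigl(\bm{E}_{i,:}\bm{E}_{j,:}^\top\bigr)\bm{W}^{(k)}_{j,:}.
\end{align*}
The subtle point is that $\bm{W}^{(k)}$ itself depends on the $i$-th row of $\bm{E}$, which would ruin any attempt to condition on $\bm{E}_{i,:}$. To decouple, I introduce a leave-one-out surrogate: let $\bm{E}^{(i)}$ be the matrix obtained from $\bm{E}$ by zeroing out its $i$-th row, and define $\bm{W}^{(k),(i)} \coloneqq \bigl[\mathcal{P}_{\sf off\text{-}diag}(\bm{E}^{(i)}\bm{E}^{(i)\top})\bigr]^{k}\bm{E}^{(i)}\bm{V}^\star$. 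Now $\bm{W}^{(k),(i)}$ is independent of $\bm{E}_{i,:}$, so conditional on $\bm{E}^{(i)}$ the vector $\sum_{j\neq i}(\bm{E}_{i,:}\bm{E}_{j,:}^{(i)\top})\bm{W}^{(k),(i)}_{j,:}$ is a linear functional of the $B$-truncated entries $\{E_{i,j}\}_{j\in[m_2]}$, and matrix Bernstein gives an $\ell_2$ bound of order $\omega_{\sf max}\bigl(\sqrt{m_1m_2}+m_1\bigr)\omega_{\sf max}\log^2 m \cdot \|\bm{W}^{(k),(i)}\|_{2,\infty}$, up to polylogs.

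The remaining piece is to control the replacement error $\bm{W}^{(k)}-\bm{W}^{(k),(i)}$, which is a telescoping sum of terms in which a single row swap propagates through at most $k$ factors of $\mathcal{P}_{\sf off\text{-}diag}(\bm{E}\bm{E}^\top)$; since $\|\mathcal{P}_{\sf off\text{-}diag}(\bm{E}\bm{E}^\top)\|\lesssim (\sqrt{m_1m_2}+m_1)\omega_{\sf max}^2\log^2 m$ with high probability (by standard heteroskedastic spectral-norm bounds, e.g.\ as used in \cite{zhou2023deflated}), this yields an estimate of the same multiplicative order as the Bernstein term, absorbed into the same recursion $\alpha_{k+1}\leq C_3(\sqrt{m_1m_2}+m_1)\omega_{\sf max}^2\log^2 m\cdot \alpha_k$. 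Taking a union bound over $i\in[m_1]$ and over the $O(\log n)$ values of $k$ loses only logarithmic factors, so the failure probability stays $O(m^{-10})$ provided $C_3$ is chosen large enough.

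\textbf{Main obstacle.} The core difficulty is that the polynomial contains $2k+1$ copies of $\bm{E}$, so naively iterating Bernstein on the innermost layer would force one to track dependencies across all layers simultaneously; this is exactly what the leave-one-out construction handles, but it requires the replacement error $\bm{W}^{(k)}-\bm{W}^{(k),(i)}$ to be controlled in $\ell_{2,\infty}$ rather than spectral norm, which means I cannot simply cite a spectral-norm perturbation bound. Getting this right is the part of the argument that demands the most care, and it is where the inductive hypothesis has to be invoked not just for $\bm{W}^{(k)}$ but simultaneously for its leave-one-out sibling (a standard but delicate device in this literature).
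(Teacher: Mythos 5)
This lemma is imported verbatim from \cite{zhou2023deflated} (it is stated in the paper as ``\cite{zhou2023deflated}, Lemma~2'') and the present paper offers no proof of it --- unlike the neighboring Lemma~\ref{lm:space_estimate_expansion} and Lemma~\ref{lm:noise_product}, which are proved in dedicated subsections. So there is no in-paper proof against which to compare your attempt; you would need to consult \cite{zhou2023deflated} directly.

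That said, a remark on the substance of your sketch. Your leave-one-out plus matrix-Bernstein induction is a plausible high-level strategy, and you correctly flag the replacement error $\bm{W}^{(k)}-\bm{W}^{(k),(i)}$ as the crux. But as written, the argument waves past the exact point where it would fail if executed naively. After conditioning on $\bm{E}^{(i)}$, the residual $\sum_{j\neq i}\bigl(\bm{E}_{i,:}\bm{E}_{j,:}^\top\bigr)\bigl(\bm{W}^{(k)}_{j,:}-\bm{W}^{(k),(i)}_{j,:}\bigr)$ still depends on $\bm{E}_{i,:}$ through $\bm{W}^{(k)}$, so it is not a conditionally linear form in the truncated entries and Bernstein does not directly apply to it. If instead you bound it crudely --- using the inductive $\ell_{2,\infty}$ bound $\|\bm{W}^{(k)}_{j,:}-\bm{W}^{(k),(i)}_{j,:}\|_2\le 2\alpha_k$ uniformly in $j$ together with $\sum_{j\neq i}|\bm{E}_{i,:}\bm{E}_{j,:}^\top|$, or via $\|\bm{E}_{i,:}\|\,\|\bm{E}^{(i)}\|\,\|\bm{W}^{(k)}-\bm{W}^{(k),(i)}\|$ --- the resulting prefactor is of order $m_2\omega_{\sf max}^2$ (when $m_2\gg m_1$), which exceeds the target $(\sqrt{m_1m_2}+m_1)\omega_{\sf max}^2$ by a factor of $\sqrt{m_2/m_1}$. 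Closing this requires exploiting the fact that $\bm{W}^{(k)}-\bm{W}^{(k),(i)}$ is small on all but a vanishing fraction of rows, or expanding the difference explicitly into monomials according to how many times row $i$ reappears and handling the combinatorics --- neither of which is supplied by the sketch. In short, the diagnosis is right but the cure is missing, and the claim that the replacement term is ``absorbed into the same recursion'' is precisely the unproved step.
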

\begin{lemma}[\cite{zhou2023deflated}, Lemma 3]\label{lm:power_U}
	Suppose that Assumption \ref{assump:noise_matrix} holds. Then we have, with probability at least $1 - O(m^{-10})$,
	\begin{align}\label{ineq:power_U}
		\left\|\left[\mathcal{P}_{\sf off\text{-}diag}\left(\bm{E}\bm{E}^\top\right)\right]^k\bm{U}^\star\right\|_{2,\infty} \leq C_3\sqrt{\frac{\mu r}{n_1}}\left(C_3\left(\sqrt{m_1m_2} + m_1\right)\omega_{\sf max}^2\log^2 m\right)^k
	\end{align}
	for all $0 \leq k \leq \log n$, where $C_3$ is some sufficiently large constant.
\end{lemma}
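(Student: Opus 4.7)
I would prove the claim by induction on $k$, mirroring the strategy used for the companion Lemma \ref{lm:power_V} (both results come from \cite{zhou2023deflated}). The base case $k=0$ is immediate from the definition of the incoherence parameter: $\|\bm{U}^\star\|_{2,\infty} \leq \sqrt{\mu r/m_1}$, which matches the claim for any $C_3 \geq 1$. The argument then boils down to showing that each application of $\mathcal{P}_{\sf off\text{-}diag}(\bm{E}\bm{E}^\top)$ inflates the $\ell_{2,\infty}$ norm by at most the advertised factor $C_3(\sqrt{m_1 m_2} + m_1)\omega_{\sf max}^2 \log^2 m$ --- much smaller than the spectral norm $\|\mathcal{P}_{\sf off\text{-}diag}(\bm{E}\bm{E}^\top)\|$ would naively give.

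For the inductive step, write $\bm{W}_k := [\mathcal{P}_{\sf off\text{-}diag}(\bm{E}\bm{E}^\top)]^k\bm{U}^\star$ and fix a row index $i \in [m_1]$. Since $[\bm{W}_k]_{i,:} = \sum_{j \neq i}\langle \bm{E}_{i,:}, \bm{E}_{j,:}\rangle [\bm{W}_{k-1}]_{j,:}$ entangles $\bm{E}_{i,:}$ with the iterate $\bm{W}_{k-1}$, I would use a leave-one-out decoupling: introduce $\bm{E}^{(-i)}$ (row $i$ zeroed) and the corresponding iterate $\bm{W}_{k-1}^{(-i)}$, which is independent of $\bm{E}_{i,:}$. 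Split the row into a decoupled part $\bm{E}_{i,:}(\bm{E}^{(-i)})^\top \bm{W}_{k-1}^{(-i)}$ and a residual $\bm{E}_{i,:}(\bm{E}^{(-i)})^\top(\bm{W}_{k-1} - \bm{W}_{k-1}^{(-i)})$. The decoupled part is handled by a Hanson--Wright / Bernstein estimate conditional on $\bm{E}^{(-i)}$, invoking the standard spectral bound $\|\bm{E}^{(-i)}\| \lesssim \omega_{\sf max}(\sqrt{m_1}+\sqrt{m_2})\log m$ (which uses the truncation level $B$ from Assumption \ref{assump:noise_matrix}) together with the inductive hypothesis applied to $\bm{E}^{(-i)}$. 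Absorbing the factor $\sqrt{m_1}$ into $\sqrt{\mu r/m_1}$ and combining with $(\sqrt{m_1}+\sqrt{m_2})\omega_{\sf max}^2$ produces the target per-iteration multiplier $(\sqrt{m_1 m_2}+m_1)\omega_{\sf max}^2\log^2 m$, where the $\log^2 m$ accommodates both the tail/truncation in Assumption \ref{assump:noise_matrix} and subsequent union bounds.

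For the residual, I would exploit the rank-two identity $\mathcal{P}_{\sf off\text{-}diag}(\bm{E}\bm{E}^\top) - \mathcal{P}_{\sf off\text{-}diag}(\bm{E}^{(-i)}(\bm{E}^{(-i)})^\top) = \mathcal{P}_{\sf off\text{-}diag}(\bm{E}^{(-i)}\bm{E}_{i,:}^\top\bm{e}_i^\top + \bm{e}_i\bm{E}_{i,:}(\bm{E}^{(-i)})^\top)$, whose support lies only in the $i$-th row and column (the $\|\bm{E}_{i,:}\|^2$ diagonal piece is killed by $\mathcal{P}_{\sf off\text{-}diag}$). Telescoping this identity through the $k-1$ iterations yields a sum of terms, each carrying an extra factor of $\bm{E}_{i,:}$, that I would bound using the inductive $\ell_{2,\infty}$ estimate together with crude spectral bounds on $\bm{E}^{(-i)}$; one then verifies that the residual is dominated by the decoupled term, so the induction closes.

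The main obstacle is bookkeeping: the constant $C_3$ must remain stable across the nested induction, which forces one to choose $C_3$ large enough at the outset to absorb all implicit constants introduced by the Hanson--Wright / Bernstein step, the residual bound, and the union bound over $i \in [m_1]$ and $0 \leq k \leq \log n$. The restriction $k \leq \log n$ is exactly what lets the union bound over $k$ cost only a logarithmic factor, which itself gets absorbed into the $\log^2 m$ appearing in the per-iteration multiplier; without this restriction the $\ell_{2,\infty}$ guarantee would degrade with $k$.
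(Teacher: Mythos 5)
This lemma is quoted from \cite{zhou2023deflated} (Lemma~3 there), and the present paper offers no proof of it, so there is no in-paper argument to compare against; I can only evaluate your plan on its own terms.

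Your high-level strategy is plausible: induct on $k$, decouple row $i$ with a leave-one-out copy $\bm{E}^{(-i)}$, handle the decoupled term $\bm{E}_{i,:}(\bm{E}^{(-i)})^\top\bm{W}_{k-1}^{(-i)}$ by a conditional Hanson--Wright/Bernstein bound, and telescope $\bm{W}_{k-1}-\bm{W}_{k-1}^{(-i)}$ through the rank-two identity for $\bm{A}-\bm{B}$, where $\bm{A}=\mathcal{P}_{\sf off\text{-}diag}(\bm{E}\bm{E}^\top)$ and $\bm{B}=\mathcal{P}_{\sf off\text{-}diag}(\bm{E}^{(-i)}(\bm{E}^{(-i)})^\top)$. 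You also correctly identify the need to run the induction jointly over all $k\le\log n$, and the role of $k\le\log n$ in keeping the union-bound cost logarithmic.

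The gap is your treatment of the residual. You bound $\bm{E}_{i,:}(\bm{E}^{(-i)})^\top(\bm{W}_{k-1}-\bm{W}_{k-1}^{(-i)})$ ``using the inductive $\ell_{2,\infty}$ estimate together with crude spectral bounds,'' and then assert that the residual is dominated by the decoupled term. That assertion is where the work is, and the naive version fails. If you telescope $\bm{W}_{k-1}-\bm{W}_{k-1}^{(-i)}=\sum_{\ell}\bm{A}^\ell(\bm{A}-\bm{B})\bm{B}^{k-2-\ell}\bm{U}^\star$ and bound the rightmost factor by $\|\bm{U}^\star\|=1$, every factor in the product is of the same order $\Phi:=(\sqrt{m_1m_2}+m_1)\omega_{\sf max}^2\log^2 m$, so the residual comes out at order $k\,\Phi^k$ --- missing the incoherence prefactor $\sqrt{\mu r/m_1}$ entirely, an overshoot of $\sqrt{m_1/(\mu r)}$ that polylog slack cannot absorb. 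To close the induction one must use the rank-two structure $\bm{A}-\bm{B}=\bm{E}^{(-i)}\bm{E}_{i,:}^\top\bm{e}_i^\top+\bm{e}_i\bm{E}_{i,:}(\bm{E}^{(-i)})^\top$ to \emph{extract rows}: $(\bm{A}-\bm{B})\bm{B}^{k-2-\ell}\bm{U}^\star$ produces either the $i$-th row $\bm{e}_i^\top\bm{B}^{k-2-\ell}\bm{U}^\star=[\bm{W}^{(-i)}_{k-2-\ell}]_{i,:}$, which the inductive $\ell_{2,\infty}$ hypothesis controls, or the scalar $\bm{E}_{i,:}(\bm{E}^{(-i)})^\top\bm{W}^{(-i)}_{k-2-\ell}$, which is again a decoupled Bernstein quantity. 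Even after this extraction you face $\bm{E}_{i,:}(\bm{E}^{(-i)})^\top\bm{A}^\ell\bm{E}^{(-i)}\bm{E}_{i,:}^\top$, a quadratic form in $\bm{E}_{i,:}$ whose kernel $\bm{A}^\ell$ itself still depends on $\bm{E}_{i,:}$; this entanglement needs another layer of decoupling or an explicit path/moment expansion. In addition, since you invoke the hypothesis for the leave-one-out iterates $\bm{W}^{(-i)}_j$, the induction must be phrased as a simultaneous statement over all $i\in[m_1]$ and $j\le k$. As written, the proposal names the right ingredients but skips precisely the step that keeps the $\sqrt{\mu r/m_1}$ factor alive through the matrix powers, which is the whole content of the lemma.
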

The following lemma, which was also established in \cite{zhou2023deflated}, provides some helpful consequences on the eigenvalue perturbation, the size of some perturbation matrix, and some incoherence properties of $\widetilde{\bm{U}}^{(1)}$.
\begin{lemma}[\cite{zhou2023deflated}, Lemma 4]\label{lm:event_collection}
	Instate the assumptions in Theorem \ref{thm:oracle_two_to_infty}. Then there exist some large enough constant $C_5 > 0$ such that with probability exceeding $1 - O(m^{-10})$,
	\begin{subequations}
		\begin{align}
			\big|\widetilde{\sigma}_i - \sigma_i^\star\big| &\leq \big\|\bm{E}\bm{V}^{\star(1)}\big\| \leq \left\|\bm{E}\bm{V}^\star\right\| \leq \sqrt{C}_5\sqrt{m_1}\omega_{\sf max}\log m,~\qquad~\forall i \in [\overline{r}],\label{ineq2a}\\
			\left\|\mathcal{P}_{\sf off\text{-}diag}\left(\bm{E}\bm{E}^\top - \bm{E}\bm{V}^\star\bm{V}^{\star\top}\bm{E}^\top\right)\right\| &\leq 3C_5\left(\sqrt{m_1m_2} + m_1\right)\omega_{\sf max}^2\log^2 m \label{ineq2b} \\
			\big\|\bm{U}^{\star(1)}\bm{U}^{\star(1)\top}\widetilde{\bm{U}}^{(1)} - \widetilde{\bm{U}}^{(1)}\big\|_{2, \infty} &\leq \frac{4C_5\sqrt{\mu r}\omega_{\sf max}\log m}{\sigma_r^\star}\leq \sqrt{\frac{\mu r}{m_1}},\label{ineq2c}\\
			\big\|\widetilde{\bm{U}}^{(1)}\big\|_{2, \infty} &\leq 2\sqrt{\frac{\mu r}{m_1}}.\label{ineq2d}
		\end{align}
	\end{subequations}
\end{lemma}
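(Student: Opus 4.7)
The plan is to establish the four claims in order, using (i) entrywise truncation followed by matrix Bernstein for the spectral bounds, and (ii) the explicit SVD identity for $\widetilde{\bm{U}}^{(1)}$ combined with Lemma \ref{lm:power_V} for the $\ell_{2,\infty}$ bounds. Throughout, I would first restrict to the high-probability event on which all $|E_{ij}| \leq B$; by Assumption \ref{assump:noise_matrix} and a union bound this event has probability at least $1 - m^{-10}$, after which every subsequent application of Bernstein or concentration is on bounded summands.

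For (a), Weyl's inequality applied to $\widetilde{\bm{U}}^{(1)}\widetilde{\bm{\Sigma}}^{(1)}\widetilde{\bm{W}}^{(1)\top} = \bm{U}^{\star(1)}\bm{\Sigma}^{\star(1)} + \bm{E}\bm{V}^{\star(1)}$ yields $|\widetilde{\sigma}_i - \sigma_i^\star| \leq \|\bm{E}\bm{V}^{\star(1)}\|$, and restricting to a subset of columns gives $\|\bm{E}\bm{V}^{\star(1)}\| \leq \|\bm{E}\bm{V}^\star\|$. The remaining task is the bound $\|\bm{E}\bm{V}^\star\| \lesssim \sqrt{m_1}\omega_{\sf max}\log m$, which I would obtain by expressing $\bm{E}\bm{V}^\star = \sum_{i,j} E_{ij}\bm{e}_i(\bm{V}^\star_{j,:})^\top$ as a sum of independent truncated rank-one matrices with matrix variance proxy $\lesssim m_1\omega_{\sf max}^2$ and invoking matrix Bernstein. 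For (b), observe the algebraic identity $\bm{E}\bm{E}^\top - \bm{E}\bm{V}^\star\bm{V}^{\star\top}\bm{E}^\top = \bm{E}\mathcal{P}_{(\bm{V}^\star)_\perp}\bm{E}^\top$, whose $(i,j)$-th off-diagonal entry has zero mean by the independence and zero-mean of the $E_{ij}$'s. I would then apply a truncated matrix Bernstein (or, more delicately, a decoupling step) to this diagonal-deleted quadratic form to obtain the $(\sqrt{m_1 m_2} + m_1)\omega_{\sf max}^2 \log^2 m$ bound.

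For (c), I would exploit $\widetilde{\bm{U}}^{(1)} = (\bm{U}^{\star(1)}\bm{\Sigma}^{\star(1)} + \bm{E}\bm{V}^{\star(1)})\widetilde{\bm{W}}^{(1)}(\widetilde{\bm{\Sigma}}^{(1)})^{-1}$, which after projecting onto the orthogonal complement of $\bm{U}^{\star(1)}$ gives
\begin{align*}
(\bm{I} - \bm{U}^{\star(1)}\bm{U}^{\star(1)\top})\widetilde{\bm{U}}^{(1)} = (\bm{I} - \bm{U}^{\star(1)}\bm{U}^{\star(1)\top})\bm{E}\bm{V}^{\star(1)}\widetilde{\bm{W}}^{(1)}(\widetilde{\bm{\Sigma}}^{(1)})^{-1}.
\end{align*}
Taking $\ell_{2,\infty}$ norms, using $\|\widetilde{\bm{W}}^{(1)}\| = 1$ and the lower bound $\widetilde{\sigma}_{\overline{r}} \gtrsim \sigma_{\overline{r}}^\star$ from (a), and splitting as $\|\bm{E}\bm{V}^{\star(1)}\|_{2,\infty} + \|\bm{U}^{\star(1)}\|_{2,\infty}\|\bm{U}^{\star(1)\top}\bm{E}\bm{V}^{\star(1)}\|$, both noise pieces are controlled by Lemma \ref{lm:power_V} at $k=0$ and the bound on $\|\bm{E}\bm{V}^\star\|$; combined with the $\sigma_1^\star$ hypothesis and membership $\overline{r} \in \mathcal{A}$, this closes the chain of inequalities. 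Finally, (d) follows by the triangle inequality $\|\widetilde{\bm{U}}^{(1)}\|_{2,\infty} \leq \|\widetilde{\bm{U}}^{(1)} - \bm{U}^{\star(1)}\bm{U}^{\star(1)\top}\widetilde{\bm{U}}^{(1)}\|_{2,\infty} + \|\bm{U}^{\star(1)}\|_{2,\infty}\|\bm{U}^{\star(1)\top}\widetilde{\bm{U}}^{(1)}\|$, the first summand being bounded by (c) and the second by the incoherence bound $\|\bm{U}^{\star(1)}\|_{2,\infty}\leq \sqrt{\mu r/m_1}$.

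The main obstacle I anticipate is pinning down the clean $\log^2 m$ scaling in part (b): a naive application of matrix Bernstein to the diagonal-deleted Gram matrix $\mathcal{P}_{\sf off\text{-}diag}(\bm{E}\mathcal{P}_{(\bm{V}^\star)_\perp}\bm{E}^\top)$ typically leaks an extra logarithmic factor, so reaching the advertised rate likely requires a decoupling (Rudelson-style) step or a variance-refined concentration argument tailored to the truncated tail of Assumption \ref{assump:noise_matrix}, in the spirit of existing HeteroPCA analyses. Parts (a), (c), (d) are by comparison essentially mechanical once (b) and the spectral bound on $\bm{E}\bm{V}^\star$ are in hand.
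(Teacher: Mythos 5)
The paper does not prove this lemma: it is imported verbatim as \citet[Lemma 4]{zhou2023deflated}, so there is no in-paper argument to compare yours against. Your reconstruction nonetheless tracks the standard HeteroPCA-type argument correctly in outline: Weyl's inequality on $\bm{U}^{\star(1)}\bm{\Sigma}^{\star(1)}+\bm{E}\bm{V}^{\star(1)}$ for \eqref{ineq2a}; the identity $\bm{E}\bm{E}^\top-\bm{E}\bm{V}^\star\bm{V}^{\star\top}\bm{E}^\top=\bm{E}\mathcal{P}_{(\bm{V}^\star)_\perp}\bm{E}^\top$ followed by truncated concentration for \eqref{ineq2b}; the representation $\widetilde{\bm{U}}^{(1)}=(\bm{U}^{\star(1)}\bm{\Sigma}^{\star(1)}+\bm{E}\bm{V}^{\star(1)})\widetilde{\bm{W}}^{(1)}(\widetilde{\bm{\Sigma}}^{(1)})^{-1}$ projected onto $(\bm{U}^{\star(1)})_\perp$, with $\|\bm{E}\bm{V}^{\star}\|_{2,\infty}$ handled via Lemma~\ref{lm:power_V} at $k=0$, for \eqref{ineq2c}; and a triangle inequality plus incoherence for \eqref{ineq2d}.

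Two points worth flagging. First, your instinct about \eqref{ineq2b} is right and is the one real technical obstacle: the off-diagonal entries $\bm{E}_{i,:}\mathcal{P}_{(\bm{V}^\star)_\perp}\bm{E}_{j,:}^\top$ ($i\neq j$) constitute a quadratic chaos, not a sum of independent matrices, so matrix Bernstein does not apply directly. One needs either a decoupling step or a conditional, row-by-row Bernstein argument (this is essentially what Lemma 7 of \cite{zhou2023deflated}, also invoked in the current paper's proof of Theorem~\ref{thm:tuning_selection}, packages). The $\log^2 m$ on the right-hand side does give enough slack to absorb the truncation level $B$ from Assumption~\ref{assump:noise_matrix}, so your proposed route should close. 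Second, in \eqref{ineq2c} your argument naturally produces a denominator of $\widetilde{\sigma}_{\overline r}\asymp\sigma_{\overline r}^\star$, not the $\sigma_r^\star$ appearing in the quoted display; since the present paper deliberately makes no assumption on $\sigma_r^\star$ (which may vanish), the version with $\sigma_{\overline r}^\star$ is the one actually usable here, and is exactly what both links in the displayed chain require. This is a notational artifact of importing the lemma verbatim from a setting with a stronger least-singular-value hypothesis; your derivation is the correct one for the present application.
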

Finally, the following lemma develops $\ell_{2,\infty}$ bounds on the polynomials of the perturbation matrix $\bm{Z}_3$, which will play a key role in the subsequent proof.
\begin{lemma}\label{lm:noise_product}
	Suppose that Assumption \ref{assump:noise_matrix} holds. Let
	\begin{align}\label{eq:event1}
		\mathcal{E} = \{\eqref{ineq:power_V} \text{ and }\eqref{ineq:power_U} \text{ hold for }0 \leq k \leq \log n\} \cap \{\eqref{ineq2a}, \eqref{ineq2b}, \eqref{ineq2c}\text{ and }\eqref{ineq2d}\text{ hold}\}.
	\end{align}
Then there exists some large enough constant $C_2, C_3 > 0$ (independent of $C_0$) such that under $\mathcal{E}$, for any $0 \leq i \leq \log n$, one has
\begin{subequations}
	\begin{align}
		\left\|\bm{Z}_3^i\bm{U}^\star\right\|_{2, \infty} &\leq  3C_3\sqrt{\frac{\mu r}{m_1}}\left(C_3\left(\sqrt{m_1m_2} + m_1\right)\omega_{\sf max}^2\log^2 m\right)^{i},\label{ineq7a}\\
		\left\|\bm{Z}_3^i\bm{E}\bm{V}^\star\right\|_{2, \infty} &\leq 3C_3\sqrt{\mu r}\left(C_3\left(\sqrt{m_1m_2} + m_1\right)\omega_{\sf max}^2\log^2 m\right)^{i}\omega_{\sf max}\log m,\label{ineq7b}\\
		\big\|\bm{Z}_3^i\widetilde{\bm{U}}^{(1)}\big\|_{2, \infty} &\leq 4C_3\sqrt{\frac{\mu r}{m_1}}\left(C_3\left(\sqrt{m_1m_2} + m_1\right)\omega_{\sf max}^2\log^2 m\right)^i,\label{ineq7c}\\
		\left\|\bm{Z}_3^i\bm{Z}_1\right\|_{2,\infty} &\leq C_2\sqrt{\mu r}\left(\sigma_{\overline{r} + 1}^\star + \sqrt{m}_1\omega_{\sf max}\log m\right)\left(C_3\left(\sqrt{m_1m_2} + m_1\right)\omega_{\sf max}^2\log^2 m\right)^{i}\omega_{\sf max}\log m,\label{ineq7d}\\
		\left\|\bm{Z}_3^i\bm{Z}_2\right\|_{2,\infty} &\leq C_2\sqrt{\mu r}\left(C_3\left(\sqrt{m_1m_2} + m_1\right)\omega_{\sf max}^2\log^2 m\right)^{i}\omega_{\sf max}\sigma_{\overline{r}+1}^\star\log m.\label{ineq7e}
	\end{align}
\end{subequations}
\end{lemma}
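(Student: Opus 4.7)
The plan is to prove each of the five bounds by induction on $i$, combining Lemmas \ref{lm:power_V} and \ref{lm:power_U} as black boxes with the algebraic decomposition of $\bm{Z}_3$. The base case $i=0$ is handled by direct computation: \eqref{ineq7a} follows from the incoherence bound $\|\bm{U}^\star\|_{2,\infty}\le \sqrt{\mu r/m_1}$; \eqref{ineq7b} is Lemma \ref{lm:power_V} at $k=0$; \eqref{ineq7c} is \eqref{ineq2d}; and \eqref{ineq7d}--\eqref{ineq7e} are obtained by expanding the definitions of $\bm{Z}_1$ and $\bm{Z}_2$ from \eqref{eq:decomposition_M_oracle}, bounding each summand via the incoherence of $\bm{U}^{\star(2)}$ (inherited from $\bm{U}^\star$), the bound \eqref{ineq2a} on $\|\bm{E}\bm{V}^\star\|$, the gap estimate $\sigma^\star_{\overline r+1}$ in Part~(a), and \eqref{ineq2c}--\eqref{ineq2d} for $\widetilde{\bm{U}}^{(1)}$.

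For the inductive step, the main idea is to write $\bm{Z}_3 = \bm{A}-\bm{B}$, where $\bm{A}=\mathcal{P}_{\sf off\text{-}diag}(\bm{E}\bm{E}^\top)$ and $\bm{B}=\mathcal{P}_{\sf off\text{-}diag}((\bm{E}\bm{V}^\star)(\bm{E}\bm{V}^\star)^\top)$, and to expand
\[
\bm{Z}_3^{\,i} \;=\; \sum_{j=0}^{i}(-1)^{j}\sum_{k_0+\cdots+k_j = i-j}\bm{A}^{k_j}\bm{B}\,\bm{A}^{k_{j-1}}\bm{B}\cdots \bm{B}\,\bm{A}^{k_0}
\]
as a signed sum over length-$i$ words in $\{\bm{A},\bm{B}\}$. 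For each word applied to the relevant target $\bm t\in\{\bm{U}^\star,\bm{E}\bm{V}^\star,\widetilde{\bm{U}}^{(1)},\bm{Z}_1,\bm{Z}_2\}$, I use the identity
\[
\bm{B}\bm{x} \;=\; (\bm{E}\bm{V}^\star)\,[(\bm{E}\bm{V}^\star)^{\!\top}\bm{x}] - \bm{D}\bm{x},\qquad \bm{D} := \mathcal{P}_{\sf diag}\bigl((\bm{E}\bm{V}^\star)(\bm{E}\bm{V}^\star)^{\!\top}\bigr),
\]
so that whenever $\bm{B}$ appears, the matrix it acts on is effectively routed through the column space of $\bm{E}\bm{V}^\star$. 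Concretely, the pure-$\bm{A}$ word $\bm{A}^{i}\bm{t}$ is controlled directly by Lemma \ref{lm:power_U} (resp.\ Lemma \ref{lm:power_V}, \eqref{ineq2d}) when $\bm t=\bm{U}^\star$ (resp.\ $\bm{E}\bm{V}^\star$, $\widetilde{\bm{U}}^{(1)}$). For a word containing $\bm{B}$, I factor at the rightmost $\bm{B}$: the prefix to the right of this $\bm{B}$, namely $\bm{A}^{k_0}\bm{t}$, is bounded by Lemma \ref{lm:power_U}/\ref{lm:power_V}; applying $\bm{B}$ yields a matrix in the span of $\bm{E}\bm{V}^\star$, and any further left-multiplication by $\bm{A}^{k_\ell}$ attached to another $\bm{E}\bm{V}^\star$ block falls back under Lemma \ref{lm:power_V}. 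The diagonal correction $\bm{D}\bm{x}$ contributes at most $\|\bm{D}\|\cdot\|\bm{x}\|_{2,\infty}$, with $\|\bm{D}\|\le \|\bm{E}\bm{V}^\star\|^2\lesssim m_1\omega_{\sf max}^2\log^2 m$ from \eqref{ineq2a}. Summing geometric contributions across the $2^i$ words gives the claimed $\rho^i$-dependence with $\rho=C_3(\sqrt{m_1 m_2}+m_1)\omega_{\sf max}^2\log^2 m$. For \eqref{ineq7d}--\eqref{ineq7e}, the same machinery applies after one notices that $\bm{Z}_1,\bm{Z}_2$ are built from exactly the blocks $\bm{U}^{\star(2)}\bm{\Sigma}^{\star(2)}$, $\bm{E}\bm{V}^{\star(2)}$, and $\mathcal{P}_{\widetilde{\bm{U}}^{(1)}}\bm{U}^{\star(2)}$, each of which admits a clean $\ell_{2,\infty}$ bound via \eqref{ineq2a}--\eqref{ineq2d} and an extra factor of $\sigma^\star_{\overline r+1}$.

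The main obstacle is the combinatorial accounting. A naive triangle inequality over the $2^i$ words would cost a factor $2^i$, which is acceptable only because $i\le\log n$; thus I must carefully keep the prefactor inside the $\rho^i$ expression tight, ensuring that the constant $C_3$ can be chosen large enough (independent of $C_0$) so that $(2C_5)^i$-type losses are absorbed and the final constant remains $3C_3$ or $4C_3$. A secondary subtlety is that Lemmas \ref{lm:power_V} and \ref{lm:power_U} are stated only for $0\le k\le\log n$, so in the expansion one must also guarantee that every pure-$\bm{A}$ sub-block appearing in a word still has length bounded by $\log n$; since the full word has length $i\le\log n$, each sub-block automatically satisfies this, but this point needs to be flagged explicitly.
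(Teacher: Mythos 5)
Your high-level strategy (split $\bm{Z}_3 = \mathcal{P}_{\sf off\text{-}diag}(\bm{E}\bm{E}^\top) - \mathcal{P}_{\sf off\text{-}diag}(\bm{E}\bm{V}^\star\bm{V}^{\star\top}\bm{E}^\top)$, expand the $i$-th power, and feed the pure $\mathcal{P}_{\sf off\text{-}diag}(\bm{E}\bm{E}^\top)$-blocks to Lemmas~\ref{lm:power_V}--\ref{lm:power_U}) is the same as the paper's. However, there are two genuine issues.

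First, your bound on the diagonal correction is too loose and this is where the argument breaks. You write $\|\bm{D}\| \le \|\bm{E}\bm{V}^\star\|^2 \lesssim m_1\omega_{\sf max}^2\log^2 m$; the correct bound, which the paper uses, is the entrywise one: since $\bm{D}$ is diagonal with $D_{ii} = \|(\bm{E}\bm{V}^\star)_{i,:}\|_2^2$, we have $\|\bm{D}\| = \|\bm{E}\bm{V}^\star\|_{2,\infty}^2 \lesssim \mu r\,\omega_{\sf max}^2\log^2 m$, which is smaller by a factor $\mu r/m_1 \ll 1$. This matters precisely when $\bm{D}$ is sandwiched in the interior of a word: the paper controls $\bm{P}^j\bm{D}\bm{Z}_3^{i-j-1}\bm{U}^\star$ entirely in operator norm, and the factor $\mu r$ (instead of $m_1$) in $\|\bm{D}\|$ is exactly what supplies the $\sqrt{\mu r/m_1}$ prefactor in the target; with $\|\bm{D}\|\lesssim m_1\omega_{\sf max}^2\log^2 m$ the resulting bound misses the target by about $\sqrt{m_1/(\mu r)}$. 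Your bound $\|\bm{D}\bm{x}\|_{2,\infty}\le\|\bm{D}\|\|\bm{x}\|_{2,\infty}$ does not rescue this, because you then need to hit the leftover left factor $\bm{L}$, and $\|\bm{L}\bm{y}\|_{2,\infty}$ cannot be extracted from $\|\bm{y}\|_{2,\infty}$ alone (one only has $\|\bm{L}\bm{y}\|_{2,\infty}\le\|\bm{L}\|_{2,\infty}\|\bm{y}\|$, and $\|\bm{L}\|_{2,\infty}$ has no $\sqrt{\mu r/m_1}$ savings).

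Second, the combinatorics and the direction of factoring differ from the paper's and are more delicate than your sketch suggests. The paper uses the telescoping identity $(\bm{A}+\bm{B})^i = \bm{B}^i + \sum_{j=0}^{i-1}\bm{B}^j\bm{A}(\bm{A}+\bm{B})^{i-j-1}$, i.e.\ it conditions on the leftmost occurrence of the ``bad'' block, yielding only $O(i)$ terms and placing $\bm{P}^j\bm{E}\bm{V}^\star$ or $\bm{P}^i\bm{U}^\star$ on the far left---which is exactly where the $\ell_{2,\infty}$ bound has to come from because $\|\bm{L}\bm{R}\|_{2,\infty}\le\|\bm{L}\|_{2,\infty}\|\bm{R}\|$. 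Your plan expands into $2^i$ words and factors at the \emph{rightmost} $\bm{B}$, then asserts that the rightmost suffix is bounded via Lemma~\ref{lm:power_U}/\ref{lm:power_V}; but those lemmas give $\ell_{2,\infty}$ bounds, which are of no direct use for a right factor. To make a ``rightmost'' decomposition work you would have to set up a genuine induction on the number of $\bm{B}$'s (so that the leftover $\bm{L}\bm{E}\bm{V}^\star$ falls under \eqref{ineq7b}), and absorb the $2^i$ overcounting into the constant $C_3$; both are doable but you have not spelled them out, and the paper's leftmost conditioning simply avoids them.
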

The proof of Lemma \ref{lm:noise_product} is postponed to Section \ref{proof:lm_noise_product}. The union bound taken together with Lemma \ref{lm:power_V}, Lemma \ref{lm:power_U} and Lemma \ref{lm:event_collection} shows that
\begin{align}\label{ineq:event_probability}
	\bbP\left(\mathcal{E}\right) \geq 1 - O\left(n^{-10}\right).
\end{align}
In the rest of the proof, we assume that $\mathcal{E}$ occurs unless otherwise noted.

\subsection{Main steps for proving \eqref{ineq:oracle}}
\paragraph{Step 1: bounding $\|\widetilde{\bm{U}}^{(2)}\|_2$.} We start with controlling $\|\widetilde{\bm{U}}^{(2)}\|_2$.
Combining \eqref{ineq2a}, \cite[Lemma 2.5]{chen2021spectral} and Wedin's $\sin\bm{\Theta}$ theorem, one has
\begin{align}\label{ineq6}
	\big\|\widetilde{\bm{U}}^{(1)\top}\bm{U}^{\star(2)}\big\| &\leq \big\|\widetilde{\bm{U}}^{(1)\top}\big(\bm{U}^{\star(1)}\big)_{\perp}\big\| = \big\|\widetilde{\bm{U}}^{(1)}\widetilde{\bm{U}}^{(1)\top} - \bm{U}^{\star(1)}\bm{U}^{\star(1)\top}\big\|\notag\\ &\leq \frac{2\big\|\bm{E}\bm{V}^{\star(1)}\big\|}{\sigma_{\overline{r}}^\star} \leq \frac{2\sqrt{C_5}\sqrt{m}_1\omega_{\sf max}\log m}{\sigma_{\overline{r}}^\star} \ll \frac{1}{2},
\end{align}
where the first inequality makes use of the fact that $\bm{U}_1^{\star\top}\bm{U}_2^\star = \bm{0}$. Note that the rank of $\widetilde{\bm{M}}$ is at most $r$. We also denote the eigendecomposition of $\mathcal{P}_{(\widetilde{\bm{U}}^{(1)})_{\perp}}\bm{U}^{\star(2)}\big(\bm{\Sigma}^{\star(2)}\big)^2\bm{U}^{\star(2)\top}\mathcal{P}_{(\widetilde{\bm{U}}^{(1)})_{\perp}}$ by
\begin{align}\label{eq:eigen_2}
	\widetilde{\bm{U}}^{(2)}\big(\widetilde{\bm{\Sigma}}^{(2)}\big)^2\widetilde{\bm{U}}^{(2)} = \mathcal{P}_{(\widetilde{\bm{U}}^{(1)})_{\perp}}\bm{U}^{\star(2)}\big(\bm{\Sigma}^{\star(2)}\big)^2\bm{U}^{\star(2)\top}\mathcal{P}_{(\widetilde{\bm{U}}^{(1)})_{\perp}},
\end{align}
where $\widetilde{\bm{U}}^{(2)} \in \mathcal{O}^{m_1, r - \overline{r}}$ and $\widetilde{\bm{\Sigma}}^{(1)} = {\sf diag}(\widetilde{\sigma}_{\overline{r} + 1}, \dots, \widetilde{\sigma}_r)$ with $\widetilde{\sigma}_{\overline{r} + 1} \geq \cdots \geq \widetilde{\sigma}_r \geq 0$. Recognizing that $\widetilde{\bm{U}}^{(1)\top}\widetilde{\bm{U}}^{(2)} = \bm{0}$, we see that the eigendecomposition of $\widetilde{\bm{M}}$ can be written as
\begin{align}\label{eq:eigen_tilde}
	\widetilde{\bm{M}} = \widetilde{\bm{U}}\widetilde{\bm{\Lambda}}\widetilde{\bm{U}}^\top,
\end{align}
where
\begin{align}\label{eq1}
	\widetilde{\bm{U}} = \big[\widetilde{\bm{U}}^{(1)}\ \widetilde{\bm{U}}^{(2)}\big],~\qquad~and~\qquad~\widetilde{\bm{\Lambda}} = {\sf diag}\big(\widetilde{\sigma}_1^2, \dots, \widetilde{\sigma}_{r}^2\big) = \begin{bmatrix}
		\big(\widetilde{\bm{\Sigma}}^{(1)}\big)^2  &\bm{0}\\
		\bm{0}  &\big(\widetilde{\bm{\Sigma}}^{(2)}\big)^2
	\end{bmatrix}.
\end{align}
In addition, one observes that
\begin{align}\label{eq2}
	\sigma_{r-\overline{r}}\big(\mathcal{P}_{(\widetilde{\bm{U}}^{(1)})_{\perp}}\bm{U}^{\star(2)}\big) &= \sigma_{r-\overline{r}}\left(\big(\widetilde{\bm{U}}^{(1)}\big)_{\perp}^\top\bm{U}^{\star(2)}\right) = \min_{\bm{a} \in \bbR^{r - \overline{r}}: \left\|\bm{a}\right\|_2 = 1}\left\|\big(\widetilde{\bm{U}}^{(1)}\big)_{\perp}^\top\bm{U}^{\star(2)}\bm{a}\right\|_2\notag\\ &= \sqrt{\min_{\bm{a} \in \bbR^{r - \overline{r}}: \left\|\bm{a}\right\|_2 = 1}\left(\big\|\bm{U}^{\star(2)}\bm{a}\big\|_2^2 - \big\|\widetilde{\bm{U}}^{(1)\top}\bm{U}^{\star(2)}\bm{a}\big\|_2^2\right)}\notag\\
	&= \sqrt{1 - \max_{\bm{a} \in \bbR^{r - \overline{r}}: \left\|\bm{a}\right\|_2 = 1}\big\|\widetilde{\bm{U}}^{(1)\top}\bm{U}^{\star(2)}\bm{a}\big\|_2^2}\notag\\
	&= \sqrt{1 - \big\|\widetilde{\bm{U}}^{(1)\top}\bm{U}^{\star(2)}\big\|^2}\notag\\
	&\geq \sqrt{1 - \frac{1}{Cr^2}}\notag\\
	&\geq \sqrt{1 - \left(\frac{1}{2}\right)^2} = \frac{\sqrt{3}}{2}.
\end{align} 
The last line holds due to \eqref{ineq6}. Noting that $\widetilde{\bm{U}}^{(2)}$ is also the column subspace of $\mathcal{P}_{(\widetilde{\bm{U}}^{(1)})_{\perp}}\bm{U}^{\star(2)} \in \bbR^{m_1, r - \overline{r}}$ and combining \eqref{ineq2d}, \eqref{ineq6} and \eqref{eq2}, one reaches
\begin{align}\label{ineq:incoherence_tilde_U_2}
	\big\|\widetilde{\bm{U}}^{(2)}\big\|_{2, \infty} &\leq \big\|\mathcal{P}_{(\widetilde{\bm{U}}^{(1)})_{\perp}}\bm{U}^{\star(2)}\big\|_{2, \infty}\sigma_{r-\overline{r}}^{-1}\big(\mathcal{P}_{(\widetilde{\bm{U}}^{(1)})_{\perp}}\bm{U}^{\star(2)}\big)\notag\\
	&\leq \frac{2}{\sqrt{3}}\left(\big\|\bm{U}^{\star(2)}\big\|_{2, \infty} + \big\|\mathcal{P}_{\widetilde{\bm{U}}^{(1)}}\bm{U}^{\star(2)}\big\|_{2, \infty}\right)\notag\\
	&\leq \frac{2}{\sqrt{3}}\left(\sqrt{\frac{\mu r}{m_1}} + \big\|\widetilde{\bm{U}}^{(1)}\big\|_{2,\infty} \big\|\widetilde{\bm{U}}^{(1)\top}\bm{U}^{\star(2)}\big\|\right)\notag\\
	&\leq \frac{2}{\sqrt{3}}\left(\sqrt{\frac{\mu r}{m_1}} + 2\sqrt{\frac{\mu r}{m_1}}\cdot \frac{1}{2}\right)\notag\\
	&\leq 2\sqrt{\frac{\mu r}{m_1}}.
\end{align}
\paragraph{Step 2: bounding $\widetilde{\sigma}_{r'}^2 - \widetilde{\sigma}_{r'+1}^2$ and $\|\bm{Z}\|$.} Recall that $\lambda_{i}(\widetilde{M}) = \widetilde{\sigma}_{i}^2$ for $i \in [r]$. To apply Lemma \ref{lm:space_estimate_expansion}, one needs to check the condition 
\begin{align}\label{ineq27}
	\widetilde{\sigma}_{r'}^2 - \widetilde{\sigma}_{r'+1}^2 > 2\|\bm{Z}\|,~\qquad~\forall r' \in \mathcal{A}.
\end{align}
It is seen from the definition of $\widetilde{\sigma}_{\overline{r}+1}$ that
\begin{align}\label{ineq25}
	\widetilde{\sigma}_{\overline{r}+1} \leq \big\|\bm{\Sigma}^{\star(2)}\big\| = \sigma_{\overline{r}+1}^\star.
\end{align}
Further, \eqref{eq:eigen_2} and \eqref{eq2} taken together imply that
\begin{align}\label{ineq127}
	\widetilde{\sigma}_{\overline{r}+1}^2 &\geq \sigma_{r-\overline{r}}^2\big(\mathcal{P}_{(\widetilde{\bm{U}}^{(1)})_{\perp}}\bm{U}^{\star(2)}\big)\sigma_{\overline{r}+1}^{\star2} \geq \left(1 - \frac{2\sqrt{C_5}\sqrt{m}_1\omega_{\sf max}\log m}{\sigma_{\overline{r}}^\star}\right)^2\sigma_{\overline{r}+1}^{\star2} \geq \left(1 - \frac{1}{Cr^2}\right)^2\sigma_{\overline{r}+1}^{\star2}
\end{align}
for some large constant $C > 0$.
By virtue of \eqref{ineq25} and \eqref{ineq127}, one has
\begin{align}\label{ineq128}
	\max\left\{\left(1 - \frac{1}{Cr^2}\right)\sigma_{\overline{r}+1}^\star, \sigma_{\overline{r}+1}^{\star} - 2\sqrt{C_5}\sqrt{m}_1\omega_{\sf max}\log m\right\} \leq \widetilde{\sigma}_{\overline{r}+1} \leq \sigma_{\overline{r}+1}^\star.
\end{align}
Putting \eqref{ineq2a}, \eqref{ineq25} and the fact $\sigma_{r'}^\star \geq \sigma_{\overline{r}}^\star \geq C_0r[(m_1m_2)^{1/4} + m_1^{1/2}]\log m$ together yields
\begin{align*}
	\widetilde{\sigma}_{r'} - \widetilde{\sigma}_{r'+1} &\geq \sigma_{r'}^\star - \sigma_{r'+1}^\star - 2\left\|\bm{E}\bm{V}^\star\right\| \\
	&\geq \sigma_{r'}^\star - \sigma_{r'+1}^\star - 2\sqrt{C_5}\sqrt{m_1}\omega_{\sf max}\log m\\
	&\geq \sigma_{r'}^\star - \sigma_{r'+1}^\star - \frac{\sigma_{r'}^\star}{Cr}\\
	&\geq \frac{1}{2}\left(\sigma_{r'}^\star - \sigma_{r'+1}^\star\right) + \frac{1}{2}\left(\sigma_{r'}^\star - \frac{4r-1}{4r}\sigma_{r'}^\star\right)  - \frac{\sigma_{r'}^\star}{Cr}\\
	&\geq \frac{1}{2}\left(\sigma_{r'}^\star - \sigma_{r'+1}^\star\right) \geq \frac{\sigma_{r'}^\star}{8r}.
\end{align*}
Here, the penultimate and the last lines hold due to the fact $r' \in \mathcal{A}$. In addition, we observe that
\begin{align*}
	\widetilde{\sigma}_{r'} + \widetilde{\sigma}_{r'+1} \geq \sigma_{r'}^\star + \sigma_{r'+1}^\star - 2\left\|\bm{E}\bm{V}^\star\right\| \geq \sigma_{r'}^\star + \sigma_{r'+1}^\star - \frac{\sigma_{r'}^\star}{Cr} \geq \frac{1}{2}\left(\sigma_{r'}^\star + \sigma_{r'+1}^\star\right).
\end{align*}
Combining the previous two inequalities leads to
\begin{align}\label{ineq26}
	\widetilde{\sigma}_{r'}^2 - \widetilde{\sigma}_{r'+1}^2 \geq \frac{1}{4}\left(\sigma_{r'}^{\star2} - \sigma_{r'+1}^{\star2}\right) \geq \frac{\sigma_{r'}^{\star2}}{16r}.
\end{align}
Now, we move on to control $\|\bm{Z}\|$. In view of \eqref{ineq2a} and \eqref{ineq6}, we have
\begin{align}\label{ineq:spectral_Z1}
	\left\|\bm{Z}_1\right\| \leq 2\big\|\bm{\Sigma}^{\star(2)}\big\|\left\|\bm{E}\bm{V}^\star\right\| + \left\|\bm{E}\bm{V}^\star\right\|^2 \leq 2\sqrt{C_5}\sqrt{m_1}\omega_{\sf max}\log m\cdot \sigma_{\overline{r}+1}^{\star} + C_5m_1\omega_{\sf max}^2\log^2 m
\end{align}
and
\begin{align}\label{ineq:spectral_Z2}
	\left\|\bm{Z}_2\right\| \leq 2\big\|\widetilde{\bm{U}}_1^\top\bm{U}_2^{\star(2)}\big\|\big\|\bm{\Sigma}^{\star(2)}\big\|^2 \lesssim \frac{\sqrt{m}_1\omega_{\sf max}\log m}{\sigma_{\overline{r}}^\star}\sigma_{\overline{r}+1}^{\star2} \leq \sqrt{m_1}\omega_{\sf max}\log m\cdot \sigma_{\overline{r}+1}^{\star}.
\end{align}
Combining \eqref{ineq:spectral_Z1}, \eqref{ineq:spectral_Z2} and \eqref{ineq2b}, we arrive at
\begin{align}\label{ineq:spectral_Z}
	\left\|\bm{Z}\right\| \lesssim \sqrt{m_1}\omega_{\sf max}\log m\cdot \sigma_{\overline{r}+1}^{\star} + \left(\sqrt{m_1m_2} + m_1\right)\omega_{\sf max}^2\log^2 m \ll \frac{\sigma_{r'}^{\star2}}{16r} \leq \widetilde{\sigma}_{r'}^2 - \widetilde{\sigma}_{r'+1}^2,
\end{align}
which validates \eqref{ineq27}. Here, the second inequality uses the facts $\sigma_{r'}^\star \geq \sigma_{\overline{r}}^\star \geq C_0r[(m_1m_2)^{1/4} + rm_1^{1/2}]\log m$. 
By virtue of Lemma \ref{lm:space_estimate_expansion} and \eqref{ineq26}, we have
\begin{align}\label{ineq32}
	&\left\|\bm{U}_{:,1: r'}^{\sf oracle}\bm{U}_{:,1: r'}^{\sf oracle\top} - \widetilde{\bm{U}}_{:,1: r'}\widetilde{\bm{U}}_{:,1: r'}^\top\right\|_{2,\infty}\notag\\ &\quad\leq \frac{8}{\pi}\sum_{k \geq 1}\frac{2^{k}}{\left(\widetilde{\sigma}_{r'}^2 - \widetilde{\sigma}_{r'+1}^2\right)^k}\sum_{0 \leq j_1, \dots, j_{k+1} \leq r\atop \left(j_1, ..., j_{k+1}\right)^{\top} \neq \bm{0}_{k+1}}\big\|\widetilde{\bm{P}}_{j_1}\bm{Z}\widetilde{\bm{P}}_{j_2}\bm{Z}\cdots\bm{Z}\widetilde{\bm{P}}_{j_{k+1}}\big\|_{2,\infty}\notag\\
	&\quad\leq \frac{8}{\pi}\sum_{k \geq 1}\left(\frac{8}{\sigma_{r'}^{\star2}-\sigma_{r'+1}^{\star2}}\right)^k\sum_{0 \leq j_1, \dots, j_{k+1} \leq r\atop \left(j_1, ..., j_{k+1}\right)^{\top} \neq \bm{0}_{k+1}}\big\|\widetilde{\bm{P}}_{j_1}\bm{Z}\widetilde{\bm{P}}_{j_2}\bm{Z}\cdots\bm{Z}\widetilde{\bm{P}}_{j_{k+1}}\big\|_{2,\infty},
\end{align}
Here, for any $1 \leq j \leq r$, $\widetilde{\bm{P}}_{j} = \widetilde{\bm{u}}_j\widetilde{\bm{u}}_j^\top$ and $\widetilde{\bm{P}}_{0} = \widetilde{\bm{U}}_{\perp}\widetilde{\bm{U}}_{\perp}^\top$. 

To bound $\|\bm{U}_{:,1: r'}^{\sf oracle}\bm{U}_{:,1: r'}^{\sf oracle\top} - \widetilde{\bm{U}}_{:,1: r'}\widetilde{\bm{U}}_{:,1: r'}^\top\|_{2,\infty}$, we will bound each single term $\|\widetilde{\bm{P}}_{j_1}\bm{Z}\widetilde{\bm{P}}_{j_2}\bm{Z}\cdots\bm{Z}\widetilde{\bm{P}}_{j_{k+1}}\|_{2,\infty}$ for $1 \leq k \leq \log n$, and show that the total contribution of the remaining terms is small.
\paragraph{Step 3: bounding $\|\widetilde{\bm{P}}_{j_1}\bm{Z}\widetilde{\bm{P}}_{j_2}\bm{Z}\cdots\bm{Z}\widetilde{\bm{P}}_{j_{k+1}}\|_{2,\infty}$ for small $k$.} For any $1 \leq k \leq \log n$ and $(j_1, \dots, j_{k+1}) \in \{0, 1, \dots, r\}^{k+1}\backslash \bm{0}$, let $\ell$ denote the the smallest $i$ such that $j_i \neq 0$.

\paragraph{Step 3.1: bounding $\|\widetilde{\bm{P}}_{j_1}\bm{Z}\widetilde{\bm{P}}_{j_2}\bm{Z}\cdots\bm{Z}\widetilde{\bm{P}}_{j_{k+1}}\|_{2,\infty}$ when $\ell = 1$.} If $\ell=1$, then \eqref{ineq2d} and \eqref{ineq:incoherence_tilde_U_2} taken collectively show that
\begin{align}\label{ineq101}
	\left\|\widetilde{\bm{u}}_{j_1}\right\|_{\infty} \leq \max\big\{\big\|\widetilde{\bm{U}}_1\big\|_{2,\infty}, \big\|\widetilde{\bm{U}}_2\big\|_{2,\infty}\big\} \leq 2\sqrt{\frac{\mu r}{m_1}}.
\end{align}
Inequality \eqref{ineq:spectral_Z} taken together with \eqref{ineq101} leads to
\begin{align}\label{ineq30}
	\big\|\widetilde{\bm{P}}_{j_1}\bm{Z}\widetilde{\bm{P}}_{j_2}\bm{Z}\cdots\bm{Z}\widetilde{\bm{P}}_{j_{k+1}}\big\|_{2,\infty} &= \big\|\widetilde{\bm{u}}_{j_1}\widetilde{\bm{u}}_{j_1}^\top\bm{Z}\widetilde{\bm{P}}_{j_2}\bm{Z}\cdots\bm{Z}\widetilde{\bm{P}}_{j_{k+1}}\big\|_{2,\infty}\notag\\
	&\leq \left\|\widetilde{\bm{u}}_{j_1}\right\|_{\infty}\big\|\widetilde{\bm{u}}_{j_1}^\top\bm{Z}\widetilde{\bm{P}}_{j_2}\bm{Z}\cdots\bm{Z}\widetilde{\bm{P}}_{j_{k+1}}\big\|\notag\\
	&\leq 2\sqrt{\frac{\mu r}{m_1}}\left\|\bm{Z}\right\|^{k}\notag\\
	&\leq 2\sqrt{\frac{\mu r}{m_1}}\left(C_2\left(\sqrt{m_1}\omega_{\sf max}\log m\cdot \sigma_{\overline{r}+1}^{\star} + \left(\sqrt{m_1m_2} + m_1\right)\omega_{\sf max}^2\log^2 m\right)\right)^k.
\end{align}

\paragraph{Step 3.2: bounding $\|\bm{Z}^i\widetilde{\bm{U}}\|_{2,\infty}$.} Turning to $\ell \geq 2$, we see from the triangle inequality that
\begin{align}\label{ineq29}
	&\big\|\widetilde{\bm{P}}_{j_1}\bm{Z}\widetilde{\bm{P}}_{j_2}\bm{Z}\cdots\bm{Z}\widetilde{\bm{P}}_{j_{k+1}}\big\|_{2,\infty} \leq \big\|\bm{Z}^{\ell-1}\widetilde{\bm{P}}_{j_\ell}\bm{Z}\cdots\bm{Z}\widetilde{\bm{P}}_{j_{k+1}}\big\|_{2,\infty} + \sum_{i=1}^{\ell-1}\left\|\bm{Z}^{i-1}\bm{P}_{\widetilde{\bm{U}}}\bm{Z}\widetilde{\bm{P}}_{j_{i+1}}\bm{Z}\cdots\bm{Z}\widetilde{\bm{P}}_{j_{k+1}}\right\|_{2,\infty}.
\end{align}
To bound the right-hand side of \eqref{ineq29}, it is helpful to bound $\|\bm{Z}^i\widetilde{\bm{U}}\|_{2,\infty}$ first. 
\paragraph{Step 3.2.1: bounding $\|\bm{Z}^i\widetilde{\bm{U}}^{(1)}\|_{2,\infty}$.} Recognizing that for any matrices $\bm{A}, \bm{B} \in \bbR^{m_1 \times m_1}$, we see that the following equation holds:
\begin{align*}
	\left(\bm{A} + \bm{B}\right)^{i} = \bm{B}^i + \sum_{j = 0}^{i-1}\bm{B}^j\bm{A}\left(\bm{A} + \bm{B}\right)^{i-j-1}.
\end{align*}
This allows one to derive 
\begin{align}\label{eq10}
	\bm{Z}^i\widetilde{\bm{U}}^{(1)} &= \left(\bm{Z}_1 + \bm{Z}_2 + \bm{Z}_3\right)^i\widetilde{\bm{U}}^{(1)}\notag\\
	&= \bm{Z}_3^i\widetilde{\bm{U}}^{(1)} + \sum_{j = 0}^{i-1}\bm{Z}_3^j\bm{Z}_1\bm{Z}^{i-j-1}\widetilde{\bm{U}}^{(1)} + \sum_{j = 0}^{i-1}\bm{Z}_3^j\bm{Z}_2\bm{Z}^{i-j-1}\widetilde{\bm{U}}^{(1)}.
\end{align}
By virtue of Lemma \ref{lm:noise_product} and \eqref{ineq:spectral_Z}, one has
\begin{align}\label{ineq102}
	\big\|\bm{Z}^i\widetilde{\bm{U}}^{(1)}\big\|_{2,\infty} &\leq \big\|\bm{Z}_3^i\widetilde{\bm{U}}^{(1)}\big\|_{2,\infty} + \sum_{j = 0}^{i-1}\big\|\bm{Z}_3^j\left(\bm{Z}_1 + \bm{Z}_2\right)\bm{Z}^{i-j-1}\widetilde{\bm{U}}^{(1)}\big\|_{2,\infty}\notag\\
	&\leq \big\|\bm{Z}_3^i\widetilde{\bm{U}}^{(1)}\big\|_{2,\infty} + \sum_{j = 0}^{i-1}\big(\big\|\bm{Z}_3^j\bm{Z}_1\big\|_{2,\infty} + \big\|\bm{Z}_3^j\bm{Z}_2\big\|_{2,\infty} \big)\big\|\bm{Z}^{i-j-1}\widetilde{\bm{U}}^{(1)}\big\|\notag\\
	&\leq \big\|\bm{Z}_3^i\widetilde{\bm{U}}^{(1)}\big\|_{2,\infty} + \sum_{j = 0}^{i-1}\big(\big\|\bm{Z}_3^j\bm{Z}_1\big\|_{2,\infty} + \big\|\bm{Z}_3^j\bm{Z}_2\big\|_{2,\infty} \big)\big\|\bm{Z}\big\|^{i-j-1}\notag\\
	&\leq 4C_3\sqrt{\frac{\mu r}{m_1}}\left(C_3\left(\sqrt{m_1m_2} + m_1\right)\omega_{\sf max}^2\log^2 m\right)^i\notag\\&\quad + \sum_{j=0}^{i-1}C_2\sqrt{\mu r}\left(\sigma_{\overline{r} + 1}^\star + \sqrt{m}_1\omega_{\sf max}\log m\right)\left(C_3\left(\sqrt{m_1m_2} + m_1\right)\omega_{\sf max}^2\log^2 m\right)^{j}\omega_{\sf max}\log m\cdot\notag\\
	&\hspace{1.2cm}\left(C_2\left(\sqrt{m_1}\omega_{\sf max}\log m\cdot \sigma_{\overline{r}+1}^{\star} + \left(\sqrt{m_1m_2} + m_1\right)\omega_{\sf max}^2\log^2 m\right)\right)^{i-j-1}\notag\\
	&\leq 4C_3\sqrt{\frac{\mu r}{m_1}}\left(C_2\left(\sqrt{m_1}\omega_{\sf max}\log m\cdot \sigma_{\overline{r}+1}^{\star} + \left(\sqrt{m_1m_2} + m_1\right)\omega_{\sf max}^2\log^2 m\right)\right)^{i}\notag\\
	&\quad + \sqrt{\frac{\mu r}{m_1}}\left(C_2\left(\sqrt{m_1}\omega_{\sf max}\log m\cdot \sigma_{\overline{r}+1}^{\star} + \left(\sqrt{m_1m_2} + m_1\right)\omega_{\sf max}^2\log^2 m\right)\right)^{i}\sum_{j=0}^{i-1}\frac{1}{2^j}\notag\\
	&\leq C_2\sqrt{\frac{\mu r}{m_1}}\left(C_2\left(\sqrt{m_1}\omega_{\sf max}\log m\cdot \sigma_{\overline{r}+1}^{\star} + \left(\sqrt{m_1m_2} + m_1\right)\omega_{\sf max}^2\log^2 m\right)\right)^{i},
\end{align}
provided that $C_2 \geq 4C_3 + 1$.
\paragraph{Step 3.2.2: bounding $\|\bm{Z}^i\widetilde{\bm{U}}^{(2)}\|_{2,\infty}$.}
Note that $\widetilde{\bm{U}}^{(2)}$ is also the column subspace of $\mathcal{P}_{(\widetilde{\bm{U}}^{(1)})_{\perp}}\bm{U}^{\star(2)} \in \bbR^{m_1, r - \overline{r}}$. In view of Lemma \ref{lm:noise_product}, \eqref{ineq6}, \eqref{eq2} and \eqref{ineq102}, we arrive at
\begin{align}\label{ineq103}
	\big\|\bm{Z}^i\widetilde{\bm{U}}^{(2)}\big\|_{2,\infty} &\leq \big\|\bm{Z}^i\mathcal{P}_{(\widetilde{\bm{U}}^{(1)})_{\perp}}\bm{U}^{\star(2)}\big\|_{2,\infty} \sigma_{r-\overline{r}}^{-1}\big(\mathcal{P}_{(\widetilde{\bm{U}}^{(1)})_{\perp}}\bm{U}^{\star(2)}\big)\notag\\
	&\leq \frac{2}{\sqrt{3}}\left(\big\|\bm{Z}^i\bm{U}^{\star(2)}\big\|_{2,\infty} + \big\|\bm{Z}^i\mathcal{P}_{\widetilde{\bm{U}}^{(1)}}\bm{U}^{\star(2)}\big\|_{2,\infty}\right)\notag\\
	&\leq \frac{2}{\sqrt{3}}\left(\big\|\bm{Z}^i\bm{U}^{\star(2)}\big\|_{2,\infty} + \big\|\bm{Z}^i\widetilde{\bm{U}}^{(1)}\big\|_{2,\infty}\big\|\widetilde{\bm{U}}^{(1)\top}\bm{U}^{\star(2)}\big\|\right)\notag\\
	&\leq \frac{2}{\sqrt{3}}\bigg(3C_3\sqrt{\frac{\mu r}{m_1}}\left(C_3\left(\sqrt{m_1m_2} + m_1\right)\omega_{\sf max}^2\log^2 m\right)^{i}\notag\\&\hspace{1.2cm} + C_2\sqrt{\frac{\mu r}{m_1}}\left(C_2\left(\sqrt{m_1}\omega_{\sf max}\log m\cdot \sigma_{\overline{r}+1}^{\star} + \left(\sqrt{m_1m_2} + m_1\right)\omega_{\sf max}^2\log^2 m\right)\right)^{i}\cdot\frac{1}{2}\bigg)\notag\\
	&\leq C_2\sqrt{\frac{\mu r}{m_1}}\left(C_2\left(\sqrt{m_1}\omega_{\sf max}\log m\cdot \sigma_{\overline{r}+1}^{\star} + \left(\sqrt{m_1m_2} + m_1\right)\omega_{\sf max}^2\log^2 m\right)\right)^{i}.
\end{align}

Putting \eqref{ineq102} and \eqref{ineq103} together and recognizing that $\widetilde{\bm{U}} = [\widetilde{\bm{U}}^{(1)}\ \widetilde{\bm{U}}^{(2)}]$,
we conclude that
\begin{align}\label{ineq28}
	\big\|\bm{Z}^i\widetilde{\bm{U}}\big\|_{2,\infty} &\leq \big\|\bm{Z}^i\widetilde{\bm{U}}^{(1)}\big\|_{2,\infty} + \big\|\bm{Z}^i\widetilde{\bm{U}}^{(2)}\big\|_{2,\infty}\notag\\ &\leq 2C_2\sqrt{\frac{\mu r}{m_1}}\left(C_2\left(\sqrt{m_1}\omega_{\sf max}\log m\cdot \sigma_{\overline{r}+1}^{\star} + \left(\sqrt{m_1m_2} + m_1\right)\omega_{\sf max}^2\log^2 m\right)\right)^{i}.
\end{align}

\paragraph{Step 4: bounding $\|\widetilde{\bm{P}}_{j_1}\bm{Z}\widetilde{\bm{P}}_{j_2}\bm{Z}\cdots\bm{Z}\widetilde{\bm{P}}_{j_{k+1}}\|_{2,\infty}$ when $\ell > 1$.} Plugging \eqref{ineq:spectral_Z} and \eqref{ineq28} into \eqref{ineq29} yields that, for $\ell \geq 2$, 
\begin{align*}
	&\big\|\widetilde{\bm{P}}_{j_1}\bm{Z}\widetilde{\bm{P}}_{j_2}\bm{Z}\cdots\bm{Z}\widetilde{\bm{P}}_{j_{k+1}}\big\|_{2,\infty}\\
	&\quad \leq \left\|\bm{Z}^{\ell-1}\widetilde{\bm{u}}_{j_\ell}\right\|_{2,\infty}\big\|\widetilde{\bm{u}}_{j_\ell}^\top\bm{Z}\cdots\bm{Z}\widetilde{\bm{P}}_{j_{k+1}}\big\|_{2,\infty} + \sum_{i=1}^{\ell-1}\big\|\bm{Z}^{i-1}\widetilde{\bm{U}}\big\|_{2,\infty}\big\|\widetilde{\bm{U}}^\top\bm{Z}\widetilde{\bm{P}}_{j_{i+1}}\bm{Z}\cdots\bm{Z}\widetilde{\bm{P}}_{j_{k+1}}\big\|_{2,\infty}\\
	&\quad \leq \big\|\bm{Z}^{\ell-1}\widetilde{\bm{U}}\big\|_{2,\infty}\left\|\bm{Z}\right\|^{k-\ell+1} + \sum_{i=1}^{\ell-1}\big\|\bm{Z}^{i-1}\widetilde{\bm{U}}\big\|_{2,\infty}\left\|\bm{Z}\right\|^{k-i+1}\\
	&\quad \leq 2C_2\sqrt{\frac{\mu r}{m_1}}\left(C_2\left(\sqrt{m_1}\omega_{\sf max}\log m\cdot \sigma_{\overline{r}+1}^{\star} + \left(\sqrt{m_1m_2} + m_1\right)\omega_{\sf max}^2\log^2 m\right)\right)^{\ell-1}\cdot\\
	&\qquad \left(C_2\left(\sqrt{m_1}\omega_{\sf max}\log m\cdot \sigma_{\overline{r}+1}^{\star} + \left(\sqrt{m_1m_2} + m_1\right)\omega_{\sf max}^2\log^2 m\right)\right)^{k-\ell+1}\\
	&\qquad + \sum_{i=1}^{\ell-1}2C_2\sqrt{\frac{\mu r}{m_1}}\left(C_2\left(\sqrt{m_1}\omega_{\sf max}\log m\cdot \sigma_{\overline{r}+1}^{\star} + \left(\sqrt{m_1m_2} + m_1\right)\omega_{\sf max}^2\log^2 m\right)\right)^{i-1}\cdot\\
	&\hspace{1.6cm}\left(C_2\left(\sqrt{m_1}\omega_{\sf max}\log m\cdot \sigma_{\overline{r}+1}^{\star} + \left(\sqrt{m_1m_2} + m_1\right)\omega_{\sf max}^2\log^2 m\right)\right)^{k-i+1}\\
	&\quad= 2C_2\sqrt{\frac{\mu r}{m_1}}\left(C_2\left(\sqrt{m_1}\omega_{\sf max}\log m\cdot \sigma_{\overline{r}+1}^{\star} + \left(\sqrt{m_1m_2} + m_1\right)\omega_{\sf max}^2\log^2 m\right)\right)^{k}\cdot\ell\\
	&\quad \leq 2C_2\sqrt{\frac{\mu r}{m_1}}\left(2C_2\left(\sqrt{m_1}\omega_{\sf max}\log m\cdot \sigma_{\overline{r}+1}^{\star} + \left(\sqrt{m_1m_2} + m_1\right)\omega_{\sf max}^2\log^2 m\right)\right)^{k}.
\end{align*}
The last inequality comes from $\ell \leq k+1 \leq 2^k$.
Combining the previous inequality and \eqref{ineq30} reveals that: for any $1 \leq k \leq \log n$ and $(j_1, \dots, j_{k+1}) \in \{0, 1, \dots, r\}^{k+1}\backslash \bm{0}$, it holds that
\begin{align}\label{ineq31}
	&\big\|\widetilde{\bm{P}}_{j_1}\bm{Z}\widetilde{\bm{P}}_{j_2}\bm{Z}\cdots\bm{Z}\widetilde{\bm{P}}_{j_{k+1}}\big\|_{2,\infty}\notag\\ &\hspace{1cm}\leq 2C_2\sqrt{\frac{\mu r}{m_1}}\left(2C_2\left(\sqrt{m_1}\omega_{\sf max}\log m\cdot \sigma_{\overline{r}+1}^{\star} + \left(\sqrt{m_1m_2} + m_1\right)\omega_{\sf max}^2\log^2 m\right)\right)^{k}.
\end{align}
\paragraph{Step 5: bounding $\|\bm{U}_{:,1: r'}^{\sf oracle}\bm{U}_{:,1: r'}^{\sf oracle\top} - \widetilde{\bm{U}}_{:,1: r'}\widetilde{\bm{U}}_{:,1: r'}^\top\|_{2,\infty}$.} Now, we are ready to bound $\|\bm{U}_{:,1: r'}^{\sf oracle}\bm{U}_{:,1: r'}^{\sf oracle\top} - \widetilde{\bm{U}}_{:,1: r'}\widetilde{\bm{U}}_{:,1: r'}^\top\|_{2,\infty}$. As a consequence of \eqref{ineq31}, for any $1 \leq k \leq \log n$, one has
\begin{align}\label{ineq104}
	&\left(\frac{8}{\sigma_{r'}^{\star2}-\sigma_{r'+1}^{\star2}}\right)^k\sum_{0 \leq j_1, \dots, j_{k+1} \leq r\atop \left(j_1, ..., j_{k+1}\right)^{\top} \neq \bm{0}_{k+1}}\big\|\widetilde{\bm{P}}_{j_1}\bm{Z}\widetilde{\bm{P}}_{j_2}\bm{Z}\cdots\bm{Z}\widetilde{\bm{P}}_{j_{k+1}}\big\|_{2,\infty}\notag\\ &\quad\leq \left(\frac{8}{\sigma_{r'}^{\star2}-\sigma_{r'+1}^{\star2}}\right)^k\cdot (r+1)^{k+1}\cdot  2C_2\sqrt{\frac{\mu r}{m_1}}\left(2C_2\left(\sqrt{m_1}\omega_{\sf max}\log m\cdot \sigma_{\overline{r}+1}^{\star} + \left(\sqrt{m_1m_2} + m_1\right)\omega_{\sf max}^2\log^2 m\right)\right)^{k}\notag\\
	&\quad\leq 4C_2\sqrt{\frac{\mu r^3}{m_1}}\left(\frac{32C_2r\left(\sqrt{m_1}\omega_{\sf max}\log m\cdot \sigma_{\overline{r}+1}^{\star} + \left(\sqrt{m_1m_2} + m_1\right)\omega_{\sf max}^2\log^2 m\right)}{\sigma_{r'}^{\star2}-\sigma_{r'+1}^{\star2}}\right)^k.
\end{align}
Recalling that $\sigma_{r'}^\star \geq \sigma_{\overline{r}}^\star \geq C_0r[(m_1m_2)^{1/4} + rm_1^{1/2}]\log m$, we know from \eqref{ineq26} that there exists some large constant $C > 0$ such that
\begin{align}\label{ineq105}
	&\left(\frac{32C_2r\left(\sqrt{m_1}\omega_{\sf max}\log m\cdot \sigma_{\overline{r}+1}^{\star} + \left(\sqrt{m_1m_2} + m_1\right)\omega_{\sf max}^2\log^2 m\right)}{\sigma_{r'}^{\star2}-\sigma_{r'+1}^{\star2}}\right)^{k-1}\notag\\
	&\hspace{1cm}\leq \left(\frac{32C_2r\left(\sqrt{m_1}\omega_{\sf max}\log m\cdot \sigma_{\overline{r}+1}^{\star} + \left(\sqrt{m_1m_2} + m_1\right)\omega_{\sf max}^2\log^2 m\right)}{\sigma_{r'}^{\star2}/(4r)}\right)^{k-1}\notag\\
	&\hspace{1cm}\leq \left(\frac{1}{C^2}\right)^{k-1} \leq \frac{1}{C^k}.
\end{align}
For any $k \ge \lfloor\log m\rfloor + 1$, in view of \eqref{ineq:spectral_Z} and the the previous inequality, we have
\begin{align}\label{ineq106}
	&\left(\frac{8}{\sigma_{r'}^{\star2}-\sigma_{r'+1}^{\star2}}\right)^k\sum_{0 \leq j_1, \dots, j_{k+1} \leq r\atop \left(j_1, ..., j_{k+1}\right)^{\top} \neq \bm{0}_{k+1}}\big\|\widetilde{\bm{P}}_{j_1}\bm{Z}\widetilde{\bm{P}}_{j_2}\bm{Z}\cdots\bm{Z}\widetilde{\bm{P}}_{j_{k+1}}\big\|_{2,\infty}\notag\\
	&\quad \leq \left(\frac{8}{\sigma_{r'}^{\star2}-\sigma_{r'+1}^{\star2}}\right)^k\cdot (r+1)^{k+1}\left\|\bm{Z}\right\|^k\notag\\
	&\quad\leq 2r\cdot \left(\frac{16rC_2\left(\sqrt{m_1}\omega_{\sf max}\log m\cdot \sigma_{\overline{r}+1}^{\star} + \left(\sqrt{m_1m_2} + m_1\right)\omega_{\sf max}^2\log^2 m\right)}{\sigma_{r'}^{\star2}-\sigma_{r'+1}^{\star2}}\right)^{k}\notag\\
	&\quad \leq \frac{2r}{C^k}\cdot\frac{16rC_2\left(\sqrt{m_1}\omega_{\sf max}\log m\cdot \sigma_{\overline{r}+1}^{\star} + \left(\sqrt{m_1m_2} + m_1\right)\omega_{\sf max}^2\log^2 m\right)}{\sigma_{r'}^{\star2}-\sigma_{r'+1}^{\star2}}.
\end{align}
Combining \eqref{ineq32}, \eqref{ineq104}, \eqref{ineq105} and \eqref{ineq106}, one can obtain
\begin{align}\label{ineq112}
	&\left\|\bm{U}_{:,1: r'}^{\sf oracle}\bm{U}_{:,1: r'}^{\sf oracle\top} - \widetilde{\bm{U}}_{:,1: r'}\widetilde{\bm{U}}_{:,1: r'}^\top\right\|_{2,\infty}\notag\\
	&\hspace{1cm}\leq \sum_{1 \leq k \leq \log m}4C_2\sqrt{\frac{\mu r^3}{m_1}}\left(\frac{32C_2r\left(\sqrt{m_1}\omega_{\sf max}\log m\cdot \sigma_{\overline{r}+1}^{\star} + \left(\sqrt{m_1m_2} + m_1\right)\omega_{\sf max}^2\log^2 m\right)}{\sigma_{r'}^{\star2}-\sigma_{r'+1}^{\star2}}\right)^k\notag\\
	&\hspace{1.3cm} + \sum_{k \geq \lfloor\log m\rfloor + 1}\frac{2r}{C^k}\cdot\frac{16C_2r\left(\sqrt{m_1}\omega_{\sf max}\log m\cdot \sigma_{\overline{r}+1}^{\star} + \left(\sqrt{m_1m_2} + m_1\right)\omega_{\sf max}^2\log^2 m\right)}{\sigma_{r'}^{\star2}-\sigma_{r'+1}^{\star2}}\notag\\
	&\hspace{1cm}\lesssim \sqrt{\frac{\mu r^3}{m_1}}\frac{r\left(\sqrt{m_1}\omega_{\sf max}\log m\cdot \sigma_{\overline{r}+1}^{\star} + \left(\sqrt{m_1m_2} + m_1\right)\omega_{\sf max}^2\log^2 m\right)}{\sigma_{r'}^{\star2}-\sigma_{r'+1}^{\star2}}\notag\\
	&\hspace{1cm}\lesssim \sqrt{\frac{\mu r^3}{m_1}}\left(\frac{r^2\sqrt{m_1}\omega_{\sf max}\log m}{\sigma_{r'}^\star} + \frac{r^2 \left(\sqrt{m_1m_2} + m_1\right)\omega_{\sf max}^2\log^2 m}{\sigma_{r'}^{\star2}}\right)\notag\\
	&\hspace{1cm}\asymp \sqrt{\frac{\mu r^3}{m_1}}\left(\frac{r^2\sqrt{m_1}\omega_{\sf max}\log m}{\sigma_{r'}^\star} + \frac{r^2 \sqrt{m_1m_2}\omega_{\sf max}^2\log^2 m}{\sigma_{r'}^{\star2}}\right).
\end{align}
Here, the second last line holds due to \eqref{ineq26} and the the last line makes use of the inequality
\begin{align*}
	\frac{r^2 m_1\omega_{\sf max}^2\log^2 m}{\sigma_{r'}^{\star2}} = r^2\left(\frac{\sqrt{m_1}\omega_{\sf max}\log m}{\sigma_{r'}^\star}\right)^2 \lesssim r^2\frac{\sqrt{m_1}\omega_{\sf max}\log m}{\sigma_{r'}^\star},
\end{align*}
provided that $\sigma_{r'}^\star \gtrsim \sqrt{m_1}\omega_{\sf max}\log m$.
\paragraph{Step 6: bounding $\|\widetilde{\bm{U}}_{:,1: r'}\widetilde{\bm{U}}_{:,1: r'}^\top - \bm{U}_{:,1: r'}^\star\bm{U}_{:,1: r'}^{\star\top}\|_{2,\infty}$.} To control $\|\bm{U}_{:,1: r'}^{\sf oracle}\bm{U}_{:,1: r'}^{\sf oracle\top} - \bm{U}_{:,1: r'}^\star\bm{U}_{:,1: r'}^{\star\top}\|_{2,\infty}$, one still needs to bound $\|\widetilde{\bm{U}}_{:,1: r'}\widetilde{\bm{U}}_{:,1: r'}^\top - \bm{U}_{:,1: r'}^\star\bm{U}_{:,1: r'}^{\star\top}\|_{2,\infty}$. Recall that $\widetilde{\bm{U}}^{(1)}\widetilde{\bm{\Sigma}}^{(1)}\widetilde{\bm{W}}^{(1)\top}$ is the SVD of $\bm{U}^{\star(1)}\bm{\Sigma}^{\star(1)} + \bm{E}\bm{V}^{\star(1)} = \bm{U}_{:,1: \overline{r}}^\star\bm{\Sigma}^{\star}_{1:\overline{r}, 1:\overline{r}} + \bm{E}\bm{V}_{:,1: \overline{r}}$ and $\widetilde{\bm{U}}_{:,1: r'}$ (resp.~$\bm{U}_{:,1: r'}^\star$) is the matrix containing the first $r'$ columns of $\widetilde{\bm{U}}^{(1)}$ (resp.~$\bm{U}^{\star(1)}$). We make the observation that
\begin{align}\label{ineq110}
	&\mathcal{P}_{\left(\widetilde{\bm{U}}_{:,1: r'}\right)_{\perp}}\bm{U}_{:,1: r'}^\star\notag\\ &\quad= \mathcal{P}_{\left(\widetilde{\bm{U}}_{:,1: r'}\right)_{\perp}}\left(\bm{U}_{:,1: r'}^\star\bm{\Sigma}^{\star}_{1:r', 1:\overline{r}}\right)\left(\bm{I}_{r'}\ \bm{0}_{r' \times (\overline{r} - r')}\right)^\top\left(\bm{\Sigma}^{\star}_{1:r', 1:r'}\right)^{-1}\notag\\
	&\quad= \mathcal{P}_{\left(\widetilde{\bm{U}}_{:,1: r'}\right)_{\perp}}\left(\widetilde{\bm{U}}^{(1)}\widetilde{\bm{\Sigma}}^{(1)}\widetilde{\bm{W}}^{(1)\top} - \bm{E}\bm{V}^{\star(1)} - \bm{U}_{:, r'+1: \overline{r}}^\star\bm{\Sigma}^{\star}_{r'+1: \overline{r}, 1:\overline{r}}\right)\left(\bm{I}_{r'}\ \bm{0}_{r' \times (\overline{r} - r')}\right)^\top\left(\bm{\Sigma}^{\star}_{1:r', 1:r'}\right)^{-1}\notag\\
	&\quad= \mathcal{P}_{\left(\widetilde{\bm{U}}_{:,1: r'}\right)_{\perp}}\left(\widetilde{\bm{U}}_{:,1: r'}^{(1)}\widetilde{\bm{\Sigma}}_{1:r', 1:r'}^{(1)}\widetilde{\bm{W}}_{:,1: r'}^{(1)\top} + \widetilde{\bm{U}}_{:,r'+1: \overline{r}}^{(1)}\widetilde{\bm{\Sigma}}_{r'+1: \overline{r}, r'+1: \overline{r}}^{(1)}\widetilde{\bm{W}}_{:,r'+1: \overline{r}}^{(1)\top} - \bm{E}\bm{V}^{\star(1)}\right)\left(\bm{I}_{r'}\ \bm{0}_{r' \times (\overline{r} - r')}\right)^\top\notag\\&\qquad \cdot\left(\bm{\Sigma}^{\star}_{1:r', 1:r'}\right)^{-1}\notag\\
	&\quad= \widetilde{\bm{U}}_{:,r'+1: \overline{r}}^{(1)}\widetilde{\bm{\Sigma}}_{r'+1: \overline{r}, r'+1: \overline{r}}^{(1)}\widetilde{\bm{W}}_{:,r'+1: \overline{r}}^{(1)\top}\left(\bm{I}_{r'}\ \bm{0}_{r' \times (\overline{r} - r')}\right)^\top\left(\bm{\Sigma}^{\star}_{1:r', 1:r'}\right)^{-1}\notag\\&\qquad - \mathcal{P}_{\left(\widetilde{\bm{U}}_{:,1: r'}\right)_{\perp}}\bm{E}\bm{V}^{\star(1)}\left(\bm{I}_{r'}\ \bm{0}_{r' \times (\overline{r} - r')}\right)^\top\left(\bm{\Sigma}^{\star}_{1:r', 1:r'}\right)^{-1},
\end{align}
where the second identity is valid since $$\bm{\Sigma}^{\star}_{r'+1: \overline{r}, 1:\overline{r}}\left(\bm{I}_{r'}\ \bm{0}_{r' \times (\overline{r} - r')}\right)^\top = \left(\bm{0}_{(\overline{r} - r') \times r'}\ \bm{\Sigma}^{\star}_{r'+1: \overline{r}, r'+1:\overline{r}}\right)\left(\bm{I}_{r'}\ \bm{0}_{r' \times (\overline{r} - r')}\right)^\top = \bm{0}_{(\overline{r} - r') \times r'}$$ and the last line comes from $\mathcal{P}_{(\widetilde{\bm{U}}_{:,1: r'})_{\perp}}\widetilde{\bm{U}}_{:,1: r'}^{(1)} = \bm{0}$ and $\mathcal{P}_{(\widetilde{\bm{U}}_{:,1: r'})_{\perp}}\widetilde{\bm{U}}_{:,r'+1: \overline{r}}^{(1)} = \widetilde{\bm{U}}_{:,r'+1: \overline{r}}^{(1)}$. Note that $\widetilde{\bm{W}}_{:,1: r'}^{(1)}$ (resp.~$\left(\bm{I}_{r'}\ \bm{0}\right)^\top$) is the leading $r'$ right singular space of $\bm{U}^{\star(1)}\bm{\Sigma}^{\star(1)} + \bm{E}\bm{V}^{\star(1)}$ (resp.~$\bm{U}^{\star(1)}\bm{\Sigma}^{\star(1)}$) and
\begin{align*}
	\sigma_{r'}^\star - \sigma_{r'+1}^\star \geq \frac{1}{4r}\sigma_{r'}^\star \geq \frac{C_0}{4}\big[(m_1m_2)^{1/4} + rm_1^{1/2}\big]\omega_{\sf max}\log m \gg r\sqrt{m}_1\omega_{\sf max}\log m.
\end{align*}
\citet[Lemma 2.6, Eqn.~(2.26a)]{chen2021spectral} and Lemma \ref{lm:event_collection} taken together imply that
\begin{subequations}
	\begin{align}
		\left\|\widetilde{\bm{W}}_{:,r'+1: \overline{r}}^{(1)\top}\left(\bm{I}_{r'}\ \bm{0}_{r' \times (\overline{r} - r')}\right)^\top\right\| &= \left\|\big(\widetilde{\bm{W}}_{:,1:r'}\big)_{\perp}^{(1)\top}\left(\bm{I}_{r'}\ \bm{0}_{r' \times (\overline{r} - r')}\right)^\top\right\| \lesssim \frac{\left\|\bm{E}\bm{V}^{\star(1)}\right\|}{\sigma_{r'}^\star - \sigma_{r'+1}^\star} \lesssim \frac{r\sqrt{m}_1\omega_{\sf max}\log m}{\sigma_{r'}^\star},\label{ineq109a}\\
		\left\|\big(\bm{U}_{:,1: r'}^\star\big)_{\perp}^\top\widetilde{\bm{U}}_{:,1: r'}\right\| &\lesssim \frac{\left\|\bm{E}\bm{V}^{\star(1)}\right\|}{\sigma_{r'}^\star - \sigma_{r'+1}^\star} \lesssim \frac{r\sqrt{m}_1\omega_{\sf max}\log m}{\sigma_{r'}^\star}.\label{ineq109b}
	\end{align}
\end{subequations}
Moreover, combining \eqref{ineq2a} and the assumption $\sigma_{\overline{r}}^\star \geq C_0r[(m_1m_2)^{1/4} + rm_1^{1/2}]\omega_{\sf max}\log m$ gives
\begin{align}\label{ineq4}
	\widetilde{\sigma}_{i} \leq \sigma_{i}^\star + \left\|\bm{E}\bm{V}^\star\right\| \leq 2\sigma_{i}^\star,~\qquad~\forall i \in \left[\overline{r}\right].
\end{align}
Inequality \eqref{ineq2d} combined with \eqref{ineq109a} and \eqref{ineq4} gives
\begin{align}\label{ineq107}
	&\left\|\widetilde{\bm{U}}_{:,r'+1: \overline{r}}^{(1)}\widetilde{\bm{\Sigma}}_{r'+1: \overline{r}, r'+1: \overline{r}}^{(1)}\widetilde{\bm{W}}_{:,r'+1: \overline{r}}^{(1)\top}\left(\bm{I}_{r'}\ \bm{0}_{r' \times (\overline{r} - r')}\right)^\top\left(\bm{\Sigma}^{\star}_{1:r', 1:r'}\right)^{-1}\right\|_{2,\infty}\notag\\
	&\quad \leq \big\|\widetilde{\bm{U}}^{(1)}\big\|_{2,\infty}\big\|\widetilde{\bm{\Sigma}}_{r'+1: \overline{r}, r'+1: \overline{r}}^{(1)}\big\|\left\|\widetilde{\bm{W}}_{:,r'+1: \overline{r}}^{(1)\top}\left(\bm{I}_{r'}\ \bm{0}_{r' \times (\overline{r} - r')}\right)^\top\right\|\left\|\left(\bm{\Sigma}^{\star}_{1:r', 1:r'}\right)^{-1}\right\|\notag\\
	&\quad \lesssim 2\sqrt{\frac{\mu r}{m_1}}\cdot \widetilde{\sigma}_{r'+1}\cdot \frac{r\sqrt{m}_1\omega_{\sf max}\log m}{\sigma_{r'}^\star}\cdot \frac{1}{\sigma_{r'}^\star}\notag\\
	&\quad \lesssim 2\sqrt{\frac{\mu r}{m_1}}\cdot 2\sigma_{r'+1}^\star\cdot \frac{r\sqrt{m}_1\omega_{\sf max}\log m}{\sigma_{r'}^\star}\cdot \frac{1}{\sigma_{r'}^\star}\notag\\
	&\quad \lesssim \sqrt{\frac{\mu r}{m_1}}\frac{r\sqrt{m}_1\omega_{\sf max}\log m}{\sigma_{r'}^\star}.
\end{align}
In addition, Lemma \ref{lm:power_V} and \eqref{ineq2d} taken together imply that
\begin{align}\label{ineq108}
	&\left\|\mathcal{P}_{\left(\widetilde{\bm{U}}_{:,1: r'}\right)_{\perp}}\bm{E}\bm{V}^{\star(1)}\left(\bm{I}_{r'}\ \bm{0}_{r' \times (\overline{r} - r')}\right)^\top\left(\bm{\Sigma}^{\star}_{1:r', 1:r'}\right)^{-1}\right\|_{2,\infty}\notag\\
	&\quad \leq \frac{1}{\sigma_{r'}^\star}\left\|\mathcal{P}_{\left(\widetilde{\bm{U}}_{:,1: r'}\right)_{\perp}}\bm{E}\bm{V}^{\star(1)}\right\|_{2,\infty}\notag\\
	&\quad \leq \frac{1}{\sigma_{r'}^\star}\left(\big\|\bm{E}\bm{V}^{\star(1)}\big\|_{2,\infty} + \left\|\mathcal{P}_{\widetilde{\bm{U}}_{:,1:r'}}\bm{E}\bm{V}^{\star(1)}\right\|_{2,\infty}\right)\notag\\
	&\quad \leq \frac{1}{\sigma_{r'}^\star}\left(\big\|\bm{E}\bm{V}^{\star(1)}\big\|_{2,\infty} + \big\|\widetilde{\bm{U}}_{:,1:r'}\big\|_{2,\infty}\big\|\widetilde{\bm{U}}_{:,1:r'}\big\|\big\|\bm{E}\bm{V}^{\star(1)}\big\|\right)\notag\\
	&\quad \lesssim \frac{1}{\sigma_{r'}^\star}\left(\sqrt{\mu r}\omega_{\sf max}\log n + 2\sqrt{\frac{\mu r}{m_1}}\cdot\sqrt{m_1}\omega_{\sf max}\log n\right)\notag\\
	&\quad \asymp \sqrt{\frac{\mu r}{m_1}}\frac{\sqrt{m}_1\omega_{\sf max}\log m}{\sigma_{r'}^\star}.
\end{align}
Taking \eqref{ineq110}, \eqref{ineq107} and \eqref{ineq108} together gives
\begin{align*}
	\left\|\mathcal{P}_{\left(\widetilde{\bm{U}}_{:,1: r'}\right)_{\perp}}\bm{U}_{:,1: r'}^\star\right\|_{2,\infty} \lesssim \sqrt{\frac{\mu r}{m_1}}\frac{r\sqrt{m}_1\omega_{\sf max}\log m}{\sigma_{r'}^\star}.
\end{align*}
The previous inequality taken together with \eqref{ineq109b} and \eqref{ineq2d} reveals that
\begin{align}\label{ineq111}
	&\big\|\widetilde{\bm{U}}_{:,1: r'}\widetilde{\bm{U}}_{:,1: r'}^\top - \bm{U}_{:,1: r'}^\star\bm{U}_{:,1: r'}^{\star\top}\big\|_{2,\infty}\notag\\ &\hspace{1cm}\leq \big\|\big(\bm{U}_{:,1: r'}^\star - \widetilde{\bm{U}}_{:,1: r'}\widetilde{\bm{U}}_{:,1: r'}^\top\bm{U}_{:,1: r'}^\star\big)\bm{U}_{:,1: r'}^{\star\top}\big\|_{2,\infty} + \big\|\widetilde{\bm{U}}_{:,1: r'}\widetilde{\bm{U}}_{:,1: r'}^\top\bm{U}_{:,1: r'}^\star\bm{U}_{:,1: r'}^{\star\top} - \widetilde{\bm{U}}_{:,1: r'}\widetilde{\bm{U}}_{:,1: r'}^\top\big\|_{2,\infty}\notag\\
	&\hspace{1cm}\leq \left\|\left(\mathcal{P}_{\left(\widetilde{\bm{U}}_{:,1: r'}\right)_{\perp}}\bm{U}_{:,1: r'}^\star\right)\bm{U}_{:,1: r'}^{\star\top}\right\|_{2,\infty} + \big\|\widetilde{\bm{U}}_{:,1: r'}\widetilde{\bm{U}}_{:,1: r'}^\top\left(\bm{U}_{:,1: r'}^\star\right)_{\perp}\left(\bm{U}_{:,1: r'}^\star\right)_{\perp}^\top\big\|_{2,\infty}\notag\\
	&\hspace{1cm}\leq \left\|\mathcal{P}_{\left(\widetilde{\bm{U}}_{:,1: r'}\right)_{\perp}}\bm{U}_{:,1: r'}^\star\right\|_{2,\infty} + \big\|\widetilde{\bm{U}}_{:,1: r'}\big\|_{2,\infty}\left\|\big(\bm{U}_{:,1: r'}^\star\big)_{\perp}^\top\widetilde{\bm{U}}_{:,1: r'}\right\|\notag\\
	&\hspace{1cm}\lesssim \sqrt{\frac{\mu r}{m_1}}\frac{r\sqrt{m}_1\omega_{\sf max}\log m}{\sigma_{r'}^\star} + 2\sqrt{\frac{\mu r}{m_1}}\frac{r\sqrt{m}_1\omega_{\sf max}\log m}{\sigma_{r'}^\star}\notag\\
	&\hspace{1cm} \asymp \sqrt{\frac{\mu r}{m_1}}\frac{r\sqrt{m}_1\omega_{\sf max}\log m}{\sigma_{r'}^\star}.
\end{align}
By virtue of \eqref{ineq112} and \eqref{ineq111}, we arrive at
\begin{align}\label{ineq113}
	&\big\|\bm{U}_{:,1: r'}^{\sf oracle}\bm{U}_{:,1: r'}^{\sf oracle\top} - \bm{U}_{:,1: r'}^\star\bm{U}_{:,1: r'}^{\star\top}\big\|_{2,\infty}\notag\\
	&\quad \leq \big\|\bm{U}_{:,1: r'}^{\sf oracle}\bm{U}_{:,1: r'}^{\sf oracle\top} - \widetilde{\bm{U}}_{:,1: r'}\widetilde{\bm{U}}_{:,1: r'}^\top\big\|_{2,\infty} + \big\|\widetilde{\bm{U}}_{:,1: r'}\widetilde{\bm{U}}_{:,1: r'}^\top - \bm{U}_{:,1: r'}^\star\bm{U}_{:,1: r'}^{\star\top}\big\|_{2,\infty}\notag\\
	&\quad \lesssim \sqrt{\frac{\mu r^3}{m_1}}\left(\frac{r^2\sqrt{m_1}\omega_{\sf max}\log m}{\sigma_{r'}^\star} + \frac{r^2 \sqrt{m_1m_2}\omega_{\sf max}^2\log^2 m}{\sigma_{r'}^{\star2}}\right).
\end{align} 
\qed

%
\subsection{Proof of Lemma \ref{lm:space_estimate_expansion}}\label{proof:lm_space_estimate_expansion}
Denote by $\gamma_1$  the following counterclockwise contour on the complex plane:
\begin{align*}
	\gamma_1 = \bigg\{x + y\rm{i}&: x = \frac{\overline{\lambda}_{r_1} + \overline{\lambda}_{r_1+1}}{2}, -\frac{\overline{\lambda}_{r_1} - \overline{\lambda}_{r_1+1}}{2} \leq y \leq \frac{\overline{\lambda}_{r_1} + \overline{\lambda}_{r_1+1}}{2},\\
	&\quad \text{or } x = \overline{\lambda}_1 + \frac{\overline{\lambda}_{r_1} - \overline{\lambda}_{r_1+1}}{2}, -\frac{\overline{\lambda}_{r_1} - \overline{\lambda}_{r_1+1}}{2} \leq y \leq \frac{\overline{\lambda}_{r_1} + \overline{\lambda}_{r_1+1}}{2},\\
	&\quad \text{or } y = \pm\frac{\overline{\lambda}_{r_1} - \overline{\lambda}_{r_1+1}}{2}, \frac{\overline{\lambda}_{r_1} + \overline{\lambda}_{r_1+1}}{2} \leq x \leq \overline{\lambda}_1 + \frac{\overline{\lambda}_{r_1} - \overline{\lambda}_{r_1+1}}{2}\bigg\}.
\end{align*}
Then $\{\overline{\lambda}_i\}_{i=1}^{r_1}$ lie inside the contour $\gamma_1$ and $\{\overline{\lambda}_i\}_{i = r_1 + 1}^{n}$ (where $\overline{\lambda}_{i} = 0$ for $i \geq r+1$) reside outside $\gamma_1$. Moreover, for any $\eta \in \gamma_1$ and $1 \leq i \leq n$,
one has
\begin{align}\label{ineq47}
	\left|\eta - \overline{\lambda}_i\right| \leq \frac{\overline{\lambda}_{r_1} - \overline{\lambda}_{r_1+1}}{2}.
\end{align}
\paragraph{Step 1: decompose $\bm{U}_1\bm{U}_1^\top - \overline{\bm{U}}_1\overline{\bm{U}}_1^\top$.} First, we invoke a similar argument as in \citet[Theorem 1]{xia2021normal} to express  $\bm{U}_1\bm{U}_1^\top - \overline{\bm{U}}_1\overline{\bm{U}}_1^\top$ as an infinite sum. Denote by $\lambda_1 \geq \cdots \geq \lambda_{n}$ the eigenvalues of $\bm{M}$. Apply Weyl's inequality to obtain 
\begin{align*}
	\max_{1 \leq i \leq r}\left|\lambda_i - \overline{\lambda}_i\right| \leq \left\|\bm{Z}\right\| < \frac{\overline{\lambda}_{r_1} - \overline{\lambda}_{r_1+1}}{2}.
\end{align*}
As a result, we know that $\left\{\lambda_i\right\}_{i=1}^{r_1}$ are inside the contour $\gamma_1$, and $\left\{\lambda_i\right\}_{i=r_1+1}^{n}$ are outside the contour. Similar to Eqn.~(10) in \cite{xia2021normal}, one has
\begin{align}\label{eq:space_expansion_1}
	\bm{U}_1\bm{U}_1^\top = \frac{1}{2\pi\rm{i}}\oint_{\gamma_1}\left(\eta\bm{I} - \bm{M}\right)\rm{d}\eta.
\end{align}
We define $$\mathcal{R}_{\overline{\bm{M}}}(\eta) := \left(\eta\bm{I} - \overline{\bm{M}}\right)^{-1} = \sum_{i=1}^{n}\frac{1}{\eta - \overline{\lambda}_j}\overline{\bm{u}}_j\overline{\bm{u}}_j^\top.$$
In view of \eqref{ineq47}, for any $\eta \in \gamma_1$, we have
\begin{align*}
	\left\|\mathcal{R}_{\overline{\bm{M}}}(\eta)\right\| \leq \frac{2}{\overline{\lambda}_{r_1} - \overline{\lambda}_{r_1+1}}, 
\end{align*}
and consequently, 
\begin{align*}
	\left\|\mathcal{R}_{\overline{\bm{M}}}(\eta)\bm{Z}\right\| \leq \left\|\mathcal{R}_{\overline{\bm{M}}}(\eta)\right\|\left\|\bm{Z}\right\| \leq \frac{2\left\|\bm{Z}\right\|}{\overline{\lambda}_{r_1} - \overline{\lambda}_{r_1+1}} < 1.
\end{align*}
A similar argument in \citet[Eqn.~(13)]{xia2021normal} yields
\begin{align}\label{ineq52}
	\bm{U}_1\bm{U}_1^\top - \overline{\bm{U}}_1\overline{\bm{U}}_1^\top &= \sum_{k \geq 1}\frac{1}{2\pi\rm{i}}\oint_{\gamma_1}\left[\mathcal{R}_{\overline{\bm{M}}}(\eta)\bm{Z}\right]^k\mathcal{R}_{\overline{\bm{M}}}(\eta)\rm{d}\eta\notag\\
	&= \sum_{k \geq 1}\sum_{1 \leq j_1, \dots, j_{k+1} \leq n}\frac{1}{2\pi\rm{i}}\oint_{\gamma_1}\frac{\rm{d}\eta}{\left(\eta - \overline{\lambda}_{j_1}\right)\cdots\left(\eta - \overline{\lambda}_{j_{k+1}}\right)}\bm{P}_{\overline{\bm{u}}_{j_1}}\bm{Z}\bm{P}_{\overline{\bm{u}}_{j_2}}\bm{Z}\cdots\bm{P}_{\overline{\bm{u}}_{j_k}}\bm{Z}\bm{P}_{\overline{\bm{u}}_{j_{k+1}}}\notag\\
	&= \sum_{k \geq 1}\sum_{0 \leq j_1, \dots, j_{k+1} \leq r}\frac{1}{2\pi\rm{i}}\oint_{\gamma_1}\frac{\rm{d}\eta}{\left(\eta - \overline{\lambda}_{j_1}\right)\cdots\left(\eta - \overline{\lambda}_{j_{k+1}}\right)}\overline{\bm{P}}_{j_1}\bm{Z}\overline{\bm{P}}_{j_2}\bm{Z}\cdots\overline{\bm{P}}_{j_k}\bm{Z}\overline{\bm{P}}_{j_{k+1}}.
\end{align} 
Here, we define $\overline{\lambda}_0 = 0$ and the last line holds since $\overline{\lambda}_{i} = 0$ for all $i \geq r+1$ and $\overline{\bm{P}}_{0} = \overline{\bm{U}}_{\perp}\overline{\bm{U}}_{\perp}^\top = \sum_{i=r+1}^n\overline{\bm{u}}_i\overline{\bm{u}}_i^\top$.

\paragraph{Step 2: bounding $\|\bm{U}_1\bm{U}_1^\top - \overline{\bm{U}}_1\overline{\bm{U}}_1^\top\|_{2,\infty}$.} By virtue of \eqref{ineq52} and the triangle inequality, we see that: to prove \eqref{ineq:space_estimate_expansion}, it suffices to bound $|\frac{1}{2\pi\rm{i}}\oint_{\gamma_1}\frac{\rm{d}\eta}{(\eta - \overline{\lambda}_{j_1})\cdots(\eta - \overline{\lambda}_{j_{k+1}})}|$. We consider two scenarios: all of $j_1, \dots, j_{k+1}$ lie in the set $\{0\} \cup \{r_1+1, \dots, r_1\}$, and at least one of $j_1, \dots, j_{k+1}$ is in the set $\{1, \dots, r_1\}$.

\paragraph{Case 1: all of $j_1, \dots, j_{k+1}$ are either $0$ or larger than $r_1$.} In this case, none of $\overline{\lambda}_{j_1}, \dots, \overline{\lambda}_{j_{k+1}}$ is inside $\gamma_1$ and as a result, $f(\eta) = \frac{1}{(\eta - \overline{\lambda}_{j_1})\cdots(\eta - \overline{\lambda}_{j_{k+1}})}$ is analytic within and on $\gamma_1$. Cauchy's integral theorem tells us that  
\begin{align}\label{ineq53}
	\frac{1}{2\pi\rm{i}}\oint_{\gamma_1}\frac{\rm{d}\eta}{(\eta - \overline{\lambda}_{j_1})\cdots(\eta - \overline{\lambda}_{j_{k+1}})} = 0,
\end{align}
and thus we have
\begin{align}\label{ineq51}
	\frac{1}{2\pi\rm{i}}\oint_{\gamma_1}\frac{\rm{d}\eta}{\left(\eta - \overline{\lambda}_{j_1}\right)\cdots\left(\eta - \overline{\lambda}_{j_{k+1}}\right)}\overline{\bm{P}}_{j_1}\bm{Z}\overline{\bm{P}}_{j_2}\bm{Z}\cdots\overline{\bm{P}}_{j_k}\bm{Z}\overline{\bm{P}}_{j_{k+1}} = \bm{0}.
\end{align}
\paragraph{Case 2: at least one of $j_1, \dots, j_{k+1}$ is between $1$ and $r_1$.} Let
\begin{align}\label{eq:set_J}
	\mathcal{J} = \{j: 1 \leq j \leq r_1, \exists 1 \leq \ell \leq k+1 \text{ s.t. } j_{\ell} = j\},
\end{align}
and let $j_{\sf max}$ and $j_{\sf min}$ denote the largest and smallest elements in $\mathcal{J}$, respectively. We define the following counterclockwise rectangular contour:
\begin{align*}
	\gamma_2 = \bigg\{x + y\rm{i}&: x = \overline{\lambda}_{j_{\sf max}} - \frac{\overline{\lambda}_{r_1} - \overline{\lambda}_{r_1+1}}{2}, -\frac{\overline{\lambda}_{r_1} - \overline{\lambda}_{r_1+1}}{2} \leq y \leq \frac{\overline{\lambda}_{r_1} - \overline{\lambda}_{r_1+1}}{2},\\
	&\quad \text{or } x = \overline{\lambda}_{j_{\sf min}} + \frac{\overline{\lambda}_{r_1} - \overline{\lambda}_{r_1+1}}{2}, -\frac{\overline{\lambda}_{r_1} - \overline{\lambda}_{r_1+1}}{2} \leq y \leq \frac{\overline{\lambda}_{r_1} - \overline{\lambda}_{r_1+1}}{2},\\
	&\quad \text{or } y = \pm\frac{\overline{\lambda}_{r_1} - \overline{\lambda}_{r_1+1}}{2}, \overline{\lambda}_{j_{\sf max}} - \frac{\overline{\lambda}_{r_1} - \overline{\lambda}_{r_1+1}}{2} \leq x \leq \overline{\lambda}_{j_{\sf min}} + \frac{\overline{\lambda}_{r_1} - \overline{\lambda}_{r_1+1}}{2}\bigg\}.
\end{align*}
It is easy to verify that
\begin{align}\label{ineq48}
	\left|\eta - \lambda_{j_\ell}\right| \geq \frac{\overline{\lambda}_{r_1} - \overline{\lambda}_{r_1+1}}{2},~\qquad~\forall 1 \leq \ell \leq k+1, \eta \in \gamma_2
\end{align}
and
\begin{align}\label{ineq49}
	\frac{1}{2\pi\rm{i}}\oint_{\gamma_1}\frac{\rm{d}\eta}{\left(\eta - \overline{\lambda}_{j_1}\right)\cdots\left(\eta - \overline{\lambda}_{j_{k+1}}\right)} = \frac{1}{2\pi\rm{i}}\oint_{\gamma_2}\frac{\rm{d}\eta}{\left(\eta - \overline{\lambda}_{j_1}\right)\cdots\left(\eta - \overline{\lambda}_{j_{k+1}}\right)}.
\end{align} 
Moreover, the length of $\gamma_2$ is $$L(\gamma_2) = 2\left(\overline{\lambda}_{j_{\sf min}} - \overline{\lambda}_{j_{\sf max}}\right) + 4\left(\overline{\lambda}_{r_1} - \overline{\lambda}_{r_1+1}\right).$$ If $j_{\sf max} = j_{\sf min}$, i.e., there is only one element in $\mathcal{I}$, then applying the triangle inequality for contour integrals yields
\begin{align*}
	\left|\frac{1}{2\pi\rm{i}}\oint_{\gamma_2}\frac{\rm{d}\eta}{\left(\eta - \overline{\lambda}_{j_1}\right)\cdots\left(\eta - \overline{\lambda}_{j_{k+1}}\right)}\right| &\leq \frac{1}{2\pi}\sup_{\eta \in \gamma_2}\left|\frac{1}{\left(\eta - \overline{\lambda}_{j_1}\right)\cdots\left(\eta - \overline{\lambda}_{j_{k+1}}\right)}\right|\cdot L(\gamma_2)\\
	&\stackrel{\eqref{ineq48}}{\leq} \frac{1}{2\pi}\left(\frac{2}{\overline{\lambda}_{r_1} - \overline{\lambda}_{r_1+1}}\right)^{k+1}\cdot 4\left(\overline{\lambda}_{r_1} - \overline{\lambda}_{r_1+1}\right)\\
	&\leq \frac{4}{\pi}\left(\frac{2}{\overline{\lambda}_{r_1} - \overline{\lambda}_{r_1+1}}\right)^{k}.
\end{align*}
If $j_{\sf max} \neq j_{\sf min}$, then the triangle inequality tells us that for any $\eta \in \gamma_2$,
\begin{align}\label{ineq11}
	\max\left\{\left|\eta - \overline{\lambda}_{j_{\sf max}}\right|, \left|\eta - \overline{\lambda}_{j_{\sf min}}\right|\right\} \geq \frac{\left|\left(\eta - \overline{\lambda}_{j_{\sf max}}\right) - \left(\eta - \overline{\lambda}_{j_{\sf min}}\right)\right|}{2} = \frac{\overline{\lambda}_{j_{\sf min}} - \overline{\lambda}_{j_{\sf max}}}{2}.
\end{align}
In view of \eqref{ineq48}, \eqref{ineq11} and the basic inequality $\min\{\frac{a}{b}, \frac{c}{d}\} \leq \frac{a + c}{b + d}$ for $a, b, c, d > 0$, one has
\begin{align}\label{ineq15}
	\frac{1}{\left|\left(\eta - \overline{\lambda}_{j_{\sf max}}\right)\left(\eta - \overline{\lambda}_{j_{\sf min}}\right)\right|} &\leq \min\left\{\frac{4}{\left(\overline{\lambda}_{j_{\sf min}} - \overline{\lambda}_{j_{\sf max}}\right)\left(\overline{\lambda}_{r_1} - \overline{\lambda}_{r_1+1}\right)}, \left(\frac{2}{\overline{\lambda}_{r_1} - \overline{\lambda}_{r_1+1}}\right)^{2}\right\}\notag\\
	&\leq \left(\frac{2}{\overline{\lambda}_{r_1} - \overline{\lambda}_{r_1+1}}\right)\cdot \left(\frac{4}{\overline{\lambda}_{r_1} - \overline{\lambda}_{r_1+1} + \overline{\lambda}_{j_{\sf max}} - \overline{\lambda}_{j_{\sf min}}}\right)
\end{align}
for all $\eta \in \gamma_2$, and consequently, one has
\begin{align*}
	\left|\frac{1}{2\pi\rm{i}}\oint_{\gamma_2}\frac{\rm{d}\eta}{\left(\eta - \overline{\lambda}_{j_1}\right)\cdots\left(\eta - \overline{\lambda}_{j_{k+1}}\right)}\right| &\leq \frac{1}{2\pi}\sup_{\eta \in \gamma_2}\left|\frac{1}{\left(\eta - \overline{\lambda}_{j_1}\right)\cdots\left(\eta - \overline{\lambda}_{j_{k+1}}\right)}\right|\cdot L(\gamma_2)\\
	&\leq \frac{1}{2\pi}\left(\frac{2}{\overline{\lambda}_{r_1} - \overline{\lambda}_{r_1+1}}\right)\cdot \left(\frac{4}{\overline{\lambda}_{r_1} - \overline{\lambda}_{r_1+1} + \overline{\lambda}_{j_{\sf min}} - \overline{\lambda}_{j_{\sf min}}}\right)\\&\quad\cdot\left(\frac{2}{\overline{\lambda}_{r_1} - \overline{\lambda}_{r_1+1}}\right)^{k-1}\cdot\left(2\left(\overline{\lambda}_{j_{\sf min}} - \overline{\lambda}_{j_{\sf max}}\right) + 4\left(\overline{\lambda}_{r_1} - \overline{\lambda}_{r_1+1}\right)\right)\\
	&\leq \frac{8}{\pi}\left(\frac{2}{\overline{\lambda}_{r_1} - \overline{\lambda}_{r_1+1}}\right)^{k}.
\end{align*}
Therefore, it is guaranteed that
\begin{align}\label{ineq50}
	\left|\frac{1}{2\pi\rm{i}}\oint_{\gamma_2}\frac{\rm{d}\eta}{\left(\eta - \overline{\lambda}_{j_1}\right)\cdots\left(\eta - \overline{\lambda}_{j_{k+1}}\right)}\right| \leq \frac{8}{\pi}\left(\frac{2}{\overline{\lambda}_{r_1} - \overline{\lambda}_{r_1+1}}\right)^{k}.
\end{align}

Combining \eqref{ineq52}, \eqref{ineq51}, \eqref{ineq50} and the triangle inequality finishes the proof of \eqref{ineq:space_estimate_expansion}.
\paragraph{Step 3: bounding $\|(\overline{\bm{U}}_1\overline{\bm{U}}_1^\top - \bm{U}_1\bm{U}_1^\top)\overline{\bm{M}}\|_{2,\infty}$.} Next, we move on to control $\|(\overline{\bm{U}}_1\overline{\bm{U}}_1^\top - \bm{U}_1\bm{U}_1^\top)\overline{\bm{M}}\|_{2, \infty}$. By virtue of \eqref{ineq52}, we have
\begin{align}\label{eq:decomposition_2}
	&\big(\bm{U}_1\bm{U}_1^\top - \overline{\bm{U}}_1\overline{\bm{U}}_1^\top\big)\overline{\bm{M}}\notag\\
	&\quad= \sum_{k \geq 1}\sum_{0 \leq j_1, \dots, j_k \leq r}\sum_{j_{k+1} = 1}^{r}\frac{1}{2\pi\rm{i}}\oint_{\gamma_1}\frac{\rm{d}\eta}{\left(\eta - \overline{\lambda}_{j_1}\right)\cdots\left(\eta - \overline{\lambda}_{j_{k+1}}\right)}\overline{\bm{P}}_{j_1}\bm{Z}\overline{\bm{P}}_{j_2}\bm{Z}\cdots\overline{\bm{P}}_{j_k}\bm{Z}\overline{\bm{P}}_{j_{k+1}}\overline{\bm{M}}\notag\\
	&\quad= \sum_{k \geq 1}\sum_{0 \leq j_1, \dots, j_k \leq r}\sum_{j_{k+1} = 1}^{r}\frac{1}{2\pi\rm{i}}\oint_{\gamma_1}\frac{\overline{\lambda}_{j_{k+1}}\rm{d}\eta}{\left(\eta - \overline{\lambda}_{j_1}\right)\cdots\left(\eta - \overline{\lambda}_{j_{k+1}}\right)}\overline{\bm{P}}_{j_1}\bm{Z}\overline{\bm{P}}_{j_2}\bm{Z}\cdots\overline{\bm{P}}_{j_k}\bm{Z}\overline{\bm{P}}_{j_{k+1}}.
\end{align}
The second line and the third line make use of $\overline{\bm{U}}_{\perp}\overline{\bm{M}} = 0$ and $\overline{\bm{P}}_j\,\overline{\bm{M}} = \overline{\bm{u}}_j\,\overline{\bm{u}}_j^\top(\sum_{i=1}^{r}\overline{\lambda}_i\overline{\bm{u}}_i\,\overline{\bm{u}}_i^\top) = \overline{\lambda}_j$, respectively. In the rest of the proof, we will establish upper bounds for $|\frac{1}{2\pi\rm{i}}\oint_{\gamma_1}\frac{\overline{\lambda}_{j_{k+1}}\rm{d}\eta}{(\eta - \overline{\lambda}_{j_1})\cdots(\eta - \overline{\lambda}_{j_{k+1}})}|$ and justify the validity of \eqref{ineq:residual_expansion}.

\paragraph{Step 3.1: bounding $|\frac{1}{2\pi\rm{i}}\oint_{\gamma_1}\frac{\overline{\lambda}_{j_{k+1}}\rm{d}\eta}{(\eta - \overline{\lambda}_{j_1})\cdots(\eta - \overline{\lambda}_{j_{k+1}})}|$ for $k = 1$.} We consider two scenarios: (1) $1 \leq j_1, j_2 \leq r_1$ or  $j_1, j_2 \in \{r_1 +1, \dots, r\} \cup \{0\}$ and (2) only one of $j_1$ and $j_2$ falls into the set $\{1, \dots, r_1\}$.

\paragraph{Case 1: $1 \leq j_1, j_2 \leq r_1$ or  $j_1, j_2 \in \{r_1 +1, \dots, r\} \cup \{0\}$.} In this case, Cauchy's integral formula asserts that
\begin{align*}
	\frac{1}{2\pi\rm{i}}\oint_{\gamma_1}\frac{\rm{d}\eta}{\left(\eta - \overline{\lambda}_{j_1}\right)\left(\eta - \overline{\lambda}_{j_{2}}\right)} = 0, 
\end{align*}
and consequently, 
\begin{align}\label{ineq12}
	\left|\frac{1}{2\pi\rm{i}}\oint_{\gamma_1}\frac{\overline{\lambda}_{j_{2}}\rm{d}\eta}{(\eta - \overline{\lambda}_{j_1})(\eta - \overline{\lambda}_{j_{2}})}\right| = 0.
\end{align}

\paragraph{Case 2: exactly one of $j_i \in \{1, \dots, r_1\}$.} Apply Cauchy's integral formula to yield 
\begin{align*}
	\left|\frac{1}{2\pi\rm{i}}\oint_{\gamma_1}\frac{\rm{d}\eta}{\left(\eta - \overline{\lambda}_{j_1}\right)\left(\eta - \overline{\lambda}_{j_{2}}\right)}\right| = 	\left|\frac{1}{2\pi\rm{i}}\oint_{\gamma_1}\frac{\frac{1}{\eta - \overline{\lambda}_{j_1}}}{\eta - \overline{\lambda}_{j_{2}}}\rm{d}\eta\right| = \frac{1}{\left|\overline{\lambda}_{j_1} - \overline{\lambda}_{j_2}\right|}.
\end{align*}
Observing that the function $f(x) = \frac{1}{|x - 1|}$ is increasing on $(-\infty, 1)$ and decreasing on $(1, \infty)$, one has
\begin{align}\label{ineq13}
		\left|\frac{1}{2\pi\rm{i}}\oint_{\gamma_1}\frac{\overline{\lambda}_{j_{k+1}}\rm{d}\eta}{(\eta - \overline{\lambda}_{j_1})(\eta - \overline{\lambda}_{j_{2}})}\right| = \frac{\overline{\lambda}_{j_{2}}}{\left|\overline{\lambda}_{j_1} - \overline{\lambda}_{j_2}\right|} = \frac{1}{\left|\frac{\overline{\lambda}_{j_1}}{\overline{\lambda}_{j_2}} - 1\right|} \leq \frac{1}{\max\left\{\frac{\overline{\lambda}_{r_1}}{\overline{\lambda}_{r_1 + 1}} - 1, 1 - \frac{\overline{\lambda}_{r_1 + 1}}{\overline{\lambda}_{r_1}}\right\}} = \frac{\overline{\lambda}_{r_1}}{\overline{\lambda}_{r_1} - \overline{\lambda}_{r_1 + 1}}.
\end{align}

\paragraph{Step 3.2: bounding $|\frac{1}{2\pi\rm{i}}\oint_{\gamma_1}\frac{\overline{\lambda}_{j_{k+1}}\rm{d}\eta}{(\eta - \overline{\lambda}_{j_1})\cdots(\eta - \overline{\lambda}_{j_{k+1}})}|$ for $k > 1$.} If (1) all $j_1, \dots, j_{k+1}$ lie in the set $\{1, \dots, r_1\}$ or (2) all $j_1, \dots, j_{k+1}$ are all in the set $\{0\} \cup \{r_1 + 1, \dots, r\}$, then the function $$g(x) = \frac{\overline{\lambda}_{j_{k+1}}}{\left(\eta - \overline{\lambda}_{j_1}\right)\cdots\left(\eta - \overline{\lambda}_{j_{k+1}}\right)}$$ is analytic in and on $\gamma_1$ or outside on on $\gamma_1$. As a result, Cauchy's integral formula tells us that
\begin{align}\label{eq5}
	\left|\frac{1}{2\pi\rm{i}}\oint_{\gamma_1}\frac{\overline{\lambda}_{j_{k+1}}\rm{d}\eta}{\left(\eta - \overline{\lambda}_{j_1}\right)\cdots\left(\eta - \overline{\lambda}_{j_{k+1}}\right)}\right| = 0.
\end{align}
In the following proof, we assume that these two cases would not happen, i.e.,
\begin{align}\label{assump:set}
	1 \leq \left|\left\{j_1, \dots, j_{k+1}\right\} \cap \{1, \dots, r_1\}\right| \leq k.
\end{align}
Let $\gamma_3$ denote the following counterclockwise rectangular contour:
\begin{align*}
	\gamma_3 = \bigg\{x + y\rm{i}&: x = \frac{\overline{\lambda}_{j_{\sf max}} + \overline{\lambda}_{r_1+1}}{2}, -\frac{\overline{\lambda}_{j_{\sf max}} - \overline{\lambda}_{r_1+1}}{2} \leq y \leq \frac{\overline{\lambda}_{j_{\sf max}} - \overline{\lambda}_{r_1+1}}{2},\\
	&\quad \text{or } x = \overline{\lambda}_{j_{\sf min}} + \frac{\overline{\lambda}_{j_{\sf max}} - \overline{\lambda}_{r_1+1}}{2}, -\frac{\overline{\lambda}_{j_{\sf max}} - \overline{\lambda}_{r_1+1}}{2} \leq y \leq \frac{\overline{\lambda}_{j_{\sf max}} - \overline{\lambda}_{r_1+1}}{2},\\
	&\quad \text{or } y = \pm\frac{\overline{\lambda}_{j_{\sf max}} - \overline{\lambda}_{r_1+1}}{2}, \frac{\overline{\lambda}_{j_{\sf max}} + \overline{\lambda}_{r_1+1}}{2} \leq x \leq \overline{\lambda}_{j_{\sf min}} + \frac{\overline{\lambda}_{j_{\sf max}} - \overline{\lambda}_{r_1+1}}{2}\bigg\},
\end{align*}
where we recall that $j_{\sf min}$ (resp.~$j_{\sf max}$) is the smallest (resp.~largest) element in the set $\mathcal{J}$ defined in \eqref{eq:set_J}. Then one can check that
\begin{align}\label{ineq14}
	\left|\eta - \overline{\lambda}_{j_\ell}\right| \geq \frac{\overline{\lambda}_{j_{\sf max}} - \overline{\lambda}_{r_1+1}}{2},~\qquad~\forall 1 \leq \ell \leq k+1, \eta \in \gamma_3, 
\end{align}
and the length of $\gamma_3$ satisfies
\begin{align}\label{eq:length_gamma_3}
	L\left(\gamma_3\right) = 2\left(\overline{\lambda}_{j_{\sf min}} - \overline{\lambda}_{j_{\sf max}}\right) + 4\left(\overline{\lambda}_{j_{\sf max}} - \overline{\lambda}_{r_1+1}\right) = 2\overline{\lambda}_{j_{\sf min}} + 2\overline{\lambda}_{j_{\sf max}} - 4\overline{\lambda}_{r_1+1}.
\end{align}
In addition, we have
\begin{align}\label{eq4}
	\frac{1}{2\pi\rm{i}}\oint_{\gamma_1}\frac{\rm{d}\eta}{\left(\eta - \overline{\lambda}_{j_1}\right)\cdots\left(\eta - \overline{\lambda}_{j_{k+1}}\right)} = \frac{1}{2\pi\rm{i}}\oint_{\gamma_3}\frac{\rm{d}\eta}{\left(\eta - \overline{\lambda}_{j_1}\right)\cdots\left(\eta - \overline{\lambda}_{j_{k+1}}\right)}.
\end{align}
\paragraph{Case 1: $\overline{\lambda}_{j_{\sf min}} - \overline{\lambda}_{j_{\sf max}} \leq 3(\overline{\lambda}_{j_{\sf max}} - \overline{\lambda}_{r_1+1})$.} In this scenario, one has
\begin{align}\label{ineq18}
	\overline{\lambda}_{j_{\sf min}} - \overline{\lambda}_{r_1+1} = \overline{\lambda}_{j_{\sf min}} - \overline{\lambda}_{j_{\sf max}} + \overline{\lambda}_{j_{\sf max}} - \overline{\lambda}_{r_1+1} \leq 4\left(\overline{\lambda}_{j_{\sf max}} - \overline{\lambda}_{r_1+1}\right),
\end{align}
which further leads to
\begin{align*}
	L(\gamma_3) \leq 10\left(\overline{\lambda}_{j_{\sf max}} - \overline{\lambda}_{r_1+1}\right).
\end{align*}
In view of \eqref{ineq14}, \eqref{eq4}, \eqref{ineq18} and the previous inequality, one has
\begin{align}\label{ineq17}
	\left|\frac{1}{2\pi\rm{i}}\oint_{\gamma_1}\frac{\overline{\lambda}_{j_{k+1}}\rm{d}\eta}{\left(\eta - \overline{\lambda}_{j_1}\right)\cdots\left(\eta - \overline{\lambda}_{j_{k+1}}\right)}\right|
	&\leq \frac{1}{2\pi}\overline{\lambda}_{j_{k+1}}\left(\frac{2}{\overline{\lambda}_{j_{\sf max}} - \overline{\lambda}_{r_1+1}}\right)^{k+1}L\left(\gamma_3\right)\notag\\
	&\leq \frac{1}{2\pi}\overline{\lambda}_{j_{\sf min}}\left(\frac{2}{\overline{\lambda}_{j_{\sf max}} - \overline{\lambda}_{r_1+1}}\right)^{k+1}\cdot 10\left(\overline{\lambda}_{j_{\sf max}} - \overline{\lambda}_{r_1+1}\right)\notag\\
	&\leq \frac{80}{\pi}\frac{\overline{\lambda}_{j_{\sf min}}}{\overline{\lambda}_{j_{\sf min}} - \overline{\lambda}_{r_1+1}}\left(\frac{2}{\overline{\lambda}_{j_{\sf max}} - \overline{\lambda}_{r_1+1}}\right)^{k-1}\notag\\&\leq \frac{80}{\pi}\frac{\overline{\lambda}_{r_1}}{\overline{\lambda}_{r_1} - \overline{\lambda}_{r_1+1}}\left(\frac{2}{\overline{\lambda}_{r_1} - \overline{\lambda}_{r_1+1}}\right)^{k-1} = \frac{40}{\pi}\overline{\lambda}_{r_1}\left(\frac{2}{\overline{\lambda}_{r_1} - \overline{\lambda}_{r_1+1}}\right)^{k}.
\end{align}
The third inequality holds because of \eqref{ineq18}, whereas the last one applies the monotonicity of the function $f(x) = \frac{x}{x - \overline{\lambda}_{r_1+1}}$ for $x > \overline{\lambda}_{r_1+1}$ and the inequality $\overline{\lambda}_{j_{\sf min}} \geq \overline{\lambda}_{j_{\sf max}} \geq \overline{\lambda}_{r_1}$.
\paragraph{Case 2: $\overline{\lambda}_{j_{\sf min}} - \overline{\lambda}_{j_{\sf max}} > 3(\overline{\lambda}_{j_{\sf max}} - \overline{\lambda}_{r_1+1})$.}
Denote the following two disjoint sets
\begin{align}\label{eq:set_I_1}
	\mathcal{I}_1 = \left\{i: 1 \leq i \leq k+1, \overline{\lambda}_{j_i} \geq \frac{2}{3}\overline{\lambda}_{j_{\sf min}} + \frac{1}{3}\overline{\lambda}_{j_{\sf max}}\right\}
\end{align}
and
\begin{align}\label{eq:set_I_2}
	\mathcal{I}_2 = \left\{i: 1 \leq i \leq k+1, \overline{\lambda}_{j_i} \leq \frac{1}{3}\overline{\lambda}_{j_{\sf min}} + \frac{2}{3}\overline{\lambda}_{j_{\sf max}}\right\}.
\end{align}
By the definition of $j_{\sf min}$ and $j_{\sf max}$, we know that $\mathcal{I}_1$ and $\mathcal{I}_2$ are nonempty.
We consider the following three scenarios: (1) $\min\{|\mathcal{I}_1|, |\mathcal{I}_2|\} \geq 2$; (2) $|\mathcal{I}_1| = 1$ and (3) $|\mathcal{I}_2| = 1$.
\paragraph{Case 2.1: $\min\{|\mathcal{I}_1|, |\mathcal{I}_2|\} \geq 2$.} When it comes to this case, one can find four different indices $i_1, i_2, i_3, i_4$ such that $i_1, i_3 \in \mathcal{I}_1$, $i_2, i_4 \in \mathcal{I}_2$, $\overline{\lambda}_{j_{i_1}} = \overline{\lambda}_{j_{\sf min}}$ and $\overline{\lambda}_{j_{i_2}} = \overline{\lambda}_{j_{\sf max}}$. Then the triangle inequality tells us that
\begin{align*}
	\max\left\{\left|\eta - \overline{\lambda}_{j_{i_1}}\right|, \left|\eta - \overline{\lambda}_{j_{i_2}}\right|\right\} \geq \frac{1}{2}\left(\overline{\lambda}_{j_{i_1}} - \overline{\lambda}_{j_{i_2}}\right) = \frac{1}{2}\left(\overline{\lambda}_{j_{\sf min}} - \overline{\lambda}_{j_{\sf max}}\right)
\end{align*}
and
\begin{align*}
	\max\left\{\left|\eta - \overline{\lambda}_{j_{i_3}}\right|, \left|\eta - \overline{\lambda}_{j_{i_4}}\right|\right\} \geq \frac{1}{2}\left(\overline{\lambda}_{j_{i_3}} - \overline{\lambda}_{j_{i_4}}\right) \geq  \frac{1}{6}\left(\overline{\lambda}_{j_{\sf min}} - \overline{\lambda}_{j_{\sf max}}\right).
\end{align*}
Similar to \eqref{ineq15}, one can derive 
\begin{align*}
	&\left|\frac{1}{\left(\eta - \overline{\lambda}_{j_{i_1}}\right)\left(\eta - \overline{\lambda}_{j_{i_2}}\right)\left(\eta - \overline{\lambda}_{j_{i_3}}\right)\left(\eta - \overline{\lambda}_{j_{i_4}}\right)}\right|\\ &\quad = \frac{1}{\left|\eta - \overline{\lambda}_{j_{i_1}}\right|\left|\eta - \overline{\lambda}_{j_{i_2}}\right|}\frac{1}{\left|\eta - \overline{\lambda}_{j_{i_3}}\right|\left|\eta - \overline{\lambda}_{j_{i_4}}\right|}\\
	&\quad = \min\left\{\frac{4}{\left(\overline{\lambda}_{j_{\sf min}} - \overline{\lambda}_{j_{\sf max}}\right)\left(\overline{\lambda}_{j_{\sf max}} - \overline{\lambda}_{r_1+1}\right)}, \frac{4}{\left(\overline{\lambda}_{j_{\sf max}} - \overline{\lambda}_{r_1+1}\right)^2}\right\}\\&\qquad\cdot\min\left\{\frac{12}{\left(\overline{\lambda}_{j_{\sf min}} - \overline{\lambda}_{j_{\sf max}}\right)\left(\overline{\lambda}_{j_{\sf max}} - \overline{\lambda}_{r_1+1}\right)}, \frac{4}{\left(\overline{\lambda}_{j_{\sf max}} - \overline{\lambda}_{r_1+1}\right)^2}\right\}\\
	&\quad \leq \frac{8}{\left(\overline{\lambda}_{j_{\sf max}} - \overline{\lambda}_{r_1+1}\right)\left(\overline{\lambda}_{j_{\sf min}} - \overline{\lambda}_{r_1+1}\right)}\cdot  \frac{16}{\left(\overline{\lambda}_{j_{\sf max}} - \overline{\lambda}_{r_1+1}\right)\left(\overline{\lambda}_{j_{\sf min}} - \overline{\lambda}_{r_1+1}\right)}\\
	&\quad = \frac{128}{\left(\overline{\lambda}_{j_{\sf max}} - \overline{\lambda}_{r_1+1}\right)^2\left(\overline{\lambda}_{j_{\sf min}} - \overline{\lambda}_{r_1+1}\right)^2}.
\end{align*}
The penultimate line uses the basic inequality $\min\{a/b, c/d\} \leq (a + c)/(b + d)$ for $a, b, c, d > 0$. Putting the previous inequality, \eqref{ineq14}, \eqref{eq:length_gamma_3} and \eqref{ineq4} together, we reach
\begin{align}\label{ineq16}
	&\left|\frac{1}{2\pi\rm{i}}\oint_{\gamma_1}\frac{\overline{\lambda}_{j_{k+1}}\rm{d}\eta}{\left(\eta - \overline{\lambda}_{j_1}\right)\cdots\left(\eta - \overline{\lambda}_{j_{k+1}}\right)}\right|\notag\\ &\hspace{1cm}\leq \frac{1}{2\pi}\sup_{\eta \in \gamma_3}\left|\frac{\overline{\lambda}_{j_{k+1}}}{\left(\eta - \overline{\lambda}_{j_1}\right)\cdots\left(\eta - \overline{\lambda}_{j_{k+1}}\right)}\right|\cdot L(\gamma_3)\notag\\
	&\hspace{1cm}\leq \frac{1}{2\pi}\overline{\lambda}_{j_{k+1}}\frac{128}{\left(\overline{\lambda}_{j_{\sf max}} - \overline{\lambda}_{r_1+1}\right)^2\left(\overline{\lambda}_{j_{\sf min}} - \overline{\lambda}_{r_1+1}\right)^2}\left(\frac{2}{\overline{\lambda}_{j_{\sf max}} - \overline{\lambda}_{r_1+1}}\right)^{k+1-4}\cdot 4\left(\overline{\lambda}_{j_{\sf min}} - \overline{\lambda}_{r_1+1}\right)\notag\\
	&\hspace{1cm}\leq \frac{64}{\pi}\frac{\overline{\lambda}_{j_{\sf min}}}{\overline{\lambda}_{j_{\sf min}} - \overline{\lambda}_{r_1+1}}\left(\frac{2}{\overline{\lambda}_{j_{\sf max}} - \overline{\lambda}_{r_1+1}}\right)^{k-1}\notag\\
	&\hspace{1cm}\leq \frac{64}{\pi}\frac{\overline{\lambda}_{r_1}}{\overline{\lambda}_{r_1} - \overline{\lambda}_{r_1+1}}\left(\frac{2}{\overline{\lambda}_{j_{\sf max}} - \overline{\lambda}_{r_1+1}}\right)^{k-1}\notag\\
	&\hspace{1cm}\leq \frac{32}{\pi}\left(\frac{2}{\overline{\lambda}_{r_1} - \overline{\lambda}_{r_1+1}}\right)^{k}\overline{\lambda}_{r_1}.
\end{align}
The fourth line is due to $\overline{\lambda}_{j_{k+1}} \leq \overline{\lambda}_{j_{\sf min}}$ and the fifth line holds since $f(x) = \frac{x}{x - \overline{\lambda}_{r_1+1}}$ is a decreasing function on $(\overline{\lambda}_{r_1+1}, \infty)$.

\paragraph{Case 2.2: $|\mathcal{I}_1| = 1$.} 
Let us choose
\begin{align}\label{eq7}
	\ell \in \argmax_{i: 1 \leq i \leq k+1, i \notin \mathcal{I}_1}\overline{\lambda}_{j_i}.
\end{align}
We can see from the definition of $\mathcal{I}_1$ that
\begin{align*}
	\overline{\lambda}_{j_{\sf min}} - \overline{\lambda}_{j_\ell} \geq \overline{\lambda}_{j_{\sf min}} - \left(\frac{2}{3}\overline{\lambda}_{j_{\sf min}} + \frac{1}{3}\overline{\lambda}_{j_{\sf max}}\right) = \frac{1}{3}\left(\overline{\lambda}_{j_{\sf min}} - \overline{\lambda}_{j_{\sf max}}\right) > \overline{\lambda}_{j_{\sf max}} - \overline{\lambda}_{r_1+1}. 
\end{align*}
The last inequality is valid since $\overline{\lambda}_{j_{\sf min}} - \overline{\lambda}_{j_{\sf max}} > 3(\overline{\lambda}_{j_{\sf max}} - \overline{\lambda}_{r_1+1})$. We let $\gamma_4$ and $\gamma_5$ denote the following counterclockwise contours:
\begin{align*} 
	\gamma_4 = \bigg\{x + y\rm{i}&: x = \frac{\overline{\lambda}_{j_{\sf max}} + \overline{\lambda}_{r_1+1}}{2}, -\frac{\overline{\lambda}_{j_{\sf max}} - \overline{\lambda}_{r_1+1}}{2} \leq y \leq \frac{\overline{\lambda}_{j_{\sf max}} - \overline{\lambda}_{r_1+1}}{2},\\
	&\quad \text{or } x = \overline{\lambda}_{j_\ell} + \frac{\overline{\lambda}_{j_{\sf max}} - \overline{\lambda}_{r_1+1}}{2}, -\frac{\overline{\lambda}_{j_{\sf max}} - \overline{\lambda}_{r_1+1}}{2} \leq y \leq \frac{\overline{\lambda}_{j_{\sf max}} - \overline{\lambda}_{r_1+1}}{2},\\
	&\quad \text{or } y = \pm\frac{\overline{\lambda}_{j_{\sf max}} - \overline{\lambda}_{r_1+1}}{2}, \frac{\overline{\lambda}_{j_{\sf max}} + \overline{\lambda}_{r_1+1}}{2} \leq x \leq \overline{\lambda}_{j_\ell} + \frac{\overline{\lambda}_{j_{\sf max}} - \overline{\lambda}_{r_1+1}}{2}\bigg\}
\end{align*}
and
\begin{align*} 
	\gamma_5 = \bigg\{x + y\rm{i}&: x = \overline{\lambda}_{j_\ell} + \frac{\overline{\lambda}_{j_{\sf max}} - \overline{\lambda}_{r_1+1}}{2}, -\frac{\overline{\lambda}_{j_{\sf max}} - \overline{\lambda}_{r_1+1}}{2} \leq y \leq \frac{\overline{\lambda}_{j_{\sf max}} - \overline{\lambda}_{r_1+1}}{2},\\
	&\quad \text{or } x = \overline{\lambda}_{j_{\sf min}} + \frac{\overline{\lambda}_{j_{\sf max}} - \overline{\lambda}_{r_1+1}}{2}, -\frac{\overline{\lambda}_{j_{\sf max}} - \overline{\lambda}_{r_1+1}}{2} \leq y \leq \frac{\overline{\lambda}_{j_{\sf max}} - \overline{\lambda}_{r_1+1}}{2},\\
	&\quad \text{or } y = \pm\frac{\overline{\lambda}_{j_{\sf max}} - \overline{\lambda}_{r_1+1}}{2}, \overline{\lambda}_{j_\ell} + \frac{\overline{\lambda}_{j_{\sf max}} - \overline{\lambda}_{r_1+1}}{2} \leq x \leq \overline{\lambda}_{j_{\sf min}} + \frac{\overline{\lambda}_{j_{\sf max}} - \overline{\lambda}_{r_1+1}}{2}\bigg\}.
\end{align*}
For any complex number $\eta = x + y\rm{i}$ with $x = \overline{\lambda}_{j_\ell} + \frac{\overline{\lambda}_{j_{\sf max}} - \overline{\lambda}_{r_1+1}}{2}$ and $ -\frac{\overline{\lambda}_{j_{\sf max}} - \overline{\lambda}_{r_1+1}}{2} \leq y \leq \frac{\overline{\lambda}_{j_{\sf max}} - \overline{\lambda}_{r_1+1}}{2}$, we know that $g(\eta) = \frac{1}{\left(\eta - \overline{\lambda}_{j_1}\right)\cdots\left(\eta - \overline{\lambda}_{j_{k+1}}\right)} \neq 0$, and thus $g(\eta)$ is analytic on $\{\eta = x + y{\rm{i}}: x = \overline{\lambda}_{j_\ell} + \frac{\overline{\lambda}_{j_{\sf max}} - \overline{\lambda}_{r_1+1}}{2}, -\frac{\overline{\lambda}_{j_{\sf max}} - \overline{\lambda}_{r_1+1}}{2} \leq y \leq \frac{\overline{\lambda}_{j_{\sf max}} - \overline{\lambda}_{r_1+1}}{2}\}$. Applying Cauchy's integral formula yields 
\begin{align}\label{eq6}
	&\frac{1}{2\pi\rm{i}}\oint_{\gamma_3}\frac{\rm{d}\eta}{\left(\eta - \overline{\lambda}_{j_1}\right)\cdots\left(\eta - \overline{\lambda}_{j_{k+1}}\right)}\notag\\ &\hspace{1cm}= \frac{1}{2\pi\rm{i}}\oint_{\gamma_4}\frac{\rm{d}\eta}{\left(\eta - \overline{\lambda}_{j_1}\right)\cdots\left(\eta - \overline{\lambda}_{j_{k+1}}\right)} + \frac{1}{2\pi\rm{i}}\oint_{\gamma_5}\frac{\rm{d}\eta}{\left(\eta - \overline{\lambda}_{j_1}\right)\cdots\left(\eta - \overline{\lambda}_{j_{k+1}}\right)}.
\end{align}
Repeating a similar argument as for \eqref{ineq50} reveals that
\begin{align}\label{ineq19}
	\frac{1}{2\pi}\sup_{\eta \in \gamma_4}\left|\frac{1}{\prod_{1 \leq i \leq k+1, i \notin \mathcal{I}_1}\left(\eta - \overline{\lambda}_{j_i}\right)}\right|\cdot L\left(\gamma_4\right) \leq \frac{8}{\pi}\left(\frac{2}{\overline{\lambda}_{r_1} - \overline{\lambda}_{r_1+1}}\right)^{k-1}.
\end{align}
In addition, for any $\eta \in \gamma_4$, we know that its real part
\begin{align*}
	\text{Re}(\eta) &\leq \overline{\lambda}_{j_\ell} + \frac{\overline{\lambda}_{j_{\sf max}} - \overline{\lambda}_{r_1+1}}{2}\\ &\leq \frac{2}{3}\overline{\lambda}_{j_{\sf min}} + \frac{1}{3}\overline{\lambda}_{j_{\sf max}} + \frac{\overline{\lambda}_{j_{\sf max}} - \overline{\lambda}_{r_1+1}}{2}\\ &= \overline{\lambda}_{j_{\sf min}} - \frac{\overline{\lambda}_{j_{\sf min}} - \overline{\lambda}_{j_{\sf max}}}{3} + \frac{\overline{\lambda}_{j_{\sf max}} - \overline{\lambda}_{r_1+1}}{2}\\
	&\leq \overline{\lambda}_{j_{\sf min}} - \frac{\overline{\lambda}_{j_{\sf min}} - \overline{\lambda}_{j_{\sf max}}}{8} - \frac{5}{8}\left(\overline{\lambda}_{j_{\sf max}} - \overline{\lambda}_{r_1+1}\right) + \frac{\overline{\lambda}_{j_{\sf max}} - \overline{\lambda}_{r_1+1}}{2}\\
	&= \overline{\lambda}_{j_{\sf min}} - \frac{\overline{\lambda}_{j_{\sf min}} - \overline{\lambda}_{r_1+1}}{8},
\end{align*}
which further tells us that
\begin{align*}
	\left|\eta - \overline{\lambda}_{j_{\sf min}}\right| \geq \frac{\overline{\lambda}_{j_{\sf min}} - \overline{\lambda}_{r_1+1}}{8},~\qquad~\forall \eta \in \gamma_4.
\end{align*}
Putting \eqref{ineq19} and the previous inequality together, one has
\begin{align}\label{ineq20}
	&\left|\frac{1}{2\pi\rm{i}}\oint_{\gamma_4}\frac{\overline{\lambda}_{j_{k+1}}\rm{d}\eta}{\left(\eta - \overline{\lambda}_{j_1}\right)\cdots\left(\eta - \overline{\lambda}_{j_{k+1}}\right)}\right|\notag\\
	&\hspace{1cm}\leq \frac{1}{2\pi}\sup_{\eta \in \gamma_4}\left|\frac{1}{\prod_{1 \leq i \leq k, i \notin \mathcal{I}_1}\left(\eta - \overline{\lambda}_{j_i}\right)}\right|\cdot \sup_{\eta \in \gamma_4}\frac{\overline{\lambda}_{j_{k+1}}}{\left|\eta - \overline{\lambda}_{j_{\sf min}}\right|}\cdot L\left(\gamma_4\right)\notag\\
	&\hspace{1cm}\leq \frac{8}{\pi}\left(\frac{2}{\overline{\lambda}_{r_1} - \overline{\lambda}_{r_1+1}}\right)^{k-1}\cdot \frac{8\overline{\lambda}_{j_{\sf min}}}{\overline{\lambda}_{j_{\sf min}} - \overline{\lambda}_{r_1+1}}\notag\\
	&\hspace{1cm}\leq \frac{32}{\pi}\left(\frac{2}{\overline{\lambda}_{r_1} - \overline{\lambda}_{r_1+1}}\right)^{k}\overline{\lambda}_{r_1}.\end{align}
Here, the second and the third lines also make use of $|\mathcal{I}_1| = 1$. Note that for all $i \in \{1, \dots, k+1\}\backslash\mathcal{I}_1$, $\overline{\lambda}_{j_{i}}$ is not in or on $\gamma_5$. By virtue of Cauchy's integral formula, one has
\begin{align}\label{eq8}
	\frac{1}{2\pi\rm{i}}\oint_{\gamma_5}\frac{\rm{d}\eta}{\left(\eta - \overline{\lambda}_{j_1}\right)\cdots\left(\eta - \overline{\lambda}_{j_{k+1}}\right)} = \frac{1}{2\pi\rm{i}}\oint_{\gamma_5}\frac{\prod_{1 \leq i \leq k+1, i \notin \mathcal{I}_1}\frac{1}{\eta - \overline{\lambda}_{j_i}}}{\eta - \overline{\lambda}_{j_{\sf min}}}{\rm{d}\eta} = \prod_{1 \leq i \leq k+1, i \notin \mathcal{I}_1}\frac{1}{\overline{\lambda}_{j_{\sf min}} - \overline{\lambda}_{j_i}}.
\end{align}
Moreover, the definition of $\mathcal{I}_1$ tells us that
\begin{align*}
	\min_{i: 1 \leq i \leq k+1, i \notin \mathcal{I}_1}\left|\overline{\lambda}_{j_{\sf min}} - \overline{\lambda}_{j_{i}}\right| &\geq \frac{\overline{\lambda}_{j_{\sf min}} - \overline{\lambda}_{j_{\sf max}}}{3}\\ &= \frac{\overline{\lambda}_{j_{\sf min}} - \overline{\lambda}_{j_{\sf max}}}{4} + \frac{\overline{\lambda}_{j_{\sf min}} - \overline{\lambda}_{j_{\sf max}}}{12}\\ &\geq \left(\frac{\overline{\lambda}_{j_{\sf min}} - \overline{\lambda}_{j_{\sf max}}}{4} + \frac{\overline{\lambda}_{j_{\sf max}} - \overline{\lambda}_{r_1+1}}{4}\right) \vee \left(\overline{\lambda}_{j_{\sf max}} - \overline{\lambda}_{r_1+1}\right)\\
	&= \frac{\overline{\lambda}_{j_{\sf min}} - \overline{\lambda}_{r_1+1}}{4} \vee \left(\overline{\lambda}_{j_{\sf max}} - \overline{\lambda}_{r_1+1}\right).
\end{align*}
Combining \eqref{eq8} and the previous inequality, one has
\begin{align}\label{ineq21}
	\left|\frac{1}{2\pi\rm{i}}\oint_{\gamma_5}\frac{\overline{\lambda}_{j_{k+1}}\rm{d}\eta}{\left(\eta - \overline{\lambda}_{j_1}\right)\cdots\left(\eta - \overline{\lambda}_{j_{k+1}}\right)}\right|
	&\leq \prod_{1 \leq i \leq k+1, i \notin \mathcal{I}_1}\frac{1}{\overline{\lambda}_{j_{\sf min}} - \overline{\lambda}_{j_i}}\overline{\lambda}_{j_{\sf min}}\notag\\
	&\leq \frac{1}{\left(\overline{\lambda}_{j_{\sf max}} - \overline{\lambda}_{r_1+1}\right)^{k-1}}\frac{4\overline{\lambda}_{j_{\sf min}}}{\overline{\lambda}_{j_{\sf min}} - \overline{\lambda}_{r_1+1}}\notag\\
	&\leq \frac{1}{\left(\overline{\lambda}_{r_1} - \overline{\lambda}_{r_1+1}\right)^{k-1}}\frac{4\overline{\lambda}_{r_1}}{\overline{\lambda}_{r_1} - \overline{\lambda}_{r_1+1}}\notag\\
	&\leq \left(\frac{2}{\overline{\lambda}_{r_1} - \overline{\lambda}_{r_1+1}}\right)^k\overline{\lambda}_{r_1}.
\end{align}
Eqn. \eqref{eq4} together with \eqref{eq6}, \eqref{ineq20} and \eqref{ineq21} implies that
\begin{align}\label{ineq22}
	\left|\frac{1}{2\pi\rm{i}}\oint_{\gamma_1}\frac{\overline{\lambda}_{j_{k+1}}\rm{d}\eta}{\left(\eta - \overline{\lambda}_{j_1}\right)\cdots\left(\eta - \overline{\lambda}_{j_{k+1}}\right)}\right| \leq \frac{36}{\pi}\left(\frac{2}{\overline{\lambda}_{r_1} - \overline{\lambda}_{r_1+1}}\right)^k\overline{\lambda}_{r_1}.
\end{align}
\paragraph{Case 2.3: $|\mathcal{I}_2| = 1$.} In this case, we define 
\begin{align*}
	\ell' \in \argmin_{i: 1 \leq i \leq k+1, i \notin \mathcal{I}_2}\overline{\lambda}_{j_i}.
\end{align*}
Denote by $\gamma_6$ and $\gamma_7$ the following counterclockwise contours:
\begin{align*} 
	\gamma_4 = \bigg\{x + y\rm{i}&: x = \frac{\overline{\lambda}_{j_{\sf max}} + \overline{\lambda}_{r_1+1}}{2}, -\frac{\overline{\lambda}_{j_{\sf max}} - \overline{\lambda}_{r_1+1}}{2} \leq y \leq \frac{\overline{\lambda}_{j_{\sf max}} - \overline{\lambda}_{r_1+1}}{2},\\
	&\quad \text{or } x = \overline{\lambda}_{j_{\ell'}} - \frac{\overline{\lambda}_{j_{\sf max}} - \overline{\lambda}_{r_1+1}}{2}, -\frac{\overline{\lambda}_{j_{\sf max}} - \overline{\lambda}_{r_1+1}}{2} \leq y \leq \frac{\overline{\lambda}_{j_{\sf max}} - \overline{\lambda}_{r_1+1}}{2},\\
	&\quad \text{or } y = \pm\frac{\overline{\lambda}_{j_{\sf max}} - \overline{\lambda}_{r_1+1}}{2}, \frac{\overline{\lambda}_{j_{\sf max}} + \overline{\lambda}_{r_1+1}}{2} \leq x \leq \overline{\lambda}_{j_{\ell'}} - \frac{\overline{\lambda}_{j_{\sf max}} - \overline{\lambda}_{r_1+1}}{2}\bigg\}
\end{align*}
and
\begin{align*} 
	\gamma_5 = \bigg\{x + y\rm{i}&: x = \overline{\lambda}_{j_{\ell'}} - \frac{\overline{\lambda}_{j_{\sf max}} - \overline{\lambda}_{r_1+1}}{2}, -\frac{\overline{\lambda}_{j_{\sf max}} - \overline{\lambda}_{r_1+1}}{2} \leq y \leq \frac{\overline{\lambda}_{j_{\sf max}} - \overline{\lambda}_{r_1+1}}{2},\\
	&\quad \text{or } x = \overline{\lambda}_{j_{\sf min}} + \frac{\overline{\lambda}_{j_{\sf max}} - \overline{\lambda}_{r_1+1}}{2}, -\frac{\overline{\lambda}_{j_{\sf max}} - \overline{\lambda}_{r_1+1}}{2} \leq y \leq \frac{\overline{\lambda}_{j_{\sf max}} - \overline{\lambda}_{r_1+1}}{2},\\
	&\quad \text{or } y = \pm\frac{\overline{\lambda}_{j_{\sf max}} - \overline{\lambda}_{r_1+1}}{2}, \overline{\lambda}_{j_{\ell'}} - \frac{\overline{\lambda}_{j_{\sf max}} - \overline{\lambda}_{r_1+1}}{2} \leq x \leq \overline{\lambda}_{j_{\sf min}} + \frac{\overline{\lambda}_{j_{\sf max}} - \overline{\lambda}_{r_1+1}}{2}\bigg\}.
\end{align*}
Similar to \eqref{eq6}, one has
\begin{align}\label{eq9}
	&\frac{1}{2\pi\rm{i}}\oint_{\gamma_3}\frac{\rm{d}\eta}{\left(\eta - \overline{\lambda}_{j_1}\right)\cdots\left(\eta - \overline{\lambda}_{j_{k+1}}\right)}\notag\\ &\hspace{1cm}= \frac{1}{2\pi\rm{i}}\oint_{\gamma_6}\frac{\rm{d}\eta}{\left(\eta - \overline{\lambda}_{j_1}\right)\cdots\left(\eta - \overline{\lambda}_{j_{k+1}}\right)} + \frac{1}{2\pi\rm{i}}\oint_{\gamma_7}\frac{\rm{d}\eta}{\left(\eta - \overline{\lambda}_{j_1}\right)\cdots\left(\eta - \overline{\lambda}_{j_{k+1}}\right)}.
\end{align}
Repeating similar arguments as in \eqref{ineq20} and \eqref{ineq21} yields
\begin{subequations}
	\begin{align}
		\left|\frac{1}{2\pi\rm{i}}\oint_{\gamma_6}\frac{\overline{\lambda}_{j_{k+1}}\rm{d}\eta}{\left(\eta - \overline{\lambda}_{j_1}\right)\cdots\left(\eta - \overline{\lambda}_{j_{k+1}}\right)}\right| &\leq \left(\frac{2}{\overline{\lambda}_{r_1} - \overline{\lambda}_{r_1+1}}\right)^k\overline{\lambda}_{r_1},\label{ineq23a},\\
		\left|\frac{1}{2\pi\rm{i}}\oint_{\gamma_7}\frac{\overline{\lambda}_{j_{k+1}}\rm{d}\eta}{\left(\eta - \overline{\lambda}_{j_1}\right)\cdots\left(\eta - \overline{\lambda}_{j_{k+1}}\right)}\right| &\leq \frac{32}{\pi}\left(\frac{2}{\overline{\lambda}_{r_1} - \overline{\lambda}_{r_1+1}}\right)^k\overline{\lambda}_{r_1}\label{ineq23b}.
	\end{align}
\end{subequations}
Putting \eqref{eq4}, \eqref{eq9}, \eqref{ineq23a} and \eqref{ineq23b} together, one has
\begin{align}\label{ineq24}
	\left|\frac{1}{2\pi\rm{i}}\oint_{\gamma_1}\frac{\overline{\lambda}_{j_{k+1}}\rm{d}\eta}{\left(\eta - \overline{\lambda}_{j_1}\right)\cdots\left(\eta - \overline{\lambda}_{j_{k+1}}\right)}\right| \leq \frac{36}{\pi}\left(\frac{2}{\overline{\lambda}_{r_1} - \overline{\lambda}_{r_1+1}}\right)^k\overline{\lambda}_{r_1}.
\end{align}

In summary, we are guaranteed to have
\begin{align*}
		\left|\frac{1}{2\pi\rm{i}}\oint_{\gamma_1}\frac{\overline{\lambda}_{j_{k+1}}\rm{d}\eta}{\left(\eta - \overline{\lambda}_{j_1}\right)\cdots\left(\eta - \overline{\lambda}_{j_{k+1}}\right)}\right| \leq \frac{40}{\pi}\left(\frac{2}{\overline{\lambda}_{r_1} - \overline{\lambda}_{r_1+1}}\right)^k\overline{\lambda}_{r_1}.
\end{align*}
This together with \eqref{eq:decomposition_2} finishes the proof of \eqref{ineq:residual_expansion}.
\qed

\subsection{Proof of Lemma \ref{lm:noise_product}}\label{proof:lm_noise_product}
Throughout the subsection, we assume that $\mathcal{E}$ holds. 
\paragraph{Proof of \eqref{ineq7a}.} \eqref{ineq7a} clearly holds for $i = 0$ due to the definition of $\mu$. Now we consider the case $i \geq 1$. It is easy to verify that for any matrices $\bm{A}, \bm{B} \in \bbR^{m_1 \times m_1}$,
\begin{align}\label{eq3}
	\left(\bm{A} + \bm{B}\right)^{i} = \bm{B}^i + \sum_{j = 0}^{i-1}\bm{B}^j\bm{A}\left(\bm{A} + \bm{B}\right)^{i-j-1}.
\end{align}
This allows us to decompose $\bm{Z}_3^i\bm{U}^\star$ as follows:
\begin{align*}
	\bm{Z}_3^i\bm{U}^\star &= \left[\mathcal{P}_{\sf off\text{-}diag}\left(\bm{E}\bm{E}^\top - \bm{E}\bm{V}^\star\bm{V}^{\star\top}\bm{E}^\top\right)\right]^i\bm{U}^\star\\
	&= -\sum_{j=0}^{i-1}\left[\mathcal{P}_{\sf off\text{-}diag}\left(\bm{E}\bm{E}^\top\right)\right]^{j}\mathcal{P}_{\sf off\text{-}diag}\left( \bm{E}\bm{V}^\star\bm{V}^{\star\top}\bm{E}^\top\right)\left[\mathcal{P}_{\sf off\text{-}diag}\left(\bm{E}\bm{E}^\top - \bm{E}\bm{V}^\star\bm{V}^{\star\top}\bm{E}^\top\right)\right]^{i-j-1}\bm{U}^\star\\
	&\quad + \left[\mathcal{P}_{\sf off\text{-}diag}\left(\bm{E}\bm{E}^\top\right)\right]^{i}\bm{U}^\star\\
	&= -\sum_{j=0}^{i-1}\left[\mathcal{P}_{\sf off\text{-}diag}\left(\bm{E}\bm{E}^\top\right)\right]^{j}\bm{E}\bm{V}^\star\bm{V}^{\star\top}\bm{E}^\top\left[\mathcal{P}_{\sf off\text{-}diag}\left(\bm{E}\bm{E}^\top - \bm{E}\bm{V}^\star\bm{V}^{\star\top}\bm{E}^\top\right)\right]^{i-j-1}\bm{U}^\star\\
	&\quad + \sum_{j=0}^{i-1}\left[\mathcal{P}_{\sf off\text{-}diag}\left(\bm{E}\bm{E}^\top\right)\right]^{j}\mathcal{P}_{\sf diag}\left( \bm{E}\bm{V}^\star\bm{V}^{\star\top}\bm{E}^\top\right)\left[\mathcal{P}_{\sf off\text{-}diag}\left(\bm{E}\bm{E}^\top - \bm{E}\bm{V}^\star\bm{V}^{\star\top}\bm{E}^\top\right)\right]^{i-j-1}\bm{U}^\star\\
	&\quad + \left[\mathcal{P}_{\sf off\text{-}diag}\left(\bm{E}\bm{E}^\top\right)\right]^{i}\bm{U}^{\star}.
\end{align*}
In view of \eqref{ineq:power_V}, \eqref{ineq:power_U}, \eqref{ineq2a} and \eqref{ineq2b}, one can obtain the following upper bound for $\|\bm{Z}^i\bm{U}^\star\|_{2, \infty}$:
\begin{align}\label{ineq8}
	&\big\|\bm{Z}_3^i\bm{U}^\star\big\|_{2,\infty}\notag\\ &\quad\leq \sum_{j=0}^{i-1}\left\|\left[\mathcal{P}_{\sf off\text{-}diag}\left(\bm{E}\bm{E}^\top\right)\right]^{j}\bm{E}\bm{V}^\star\right\|_{2,\infty}\left\|\bm{V}^{\star\top}\bm{E}^\top\left[\mathcal{P}_{\sf off\text{-}diag}\left(\bm{E}\bm{E}^\top - \bm{E}\bm{V}^\star\bm{V}^{\star\top}\bm{E}^\top\right)\right]^{i-j-1}\bm{U}^\star\right\|\notag\\
	&\qquad + \sum_{j=0}^{i-1}\left\|\left[\mathcal{P}_{\sf off\text{-}diag}\left(\bm{E}\bm{E}^\top\right)\right]^{j}\mathcal{P}_{\sf diag}\left( \bm{E}\bm{V}^\star\bm{V}^{\star\top}\bm{E}^\top\right)\left[\mathcal{P}_{\sf off\text{-}diag}\left(\bm{E}\bm{E}^\top - \bm{E}\bm{V}^\star\bm{V}^{\star\top}\bm{E}^\top\right)\right]^{i-j-1}\bm{U}^\star\right\|\notag\\
	&\qquad + \left\|\left[\mathcal{P}_{\sf off\text{-}diag}\left(\bm{E}\bm{E}^\top\right)\right]^{i}\bm{U}^{\star}\right\|_{2,\infty}\notag\\
	&\quad \leq \sum_{j=0}^{i-1}\left\|\left[\mathcal{P}_{\sf off\text{-}diag}\left(\bm{E}\bm{E}^\top\right)\right]^{j}\bm{E}\bm{V}^\star\right\|_{2,\infty}\left\|\bm{E}\bm{V}^{\star}\right\|\left\|\mathcal{P}_{\sf off\text{-}diag}\left(\bm{E}\bm{E}^\top - \bm{E}\bm{V}^\star\bm{V}^{\star\top}\bm{E}^\top\right)\right\|^{i-j-1}\notag\\
	&\qquad + \sum_{j=0}^{i-1}\left\|\mathcal{P}_{\sf off\text{-}diag}\left(\bm{E}\bm{E}^\top\right)\right\|^{j}\left\|\bm{E}\bm{V}^\star\right\|_{2,\infty}^2\left\|\mathcal{P}_{\sf off\text{-}diag}\left(\bm{E}\bm{E}^\top - \bm{E}\bm{V}^\star\bm{V}^{\star\top}\bm{E}^\top\right)\right\|^{i-j-1}\notag\\
	&\qquad + \left\|\left[\mathcal{P}_{\sf off\text{-}diag}\left(\bm{E}\bm{E}^\top\right)\right]^{i}\bm{U}^{\star}\right\|_{2,\infty}\notag\\
	&\quad \leq \sum_{j=0}^{i-1}\left(C_3\sqrt{\mu r}\left(C_3\left(\sqrt{m_1m_2} + m_1\right)\omega_{\sf max}^2\log^2 m\right)^{j}\omega_{\sf max}\log m\right)\cdot C_5\sqrt{m_1}\omega_{\sf max}\log m\notag\\
	&\hspace{1.5cm}\cdot \left(3C_5\left(\sqrt{m_1m_2} + m_1\right)\omega_{\sf max}^2\log^2 m\right)^{i-j-1}\notag\\
	&\qquad + \sum_{j=0}^{i-1}\left(C_5\left(\sqrt{m_1m_2} + m_1\right)\omega_{\sf max}^2\log^2 m\right)^j\left(C_3\sqrt{\mu r}\omega_{\sf max}\log m\right)^2\cdot \left(3C_5\left(\sqrt{m_1m_2} + m_1\right)\omega_{\sf max}^2\log^2 m\right)^{i-j-1}\notag\\
	&\qquad + C_3\sqrt{\frac{\mu r}{m_1}}\left(C_3\left(\sqrt{m_1m_2} + m_1\right)\omega_{\sf max}^2\log^2 m\right)^{i}\notag\\
	&\quad \leq 3C_3\sqrt{\frac{\mu r}{m_1}}\left(C_3\left(\sqrt{m_1m_2} + m_1\right)\omega_{\sf max}^2\log^2 m\right)^{i},
\end{align}
provided that $C_3 \geq 6C_5$.
\paragraph{Proof of \eqref{ineq7b}.} When $i = 0$, \eqref{ineq7b} is a direct consequence of Lemma \ref{lm:power_V}. For $i \geq 1$, similar to \eqref{ineq8}, one has
\begin{align}\label{ineq9}
	&\big\|\bm{Z}_3^i\bm{E}\bm{V}^\star\big\|_{2,\infty}\notag\\ &\quad\leq \sum_{j=0}^{i-1}\left\|\left[\mathcal{P}_{\sf off\text{-}diag}\left(\bm{E}\bm{E}^\top\right)\right]^{j}\bm{E}\bm{V}^\star\right\|_{2,\infty}\left\|\bm{V}^{\star\top}\bm{E}^\top\left[\mathcal{P}_{\sf off\text{-}diag}\left(\bm{E}\bm{E}^\top - \bm{E}\bm{V}^\star\bm{V}^{\star\top}\bm{E}^\top\right)\right]^{i-j-1}\bm{E}\bm{V}^\star\right\|\notag\\
	&\qquad + \sum_{j=0}^{i-1}\left\|\left[\mathcal{P}_{\sf off\text{-}diag}\left(\bm{E}\bm{E}^\top\right)\right]^{j}\mathcal{P}_{\sf diag}\left( \bm{E}\bm{V}^\star\bm{V}^{\star\top}\bm{E}^\top\right)\left[\mathcal{P}_{\sf off\text{-}diag}\left(\bm{E}\bm{E}^\top - \bm{E}\bm{V}^\star\bm{V}^{\star\top}\bm{E}^\top\right)\right]^{i-j-1}\bm{E}\bm{V}^\star\right\|\notag\\
	&\qquad + \left\|\left[\mathcal{P}_{\sf off\text{-}diag}\left(\bm{E}\bm{E}^\top\right)\right]^{i}\bm{E}\bm{V}^\star\right\|_{2,\infty}\notag\\
	&\quad \leq \sum_{j=0}^{i-1}\left\|\left[\mathcal{P}_{\sf off\text{-}diag}\left(\bm{E}\bm{E}^\top\right)\right]^{j}\bm{E}\bm{V}^\star\right\|_{2,\infty}\left\|\bm{E}\bm{V}^{\star}\right\|^2\left\|\mathcal{P}_{\sf off\text{-}diag}\left(\bm{E}\bm{E}^\top - \bm{E}\bm{V}^\star\bm{V}^{\star\top}\bm{E}^\top\right)\right\|^{i-j-1}\notag\\
	&\qquad + \sum_{j=0}^{i-1}\left\|\mathcal{P}_{\sf off\text{-}diag}\left(\bm{E}\bm{E}^\top\right)\right\|^{j}\left\|\bm{E}\bm{V}^\star\right\|_{2,\infty}^2\left\|\mathcal{P}_{\sf off\text{-}diag}\left(\bm{E}\bm{E}^\top - \bm{E}\bm{V}^\star\bm{V}^{\star\top}\bm{E}^\top\right)\right\|^{i-j-1}\left\|\bm{E}\bm{V}^{\star}\right\|\notag\\
	&\qquad + \left\|\left[\mathcal{P}_{\sf off\text{-}diag}\left(\bm{E}\bm{E}^\top\right)\right]^{i}\bm{E}\bm{V}^{\star}\right\|_{2,\infty}\notag\\
	&\quad \leq \sum_{j=0}^{i-1}\left(C_3\sqrt{\mu r}\left(C_3\left(\sqrt{m_1m_2} + m_1\right)\omega_{\sf max}^2\log^2 m\right)^{j}\omega_{\sf max}\log m\right)\cdot \left(\sqrt{C_5}\sqrt{m_1}\omega_{\sf max}\log m\right)^2\notag\\
	&\hspace{1.5cm}\cdot \left(3C_5\left(\sqrt{m_1m_2} + m_1\right)\omega_{\sf max}^2\log^2 m\right)^{i-j-1}\notag\\
	&\qquad + \sum_{j=0}^{i-1}\left(C_5\left(\sqrt{m_1m_2} + m_1\right)\omega_{\sf max}^2\log^2 m\right)^j\left(C_3\sqrt{\mu r}\omega_{\sf max}\log m\right)^2\cdot \left(3C_5\left(\sqrt{m_1m_2} + m_1\right)\omega_{\sf max}^2\log^2 m\right)^{i-j-1}\notag\\&\hspace{2cm}\cdot\sqrt{C_5}\sqrt{m_1}\omega_{\sf max}\log m\notag\\
	&\qquad + C_3\sqrt{\mu r}\left(C_3\left(\sqrt{m_1m_2} + m_1\right)\omega_{\sf max}^2\log^2 m\right)^{i}\omega_{\sf max}\log m\notag\\
	&\quad \leq 3C_3\sqrt{\mu r}\left(C_3\left(\sqrt{m_1m_2} + m_1\right)\omega_{\sf max}^2\log^2 m\right)^{i}\omega_{\sf max}\log m,
\end{align}
provided that $C_3 \geq 6C_5$. In the last inequality, we used the assumption that $m_1 \gg \mu r$.

\paragraph{Proof of \eqref{ineq7c}.} We can directly use the same argument of \citet[Eqn. (121)]{zhou2023deflated} to prove \eqref{ineq7c}. We omit the details here for the sake of brevity.

\paragraph{Proof of \eqref{ineq7d}.} Putting \eqref{ineq1}, \eqref{ineq2a}, \eqref{ineq7a} and \eqref{ineq7b} together shows that: for all $0 \leq i \leq \log m$, we have
\begin{align}\label{ineq10}
	\left\|\bm{Z}_3^i\bm{Z}_1\right\|_{2, \infty} &= \left\|\bm{Z}_3^i\left(\bm{U}^{\star(2)}\bm{\Sigma}^{\star(2)}\bm{V}^{\star(2)\top}\bm{E} + \bm{E}\bm{V}^{\star(2)}\bm{\Sigma}^{\star(2)}\bm{U}^{\star(2)\top} + \bm{E}\bm{V}^{\star(2)}\bm{V}^{\star(2)\top}\bm{E}\right)\right\|_{2, \infty}\notag\\
	&\leq \left\|\bm{Z}_3^i\bm{U}^{\star(2)}\bm{\Sigma}^{\star(2)}\bm{V}^{\star(2)\top}\bm{E}^\top\right\|_{2,\infty} + \left\|\bm{Z}_3^i\bm{E}\bm{V}^{\star(2)}\bm{\Sigma}^{\star(2)}\bm{U}^{\star(2)\top}\right\|_{2,\infty} + \left\|\bm{Z}_3^i\bm{E}\bm{V}^{\star(2)}\bm{V}^{\star(2)\top}\bm{E}^\top\right\|_{2,\infty}\notag\\
	&\leq \big\|\bm{Z}_3^i\bm{U}^{\star(2)}\big\|_{2,\infty}\big\|\bm{\Sigma}^{\star(2)}\big\|\big\|\bm{E}\bm{V}^{\star(2)}\big\| + \big\|\bm{Z}_3^i\bm{E}\bm{V}^{\star(2)}\big\|_{2,\infty}\big\|\bm{\Sigma}^{\star(2)}\big\|  + \left\|\bm{Z}_3^i\bm{E}\bm{V}^{\star(2)}\right\|_{2,\infty}\big\|\bm{E}\bm{V}^{\star(2)}\big\|\notag\\
	&\leq \sigma_{\overline{r} + 1}^\star\big\|\bm{Z}_3^i\bm{U}^{\star}\big\|_{2,\infty}\big\|\bm{E}\bm{V}^{\star}\big\| + \sigma_{\overline{r} + 1}^\star\big\|\bm{Z}_3^i\bm{E}\bm{V}^{\star}\big\|_{2,\infty}  + \left\|\bm{Z}_3^i\bm{E}\bm{V}^{\star}\right\|_{2,\infty}\big\|\bm{E}\bm{V}^{\star}\big\|\notag\\
	&\leq \sigma_{\overline{r} + 1}^\star\cdot 3C_3\sqrt{\frac{\mu r}{m_1}}\left(C_3\left(\sqrt{m_1m_2} + m_1\right)\omega_{\sf max}^2\log^2 m\right)^{i}\cdot \sqrt{C}_5\sqrt{m}_1\omega_{\sf max}\log m\notag\\
	&\quad + \sigma_{\overline{r} + 1}^\star\omega_{\sf max}\log m\cdot 3C_3\sqrt{\mu r}\left(C_3\left(\sqrt{m_1m_2} + m_1\right)\omega_{\sf max}^2\log^2 m\right)^{i}\omega_{\sf max}\log m\notag\\
	&\quad + 3C_3\sqrt{\mu r}\left(C_3\left(\sqrt{m_1m_2} + m_1\right)\omega_{\sf max}^2\log^2 m\right)^{i}\omega_{\sf max}\log m\cdot \sqrt{C}_5\sqrt{m}_1\omega_{\sf max}\log m\notag\\
	&\leq C_2\sqrt{\mu r}\left(\sigma_{\overline{r} + 1}^\star + \sqrt{m}_1\omega_{\sf max}\log m\right)\left(C_3\left(\sqrt{m_1m_2} + m_1\right)\omega_{\sf max}^2\log^2 m\right)^{i}\omega_{\sf max}\log m,
\end{align}
provided that $C_2 \geq 9C_3\sqrt{C_5}$.
The fourth line makes use of the fact that $\|\bm{B}\| \leq \|\bm{A}\|$ and $\|\bm{B}\|_{2,\infty} \leq \|\bm{A}\|_{2,\infty}$ for any $\bm{A}$ and its submatrix $\bm{B}$.
\paragraph{Proof of \eqref{ineq7e}.} By virtue of \eqref{ineq1}, \eqref{ineq7a}, \eqref{ineq7c} and \eqref{ineq6}, we know that for all $0 \leq i \leq \log m$,
\begin{align}
	\left\|\bm{Z}_3^i\bm{Z}_2\right\|_{2, \infty} &\leq \left\|\bm{Z}_3^i\left(\mathcal{P}_{\widetilde{\bm{U}}^{(1)}}\bm{U}^{\star(2)}\big(\bm{\Sigma}^{\star(2)}\big)^2\bm{U}^{\star(2)\top}\mathcal{P}_{\big(\widetilde{\bm{U}}^{(1)}\big)_{\perp}} + \bm{U}^{\star(2)}\big(\bm{\Sigma}^{\star(2)}\big)^2\bm{U}^{\star(2)\top}\mathcal{P}_{\widetilde{\bm{U}}^{(1)}}\right)\right\|_{2, \infty}\notag\\
	&\leq \big\|\bm{Z}_3^i\widetilde{\bm{U}}^{(1)}\big\|_{2,\infty}\big\|\widetilde{\bm{U}}^{(1)\top}\bm{U}^{\star(2)}\big\|\big\|\bm{\Sigma}^{\star(2)}\big\|^2 + \big\|\bm{Z}_3^i\bm{U}^{\star(2)}\big\|_{2,\infty}\big\|\bm{\Sigma}^{\star(2)}\big\|^2\big\|\widetilde{\bm{U}}^{(1)\top}\bm{U}^{\star(2)}\big\|\notag\\
	&\leq \big\|\bm{Z}_3^i\widetilde{\bm{U}}^{(1)}\big\|_{2,\infty}\big\|\widetilde{\bm{U}}^{(1)\top}\left(\bm{U}_1^\star\right)_{\perp}\big\|\sigma_{\overline{r}+1}^{\star2} + \big\|\bm{Z}_3^i\bm{U}^{\star(2)}\big\|_{2,\infty}\big\|\widetilde{\bm{U}}^{(1)\top}\left(\bm{U}_1^\star\right)_{\perp}\big\|\sigma_{\overline{r}+1}^{\star2}\notag\\
	&\lesssim \sqrt{\frac{\mu r}{m_1}}\left(C_3\left(\sqrt{m_1m_2} + m_1\right)\omega_{\sf max}^2\log^2 m\right)^i\cdot \frac{\sqrt{m}_1\omega_{\sf max}\log m}{\sigma_{\overline{r}}^\star}\sigma_{\overline{r}+1}^{\star2}\notag\\
	&\leq \sqrt{\frac{\mu r}{m_1}}\left(C_3\left(\sqrt{m_1m_2} + m_1\right)\omega_{\sf max}^2\log^2 m\right)^i\cdot \sqrt{m}_1\omega_{\sf max}\sigma_{\overline{r}+1}^\star\log m\notag\\
	&= \sqrt{\mu r}\left(C_3\left(\sqrt{m_1m_2} + m_1\right)\omega_{\sf max}^2\log^2 m\right)^{i}\omega_{\sf max}\sigma_{\overline{r}+1}^\star\log m.
\end{align}
The third line holds since $\bm{U}^{\star(1)\top}\bm{U}^{\star(2)} = \bm{0}$, 
whereas the fifth line is due to the basic fact that $\sigma_{\overline{r}}^\star \geq \sigma_{\overline{r}+1}^\star$.
\qed
\section{Proof of Theorem \ref{thm:residual}}\label{proof:thm_residual}
\subsection{Several notation}
First, we introduce some notation that will be useful throughout the proof. We let
\begin{align}\label{def:G0}
	\bm{G}_{k+1}^{0} := \bm{G}_k,~\qquad~\forall 0 \leq k \leq k_{\sf max},
\end{align}
For any $0 \leq t \leq t_{k+1}$ and $0 \leq k \leq k_{\sf max}$, we define
\begin{align}\label{def:eigendecomposition_G}
	\bm{U}_{k+1}^t\bm{\Lambda}_{k+1}^t\bm{U}_{k+1}^{t\top} := \text{ the leading } r_k \text{ eigendecomposition of } \bm{G}_{k+1}^{t},
\end{align}
and denote
\begin{align}\label{def:G_general}
	\bm{G}_{k+1}^{t+1} := \mathcal{P}_{\sf off\text{-}diag}\left(\bm{G}_{k+1}^t\right) + \mathcal{P}_{\sf diag}\left(\bm{U}_{k+1}^t\bm{\Lambda}_{k+1}^t\bm{U}_{k+1}^{t\top}\right).
\end{align}
Recall that we can decompose $\bm{M}^{\sf oracle}$ into four terms:
\begin{align}\label{M_oracle_decompose}
	\bm{M}^{\sf oracle} = \widetilde{\bm{M}} + \bm{Z}_1 + \bm{Z}_2 + \bm{Z}_3 = \widetilde{\bm{M}} + \bm{Z},
\end{align}
where $\widetilde{\bm{M}}, \bm{Z}_1, \bm{Z}_2$ and $\bm{Z}_3$ are defined in \eqref{eq:decomposition_M_oracle}.

For notational convenience, we let 
\begin{align}\label{def:others}
	D_k^t = \left\|\bm{G}_k^t - \bm{M}^{\sf oracle}\right\|,~\qquad~F_k^t = \big\|\mathcal{P}_{\sf diag}\big(\bm{G}_k^t - \widetilde{\bm{M}}\big)\big\|,~\qquad~\text{and}~\qquad~L_k^t = \big\|\bm{G}_k^t - \widetilde{\bm{M}}\big\|.
\end{align}
In addition, we let
\begin{align}\label{def:tilde_U_k}
	\widetilde{\bm{U}}_k = \widetilde{\bm{U}}_{:, 1:r_k},~\qquad~\bm{U}^{\sf oracle}_k = \bm{U}^{\sf oracle}_{:, 1:r_k},~\qquad~\text{and}~\qquad~\bm{U}^\star_k = \bm{U}^\star_{:, 1:r_k},
\end{align}
where $\widetilde{\bm{U}}$ (resp.~$\bm{U}^{\sf oracle}$) is the rank-$r$ leading eigenspace of $\widetilde{\bm{M}}$ (resp.~$\bm{M}^{\sf oracle}$).
We also let $\mathcal{E}$ denote the following event:
\begin{align}\label{event:E}
	\mathcal{E} &= \{\eqref{ineq:power_V} \text{ and }\eqref{ineq:power_U} \text{ hold for }0 \leq k \leq \log n\} \cap \{\eqref{ineq2a}, \eqref{ineq2b}, \eqref{ineq2c}\text{ and }\eqref{ineq2d}\text{ hold}\}\notag\\ &\quad \cap \{\eqref{ineq:oracle_tilde}~\text{and}~\eqref{ineq:oracle}~\text{for all }~r' \in \mathcal{A}\}.
\end{align}
We know from Lemmas \ref{lm:power_V}, \ref{lm:power_U}, \ref{lm:event_collection}, Theorem \ref{thm:oracle_two_to_infty} and the union bound that
\begin{align}\label{ineq:event_probability1}
	\bbP\left(\mathcal{E}\right) \geq 1 - O\left(m^{-10}\right).
\end{align}
Throughout the rest of the proof, we assume that $\mathcal{E}$ occurs.

\subsection{Main steps for proving Theorem \ref{thm:residual}}

\paragraph{Step 1: a key property of $r_1$ selected in Algorithm \ref{algorithm:sequential_heteroPCA}.} First, we show that
\begin{align}\label{property:r_1}
	r_1 \in \mathcal{R}_1 \cap \mathcal{A},
\end{align}
where
\begin{align}\label{def:R_1}
	\mathcal{R}_1 := \left\{r' \leq r: \frac{\sigma_1\left(\bm{G}_0\right)}{\sigma_{r'}\left(\bm{G}_0\right)} \leq 4 \quad\text{and}\quad \sigma_{r'}\left(\bm{G}_0\right) - \sigma_{r' + 1}\left(\bm{G}_0\right) \geq \frac{1}{r}\sigma_{r'}\left(\bm{G}_0\right)\right\}.
\end{align}
and $\mathcal{A}$ is defined in \eqref{def:A}. Noting that $\widetilde{\bm{U}} = [\widetilde{\bm{U}}^{(1)}\ \widetilde{\bm{U}}^{(2)}]$ and putting \eqref{ineq2d} and \eqref{ineq:incoherence_tilde_U_2} together, one has
\begin{align}\label{ineq:incoherence_tilde_U}
	\big\|\widetilde{\bm{U}}\big\|_{2,\infty} = \big\|\big[\widetilde{\bm{U}}^{(1)}\ \widetilde{\bm{U}}^{(2)}\big]\big\|_{2,\infty} \leq \big\|\widetilde{\bm{U}}^{(1)}\big\|_{2,\infty} + \big\|\widetilde{\bm{U}}^{(2)}\big\|_{2,\infty} \leq 4\sqrt{\frac{\mu r}{m_1}}.
\end{align}
In view of \eqref{ineq:spectral_Z}, \eqref{ineq:incoherence_tilde_U} and the definition $\bm{G}_1^0 = \bm{G}_0 = \mathcal{P}_{\sf off\text{-}diag}(\bm{M}) = \mathcal{P}_{\sf off\text{-}diag}(\bm{M}^{\sf oracle}) = \mathcal{P}_{\sf off\text{-}diag}(\widetilde{\bm{U}}\widetilde{\bm{\Lambda}}\widetilde{\bm{U}}^\top + \bm{Z})$, we can derive
\begin{align}\label{ineq:L_1^0}
	L_1^0 &= \big\|\bm{G}_0 - \widetilde{\bm{M}}\big\|\notag\\ &= \big\|\mathcal{P}_{\sf diag}\big(\widetilde{\bm{U}}\widetilde{\bm{\Lambda}}\widetilde{\bm{U}}^\top\big) - \mathcal{P}_{\sf off\text{-}diag}\left(\bm{Z}\right)\big\|\notag\\
	&\leq \big\|\mathcal{P}_{\sf diag}\big(\widetilde{\bm{U}}\widetilde{\bm{\Lambda}}\widetilde{\bm{U}}^\top\big)\big\| + \left\|\mathcal{P}_{\sf off\text{-}diag}\left(\bm{Z}\right)\right\|\notag\\
	&\leq \big\|\widetilde{\bm{U}}\big\|_{2,\infty}^2\big\|\widetilde{\bm{\Lambda}}\big\| + 2\left\|\bm{Z}\right\|\notag\\
	&\leq 16\frac{\mu r}{m_1}\widetilde{\sigma}_1^2 + C_2\left(\sqrt{m_1}\omega_{\sf max}\log m\cdot\sigma_{\overline{r}+1}^{\star} + \left(\sqrt{m_1m_2} + m_1\right)\omega_{\sf max}^2\log^2 m\right).
\end{align}
This together with Weyl's inequality shows that, for all $i \in [m_1]$,
\begin{align}\label{ineq116}
	\big|\sigma_i\left(\bm{G}_0\right) - \widetilde{\sigma}_i^2\big| &= \big|\sigma_i\left(\bm{G}_0\right) - \sigma_{i}\big(\widetilde{\bm{M}}\big)\big|\notag\\
	&\leq 16\frac{\mu r}{m_1}\widetilde{\sigma}_1^2 + C_2\left(\sqrt{m_1}\omega_{\sf max}\log m\cdot\sigma_{\overline{r}+1}^{\star} + \left(\sqrt{m_1m_2} + m_1\right)\omega_{\sf max}^2\log^2 m\right).
\end{align}
Moreover, \eqref{ineq2a} tells us that
\begin{align*}
	\widetilde{\sigma}_i \leq \sigma_i^\star + \sqrt{C_5}\sqrt{m_1}\omega_{\sf max}\log m \leq \left(1 + \frac{1}{Cr^2}\right)\sigma_i^\star,~\qquad~\forall i \in \left[\overline{r}\right]
\end{align*}
for some large constant $C > 0$, where $\overline{r} = \max\mathcal{A}$ and the last inequality holds since $\sigma_{\overline{r}}^\star \geq C_0r\big[(m_1m_2)^{1/4} + rm_1^{1/2}\big]\omega_{\sf max}\log m$. Similarly, one can show that
\begin{align}\label{ineq115}
	\left(1 - \frac{1}{Cr^2}\right)\sigma_i^\star \leq \widetilde{\sigma}_i \leq \left(1 + \frac{1}{Cr^2}\right)\sigma_i^\star,~\qquad~\forall i \in \left[\overline{r}\right].
\end{align}
By virtue of \eqref{ineq115} and the assumption $\sigma_{\overline{r}}^\star \geq C_0r\big[(m_1m_2)^{1/4} + rm_1^{1/2}\big]\omega_{\sf max}\log m$, one has
\begin{align}\label{ineq120}
	&16\frac{\mu r}{m_1}\widetilde{\sigma}_1^2 + C_2\left(\sqrt{m_1}\omega_{\sf max}\log m\cdot\sigma_{\overline{r}+1}^{\star} + \left(\sqrt{m_1m_2} + m_1\right)\omega_{\sf max}^2\log^2 m\right)\notag\\
	&\hspace{1cm} \leq 16\frac{\mu r}{m_1}\widetilde{\sigma}_1^2 + C_2\left(\sqrt{m_1}\omega_{\sf max}\log m\cdot\sigma_{\overline{r}+1}^{\star} + \frac{1}{C_0^2r^2}\sigma_{\overline{r}}^{\star2}\right)\notag\\
	&\hspace{1cm} \leq 16\frac{\mu r}{m_1}\widetilde{\sigma}_1^2 + C_2\left(2\sqrt{m_1}\omega_{\sf max}\log m\cdot\widetilde{\sigma}_{\overline{r}+1} + \frac{4}{C_0^2r^2}\widetilde{\sigma}_{\overline{r}}^{2}\right)\notag\\
	&\hspace{1cm} \leq 16\frac{\mu r}{m_1}\widetilde{\sigma}_1^2 + C_2\left(\frac{2}{C_0r^2}\widetilde{\sigma}_{\overline{r}}^2 + \frac{4}{C_0^2r^2}\widetilde{\sigma}_{\overline{r}}^2\right)\notag\\
	&\hspace{1cm} \leq \frac{16c_1}{r^2}\widetilde{\sigma}_1^2 + \frac{1}{2Cr^2}\widetilde{\sigma}_{\overline{r}}^2\notag\\
	&\hspace{1cm} \leq \frac{1}{Cr^2}\widetilde{\sigma}_1^2,
\end{align}
provided that $C_0 \geq 8C\cdot C_2$ and $c_1 \leq \frac{1}{32C}$. Here, the fourth line comes from $\sigma_{\overline{r}}^\star \geq C_0r\big[(m_1m_2)^{1/4} + rm_1^{1/2}\big]\omega_{\sf max}\log m$ and \eqref{ineq115}, whereas the penultimate line uses the assumption $\mu \leq c_0m_1/r^3$. Combining \eqref{ineq116} and \eqref{ineq120} gives  
\begin{align}\label{ineq117}
	\sigma_1\left(\bm{G}_0\right) &\geq \widetilde{\sigma}_1^2 - \frac{1}{Cr^2}\widetilde{\sigma}_1^2 > \max\left\{\frac{1}{2}\widetilde{\sigma}_1^2, \frac{1}{2}\sigma_1^{\star2}, \tau\right\},
\end{align}
where the last inequality holds due to \eqref{ineq115} and $C_1^2/2 \geq C_{\tau}$.

Equipped with \eqref{ineq116}, \eqref{ineq115}, \eqref{ineq120} and \eqref{ineq117}, 
we can establish the property $r_1 \in \mathcal{R}_1$ using a similar argument as in the proof of \citet[Eqn. (62)]{zhou2023deflated}. Therefore, we only need to prove $r_1 \in \mathcal{A}$. In view of \eqref{ineq115}, \eqref{ineq117} and the definition of $\mathcal{R}_1$, we know that
\begin{align}\label{ineq118}
	\sigma_{r_1}\left(\bm{G}_0\right) \geq \frac{1}{4}\sigma_1\left(\bm{G}_0\right) \geq \frac{1}{8}\sigma_1^{\star2},
\end{align}
and consequently, 
\begin{align}\label{ineq119}
	 \sigma_{r_1}^{\star} &\geq \left(1 - \frac{C}{r^2}\right)\widetilde{\sigma}_{r_1} \geq \left(1 - \frac{C}{r^2}\right)\left[\sigma_{r_1}\left(\bm{G}_0\right) - \frac{1}{Cr^2}\widetilde{\sigma}_1^2\right]^{1/2} \geq \left(1 - \frac{C}{r^2}\right)\left(\frac{1}{8}\sigma_1^{\star2} - \frac{4}{Cr^2}\sigma_1^{\star2}\right)^{1/2}\notag\\& \geq \frac{1}{3}\sigma_1^{\star} > 2C_0r\big[(m_1m_2)^{1/4} + rm_1^{1/2}\big]\omega_{\sf max}\log m.
\end{align}
Furthermore, inequality \eqref{ineq116}, \eqref{ineq120} and \eqref{ineq118} combined imply that
\begin{align}\label{ineq122}
	\widetilde{\sigma}_{r_1}^2 \geq \sigma_{r_1}\left(\bm{G}_0\right) - \frac{1}{Cr^2}\widetilde{\sigma}_1^2 \geq \frac{1}{4}\sigma_{1}\left(\bm{G}_0\right) - \frac{1}{Cr^2}\widetilde{\sigma}_1^2 \geq \frac{1}{4}\left(\widetilde{\sigma}_1^2 - \frac{1}{Cr^2}\widetilde{\sigma}_1^2\right) - \frac{1}{Cr^2}\widetilde{\sigma}_1^2 \geq \frac{1}{5}\widetilde{\sigma}_1^2.
\end{align}
Inequalities \eqref{ineq116}, \eqref{ineq120}, \eqref{ineq122} and the triangle inequality taken together show that
\begin{align}\label{ineq121}
	\widetilde{\sigma}_{r_1}^2 - \widetilde{\sigma}_{r_1+1}^2 &\geq \sigma_{r_1}\left(\bm{G}_0\right) - \sigma_{r_1+1}\left(\bm{G}_0\right) - \big|\sigma_{r_1}\left(\bm{G}_0\right) - \widetilde{\sigma}_{r_1}^2\big| - \big|\sigma_{r_1+1}\left(\bm{G}_0\right) - \widetilde{\sigma}_{r_1+1}^2\big|\notag\\
	&\geq \frac{1}{r}\sigma_{r_1}\left(\bm{G}_0\right) - \frac{2}{Cr^2}\widetilde{\sigma}_{1}^2\notag\\
	&\geq \frac{1}{r}\widetilde{\sigma}_{r_1}^2 - \frac{1}{r}\big|\sigma_{r_1}\left(\bm{G}_0\right) - \widetilde{\sigma}_{r_1}^2\big| - \frac{2}{Cr^2}\widetilde{\sigma}_{1}^2\notag\\
	&\geq \frac{1}{r}\widetilde{\sigma}_{r_1}^2 - \frac{3}{Cr^2}\widetilde{\sigma}_{1}^2\notag\\
	&\geq \frac{1}{r}\widetilde{\sigma}_{r_1}^2 - \frac{15}{Cr^2}\widetilde{\sigma}_{r_1}^2\notag\\
	&\geq \frac{9}{10r}\widetilde{\sigma}_{r_1}^2.
\end{align}
Note that \eqref{ineq1} together with \eqref{ineq119} reveals that $r_1 \leq \overline{r}$, where $\overline{r}$ is the largest element in $\mathcal{A}$. Putting \eqref{ineq2a}, \eqref{ineq128}, \eqref{ineq115} and the previous inequality together, we arrive at 
\begin{align}\label{ineq123}
	\sigma_{r_1}^\star - \sigma_{r_1 + 1}^\star &\geq \widetilde{\sigma}_{r_1} - \widetilde{\sigma}_{r_1+1} - \left|\widetilde{\sigma}_{r_1} - \sigma_{r_1}^\star\right| - \left|\widetilde{\sigma}_{r_1+1} - \sigma_{r_1+1}^\star\right|\notag\\
	&\geq \frac{1}{\widetilde{\sigma}_{r_1} + \widetilde{\sigma}_{r_1+1}}\big(\widetilde{\sigma}_{r_1}^2 - \widetilde{\sigma}_{r_1+1}^2\big) - \sqrt{C_5}\sqrt{m_1}\omega_{\sf max}\log m - 2\sqrt{C_5}\sqrt{m_1}\omega_{\sf max}\log m\notag\\
	&\geq \frac{1}{2\widetilde{\sigma}_{r_1}}\cdot \frac{9}{10r}\widetilde{\sigma}_{r_1}^2 - \frac{1}{20r^2}\sigma_{r_1}^\star\notag\\
	&\geq \frac{9}{20r}\left(1 - \frac{1}{Cr}\right)\sigma_{r_1}^\star - \frac{1}{20r^2}\sigma_{r_1}^\star\notag\\
	&\geq \frac{1}{4r}\sigma_{r_1}^\star.
\end{align}
Inequality~\eqref{ineq119} taken together with \eqref{ineq123} validates $r_1 \in \mathcal{A}$, thus finishing the proof of \eqref{property:r_1}.

\paragraph{Step 2: bounding $D_1^t = \|\bm{G}_1^t - \bm{M}^{\sf oracle}\|$.} Now, we would like to deal with the quantities $\{D_1^t\}$. Recognizing that for all $t$ and $k$,
\begin{align*}
	\mathcal{P}_{\sf off\text{-}diag}\left(\bm{G}_k^t\right) = \mathcal{P}_{\sf off\text{-}diag}\left(\bm{M}^{\sf oracle}\right) =  \mathcal{P}_{\sf off\text{-}diag}\left(\bm{Y}\bm{Y}^\top\right),
\end{align*}
we can deduce that
\begin{align}\label{eq11}
	D_k^t = \left\|\mathcal{P}_{\sf diag}\left(\bm{G}_k^t - \bm{M}^{\sf oracle}\right)\right\|.
\end{align}
We would like to prove the following inequalities by induction:
\begin{subequations}
	\begin{align}
		F_1^t - 40\sqrt{\frac{\mu r^3}{m_1}}\left\|\bm{Z}\right\| - 20\sqrt{\frac{\mu r}{m_1}}\widetilde{\sigma}_{r_1 + 1}^2 &\leq \frac{1}{e^t}\left(F_1^0 - 40\sqrt{\frac{\mu r^3}{m_1}}\left\|\bm{Z}\right\| - 20\sqrt{\frac{\mu r}{m_1}}\widetilde{\sigma}_{r_1 + 1}^2\right),\label{ineq:induction_1}\\
		D_1^t &\leq F_1^t + 6C_3\sqrt{\frac{\mu r}{m_1}}\sqrt{m}_1\omega_{\sf max}\log m\cdot\sigma_{\overline{r} + 1}^{\star} + C_3^2\mu r\omega_{\sf max}^2\log^2 m,\label{ineq:induction_2}\\
		\left\|\bm{U}_1^t\bm{U}_1^{t\top} - \bm{U}_1^{\sf oracle}\bm{U}_1^{\sf oracle\top}\right\| &\leq 2\frac{D_1^t}{\lambda_{r_1}\left(\bm{M}^{\sf oracle}\right) - \lambda_{r_1+1}\left(\bm{M}^{\sf oracle}\right)} \leq \frac{1}{8},\label{ineq:induction_3}\\
		\left\|\bm{U}_1^t\right\|_{2,\infty} &\leq \left\|\bm{U}_1^t\bm{U}_1^{t\top} - \bm{U}_1^{\sf oracle}\bm{U}_1^{\sf oracle\top}\right\| + \left\|\bm{U}_1^{\sf oracle}\right\|_{2,\infty} \leq \frac{1}{4e}.\label{ineq:induction_4}
	\end{align}
\end{subequations}
\paragraph{Step 2.1: the base case ($t = 0$) for \eqref{ineq:induction_1}-\eqref{ineq:induction_4}.} Note that \eqref{ineq:induction_1} automatically holds when $t = 0$. Recalling that $\mathcal{P}_{\sf diag}(\bm{G}_1^0) = 0$, 
we can invoke \citet[Lemma 1]{zhang2022heteroskedastic} together with \eqref{ineq:incoherence_tilde_U} to obtain
\begin{align}\label{ineq:F_1^0}
	F_1^0 = \big\|\mathcal{P}_{\sf diag}\big(\widetilde{\bm{M}}\big)\big\| = \big\|\mathcal{P}_{\sf diag}\big(\widetilde{\bm{U}}\widetilde{\bm{\Lambda}}\widetilde{\bm{U}}^\top\big)\big\| \leq \big\|\widetilde{\bm{U}}\big\|_{2,\infty}^2\big\|\widetilde{\bm{\Lambda}}\big\| \leq 16\frac{\mu r}{m_1}\widetilde{\sigma}_1^2.
\end{align}
Furthermore, putting Lemma \ref{lm:power_V}, \eqref{eq:decomposition_M_oracle}, \eqref{ineq2d}, \eqref{ineq6} and \eqref{ineq:F_1^0} together, we have
\begin{align}\label{ineq:D_1^0}
	D_1^0 &= \left\|\mathcal{P}_{\sf diag}\left(\bm{M}^{\sf oracle}\right)\right\| = \big\|\mathcal{P}_{\sf diag}\big(\widetilde{\bm{M}} + \bm{Z}_1 + \bm{Z}_2 + \bm{Z}_3\big)\big\|\notag\\
	&\leq \big\|\mathcal{P}_{\sf diag}\big(\widetilde{\bm{M}}\big)\big\| + \left\|\mathcal{P}_{\sf diag}\left(\bm{Z}_1\right)\right\| + \left\|\mathcal{P}_{\sf diag}\left(\bm{Z}_2\right)\right\| + \left\|\mathcal{P}_{\sf diag}\left(\bm{Z}_3\right)\right\|\notag\\
	&\leq F_1^0+ \left\|\mathcal{P}_{\sf off-diag}\left(\bm{U}^{\star(2)}\bm{\Sigma}^{\star(2)}\bm{V}^{\star(2)\top}\bm{E}^\top + \bm{E}\bm{V}^{\star(2)}\bm{\Sigma}^{\star(2)}\bm{U}^{\star(2)\top} + \bm{E}\bm{V}^{\star(2)}\bm{V}^{\star(2)\top}\bm{E}^\top\right)\right\|\notag\\&\quad + 2\left\|\mathcal{P}_{\sf diag}\left(\mathcal{P}_{\widetilde{\bm{U}}^{(1)}}\bm{U}^{\star(2)}\big(\bm{\Sigma}^{\star(2)}\big)^2\bm{U}^{\star(2)\top}\mathcal{P}_{\big(\widetilde{\bm{U}}^{(1)}\big)_{\perp}}\right)\right\| + 0\notag\\
	&\leq F_1^0 + 2\big\|\bm{U}^{\star(2)}\big\|_{2,\infty}\big\|\bm{E}\bm{V}^{\star(2)}\big\|_{2,\infty}\big\|\bm{\Sigma}^{\star(2)}\big\| + \big\|\bm{E}\bm{V}^{\star(2)}\big\|_{2,\infty}^2 + 2\big\|\widetilde{\bm{U}}^{(1)}\big\|_{2,\infty}\big\|\widetilde{\bm{U}}^{(1)\top}\bm{U}^{\star(2)}\big\|\big\|\bm{\Sigma}^{\star(2)}\big\|^2\notag\\
	&\leq F_1^0 + 2\sqrt{\frac{\mu r}{m_1}}\cdot C_3\sqrt{\mu r}\omega_{\sf max}\log m\cdot \sigma_{\overline{r} + 1}^{\star} + \left(C_3\sqrt{\mu r}\omega_{\sf max}\log m\right)^2 + 4\sqrt{\frac{\mu r}{m_1}}\cdot C_3\frac{\sqrt{m}_1\omega_{\sf max}\log m}{\sigma_{\overline{r}}^\star}\sigma_{\overline{r} + 1}^{\star2}\notag\\
	&\leq F_1^0 + 6C_3\sqrt{\frac{\mu r}{m_1}}\sqrt{m}_1\omega_{\sf max}\log m\cdot\sigma_{\overline{r} + 1}^{\star} + C_3^2\mu r\omega_{\sf max}^2\log^2 m,
\end{align}
which validates \eqref{ineq:induction_2} for $t = 0$. Here, the last line holds since $\mu \leq c_0m_1/r^3$ and $\sigma_{\overline{r}}^\star \geq \sigma_{\overline{r} + 1}^{\star}$. Combining \eqref{ineq119}, \eqref{ineq122}, \eqref{ineq121} and \eqref{ineq123}, one has
\begin{align}\label{ineq124}
	\max\left\{\frac{\sigma_1^\star}{\sigma_{r_1}^\star}, \frac{\widetilde{\sigma}_1}{\widetilde{\sigma}_{r_1}}\right\} \leq 3~\qquad~\text{and}~\qquad~\min\left\{\frac{\widetilde{\sigma}_{r_1}^2 - \widetilde{\sigma}_{r_1+1}^2}{\widetilde{\sigma}_{r_1}^2}, \frac{\sigma_{r_1}^{\star2} - \sigma_{r_1+1}^{\star2}}{\sigma_{r_1}^{\star2}}\right\} \geq \frac{1}{4r}.
\end{align}
Moreover, by virtue of \eqref{ineq2a}, \eqref{ineq128} and the fact $r_1 \in \mathcal{A}$, we know that
\begin{align*}
	\big|\big(\widetilde{\sigma}_{r_1} - \widetilde{\sigma}_{r_1 + 1}\big) - \big(\sigma_{r_1}^\star - \sigma_{r_1 + 1}^\star\big)\big| &\leq \big|\widetilde{\sigma}_{r_1} - \sigma_{r_1}^\star\big| + \big|\widetilde{\sigma}_{r_1+1} - \sigma_{r_1+1}^\star\big|
	\leq \sqrt{C}_5\sqrt{m_1}\omega_{\sf max}\log m + 2\sqrt{C_5}\sqrt{m_1}\omega_{\sf max}\log m\\
	&\ll \frac{1}{r^2}\sigma_{r_1}^\star \lesssim \frac{1}{r}\left(\sigma_{r_1}^\star - \sigma_{r_1 + 1}^\star\right),
\end{align*}
where the last inequality comes from \eqref{ineq123}. This implies that
\begin{align*}
	\left(1 - \frac{1}{Cr}\right)\big(\sigma_{r_1}^\star - \sigma_{r_1 + 1}^\star\big) \leq \widetilde{\sigma}_{r_1} - \widetilde{\sigma}_{r_1 + 1} \leq \left(1 + \frac{1}{Cr}\right)\big(\sigma_{r_1}^\star - \sigma_{r_1 + 1}^\star\big).
\end{align*}
The previous inequality together with \eqref{ineq115}, \eqref{ineq:spectral_Z} and \eqref{ineq124} gives 
\begin{align*}
	\widetilde{\sigma}_{r_1}^2 - \widetilde{\sigma}_{r_1 + 1}^2 \asymp \sigma_{r_1}^{\star2} - \sigma_{r_1 + 1}^{\star2} \gg \left\|\bm{Z}\right\|.
\end{align*}
Recalling that $\bm{M}^{\sf oracle} = \widetilde{\bm{M}} + \bm{Z}$, one can invoke Weyl's inequality to obtain
\begin{align}\label{ineq125}
	\lambda_{r_1}\left(\bm{M}^{\sf oracle}\right) - \lambda_{r_1+1}\left(\bm{M}^{\sf oracle}\right) \asymp \widetilde{\sigma}_{r_1}^2 - \widetilde{\sigma}_{r_1 + 1}^2 \asymp \sigma_{r_1}^{\star2} - \sigma_{r_1 + 1}^{\star2} \gg \left\|\bm{Z}\right\|.
\end{align}
Note that $\bm{U}_1^0$ (resp.~$\bm{U}_1^{\sf oracle}$) is the rank-$r$ leading eigenspace of $\bm{G}_0$ (resp.~$\bm{M}^{\sf oracle}$). We know from the Davis-Kahan Theorem \cite[Theorem 2.7]{chen2021spectral}, \eqref{ineq:D_1^0}, \eqref{ineq124} and \eqref{ineq125} that 
\begin{align}\label{ineq126}
	\left\|\bm{U}_1^0\bm{U}_1^{0\top} - \bm{U}_1^{\sf oracle}\bm{U}_1^{\sf oracle\top}\right\| &\leq 2\frac{\left\|\bm{G}_0 - \bm{M}^{\sf oracle}\right\|}{\lambda_{r_1}\left(\bm{M}^{\sf oracle}\right) - \lambda_{r_1+1}\left(\bm{M}^{\sf oracle}\right)} = 2\frac{D_1^0}{\lambda_{r_1}\left(\bm{M}^{\sf oracle}\right) - \lambda_{r_1+1}\left(\bm{M}^{\sf oracle}\right)}\notag\\
	&\stackrel{\eqref{ineq125}}{\lesssim} \frac{\frac{\mu r}{m_1}\widetilde{\sigma}_1^2}{\widetilde{\sigma}_{r_1}^2 - \widetilde{\sigma}_{r_1 + 1}^2} + \frac{\sqrt{\frac{\mu r}{m_1}}\sqrt{m}_1\omega_{\sf max}\log m\cdot\sigma_{\overline{r} + 1}^{\star}}{\sigma_{r_1}^{\star2} - \sigma_{r_1 + 1}^{\star2}} + \frac{\mu r\omega_{\sf max}^2\log^2 m}{\sigma_{r_1}^{\star2} - \sigma_{r_1 + 1}^{\star2}}\notag\\
	&\stackrel{\eqref{ineq124}}{\lesssim} \frac{\frac{\mu r}{m_1}\widetilde{\sigma}_1^2}{\widetilde{\sigma}_{r_1}^2/r} + \frac{\sqrt{\frac{\mu r}{m_1}}\sqrt{m}_1\omega_{\sf max}\log m\cdot\sigma_{\overline{r} + 1}^{\star}}{\sigma_{r_1}^{\star2}/r} + \frac{\mu r\omega_{\sf max}^2\log^2 m}{\sigma_{r_1}^{\star2}/r}\notag\\
	&\stackrel{\eqref{ineq124}}{\lesssim} \frac{\mu r^2}{m_1} + \frac{\sqrt{\frac{\mu r}{m_1}}r\sqrt{m}_1\omega_{\sf max}\log m}{\sigma_{r_1}^{\star}} + \frac{\mu r\omega_{\sf max}^2\log^2 m}{\sigma_{r_1}^{\star2}/r}\notag\\
	&\ll \sqrt{\frac{\mu r}{m_1}} \leq \frac{1}{8},
\end{align}
which proves \eqref{ineq:induction_3} for $t = 0$. Here, the last inequality is due to $\mu \leq c_0m_1/r^3$ and $\sigma_{r_1}^\star \geq \sigma_{\overline{r}}^\star \geq C_0r\big[(m_1m_2)^{1/4} + rm_1^{1/2}\big]\omega_{\sf max}\log m$. Inequality \eqref{ineq:oracle} and the fact $r_1 \in \mathcal{A}$ together imply that
\begin{align}\label{ineq129}
	\left\|\bm{U}_1^{\sf oracle}\right\|_{2,\infty} = \left\|\bm{U}_1^{\sf oracle}\bm{U}_1^{\sf oracle\top}\right\|_{2,\infty} \leq \left\|\bm{U}_1^{\sf oracle}\bm{U}_1^{\sf oracle\top} - \bm{U}_1^\star\bm{U}_1^{\star\top}\right\|_{2,\infty} + \|\bm{U}_1^\star\|_{2,\infty} \leq 2\sqrt{\frac{\mu r^3}{m_1}}.
\end{align}
Combining \eqref{ineq126} and \eqref{ineq129}, one can further obtain that
\begin{align}\label{ineq130}
	\left\|\bm{U}_1^0\right\|_{2,\infty} = \left\|\bm{U}_1^0\bm{U}_1^{0\top}\right\|_{2,\infty} \leq \left\|\bm{U}_1^0\bm{U}_1^{0\top} - \bm{U}_1^{\sf oracle}\bm{U}_1^{\sf oracle\top}\right\| + \left\|\bm{U}_1^{\sf oracle}\right\|_{2,\infty} \leq 3\sqrt{\frac{\mu r^3}{m_1}} \leq \frac{1}{4e},
\end{align}
i.e., \eqref{ineq:induction_4} holds for $t = 0$.

\paragraph{Step 2.2: induction step ($t > 0$) for \eqref{ineq:induction_1}-\eqref{ineq:induction_4}.} Suppose that \eqref{ineq:induction_1}-\eqref{ineq:induction_4} hold for $t = t'$. We aim to show that they continue to hold for $t = t'+1$.

Recognizing that $\bm{U}_1^{t'}$ is the top-$r_{1}$ singular space of
\begin{align*}
	\bm{G}_1^{t'} = \mathcal{P}_{\widetilde{\bm{U}}_1}\widetilde{\bm{M}} + \left(\bm{G}_1^{t'} - \mathcal{P}_{\widetilde{\bm{U}}_1}\widetilde{\bm{M}}\right),
\end{align*}
we have
\begin{align}\label{ineq131}
	F_1^{t'+1} &= \big\|\mathcal{P}_{\sf diag}\big(\bm{G}_1^{t'+1} - \widetilde{\bm{M}}\big)\big\|\notag\\
	&= \left\|\mathcal{P}_{\sf diag}\big(\mathcal{P}_{\bm{U}_1^{t'}}\bm{G}_1^{t'} - \widetilde{\bm{M}}\big)\right\|\notag\\
	&\leq \left\|\mathcal{P}_{\sf diag}\left(\mathcal{P}_{\bm{U}_1^{t'}}\big(\bm{G}_1^{t'} - \widetilde{\bm{M}}\big)\right)\right\| + \left\|\mathcal{P}_{\sf diag}\left(\mathcal{P}_{\left(\bm{U}_1^{t'}\right)_{\perp}}\widetilde{\bm{M}}\mathcal{P}_{\widetilde{\bm{U}}}\right)\right\|\notag\\
	&\leq \left\|\bm{U}_1^{t'}\right\|_{2,\infty}\big\|\bm{G}_1^{t'} - \widetilde{\bm{M}}\big\| + \big\|\widetilde{\bm{U}}\big\|_{2,\infty}\left\|\big(\bm{U}_1^{t'}\big)_{\perp}\widetilde{\bm{M}}\right\|\notag\\
	&\leq \left\|\bm{U}_1^{t'}\right\|_{2,\infty}L_1^{t'} + 4\sqrt{\frac{\mu r}{m_1}}\left(\left\|\big(\bm{U}_1^{t'}\big)_{\perp}\mathcal{P}_{\widetilde{\bm{U}}_1}\widetilde{\bm{M}}\right\| + \left\|\mathcal{P}_{\widetilde{\bm{U}}_{:, r_1+1:r}}\widetilde{\bm{M}}\right\|\right)\notag\\
	&\leq \left\|\bm{U}_1^{t'}\right\|_{2,\infty}L_1^{t'} + 4\sqrt{\frac{\mu r}{m_1}}\left(2\left\|\bm{G}_1^{t'} - \mathcal{P}_{\widetilde{\bm{U}}_1}\widetilde{\bm{M}}\right\| + \left\|\mathcal{P}_{\widetilde{\bm{U}}_{:, r_1+1:r}}\widetilde{\bm{M}}\right\|\right)\notag\\
	&\leq \left\|\bm{U}_1^{t'}\right\|_{2,\infty}L_1^{t'} + 4\sqrt{\frac{\mu r}{m_1}}\left(2\left\|\bm{G}_1^{t'} - \widetilde{\bm{M}}\right\| + 3\left\|\mathcal{P}_{\widetilde{\bm{U}}_{:, r_1+1:r}}\widetilde{\bm{M}}\right\|\right)\notag\\
	&\leq \left(\left\|\bm{U}_1^{t'}\right\|_{2,\infty} + 8\sqrt{\frac{\mu r}{m_1}}\right)L_1^{t'} + 12\sqrt{\frac{\mu r}{m_1}}\widetilde{\sigma}_{r_1 + 1}^2.
\end{align}
Here, the second line holds since $\mathcal{P}_{\sf diag}(\bm{G}_1^{t'+1}) = \mathcal{P}_{\sf diag}(\bm{U}_1^{t'}\bm{\Lambda}_1^{t'}\bm{U}_1^{t'\top}) = \mathcal{P}_{\sf diag}(\mathcal{P}_{\bm{U}_1^{t'}}\bm{G}_1^{t'})$; the fourth line comes from \citet[Lemma 1]{zhang2022heteroskedastic}; the fifth line makes use of \eqref{ineq:incoherence_tilde_U}; the sixth line applies \citet[Lemma 8]{zhou2023deflated}; and the penultimate line invokes the triangle inequality. Note that
\begin{align*}
	L_1^{t'} &\leq F_1^{t'} + \big\|\mathcal{P}_{\sf off\text{-}diag}\big(\bm{G}_1^{t'+1} - \widetilde{\bm{M}}\big)\big\| = F_1^{t'} + \big\|\mathcal{P}_{\sf off\text{-}diag}\big(\bm{M}^{\sf oracle} - \widetilde{\bm{M}}\big)\big\|\\ &\leq F_1^{t'} + \left\|\mathcal{P}_{\sf off\text{-}diag}\left(\bm{Z}\right)\right\| \leq F_1^{t'} + \left\|\bm{Z}\right\| + \left\|\mathcal{P}_{\sf diag}\left(\bm{Z}\right)\right\| \leq F_1^{t'} + 2\left\|\bm{Z}\right\|.
\end{align*}
Inequality \eqref{ineq131} taken together with the previous inequality leads us to
\begin{align}\label{ineq132}
	F_1^{t'+1} &\leq \left(\left\|\bm{U}_1^{t'}\right\|_{2,\infty} + 8\sqrt{\frac{\mu r}{m_1}}\right)F_1^{t'} + 2\left(\left\|\bm{U}_1^{t'}\right\|_{2,\infty} + 8\sqrt{\frac{\mu r}{m_1}}\right)\left\|\bm{Z}\right\| + 12\sqrt{\frac{\mu r}{m_1}}\widetilde{\sigma}_{r_1 + 1}^2\notag\\
	&\stackrel{\eqref{ineq:induction_4}}{\leq} \left(\frac{1}{4e} + \frac{1}{4e}\right)F_1^{t'} + 2\left(\left\|\bm{U}_1^t\bm{U}_1^{t\top} - \bm{U}_1^{\sf oracle}\bm{U}_1^{\sf oracle\top}\right\| + \left\|\bm{U}_1^{\sf oracle}\right\|_{2,\infty} + 8\sqrt{\frac{\mu r}{m_1}}\right)\left\|\bm{Z}\right\| + 12\sqrt{\frac{\mu r}{m_1}}\widetilde{\sigma}_{r_1 + 1}^2\notag\\
	&\stackrel{\eqref{ineq:induction_3}~\text{and}~\eqref{ineq129}}{\leq} \frac{1}{2e}F_1^{t'} + 2\left(2\frac{D_1^{t'}}{\lambda_{r_1}\left(\bm{M}^{\sf oracle}\right) - \lambda_{r_1+1}\left(\bm{M}^{\sf oracle}\right)} + 2\sqrt{\frac{\mu r^3}{m_1}} +  8\sqrt{\frac{\mu r}{m_1}}\right)\left\|\bm{Z}\right\| + 12\sqrt{\frac{\mu r}{m_1}}\widetilde{\sigma}_{r_1 + 1}^2\notag\\
	&\stackrel{\eqref{ineq:induction_2}}{\leq} \frac{1}{2e}F_1^{t'} + 2\left(2\frac{F_1^{t'}}{\lambda_{r_1}\left(\bm{M}^{\sf oracle}\right) - \lambda_{r_1+1}\left(\bm{M}^{\sf oracle}\right)} + 10\sqrt{\frac{\mu r^3}{m_1}}\right)\left\|\bm{Z}\right\| + 12\sqrt{\frac{\mu r}{m_1}}\widetilde{\sigma}_{r_1 + 1}^2\notag\\
	&\hspace{1cm} + 4\frac{6C_3\sqrt{\frac{\mu r}{m_1}}\sqrt{m}_1\omega_{\sf max}\log m\cdot\sigma_{\overline{r} + 1}^{\star} + C_3^2\mu r\omega_{\sf max}^2\log^2 m}{\lambda_{r_1}\left(\bm{M}^{\sf oracle}\right) - \lambda_{r_1+1}\left(\bm{M}^{\sf oracle}\right)}\left\|\bm{Z}\right\|\notag\\
	&\stackrel{\eqref{ineq124}~\text{and}~\eqref{ineq125}}{\leq} \frac{1}{e}F_1^{t'} + 20\sqrt{\frac{\mu r^3}{m_1}}\left\|\bm{Z}\right\| + 12\sqrt{\frac{\mu r}{m_1}}\widetilde{\sigma}_{r_1 + 1}^2 + C_3^3\frac{\sqrt{\frac{\mu r}{m_1}}\sqrt{m}_1\omega_{\sf max}\log m\cdot\sigma_{\overline{r} + 1}^{\star} + \mu r\omega_{\sf max}^2\log^2 m}{\sigma_{r_1}^{\star2}/r}\left\|\bm{Z}\right\|\notag\\
	&\leq \frac{1}{e}F_1^{t'} + 21\sqrt{\frac{\mu r^3}{m_1}}\left\|\bm{Z}\right\| + 12\sqrt{\frac{\mu r}{m_1}}\widetilde{\sigma}_{r_1 + 1}^2,
\end{align}
where the last inequality is a consequence of $\sigma_{r_1}^{\star} \geq \sigma_{\overline{r}}^\star \geq C_0r\big[(m_1m_2)^{1/4} + rm_1^{1/2}\big]\omega_{\sf max}\log m$. Then one immediately has
\begin{align*}
	F_1^{t'+1} - 40\sqrt{\frac{\mu r^3}{m_1}}\left\|\bm{Z}\right\| - 20\sqrt{\frac{\mu r}{m_1}}\widetilde{\sigma}_{r_1 + 1}^2 &\leq \frac{1}{e}\left(F_1^{t'} - 40\sqrt{\frac{\mu r^3}{m_1}}\left\|\bm{Z}\right\| - 20\sqrt{\frac{\mu r}{m_1}}\widetilde{\sigma}_{r_1 + 1}^2\right)\\
	&\leq \frac{1}{e^{t'+1}}\left(F_1^{0} - 40\sqrt{\frac{\mu r^3}{m_1}}\left\|\bm{Z}\right\| - 20\sqrt{\frac{\mu r}{m_1}}\widetilde{\sigma}_{r_1 + 1}^2\right),
\end{align*}
which confirms that \eqref{ineq:induction_1} holds for $t = t'+1$.

In addition, we can prove \eqref{ineq:induction_2} for $t = t'+1$ by using the same argument as in \eqref{ineq:D_1^0}. Combining \eqref{ineq:induction_1}, \eqref{ineq:induction_2} for $t = t'+1$ and Weyl's inequality, we further have
\begin{align*}
	&\left\|\bm{U}_1^{t'+1}\bm{U}_1^{t'+1\top} - \bm{U}_1^{\sf oracle}\bm{U}_1^{\sf oracle\top}\right\|\\ &\quad\leq 2\frac{D_1^{t'+1}}{\lambda_{r_1}\left(\bm{M}^{\sf oracle}\right) - \lambda_{r_1+1}\left(\bm{M}^{\sf oracle}\right)}\\
	&\quad\leq 2\frac{F_1^{t'+1}}{\lambda_{r_1}\left(\bm{M}^{\sf oracle}\right) - \lambda_{r_1+1}\left(\bm{M}^{\sf oracle}\right)} + 2\frac{6C_3\sqrt{\frac{\mu r}{m_1}}\sqrt{m}_1\omega_{\sf max}\log m\cdot\sigma_{\overline{r} + 1}^{\star} + C_3^2\mu r\omega_{\sf max}^2\log^2 m}{\lambda_{r_1}\left(\bm{M}^{\sf oracle}\right) - \lambda_{r_1+1}\left(\bm{M}^{\sf oracle}\right)}\\
	&\quad= 2\frac{F_1^{t'+1} - 40\sqrt{\frac{\mu r^3}{m_1}}\left\|\bm{Z}\right\| - 20\sqrt{\frac{\mu r}{m_1}}\widetilde{\sigma}_{r_1 + 1}^2}{\lambda_{r_1}\left(\bm{M}^{\sf oracle}\right) - \lambda_{r_1+1}\left(\bm{M}^{\sf oracle}\right)} + 2\frac{6C_3\sqrt{\frac{\mu r}{m_1}}\sqrt{m}_1\omega_{\sf max}\log m\cdot\sigma_{\overline{r} + 1}^{\star} + C_3^2\mu r\omega_{\sf max}^2\log^2 m}{\lambda_{r_1}\left(\bm{M}^{\sf oracle}\right) - \lambda_{r_1+1}\left(\bm{M}^{\sf oracle}\right)}\\
	&\qquad + \frac{80\sqrt{\frac{\mu r^3}{m_1}}\left\|\bm{Z}\right\| + 40\sqrt{\frac{\mu r}{m_1}}\widetilde{\sigma}_{r_1 + 1}^2}{\lambda_{r_1}\left(\bm{M}^{\sf oracle}\right) - \lambda_{r_1+1}\left(\bm{M}^{\sf oracle}\right)}\\
	&\quad\stackrel{\eqref{ineq125}}{\lesssim} \frac{1}{e^{t'+1}}\frac{F_1^{0}}{\lambda_{r_1}\left(\bm{M}^{\sf oracle}\right) - \lambda_{r_1+1}\left(\bm{M}^{\sf oracle}\right)} + \frac{\sqrt{\frac{\mu r^3}{m_1}}\left\|\bm{Z}\right\| + \sqrt{\frac{\mu r}{m_1}}\widetilde{\sigma}_{r_1 + 1}^2}{\widetilde{\sigma}_{r_1}^2 - \widetilde{\sigma}_{r_1 + 1}^2}\\&\qquad2\frac{6C_3\sqrt{\frac{\mu r}{m_1}}\sqrt{m}_1\omega_{\sf max}\log m\cdot\sigma_{\overline{r} + 1}^{\star} + C_3^2\mu r\omega_{\sf max}^2\log^2 m}{\sigma_{r_1}^{\star2} - \sigma_{r_1 + 1}^{\star2}}\\
	&\quad\stackrel{\eqref{ineq:F_1^0}, \eqref{ineq124}~\text{and}~\eqref{ineq125}}{\ll} \sqrt{\frac{\mu r^3}{m_1}} \ll \frac{1}{8},
\end{align*}
which validates \eqref{ineq:induction_3} for $t = t'+1$.

Putting the previous inequality and \eqref{ineq129} together, one can prove that \eqref{ineq:induction_4} also holds for $t = t'+1$:
\begin{align*}
		\left\|\bm{U}_1^{t'+1}\right\|_{2,\infty} = \left\|\bm{U}_1^{t'+1}\bm{U}_1^{t'+1\top}\right\|_{2,\infty} \leq \left\|\bm{U}_1^{t'+1}\bm{U}_1^{t'+1\top} - \bm{U}_1^{\sf oracle}\bm{U}_1^{\sf oracle\top}\right\| + \left\|\bm{U}_1^{\sf oracle}\right\|_{2,\infty} \leq 3\sqrt{\frac{\mu r^3}{m_1}} \leq \frac{1}{4e}.
\end{align*}
Therefore, we have completed the proof of the induction step for \eqref{ineq:induction_1} - \eqref{ineq:induction_4}.

\paragraph{Step 3: bounding $D_k^t$ for $k > 1$} After establishing upper bounds on $\{D_1^t\}$, we now turn attention to the quantities $\{D_k^t\}_{k > 1}$. By setting $$t_1 \geq \log\left(C\frac{\sigma_1^{\star2}}{\sigma_{r_1+1}^{\star2} + \omega_{\sf max}^2}\right),$$
we can show that
\begin{align}\label{ineq133}
	F_2^0 = F_1^{t_1} &\stackrel{\eqref{ineq:induction_1}}{\leq} 40\sqrt{\frac{\mu r^3}{m_1}}\left\|\bm{Z}\right\| + 20\sqrt{\frac{\mu r}{m_1}}\widetilde{\sigma}_{r_1 + 1}^2 + \frac{1}{e^{t_1}}F_1^0\notag\\
	&\stackrel{\eqref{ineq:D_1^0}}{\leq} 40\sqrt{\frac{\mu r^3}{m_1}}\left\|\bm{Z}\right\| + 20\sqrt{\frac{\mu r}{m_1}}\widetilde{\sigma}_{r_1 + 1}^2 + \frac{\sigma_{r_1+1}^{\star2} + \omega_{\sf max}^2}{C\sigma_1^{\star2}}\cdot 16\frac{\mu r}{m_1}\widetilde{\sigma}_1^2\notag\\
	&\stackrel{\eqref{ineq:spectral_Z}~\text{and}~\eqref{ineq124}}{\leq} 40C_2\sqrt{\frac{\mu r^3}{m_1}}\left(\sqrt{m_1}\omega_{\sf max}\log m\cdot \sigma_{\overline{r}+1}^{\star} + \left(\sqrt{m_1m_2} + m_1\right)\omega_{\sf max}^2\log^2 m\right)\notag\\
	&\hspace{2cm} + 20\sqrt{\frac{\mu r}{m_1}}\widetilde{\sigma}_{r_1 + 1}^2 + \frac{1}{C}\sqrt{\frac{\mu r}{m_1}}\left(\omega_{\sf max}^2 + \sigma_{r_1+1}^{\star2}\right)\notag\\
	&\leq 41C_2\sqrt{\frac{\mu r^3}{m_1}}\left(\sqrt{m_1}\omega_{\sf max}\log m\cdot \sigma_{\overline{r}+1}^{\star} + \left(\sqrt{m_1m_2} + m_1\right)\omega_{\sf max}^2\log^2 m\right) + 21\sqrt{\frac{\mu r}{m_1}}\widetilde{\sigma}_{r_1 + 1}^2.
\end{align}
Here, the last line makes use of the following inequality:
\begin{align*}
	\sigma_{r_1+1}^{\star2} \stackrel{\eqref{ineq2a}~\text{and}~\eqref{ineq128}}{\leq} \left(\widetilde{\sigma}_{r_1 + 1} + 2\sqrt{C_5}\sqrt{m_1}\omega_{\sf max}\log m\right)^2
	\stackrel{\text{Cauchy-Schwarz}}{\leq} 2\widetilde{\sigma}_{r_1 + 1}^2 + 8C_5m_1\omega_{\sf max}^2\log^2 m.
\end{align*}
We define
\begin{align}\label{def:R_k}
	\mathcal{R}_k &:= \left\{r': \frac{\sigma_{r_{k-1}+1}\left(\bm{G}_0\right)}{\sigma_{r'}\left(\bm{G}_0\right)} \leq 4 \quad\text{and}\quad \sigma_{r'}\left(\bm{G}_0\right) - \sigma_{r' + 1}\left(\bm{G}_0\right) \geq \frac{1}{r}\sigma_{r'}\left(\bm{G}_0\right)\right\},
\end{align}
Choosing the numbers of iterations $\{t_i\}$ as in \eqref{iter1_matrix} and \eqref{iter2_matrix} and repeating similar arguments as in \eqref{property:r_1}, \eqref{ineq:induction_1} - \eqref{ineq:induction_4}, \eqref{ineq124}, \eqref{ineq125} and \eqref{ineq133}, we know that for all $1 \leq k \leq k_{\sf max}$ and $1 \leq t \leq t_k$, 
\begin{subequations}
	\begin{align}
		r_k &\in \mathcal{R}_k \cap \mathcal{A},\label{ineq:induction_general_1}\\
		\max\left\{\frac{\sigma_{r_{k-1}+1}^\star}{\sigma_{r_k}^\star}, \frac{\widetilde{\sigma}_{r_{k-1}+1}}{\widetilde{\sigma}_{r_k}}\right\} &\leq 3~\quad~\text{and}~\quad~\min\left\{\frac{\widetilde{\sigma}_{r_k}^2 - \widetilde{\sigma}_{r_k+1}^2}{\widetilde{\sigma}_{r_k}^2}, \frac{\sigma_{r_k}^{\star2} - \sigma_{r_k+1}^{\star2}}{\sigma_{r_k}^{\star2}}\right\} \geq \frac{1}{4r},\label{ineq:induction_general_2}\\
		\lambda_{r_k}\left(\bm{M}^{\sf oracle}\right) - \lambda_{r_k+1}\left(\bm{M}^{\sf oracle}\right) &\asymp \widetilde{\sigma}_{r_k}^2 - \widetilde{\sigma}_{r_k + 1}^2 \asymp \sigma_{r_k}^{\star2} - \sigma_{r_k + 1}^{\star2} \gg \left\|\bm{Z}\right\|,\label{ineq:induction_general_3}\\
		F_k^t - 40\sqrt{\frac{\mu r^3}{m_1}}\left\|\bm{Z}\right\| - 20\sqrt{\frac{\mu r}{m_1}}\widetilde{\sigma}_{r_k + 1}^2 &\leq \frac{1}{e^t}\left(F_k^0 - 40\sqrt{\frac{\mu r^3}{m_1}}\left\|\bm{Z}\right\| - 20\sqrt{\frac{\mu r}{m_1}}\widetilde{\sigma}_{r_k + 1}^2\right),\label{ineq:induction_general_4}\\
		D_k^t &\leq F_k^t + 6C_3\sqrt{\frac{\mu r}{m_1}}\sqrt{m}_1\omega_{\sf max}\log m\cdot\sigma_{\overline{r} + 1}^{\star} + C_3^2\mu r\omega_{\sf max}^2\log^2 m,\label{ineq:induction_general_5}\\
		\left\|\bm{U}_k^t\bm{U}_k^{t\top} - \bm{U}_k^{\sf oracle}\bm{U}_k^{\sf oracle\top}\right\| &\leq 2\frac{D_k^t}{\lambda_{r_k}\left(\bm{M}^{\sf oracle}\right) - \lambda_{r_k+1}\left(\bm{M}^{\sf oracle}\right)} \leq \frac{1}{8},\label{ineq:induction_general_6}\\
		\left\|\bm{U}_k^t\right\|_{2,\infty} &\leq \left\|\bm{U}_k^t\bm{U}_k^{t\top} - \bm{U}_k^{\sf oracle}\bm{U}_k^{\sf oracle\top}\right\| + \left\|\bm{U}_k^{\sf oracle}\right\|_{2,\infty} \leq \frac{1}{4e}.\label{ineq:induction_general_7}\\
		F_{k+1}^0 = F_k^{t_k} \leq 41C_2\sqrt{\frac{\mu r^3}{m_1}}\big(\sqrt{m_1}\omega_{\sf max}&\log m\cdot \sigma_{\overline{r}+1}^{\star} + \left(\sqrt{m_1m_2} + m_1\right)\omega_{\sf max}^2\log^2 m\big) + 21\sqrt{\frac{\mu r}{m_1}}\widetilde{\sigma}_{r_k + 1}^2.\label{ineq:induction_general_8}
	\end{align}
\end{subequations}
Taking $k = k_{\sf max}$ in \eqref{ineq:induction_general_8} yields that
\begin{align}\label{ineq134}
	F_{k_{\sf max}}^{t_{k_{\sf max}}} &\leq 41C_2\sqrt{\frac{\mu r^3}{m_1}}\big(\sqrt{m_1}\omega_{\sf max}\log m\cdot \sigma_{\overline{r}+1}^{\star} + \left(\sqrt{m_1m_2} + m_1\right)\omega_{\sf max}^2\log^2 m\big) + 21\sqrt{\frac{\mu r}{m_1}}\widetilde{\sigma}_{r_{k_{\sf max}} + 1}^2.
\end{align}
This together with \eqref{ineq:induction_general_5} implies that
\begin{align}\label{ineq136}
	D_{k_{\sf max}}^{t_{k_{\sf max}}} \leq 42C_2\sqrt{\frac{\mu r^3}{m_1}}\big(\sqrt{m_1}\omega_{\sf max}\log m\cdot \sigma_{\overline{r}+1}^{\star} + \left(\sqrt{m_1m_2} + m_1\right)\omega_{\sf max}^2\log^2 m\big) + 21\sqrt{\frac{\mu r}{m_1}}\widetilde{\sigma}_{r_{k_{\sf max}} + 1}^2.
\end{align}
Recall that $r_{k_{\sf max}}$ satisfies $r_{k_{\sf max}} = r$ or $\sigma_{r_{k_{\sf max}} + 1}(\bm{G}_{k_{\sf max}}) = \sigma_{r_{k_{\sf max}} + 1}(\bm{G}_{k_{\sf max}}^{t_{k_{\sf max}}}) \leq \tau$. 
\begin{itemize}[leftmargin=*]
	\item[1.] If $r_{k_{\sf max}} = r$, then it follows that $$\widetilde{\sigma}_{r_{k_{\sf max}} + 1}^2 = \widetilde{\sigma}_{r + 1}^2 = 0.$$ 
	\item[2.] If $\sigma_{r_{k_{\sf max}} + 1}(\bm{G}_{k_{\sf max}}) \leq \tau$, then Weyl's inequality and \eqref{ineq134} together show that
	\begin{align*}
		\widetilde{\sigma}_{r_{k_{\sf max}} + 1}^2 &= \sigma_{r_{k_{\sf max}} + 1}\big(\widetilde{\bm{M}}\big) \leq \sigma_{r_{k_{\sf max}} + 1}\left(\bm{G}_{k_{\sf max}}^{t_{k_{\sf max}}}\right) + \left\|\widetilde{\bm{M}} - \bm{G}_{k_{\sf max}}^{t_{k_{\sf max}}}\right\|\notag\\
		&= \sigma_{r_{k_{\sf max}} + 1}\left(\bm{G}_{k_{\sf max}}^{t_{k_{\sf max}}}\right) + F_{k_{\sf max}}^{t_{k_{\sf max}}}\notag\\
		&\leq \tau + 41C_2\sqrt{\frac{\mu r^3}{m_1}}\big(\sqrt{m_1}\omega_{\sf max}\log m\cdot \sigma_{\overline{r}+1}^{\star} + \left(\sqrt{m_1m_2} + m_1\right)\omega_{\sf max}^2\log^2 m\big) + 21\sqrt{\frac{\mu r}{m_1}}\widetilde{\sigma}_{r_{k_{\sf max}} + 1}^2\notag\\
		&\leq \tau + 41C_2\sqrt{\frac{\mu r^3}{m_1}}\big(\sqrt{m_1}\omega_{\sf max}\log m\cdot \sigma_{\overline{r}+1}^{\star} + \left(\sqrt{m_1m_2} + m_1\right)\omega_{\sf max}^2\log^2 m\big) + \frac{1}{2}\widetilde{\sigma}_{r_{k_{\sf max}} + 1}^2,
	\end{align*}
	which further gives 
	\begin{align}\label{ineq135}
		\widetilde{\sigma}_{r_{k_{\sf max}} + 1}^2 \leq 2\tau + 82C_2\sqrt{\frac{\mu r^3}{m_1}}\big(\sqrt{m_1}\omega_{\sf max}\log m\cdot \sigma_{\overline{r}+1}^{\star} + \left(\sqrt{m_1m_2} + m_1\right)\omega_{\sf max}^2\log^2 m\big) \leq 3\tau.
	\end{align}
\end{itemize}
Therefore, inequality \eqref{ineq135} is guaranteed to hold.

\paragraph{Step 4: proving \eqref{ineq:two_to_infty_crude}.}
We know from \eqref{ineq:induction_general_1} that $r_{k_{\sf max}} \in \mathcal{A}$. Also, \eqref{ineq:oracle} tell us that
\begin{align*}
	\big\|\bm{U}_{k_{\sf max}}^{\sf oracle}\bm{U}_{k_{\sf max}}^{\sf oracle\top} - \bm{U}_{k_{\sf max}}^{\star}\bm{U}_{k_{\sf max}}^{\star\top}\big\|_{2,\infty} &\lesssim \sqrt{\frac{\mu r^3}{m_1}}\left(\frac{r^2\sqrt{m_1}\omega_{\sf max}\log m}{\sigma_{r_{k_{\sf max}}}^\star} + \frac{r^2 \sqrt{m_1m_2}\omega_{\sf max}^2\log^2 m}{\sigma_{r_{k_{\sf max}}}^{\star2}}\right) \leq \sqrt{\frac{\mu r^3}{m_1}}.
\end{align*}
In view of \eqref{ineq136}, \eqref{ineq:induction_general_2}, \eqref{ineq:induction_general_3} and \eqref{ineq:induction_general_7}, we can demonstrate that
\begin{align}\label{ineq143}
	&\left\|\bm{U}_{k_{\sf max}}\bm{U}_{k_{\sf max}}^\top - \bm{U}_{k_{\sf max}}^{\sf oracle}\bm{U}_{k_{\sf max}}^{{\sf oracle}\top}\right\|\notag\\ &\quad\lesssim \frac{D_{k_{\sf max}}^{t_{k_{\sf max}}}}{\lambda_{r_{k_{\sf max}}}\left(\bm{M}^{\sf oracle}\right) - \lambda_{r_{k_{\sf max}}+1}\left(\bm{M}^{\sf oracle}\right)}\notag\\
	&\quad \lesssim \frac{\sqrt{\frac{\mu r^3}{m_1}}\big(\sqrt{m_1}\omega_{\sf max}\log m\cdot \sigma_{\overline{r}+1}^{\star} + \left(\sqrt{m_1m_2} + m_1\right)\omega_{\sf max}^2\log^2 m\big)}{\sigma_{r_{k_{\sf max}}}^{\star2} - \sigma_{r_{k_{\sf max}} + 1}^{\star2}} + \frac{\sqrt{\frac{\mu r}{m_1}}\widetilde{\sigma}_{r_{k_{\sf max}} + 1}^2}{\widetilde{\sigma}_{r_{k_{\sf max}}}^2 - \widetilde{\sigma}_{r_{k_{\sf max}} + 1}^2}\notag\\
	&\quad \lesssim \frac{\sqrt{\frac{\mu r^3}{m_1}}r\big(\sqrt{m_1}\omega_{\sf max}\log m\cdot \sigma_{\overline{r}+1}^{\star} + \left(\sqrt{m_1m_2} + m_1\right)\omega_{\sf max}^2\log^2 m\big)}{\sigma_{r_{k_{\sf max}}}^{\star2}} + \frac{\sqrt{\frac{\mu r^3}{m_1}}\widetilde{\sigma}_{r_{k_{\sf max}} + 1}^2}{\widetilde{\sigma}_{r_{k_{\sf max}}}^2}\notag\\
	&\quad \lesssim \sqrt{\frac{\mu r^3}{m_1}}.
\end{align}
Here, the last inequality holds since $\widetilde{\sigma}_{r_{k_{\sf max}}} \geq \widetilde{\sigma}_{r_{k_{\sf max}} + 1}$ and $\sigma_{r_{k_{\sf max}}}^{\star} \geq \sigma_{\overline{r}}^{\star} \geq C_0r\big[(m_1m_2)^{1/4} + rm_1^{1/2}\big]\omega_{\sf max}\log m$.
Combining the previous two inequalities yields
\begin{align}\label{ineq200}
	\left\|\bm{U}_{k_{\sf max}}\bm{U}_{k_{\sf max}}^\top - \bm{U}_{k_{\sf max}}^{\star}\bm{U}_{k_{\sf max}}^{\star\top}\right\|_{2, \infty} &\leq \big\|\bm{U}_{k_{\sf max}}^{\sf oracle}\bm{U}_{k_{\sf max}}^{\sf oracle\top} - \bm{U}_{k_{\sf max}}^{\star}\bm{U}_{k_{\sf max}}^{\star\top}\big\|_{2,\infty} + \left\|\bm{U}_{k_{\sf max}}\bm{U}_{k_{\sf max}}^\top - \bm{U}_{k_{\sf max}}^{\sf oracle}\bm{U}_{k_{\sf max}}^{{\sf oracle}\top}\right\|\notag\\
	&\lesssim \sqrt{\frac{\mu r^3}{m_1}},
\end{align}
which validates \eqref{ineq:two_to_infty_crude}.

\paragraph{Step 5: proving \eqref{ineq:two_to_infty_residual_matrix1} and \eqref{ineq:two_to_infty_residual_matrix2}.} 
Note that
\begin{align}\label{ineq:decomposition}
	&\big\|\big(\bm{U}_{k_{\sf max}}\bm{U}_{k_{\sf max}}^\top - \bm{U}_{k_{\sf max}}^\star\bm{U}_{k_{\sf max}}^{\star\top}\big)\bm{X}^\star\big\|_{2,\infty}\notag\\
	&\quad = \big\|\big(\bm{U}_{k_{\sf max}}\bm{U}_{k_{\sf max}}^\top - \bm{U}_{k_{\sf max}}^\star\bm{U}_{k_{\sf max}}^{\star\top}\big)\big(\bm{U}^{\star(1)}\bm{\Sigma}^{\star(1)}\bm{V}^{\star(1)\top} + \bm{U}^{\star(2)}\bm{\Sigma}^{\star(2)}\bm{V}^{\star(2)\top}\big)\big\|_{2,\infty}\notag\\
	&\quad = \big\|\big(\bm{U}_{k_{\sf max}}\bm{U}_{k_{\sf max}}^\top - \bm{U}_{k_{\sf max}}^\star\bm{U}_{k_{\sf max}}^{\star\top}\big)\bm{U}^{\star(1)}\bm{\Sigma}^{\star(1)}\bm{V}^{\star(1)\top}\big\|_{2,\infty}\notag\\
	&\qquad + \big\|\big(\bm{U}_{k_{\sf max}}\bm{U}_{k_{\sf max}}^\top - \bm{U}_{k_{\sf max}}^\star\bm{U}_{k_{\sf max}}^{\star\top}\big)\bm{U}^{\star(2)}\bm{\Sigma}^{\star(2)}\bm{V}^{\star(2)\top}\big\|_{2,\infty}\notag\\
	&\quad \leq \underbrace{\big\|\big(\bm{U}_{k_{\sf max}}\bm{U}_{k_{\sf max}}^\top - \widetilde{\bm{U}}_{k_{\sf max}}\widetilde{\bm{U}}_{k_{\sf max}}^{\top}\big)\bm{U}^{\star(1)}\bm{\Sigma}^{\star(1)}\bm{V}^{\star(1)\top}\big\|_{2,\infty}}_{=:\alpha_1}\notag\\
	&\qquad + \underbrace{\big\|\big(\widetilde{\bm{U}}_{k_{\sf max}}\widetilde{\bm{U}}_{k_{\sf max}}^{\top} - \bm{U}_{k_{\sf max}}^\star\bm{U}_{k_{\sf max}}^{\star\top}\big)\bm{U}^{\star(1)}\bm{\Sigma}^{\star(1)}\bm{V}^{\star(1)\top}\big\|_{2,\infty}}_{=:\alpha_2}\notag\\
	&\qquad + \underbrace{\big\|\bm{U}_{k_{\sf max}}\bm{U}_{k_{\sf max}}^\top - \bm{U}_{k_{\sf max}}^\star\bm{U}_{k_{\sf max}}^{\star\top}\big\|_{2,\infty}\sigma_{\overline{r} + 1}^\star}_{=:\alpha_3},
\end{align}
where the last line holds due to $\|\bm{U}^{\star(2)}\bm{\Sigma}^{\star(2)}\bm{V}^{\star(2)\top}\| = \|\bm{\Sigma}^{\star(2)}\| = \sigma_{\overline{r} + 1}^\star$. To control $\big\|\big(\bm{U}_{k_{\sf max}}\bm{U}_{k_{\sf max}}^\top - \bm{U}_{k_{\sf max}}^\star\bm{U}_{k_{\sf max}}^{\star\top}\big)\bm{X}^\star\big\|_{2,\infty}$, one only needs to bound $\alpha_1$, $\alpha_2$ and $\alpha_3$, respectively.
\paragraph{Step 5.1: bounding $\alpha_1$.} We first bound $\alpha_1 = \|(\bm{U}_{k_{\sf max}}\bm{U}_{k_{\sf max}}^\top - \widetilde{\bm{U}}_{k_{\sf max}}\widetilde{\bm{U}}_{k_{\sf max}}^{\top})\bm{U}^{\star(1)}\bm{\Sigma}^{\star(1)}\bm{V}^{\star(1)\top}\|_{2,\infty}$.
\paragraph{Step 5.1.1: bounding $\|(\bm{U}_{k_{\sf max}}\bm{U}_{k_{\sf max}}^\top - \bm{U}_{k_{\sf max}}^{\sf oracle}\bm{U}_{k_{\sf max}}^{{\sf oracle}\top})\widetilde{\bm{M}}\|_{2, \infty}$.} 
By virtue of Weyl's inequality and \eqref{ineq:spectral_Z}, one has
\begin{align}\label{ineq145}
	\lambda_{r_{k_{\sf max}}}\left(\bm{M}^{\sf oracle}\right) \leq \widetilde{\sigma}_{r_{k_{\sf max}}}^2 + \left\|\bm{Z}\right\| \asymp \widetilde{\sigma}_{r_{k_{\sf max}}}^2 \asymp \sigma_{r_{k_{\sf max}}}^{\star2}.
\end{align}
Recognizing that $\bm{G}_{k_{\sf max}}^{t_{k_{\sf max}}} = \bm{M}^{\sf oracle} + (\bm{G}_{k_{\sf max}}^{t_{k_{\sf max}}} - \bm{M}^{\sf oracle})$ and $\bm{U}_{k_{\sf max}}$ (resp.~$\bm{U}_{k_{\sf max}}^{\sf oracle}$) is the rank-$r_{k_{\sf max}}$ leading singular subspace of $\bm{G}_{k_{\sf max}}^{t_{k_{\sf max}}}$ (resp.~$\bm{M}^{\sf oracle}$), we invoke Lemma \ref{lm:1} to yield
\begin{align}\label{ineq144}
	&\left\|\left(\bm{U}_{k_{\sf max}}\bm{U}_{k_{\sf max}}^\top - \bm{U}_{k_{\sf max}}^{\sf oracle}\bm{U}_{k_{\sf max}}^{{\sf oracle}\top}\right)\bm{M}^{\sf oracle}\right\|_{2,\infty}\notag\\
	&\quad \leq \left\|\left(\bm{U}_{k_{\sf max}}\bm{U}_{k_{\sf max}}^\top - \bm{U}_{k_{\sf max}}^{\sf oracle}\bm{U}_{k_{\sf max}}^{{\sf oracle}\top}\right)\bm{M}^{\sf oracle}\right\|\notag\\
	&\quad \lesssim \frac{\lambda_{r_{k_{\sf max}}}\left(\bm{M}^{\sf oracle}\right)}{\lambda_{r_{k_{\sf max}}}\left(\bm{M}^{\sf oracle}\right) - \lambda_{r_{k_{\sf max}}+1}\left(\bm{M}^{\sf oracle}\right)}\big\|\bm{G}_{k_{\sf max}}^{t_{k_{\sf max}}} - \bm{M}^{\sf oracle}\big\|\notag\\
	&\quad \stackrel{\eqref{ineq:induction_general_3}~\text{and}~\eqref{ineq145}}{\lesssim} \frac{\sigma_{r_{k_{\sf max}}}^{\star2}}{\sigma_{r_{k_{\sf max}}}^{\star2} - \sigma_{r_{k_{\sf max}} + 1}^{\star2}}D_{k_{\sf max}}^{t_{k_{\sf max}}}\notag\\
	&\quad \stackrel{\eqref{ineq:induction_general_2}~\text{and}~\eqref{ineq136}}{\lesssim} r\left[\sqrt{\frac{\mu r^3}{m_1}}\big(\sqrt{m_1}\omega_{\sf max}\log m\cdot \sigma_{\overline{r}+1}^{\star} + \left(\sqrt{m_1m_2} + m_1\right)\omega_{\sf max}^2\log^2 m\big) + \sqrt{\frac{\mu r}{m_1}}\widetilde{\sigma}_{r_{k_{\sf max}} + 1}^2\right].
\end{align}
Combining \eqref{ineq:spectral_Z}, \eqref{ineq143} and \eqref{ineq144} leads to
\begin{align}\label{ineq146}
	&\left\|\left(\bm{U}_{k_{\sf max}}\bm{U}_{k_{\sf max}}^\top - \bm{U}_{k_{\sf max}}^{\sf oracle}\bm{U}_{k_{\sf max}}^{{\sf oracle}\top}\right)\widetilde{\bm{M}}\right\|_{2,\infty}\notag\\
	&\quad \leq \left\|\left(\bm{U}_{k_{\sf max}}\bm{U}_{k_{\sf max}}^\top - \bm{U}_{k_{\sf max}}^{\sf oracle}\bm{U}_{k_{\sf max}}^{{\sf oracle}\top}\right)\bm{M}^{\sf oracle}\right\|_{2,\infty} + \left\|\left(\bm{U}_{k_{\sf max}}\bm{U}_{k_{\sf max}}^\top - \bm{U}_{k_{\sf max}}^{\sf oracle}\bm{U}_{k_{\sf max}}^{{\sf oracle}\top}\right)\bm{Z}\right\|_{2,\infty}\notag\\
	&\quad \leq \left\|\left(\bm{U}_{k_{\sf max}}\bm{U}_{k_{\sf max}}^\top - \bm{U}_{k_{\sf max}}^{\sf oracle}\bm{U}_{k_{\sf max}}^{{\sf oracle}\top}\right)\bm{M}^{\sf oracle}\right\|_{2,\infty} + \left\|\bm{U}_{k_{\sf max}}\bm{U}_{k_{\sf max}}^\top - \bm{U}_{k_{\sf max}}^{\sf oracle}\bm{U}_{k_{\sf max}}^{{\sf oracle}\top}\right\|_{2,\infty}\left\|\bm{Z}\right\|\notag\\
	&\quad \lesssim r\left[\sqrt{\frac{\mu r^3}{m_1}}\left(\sqrt{m_1}\omega_{\sf max}\log m\cdot \sigma_{\overline{r}+1}^{\star} + \left(\sqrt{m_1m_2} + m_1\right)\omega_{\sf max}^2\log^2 m\right) + \sqrt{\frac{\mu r}{m_1}}\widetilde{\sigma}_{r_{k_{\sf max}} + 1}^2\right]\notag\\
	&\qquad + \sqrt{\frac{\mu r^3}{m_1}}\left(\sqrt{m_1}\omega_{\sf max}\log m\cdot \sigma_{\overline{r}+1}^{\star} + \left(\sqrt{m_1m_2} + m_1\right)\omega_{\sf max}^2\log^2 m\right)\notag\\
	&\quad \lesssim \sqrt{\frac{\mu r^5}{m_1}}\left(\sqrt{m_1}\omega_{\sf max}\log m\cdot \sigma_{\overline{r}+1}^{\star} + \left(\sqrt{m_1m_2} + m_1\right)\omega_{\sf max}^2\log^2 m\right) + \sqrt{\frac{\mu r^3}{m_1}}\widetilde{\sigma}_{r_{k_{\sf max}} + 1}^2.
\end{align}

\paragraph{Step 5.1.2: bounding $\|(\bm{U}_{k_{\sf max}}\bm{U}_{k_{\sf max}}^\top - \widetilde{\bm{U}}_{k_{\sf max}}\widetilde{\bm{U}}_{k_{\sf max}}^\top)\widetilde{\bm{M}}\|_{2,\infty}$.} Recalling that \eqref{ineq:spectral_Z} holds, we can invoke Lemma \ref{lm:space_estimate_expansion} to obtain
\begin{align}\label{ineq:expansion_tilde_U}
	&\big\|\big(\bm{U}_{k_{\sf max}}^{\sf oracle}\bm{U}_{k_{\sf max}}^{{\sf oracle}\top} - \widetilde{\bm{U}}_{k_{\sf max}}\widetilde{\bm{U}}_{k_{\sf max}}^{\top}\big)\widetilde{\bm{M}}\big\|_{2,\infty}\notag\\ &\quad\leq \frac{40}{\pi}\widetilde{\sigma}_{r_{k_{\sf max}}}^2\sum_{k \geq 1}\frac{2^{k}}{\big(\widetilde{\sigma}_{r_{k_{\sf max}}}^2 - \widetilde{\sigma}_{r_{k_{\sf max}}+1}^2\big)^k}\sum_{0 \leq j_1, \dots, j_{k+1} \leq r\atop \left(j_1, ..., j_{k+1}\right)^{\top} \neq \bm{0}_{k+1}}\big\|\widetilde{\bm{P}}_{j_1}\bm{Z}\widetilde{\bm{P}}_{j_2}\bm{Z}\cdots\bm{Z}\widetilde{\bm{P}}_{j_{k+1}}\big\|_{2,\infty}.
\end{align}
Here, we recall that $\widetilde{\bm{P}}_{j} = \widetilde{\bm{u}}_j\widetilde{\bm{u}}_j^\top$ for $1 \leq j \leq r$ and $\widetilde{\bm{P}}_{0} = \widetilde{\bm{U}}_{\perp}\widetilde{\bm{U}}_{\perp}^\top$. Repeat similar arguments as in \eqref{ineq112} to deduce that
\begin{align}\label{ineq147}
	&\big\|\big(\bm{U}_{k_{\sf max}}^{\sf oracle}\bm{U}_{k_{\sf max}}^{{\sf oracle}\top} - \widetilde{\bm{U}}_{k_{\sf max}}\widetilde{\bm{U}}_{k_{\sf max}}^{\top}\big)\widetilde{\bm{M}}\big\|_{2,\infty}\notag\\ &\quad
	\lesssim \sqrt{\frac{\mu r^3}{m_1}}\frac{r\left(\sqrt{m_1}\omega_{\sf max}\log m\cdot \sigma_{\overline{r}+1}^{\star} + \left(\sqrt{m_1m_2} + m_1\right)\omega_{\sf max}^2\log^2 m\right)}{\sigma_{r_{k_{\sf max}}}^{\star2}-\sigma_{r_{k_{\sf max}}+1}^{\star2}}\cdot\widetilde{\sigma}_{r_{k_{\sf max}}}^2\notag\\
	&\quad \stackrel{\eqref{ineq:induction_general_2}~\text{and}~\eqref{ineq115}}{\lesssim} \sqrt{\frac{\mu r^3}{m_1}}\frac{r\left(\sqrt{m_1}\omega_{\sf max}\log m\cdot \sigma_{\overline{r}+1}^{\star} + \left(\sqrt{m_1m_2} + m_1\right)\omega_{\sf max}^2\log^2 m\right)}{\sigma_{r_{k_{\sf max}}}^{\star2}/r}\cdot\sigma_{r_{k_{\sf max}}}^{\star2}\notag\\
	&\quad = \sqrt{\frac{\mu r^3}{m_1}}r^2\left(\sqrt{m_1}\omega_{\sf max}\log m\cdot \sigma_{\overline{r}+1}^{\star} + \left(\sqrt{m_1m_2} + m_1\right)\omega_{\sf max}^2\log^2 m\right).
\end{align}
Inequality~\eqref{ineq146} taken together with \eqref{ineq147} and the triangle inequality shows that
\begin{align}\label{ineq148}
	&\big\|\big(\bm{U}_{k_{\sf max}}\bm{U}_{k_{\sf max}}^\top - \widetilde{\bm{U}}_{k_{\sf max}}\widetilde{\bm{U}}_{k_{\sf max}}^{\top}\big)\widetilde{\bm{M}}\big\|_{2,\infty}\notag\\
	&\quad \lesssim \sqrt{\frac{\mu r^3}{m_1}}r^2\left(\sqrt{m_1}\omega_{\sf max}\log m\cdot \sigma_{\overline{r}+1}^{\star} + \left(\sqrt{m_1m_2} + m_1\right)\omega_{\sf max}^2\log^2 m\right) + \sqrt{\frac{\mu r^3}{m_1}}\widetilde{\sigma}_{r_{k_{\sf max}} + 1}^2.
\end{align}

\paragraph{Step 5.1.3: bounding $\alpha_1$.} Equipped with \eqref{ineq148}, we are now ready to bound $\alpha_1$. Recall that 
\begin{align*}
	\widetilde{\bm{M}} = \widetilde{\bm{U}}^{(1)}\big(\widetilde{\bm{\Sigma}}^{(1)}\big)^2\widetilde{\bm{U}}^{(1)\top} + \widetilde{\bm{U}}^{(2)}\big(\widetilde{\bm{\Sigma}}^{(2)}\big)^2\widetilde{\bm{U}}^{(2)\top},
\end{align*}
where $\widetilde{\bm{U}}^{(1)}$ and $\widetilde{\bm{\Sigma}}^{(1)}$ (resp.~$\widetilde{\bm{U}}^{(2)}$ and $\widetilde{\bm{\Sigma}}^{(2)}$) are defined in \eqref{svd1} (resp.~\eqref{eq:eigen_2}).
In view of \eqref{ineq143} and \eqref{ineq148}, one can obtain 
\begin{align}\label{ineq142}
	&\big\|\big(\bm{U}_{k_{\sf max}}\bm{U}_{k_{\sf max}}^\top - \widetilde{\bm{U}}_{k_{\sf max}}\widetilde{\bm{U}}_{k_{\sf max}}^{\top}\big)\widetilde{\bm{U}}^{(1)}\widetilde{\bm{\Sigma}}^{(1)}\big\|_{2,\infty}\notag\\
	&\quad \leq \left\|\big(\bm{U}_{k_{\sf max}}\bm{U}_{k_{\sf max}}^\top - \widetilde{\bm{U}}_{k_{\sf max}}\widetilde{\bm{U}}_{k_{\sf max}}^{\top}\big)\widetilde{\bm{U}}^{(1)}\big(\widetilde{\bm{\Sigma}}^{(1)}\big)^2\widetilde{\bm{U}}^{(1)\top}\right\|_{2,\infty}\left\|\big(\widetilde{\bm{\Sigma}}^{(1)}\big)^{-1}\right\|\notag\\
	&\quad \stackrel{(\romannumeral1)}{\lesssim} \left(\big\|\big(\bm{U}_{k_{\sf max}}\bm{U}_{k_{\sf max}}^\top - \widetilde{\bm{U}}_{k_{\sf max}}\widetilde{\bm{U}}_{k_{\sf max}}^{\top}\big)\widetilde{\bm{M}}\big\|_{2,\infty} + \left\|\big(\bm{U}_{k_{\sf max}}\bm{U}_{k_{\sf max}}^\top - \widetilde{\bm{U}}_{k_{\sf max}}\widetilde{\bm{U}}_{k_{\sf max}}^{\top}\big)\widetilde{\bm{U}}^{(2)}\big(\widetilde{\bm{\Sigma}}^{(2)}\big)^2\widetilde{\bm{U}}^{(2)\top}\right\|_{2,\infty}\right)\frac{1}{\sigma_{\overline{r}}^{\star}}\notag\\
	&\quad \leq \left(\big\|\big(\bm{U}_{k_{\sf max}}\bm{U}_{k_{\sf max}}^\top - \widetilde{\bm{U}}_{k_{\sf max}}\widetilde{\bm{U}}_{k_{\sf max}}^{\top}\big)\widetilde{\bm{M}}\big\|_{2,\infty} + \left\|\bm{U}_{k_{\sf max}}\bm{U}_{k_{\sf max}}^\top - \widetilde{\bm{U}}_{k_{\sf max}}\widetilde{\bm{U}}_{k_{\sf max}}^{\top}\right\|_{2,\infty}\big\|\widetilde{\bm{\Sigma}}^{(2)}\big\|^2\right)\frac{1}{\sigma_{\overline{r}}^{\star}}\notag\\
	&\quad \lesssim \Bigg(\sqrt{\frac{\mu r^3}{m_1}}r^2\left(\sqrt{m_1}\omega_{\sf max}\log m\cdot \sigma_{\overline{r}+1}^{\star} + \left(\sqrt{m_1m_2} + m_1\right)\omega_{\sf max}^2\log^2 m\right) + \sqrt{\frac{\mu r^3}{m_1}}\widetilde{\sigma}_{r_{k_{\sf max}} + 1}^2\notag\\&\hspace{1cm} + \sqrt{\frac{\mu r^3}{m_1}}\left(\frac{r^2\sqrt{m_1}\omega_{\sf max}\log m}{\sigma_{r_{k_{\sf max}}}^\star} + \frac{r^2 \sqrt{m_1m_2}\omega_{\sf max}^2\log^2 m}{\sigma_{r_{k_{\sf max}}}^{\star2}}\right)\widetilde{\sigma}_{\overline{r} + 1}^2\Bigg)\frac{1}{\sigma_{\overline{r}}^{\star}}\notag\\
	&\quad \stackrel{(\romannumeral2)}{\lesssim} \Bigg(\sqrt{\frac{\mu r^3}{m_1}}r^2\left(\sqrt{m_1}\omega_{\sf max}\log m\cdot \sigma_{\overline{r}+1}^{\star} + \left(\sqrt{m_1m_2} + m_1\right)\omega_{\sf max}^2\log^2 m\right) + \sqrt{\frac{\mu r^3}{m_1}}\widetilde{\sigma}_{r_{k_{\sf max}} + 1}^2\notag\\&\hspace{1cm} + \sqrt{\frac{\mu r^3}{m_1}}\left(\frac{r^2\sqrt{m_1}\omega_{\sf max}\log m}{\sigma_{r_{k_{\sf max}}}^\star} + \frac{r^2 \sqrt{m_1m_2}\omega_{\sf max}^2\log^2 m}{\sigma_{r_{k_{\sf max}}}^{\star2}}\right)\sigma_{\overline{r}}^{\star2}\Bigg)\frac{1}{\sigma_{\overline{r}}^{\star}}\notag\\
	&\quad \stackrel{(\romannumeral3)}{\lesssim} \sqrt{\frac{\mu r^3}{m_1}}r^2\left(\sqrt{m_1}\omega_{\sf max}\log m + \frac{\left(\sqrt{m_1m_2} + m_1\right)\omega_{\sf max}^2\log^2 m}{\sigma_{\overline{r}}^{\star}}\right) + \sqrt{\frac{\mu r^3}{m_1}}\frac{\widetilde{\sigma}_{r_{k_{\sf max}} + 1}^2}{\sigma_{\overline{r}}^{\star}}\notag\\
	&\quad \stackrel{(\romannumeral4)}{\lesssim} \sqrt{\frac{\mu r^3}{m_1}}\left(r^2\sqrt{m_1}\omega_{\sf max}\log m + r(m_1m_2)^{1/4}\omega_{\sf max}\log m\right) + \sqrt{\frac{\mu r^3}{m_1}}\frac{\tau}{\sigma_{\overline{r}}^{\star}}\notag\\
	&\quad \asymp \sqrt{\frac{\mu r^3}{m_1}}\left(r^2\sqrt{m_1}\omega_{\sf max}\log m + r(m_1m_2)^{1/4}\omega_{\sf max}\log m\right),
\end{align}
where $(\romannumeral1)$, $(\romannumeral2)$ and $(\romannumeral3)$ are consequences of $\widetilde{\sigma}_{r_{k_{\sf max}}} \stackrel{\eqref{ineq115}}{\asymp} \sigma_{r_{k_{\sf max}}}^\star \geq \sigma_{\overline{r}}^{\star} \asymp \widetilde{\sigma}_{\overline{r}} \gtrsim \max\{\sigma_{\overline{r}+1}^{\star}, \widetilde{\sigma}_{\overline{r}+1}\}$, 
and $(\romannumeral4)$ makes use of the fact $\sigma_{\overline{r}}^{\star} \geq C_0r[(m_1m_2)^{1/4} + rm_1^{1/2}]\omega_{\sf max}\log m$ and \eqref{ineq135}. Note that $\widetilde{\bm{U}}^{(1)}\widetilde{\bm{\Sigma}}^{(1)}\widetilde{\bm{W}}^{(1)\top}$ is the SVD of $\bm{U}^{\star(1)}\bm{\Sigma}^{\star(1)} + \bm{E}\bm{V}^{\star(1)}$. By virtue of the previous inequality, \eqref{ineq2a}, Theorem \ref{thm:oracle_two_to_infty} and the triangle inequality, we arrive at
\begin{align}\label{ineq149}
	\alpha_1 &= \big\|\big(\bm{U}_{k_{\sf max}}\bm{U}_{k_{\sf max}}^\top - \widetilde{\bm{U}}_{k_{\sf max}}\widetilde{\bm{U}}_{k_{\sf max}}^{\top}\big)\bm{U}^{\star(1)}\bm{\Sigma}^{\star(1)}\bm{V}^{\star(1)\top}\big\|_{2,\infty}\notag\\
	&= \big\|\big(\bm{U}_{k_{\sf max}}\bm{U}_{k_{\sf max}}^\top - \widetilde{\bm{U}}_{k_{\sf max}}\widetilde{\bm{U}}_{k_{\sf max}}^{\top}\big)\bm{U}^{\star(1)}\bm{\Sigma}^{\star(1)}\big\|_{2,\infty}\notag\\
	&\leq \big\|\big(\bm{U}_{k_{\sf max}}\bm{U}_{k_{\sf max}}^\top - \widetilde{\bm{U}}_{k_{\sf max}}\widetilde{\bm{U}}_{k_{\sf max}}^{\top}\big)\widetilde{\bm{U}}^{(1)}\widetilde{\bm{\Sigma}}^{(1)}\widetilde{\bm{W}}^\top\big\|_{2,\infty} + \big\|\big(\bm{U}_{k_{\sf max}}\bm{U}_{k_{\sf max}}^\top - \widetilde{\bm{U}}_{k_{\sf max}}\widetilde{\bm{U}}_{k_{\sf max}}^{\top}\big)\bm{E}\bm{V}^{\star(1)}\big\|_{2,\infty}\notag\\
	&\leq \big\|\big(\bm{U}_{k_{\sf max}}\bm{U}_{k_{\sf max}}^\top - \widetilde{\bm{U}}_{k_{\sf max}}\widetilde{\bm{U}}_{k_{\sf max}}^{\top}\big)\widetilde{\bm{U}}^{(1)}\widetilde{\bm{\Sigma}}^{(1)}\big\|_{2,\infty} + \big\|\bm{U}_{k_{\sf max}}\bm{U}_{k_{\sf max}}^\top - \widetilde{\bm{U}}_{k_{\sf max}}\widetilde{\bm{U}}_{k_{\sf max}}^{\top}\big\|_{2,\infty}\big\|\bm{E}\bm{V}^{\star(1)}\big\|\notag\\
	&\leq \sqrt{\frac{\mu r^3}{m_1}}\left(r^2\sqrt{m_1}\omega_{\sf max}\log m + r(m_1m_2)^{1/4}\omega_{\sf max}\log m\right)\notag\\
	&\quad + \sqrt{\frac{\mu r^3}{m_1}}\left(\frac{r^2\sqrt{m_1}\omega_{\sf max}\log m}{\sigma_{r_{k_{\sf max}}}^\star} + \frac{r^2 \sqrt{m_1m_2}\omega_{\sf max}^2\log^2 m}{\sigma_{r_{k_{\sf max}}}^{\star2}}\right)\cdot \sqrt{m_1}\omega_{\sf max}\log m\notag\\
	&\leq \sqrt{\frac{\mu r^3}{m_1}}\left(r^2\sqrt{m_1}\omega_{\sf max}\log m + r(m_1m_2)^{1/4}\omega_{\sf max}\log m\right) + \sqrt{\frac{\mu r^3}{m_1}}\sqrt{m_1}\omega_{\sf max}\log m\notag\\
	&\asymp \sqrt{\frac{\mu r^3}{m_1}}\left(r^2\sqrt{m_1}\omega_{\sf max}\log m + r(m_1m_2)^{1/4}\omega_{\sf max}\log m\right),
\end{align}
where the penultimate line follows since $\sigma_{\overline{r}}^{\star} \geq C_0r[(m_1m_2)^{1/4} + rm_1^{1/2}]\omega_{\sf max}\log m$.

\paragraph{Step 5.2: bounding $\alpha_2$.} In view of \eqref{ineq110}, we have
\begin{align}\label{eq14}
	&\mathcal{P}_{\left(\widetilde{\bm{U}}_{k_{\sf max}}\right)_{\perp}}\mathcal{P}_{\bm{U}_{k_{\sf max}}^\star}\bm{U}^{\star(1)}\bm{\Sigma}^{\star(1)}\bm{V}^{\star(1)\top}\notag\\ &\quad= \left(\mathcal{P}_{\left(\widetilde{\bm{U}}_{k_{\sf max}}\right)_{\perp}}\bm{U}_{k_{\sf max}}^\star\right)\bm{U}_{k_{\sf max}}^{\star\top}\bm{U}^{\star(1)}\bm{\Sigma}^{\star(1)}\bm{V}^{\star(1)\top}\notag\\
	&\quad= \Big[\widetilde{\bm{U}}_{:,r_{k_{\sf max}}+1: \overline{r}}^{(1)}\widetilde{\bm{\Sigma}}_{r_{k_{\sf max}}+1: \overline{r}, r_{k_{\sf max}}+1: \overline{r}}^{(1)}\widetilde{\bm{W}}_{:,r_{k_{\sf max}}+1: \overline{r}}^{(1)\top}\left(\bm{I}_{r_{k_{\sf max}}}\ \bm{0}\right)^\top\left(\bm{\Sigma}^{\star}_{1:r_{k_{\sf max}}, 1:r_{k_{\sf max}}}\right)^{-1}\notag\\&\qquad - \mathcal{P}_{\left(\widetilde{\bm{U}}_{:,1: r_{k_{\sf max}}}\right)_{\perp}}\bm{E}\bm{V}^{\star(1)}\left(\bm{I}_{r_{k_{\sf max}}}\ \bm{0}\right)^\top\left(\bm{\Sigma}^{\star}_{1:r_{k_{\sf max}}, 1:r_{k_{\sf max}}}\right)^{-1}\Big]\notag\\
	&\qquad\cdot \bm{\Sigma}^{\star}_{1:r_{k_{\sf max}}, 1:r_{k_{\sf max}}}\bm{V}_{:,1:r_{k_{\sf max}}}^{\star\top}\notag\\
	&\quad= \widetilde{\bm{U}}_{:,r_{k_{\sf max}}+1: \overline{r}}^{(1)}\widetilde{\bm{\Sigma}}_{r_{k_{\sf max}}+1: \overline{r}, r_{k_{\sf max}}+1: \overline{r}}^{(1)}\widetilde{\bm{W}}_{:,r_{k_{\sf max}}+1: \overline{r}}^{(1)\top}\left(\bm{I}_{r_{k_{\sf max}}}\ \bm{0}\right)^\top\bm{V}_{:,1:r_{k_{\sf max}}}^{\star\top}\notag\\
	&\qquad - \mathcal{P}_{\left(\widetilde{\bm{U}}_{:,1: r_{k_{\sf max}}}\right)_{\perp}}\bm{E}\bm{V}^{\star(1)}\left(\bm{I}_{r_{k_{\sf max}}}\ \bm{0}\right)^\top\bm{V}_{:,1:r_{k_{\sf max}}}^{\star\top}.
\end{align}
Here, the third line holds since $r_{k_{\sf max}} \leq \overline{r}$ and
\begin{align}\label{eq15}
	\bm{U}_{k_{\sf max}}^{\star\top}\bm{U}^{\star(1)}\bm{\Sigma}^{\star(1)}\bm{V}^{\star(1)\top} &= \bm{U}_{:,1:r_{k_{\sf max}}}^{\star\top}\bm{U}_{:,1:\overline{r}}^\star\bm{\Sigma}_{1:\overline{r}, 1:\overline{r}}^\star\bm{V}_{:,1:\overline{r}}^{\star\top}\notag\\
	&= \bm{U}_{:,1:r_{k_{\sf max}}}^{\star\top}\left(\bm{U}_{:,1:r_{k_{\sf max}}}^{\star}\bm{\Sigma}_{1:r_{k_{\sf max}}, 1:r_{k_{\sf max}}}^\star\bm{V}_{:,1:r_{k_{\sf max}}}^{\star\top} + \bm{U}_{:,r_{k_{\sf max}}+1:\overline{r}}^{\star}\bm{\Sigma}_{r_{k_{\sf max}}+1:\overline{r}, r_{k_{\sf max}}+1:\overline{r}}^\star\bm{V}_{:,r_{k_{\sf max}}+1:\overline{r}}^{\star\top}\right)\notag\\
	&= \bm{\Sigma}_{1:r_{k_{\sf max}}, 1:r_{k_{\sf max}}}^\star\bm{V}_{:,1:r_{k_{\sf max}}}^{\star\top}.
\end{align}
Repeating similar arguments as in \eqref{ineq107} and \eqref{ineq108}, one has 
\begin{subequations}
	\begin{align}
		\left\|\widetilde{\bm{U}}_{:,r_{k_{\sf max}}+1: \overline{r}}^{(1)}\widetilde{\bm{\Sigma}}_{r_{k_{\sf max}}+1: \overline{r}, r_{k_{\sf max}}+1: \overline{r}}^{(1)}\widetilde{\bm{W}}_{:,r_{k_{\sf max}}+1: \overline{r}}^{(1)\top}\left(\bm{I}_{r_{k_{\sf max}}}\ \bm{0}\right)^\top\bm{V}_{:,1:r_{k_{\sf max}}}^{\star\top}\right\|_{2,\infty} &\lesssim \sqrt{\frac{\mu r}{m_1}}r\sqrt{m}_1\omega_{\sf max}\log m,\label{ineq150a}\\
		\left\|\mathcal{P}_{\left(\widetilde{\bm{U}}_{:,1: r_{k_{\sf max}}}\right)_{\perp}}\bm{E}\bm{V}^{\star(1)}\left(\bm{I}_{r_{k_{\sf max}}}\ \bm{0}\right)^\top\bm{V}_{:,1:r_{k_{\sf max}}}^{\star\top}\right\|_{2,\infty} &\lesssim \sqrt{\frac{\mu r}{m_1}}\sqrt{m}_1\omega_{\sf max}\log m.\label{ineq150b}
	\end{align}
\end{subequations}
Combining \eqref{eq14}, \eqref{ineq150a}, \eqref{ineq150b} and the triangle inequality yields 
\begin{align}\label{ineq151}
	\left\|\mathcal{P}_{\left(\widetilde{\bm{U}}_{k_{\sf max}}\right)_{\perp}}\mathcal{P}_{\bm{U}_{k_{\sf max}}^\star}\bm{U}^{\star(1)}\bm{\Sigma}^{\star(1)}\bm{V}^{\star(1)\top}\right\|_{2,\infty} \lesssim \sqrt{\frac{\mu r^3}{m_1}}\sqrt{m}_1\omega_{\sf max}\log m.
\end{align}
Then we can bound $\alpha_2$ as follows:
\begin{align}\label{ineq152}
	\alpha_2 &= \left\|\big(\widetilde{\bm{U}}_{k_{\sf max}}\widetilde{\bm{U}}_{k_{\sf max}}^{\top} - \bm{U}_{k_{\sf max}}^\star\bm{U}_{k_{\sf max}}^{\star\top}\big)\bm{U}^{\star(1)}\bm{\Sigma}^{\star(1)}\bm{V}^{\star(1)\top}\right\|_{2,\infty}\notag\\
	&\leq \left\|\big(\widetilde{\bm{U}}_{k_{\sf max}}\widetilde{\bm{U}}_{k_{\sf max}}^{\top} - \bm{U}_{k_{\sf max}}^\star\bm{U}_{k_{\sf max}}^{\star\top}\big)\bm{U}_{k_{\sf max}}^\star\bm{U}_{k_{\sf max}}^{\star\top}\bm{U}^{\star(1)}\bm{\Sigma}^{\star(1)}\bm{V}^{\star(1)\top}\right\|_{2,\infty}\notag\\
	&\quad + \left\|\widetilde{\bm{U}}_{k_{\sf max}}\widetilde{\bm{U}}_{k_{\sf max}}^{\top}\left(\bm{U}_{k_{\sf max}}^\star\right)_{\perp}\left(\bm{U}_{k_{\sf max}}^\star\right)_{\perp}^{\star\top}\bm{U}^{\star(1)}\bm{\Sigma}^{\star(1)}\bm{V}^{\star(1)\top}\right\|_{2,\infty}\notag\\
	&= \left\|\mathcal{P}_{\left(\widetilde{\bm{U}}_{k_{\sf max}}\right)_{\perp}}\mathcal{P}_{\bm{U}_{k_{\sf max}}^\star}\bm{U}^{\star(1)}\bm{\Sigma}^{\star(1)}\bm{V}^{\star(1)\top}\right\|_{2,\infty}\notag\\
	&\quad + \left\|\widetilde{\bm{U}}_{k_{\sf max}}\widetilde{\bm{U}}_{k_{\sf max}}^{\top}\left(\bm{U}_{k_{\sf max}}^\star\right)_{\perp}\left(\bm{U}_{k_{\sf max}}^\star\right)_{\perp}^{\star\top}\bm{U}_{:,r_{k_{\sf max}}+1:\overline{r}}^{\star}\bm{\Sigma}_{r_{k_{\sf max}}+1:\overline{r}, r_{k_{\sf max}}+1:\overline{r}}^\star\bm{V}_{:,r_{k_{\sf max}}+1:\overline{r}}^{\star\top}\right\|_{2,\infty}\notag\\
	&\stackrel{\eqref{ineq151}}{\lesssim} \sqrt{\frac{\mu r^3}{m_1}}\sqrt{m}_1\omega_{\sf max}\log m + \big\|\widetilde{\bm{U}}_{k_{\sf max}}\big\|_{2,\infty}\big\|\widetilde{\bm{U}}_{k_{\sf max}}^{\top}\left(\bm{U}_{k_{\sf max}}^\star\right)_{\perp}\big\|\big\|\bm{\Sigma}_{r_{k_{\sf max}}+1:\overline{r}, r_{k_{\sf max}}+1:\overline{r}}^\star\big\|\notag\\
	&\stackrel{\eqref{ineq2d}~\text{and}~\eqref{ineq109b}}{\lesssim} \sqrt{\frac{\mu r^3}{m_1}}\sqrt{m}_1\omega_{\sf max}\log m + \sqrt{\frac{\mu r}{m_1}}\frac{r\sqrt{m}_1\omega_{\sf max}\log m}{\sigma_{r_{k_{\sf max}}}^\star}\sigma_{r_{k_{\sf max}} + 1}^\star\notag\\
	&\asymp \sqrt{\frac{\mu r^3}{m_1}}\sqrt{m}_1\omega_{\sf max}\log m.
\end{align}
\paragraph{Step 5.3: bounding $\alpha_3$.} By virtue of \eqref{ineq1} and \eqref{ineq200}, one has
\begin{align}\label{ineq153}
	\alpha_3 &= \big\|\bm{U}_{k_{\sf max}}\bm{U}_{k_{\sf max}}^\top - \bm{U}_{k_{\sf max}}^\star\bm{U}_{k_{\sf max}}^{\star\top}\big\|_{2,\infty}\sigma_{\overline{r} + 1}^\star
	\lesssim \sqrt{\frac{\mu r^3}{m_1}}\left(r\left[(m_1m_2)^{1/4} + rm_1^{1/2}\right]\omega_{\sf max}\log m\right).
\end{align}
\paragraph{Step 5.4: bounding $\|(\bm{U}_{k_{\sf max}}\bm{U}_{k_{\sf max}}^\top - \bm{U}_{k_{\sf max}}^\star\bm{U}_{k_{\sf max}}^{\star\top})\bm{X}^\star\|_{2,\infty}$ and $\|(\bm{I}_{n_1} - \bm{U}_{k_{\sf max}}\bm{U}_{k_{\sf max}}^\top)\bm{X}^\star\|_{2,\infty}$.} Putting \eqref{ineq:decomposition}, \eqref{ineq149}, \eqref{ineq152} and \eqref{ineq153} together, we arrive at
\begin{align}\label{ineq:two_to_infty_residual1}
	\big\|\big(\bm{U}_{k_{\sf max}}\bm{U}_{k_{\sf max}}^\top - \bm{U}_{k_{\sf max}}^\star\bm{U}_{k_{\sf max}}^{\star\top}\big)\bm{X}^\star\big\|_{2,\infty} \lesssim \sqrt{\frac{\mu r^3}{m_1}}\left(r^2\sqrt{m_1}\omega_{\sf max}\log m + r(m_1m_2)^{1/4}\omega_{\sf max}\log m\right).
\end{align}
Furthermore, we have
\begin{align*}
	\left\|\left(\bm{I}_{m_1} - \bm{U}_{k_{\sf max}}^\star\bm{U}_{k_{\sf max}}^{\star\top}\right)\bm{X}^\star\right\|_{2,\infty} 
	&= \left\|\bm{U}^{\star}\bm{\Sigma}^{\star}\bm{V}^{\star\top} - \bm{U}_{k_{\sf max}}^\star\bm{U}_{k_{\sf max}}^{\star\top}\bm{U}^{\star}\bm{\Sigma}^{\star}\bm{V}^{\star\top}\right\|_{2,\infty}\notag\\
	&= \left\|\bm{U}^{\star}\bm{\Sigma}^{\star}\bm{V}^{\star\top} - \bm{U}_{:,1:r_{k_{\sf max}}}^\star\bm{\Sigma}_{1:r_{k_{\sf max}}, 1:r_{k_{\sf max}}}^\star\bm{V}_{:,1:r_{k_{\sf max}}}^{\star\top}\right\|_{2,\infty}\notag\\
	&= \left\|\bm{U}_{:, r_{k_{\sf max}}+1:r}^\star\bm{\Sigma}_{r_{k_{\sf max}}+1:r, r_{k_{\sf max}}+1:r}^\star\bm{V}_{:, r_{k_{\sf max}}+1:r}^{\star\top}\right\|_{2,\infty}\notag\\
	&\leq \left\|\bm{U}^\star\right\|_{2,\infty}\left\|\bm{\Sigma}_{r_{k_{\sf max}}+1:r, r_{k_{\sf max}}+1:r}^\star\right\|\notag\\
	&\leq \sqrt{\frac{\mu r}{m_1}}\sigma_{r_{k_{\sf max}}+1}^\star\notag\\
	&\stackrel{\eqref{ineq2a}~\text{and}~\eqref{ineq128}}{\leq} \sqrt{\frac{\mu r}{m_1}}\big(\widetilde{\sigma}_{r_{k_{\sf max}}+1} + 2\sqrt{C_5}\sqrt{m}_1\omega_{\sf max}\log m\big)\notag\\
	&\stackrel{\eqref{ineq135}}{\lesssim} \sqrt{\frac{\mu r}{m_1}}\left(\sqrt{\tau} + \sqrt{m}_1\omega_{\sf max}\log m\right)\notag\\
	&\asymp \sqrt{\frac{\mu r}{m_1}}\left(r^2\sqrt{m_1}\omega_{\sf max}\log m + r(m_1m_2)^{1/4}\omega_{\sf max}\log m\right).
\end{align*}
This together with \eqref{ineq:two_to_infty_residual1} gives
\begin{align}\label{ineq:two_to_infty_residual2}
	\|(\bm{I}_{n_1} - \bm{U}_{k_{\sf max}}\bm{U}_{k_{\sf max}}^\top)\bm{X}^\star\|_{2,\infty} \lesssim \sqrt{\frac{\mu r^3}{m_1}}\left(r^2\sqrt{m_1}\omega_{\sf max}\log m + r(m_1m_2)^{1/4}\omega_{\sf max}\log m\right).
\end{align}

\section{Proof of Theorem \ref{thm:tuning_selection}}\label{proof:tuning_selection}
For notational convenience, we let $\bm{U}_i^\star = \bm{U}_{\bm{X}_i^\star} \in \mathcal{O}^{n_i, k_i}$ denote the left singular subspace of $\bm{X}_i^\star$ for all $i \in [3]$. Then we know from \eqref{ineq155} that 
\begin{align}\label{ineq193}
	\left\|\bm{U}_i^\star\right\|_{2,\infty} \leq \sqrt{\frac{k_i}{\beta n_i}},~\quad~\forall i \in [3].
\end{align}
In view of \citet[Lemma 7]{zhou2023deflated}, with probability exceeding $1 - O(n^{-10})$,
\begin{align}\label{ineq184}
	\left\|\mathcal{P}_{\sf off\text{-}diag}\left(\bm{E}_1\bm{E}_1^\top\right)\right\| \lesssim B^2\log^2n + \sqrt{n_1n_2n_3}\omega_{\sf max}^2\log n \leq \sqrt{n_1n_2n_3}\omega_{\sf max}^2\log n \ll n_2n_3\omega_{\sf max}^2.
\end{align}
For any $i \in [n_1]$, we know that
\begin{align}\label{ineq185}
	\left(\bm{E}_1\bm{E}_1^\top\right)_{i,i} = \sum_{j = 1}^{n_2}\sum_{\ell = 1}^{n_3}E_{i,j,\ell}^2 = \sum_{j = 1}^{n_2}\sum_{\ell = 1}^{n_3}\omega_{i,j,\ell}^2 + \sum_{j = 1}^{n_2}\sum_{\ell = 1}^{n_3}\left(E_{i,j,\ell}^2 - \omega_{i,j,\ell}^2\right).
\end{align}
If the noise is bounded, i.e., $E_{i,j,k} \leq B$ for all $(i, j, k) \in [n_1] \times [n_2] \times [n_3]$, then $\{E_{i,j,\ell}^2 - \omega_{i,j,\ell}^2\}_{i,j,\ell}$ are zero-mean, and
\begin{subequations}
	\begin{align*}
		\left|E_{i,j,\ell}^2 - \omega_{i,j,\ell}^2\right| &\leq 2B^2,\\
		\bbE\left[\left(E_{i,j,\ell}^2 - \omega_{i,j,\ell}^2\right)^2\right] &= \bbE\left[E_{i,j,\ell}^4\right] - \omega_{i,j,\ell}^4 \leq B^2\bbE\left[E_{i,j,\ell}^2\right] - \omega_{i,j,\ell}^4 \leq B^2\omega_{\sf max}^2.
	\end{align*}
\end{subequations}
In view of Bernstein's inequality and the union bound, one has, with probability exceeding $1 - O(n^{-10})$, for all $i \in [n_1]$,
\begin{align}\label{ineq186}
	\left|\sum_{j = 1}^{n_2}\sum_{\ell = 1}^{n_3}\left(E_{i,j,\ell}^2 - \omega_{i,j,\ell}^2\right)\right| \lesssim \sqrt{n_2n_3}B\omega_{\sf max}\sqrt{\log n} + B^2\log n \ll n_2n_3\omega_{\sf max}^2.
\end{align}
For the general case where the noise satisfies Assumption \ref{assump:noise}, using the truncation trick as in \citet[Section B.4.2]{zhou2023deflated}, one can show that \eqref{ineq186} also holds with probability exceeding $1 - O(n^{-10})$. Putting \eqref{ineq184}, \eqref{ineq185} and \eqref{ineq186} together, we know that with probability exceeding $1 - O(n^{-10})$,
\begin{align}\label{ineq188}
	\left\|\bm{E}_1\bm{E}_1^\top - {\sf diag}\left(\Bigg[\sum_{j = 1}^{n_2}\sum_{\ell = 1}^{n_3}\omega_{i,j,\ell}^2\Bigg]_{1 \leq i \leq n_1}\right)\right\| \ll n_2n_3\omega_{\sf max}^2.
\end{align}

For all $i \in [3]$, let $\bm{U}_i^\star \in \mathcal{O}^{n_i, k_i}$ denote the left singular subspace of $\bm{X}_i^\star$. The min-max principle for singular values reveals that
\begin{align}\label{ineq187}
	\sigma_{k_1 + 1}\left(\bm{Y}_1\right) &\geq \sigma_{k_1 + 1}\left(\bm{Y}_1\left(\mathcal{P}_{\bm{U}_{3\perp}^\star} \otimes \mathcal{P}_{\bm{U}_{2\perp}^\star}\right)\right)\notag\\
	&= \sigma_{k_1 + 1}\left(\bm{X}_1^\star\left(\mathcal{P}_{\bm{U}_{3\perp}^\star} \otimes \mathcal{P}_{\bm{U}_{2\perp}^\star}\right) + \bm{E}_1\left(\mathcal{P}_{\bm{U}_{3\perp}^\star} \otimes \mathcal{P}_{\bm{U}_{2\perp}^\star}\right)\right)\notag\\
	&= \sigma_{k_1 + 1}\left(\bm{E}_1\left(\mathcal{P}_{\bm{U}_{3\perp}^\star} \otimes \mathcal{P}_{\bm{U}_{2\perp}^\star}\right)\right)\notag\\
	&\geq \sigma_{k_1 + 1}\left(\bm{E}_1\right) - \left\|\bm{E}_1\left(\mathcal{P}_{\bm{U}_{3}^\star} \otimes \mathcal{P}_{\bm{U}_{2}^\star}\right)\right\| - \left\|\bm{E}_1\left(\mathcal{P}_{\bm{U}_{3\perp}^\star} \otimes \mathcal{P}_{\bm{U}_{2}^\star}\right)\right\| - \left\|\bm{E}_1\left(\mathcal{P}_{\bm{U}_{3}^\star} \otimes \mathcal{P}_{\bm{U}_{2\perp}^\star}\right)\right\|\notag\\
	&= \sqrt{\sigma_{k_1 + 1}\left(\bm{E}_1\bm{E}_1^\top\right)} - \left\|\bm{E}_1\left(\bm{U}_{3}^\star \otimes \bm{U}_{2}^\star\right)\right\| - \left\|\bm{E}_1\left(\bm{U}_{3\perp}^\star \otimes \bm{U}_{2}^\star\right)\right\| - \left\|\bm{E}_1\left(\bm{U}_{3}^\star \otimes \bm{U}_{2\perp}^\star\right)\right\|,
\end{align}
where the fourth line makes use of Weyl's inequality. We know from \eqref{ineq193} that
\begin{align*}
	\left\|\bm{U}_{3}^\star \otimes \bm{U}_{2}^\star\right\|_{2,\infty} &\leq \sqrt{\frac{k_2k_3}{\beta^2n_2n_3}},\\
	\left\|\bm{U}_{3\perp}^\star \otimes \bm{U}_{2}^\star\right\|_{2,\infty} &\leq \|\bm{U}_{2}^\star\|_{2,\infty} \leq \sqrt{\frac{k_2}{\beta n_2}},\\
	\left\|\bm{U}_{3}^\star \otimes \bm{U}_{2\perp}^\star\right\|_{2,\infty} &\leq \|\bm{U}_{3}^\star\|_{2,\infty} \leq \sqrt{\frac{k_3}{\beta n_3}}.
\end{align*}
Applying \citet[Lemma 5]{zhou2023deflated} yields that with probability at least $1 - O(n^{-10})$,
\begin{subequations}
	\begin{align}
		\left\|\bm{E}_1\left(\bm{U}_{3}^\star \otimes \bm{U}_{2}^\star\right)\right\| \lesssim B\sqrt{\frac{k_2k_3}{\beta^2n_2n_3}}\log n + \sqrt{n_1}\omega_{\sf max}\sqrt{\log n} \ll \sqrt{n_2n_3}\omega_{\sf max},\label{ineq189a}\\
		\left\|\bm{E}_1\left(\bm{U}_{3\perp}^\star \otimes \bm{U}_{2}^\star\right)\right\| \lesssim B\sqrt{\frac{k_2}{\beta n_2}}\log n + \sqrt{n_1 + k_2n_3}\omega_{\sf max}\sqrt{\log n} \ll \sqrt{n_2n_3}\omega_{\sf max},\label{ineq189b}\\
		\left\|\bm{E}_1\left(\bm{U}_{3}^\star \otimes \bm{U}_{2\perp}^\star\right)\right\| \lesssim B\sqrt{\frac{k_3}{\beta n_3}}\log n + \sqrt{n_1 + k_3n_2}\omega_{\sf max}\sqrt{\log n} \ll \sqrt{n_2n_3}\omega_{\sf max}.\label{ineq189c}
	\end{align}
\end{subequations}
Combining \eqref{ineq188}, \eqref{ineq189a} - \eqref{ineq189c} and Weyl's inequality, we obtain that with probability exceeding $1 - O(n^{-10})$,
\begin{align}\label{ineq190}
	\sigma_{k_1 + 1}\left(\bm{Y}_1\right) \leq \sigma_{k_1 + 1}\left(\bm{X}_1^\star\right) + \left\|\bm{E}_1\right\| = \left\|\bm{E}_1\right\| = \left(\left\|\bm{E}_1\bm{E}_1^\top\right\|\right)^{1/2} \leq 2\sqrt{n_2n_3}\omega_{\sf max}.
\end{align}
\begin{itemize}[leftmargin = *]
	\item[1.] If \eqref{assump:variance_same_order} holds, then \eqref{ineq188}, \eqref{ineq187} and \eqref{ineq189a} - \eqref{ineq189c} together show that with probability exceeding $1 - O(n^{-10})$,
	\begin{align}\label{ineq191}
		\sigma_{k_1 + 1}\left(\bm{Y}_1\right) &\geq \sqrt{cn_2n_3\omega_{\sf max}^2} - \frac{\sqrt{c}}{2}\sqrt{n_2n_3}\omega_{\sf max} \geq \frac{\sqrt{c}}{2}\sqrt{n_2n_3}\omega_{\sf max}.
	\end{align}
    The previous inequality together with \eqref{ineq190} shows that
    \begin{align*}
    	\sigma_{k_1 + 1}\left(\bm{Y}_1\right) \asymp \sqrt{n_2n_3}\omega_{\sf max}
    \end{align*}
with probability at least $1 - O(n^{-10})$. As a consequence, there exist two large enough constants $C_{\tau} > c_{\tau} > 0$ such that with probability at least $1 - O(n^{-10})$, 
\begin{align}\label{ineq192}
	c_{\tau}\left(n_1n_2n_3\right)^{1/2}\log^2 n \leq \tau/\omega_{\sf max}^2 &\leq C_{\tau}\left(n_1n_2n_3\right)^{1/2}\log^2 n .
\end{align}
    \item[2.] If Assumption 2 in Theorem \ref{thm:tuning_selection} holds, we choose $(\ell_1, \ell_2, \ell_3) \in [k_1] \times [k_2] \times [k_3]$ such that 
    \begin{align*}
    (\ell_1, \ell_2, \ell_3) \in \argmax_{i_1 \in [k_1], i_2 \in [k_2], i_3 \in [k_3]}S^\star_{i_1, i_2, i_3}\left(1 - S^\star_{i_1, i_2, i_3}\right) = \omega_{\sf max}^2.
    \end{align*}
   Then for any $(j_1, j_2, j_3) \in [n_1] \times [n_2] \times [n_3]$ with $z_{i, j_i}^\star = \ell_i$, $i \in [3]$,
   \begin{align*}
   	   \bbE\left[E_{j_1, j_2, j_3}^2\right] = \omega_{\sf max}^2.
   \end{align*}
   For any $i \in \{j \in [n_1]: z_{1, j}^\star = \ell_1\}$, 
   \begin{align*}
   	\sum_{j = 1}^{n_2}\sum_{\ell = 1}^{n_3}\omega_{i,j,\ell}^2 \geq \omega_{\sf max}^2\left|j \in [n_2]: z_{2, j}^\star = \ell_2\right|\cdot\left|j \in [n_3]: z_{3, j}^\star = \ell_3\right| \geq  \frac{\beta n_2}{k_2}\frac{\beta n_3}{k_3}\omega_{\sf max}^2 \asymp n_2n_3\omega_{\sf max}^2.
   \end{align*}
Since $|\{j \in [n_1]: z_{1, j}^\star = \ell_1\}| \geq \beta n_1/k_1 > k_1 + 1$, we can still show that \eqref{ineq191} holds with probability exceeding $1 - O(n^{-10})$. Repeating similar arguments as in \eqref{ineq192} shows that Theorem \ref{thm:tuning_selection} also holds.
\end{itemize}

\section{Technical lemmas}
\begin{lemma}\label{lm:1}
    Suppose that
    \begin{align}\label{eq13}
    	\bm{M} = \bm{M}^\star + \bm{E} \in \bbR^{n_1 \times n_2}
    \end{align}
and the SVDs of $\bm{M}^\star$ and $\bm{M}$ are given by
\begin{align*}
	\bm{M}^\star = \sum_{i=1}^{n_1}\sigma_i^\star\bm{u}_i^\star\bm{v}_i^{\star\top},~\quad~\text{and}~\quad~\bm{M}^{\star} = \sum_{i=1}^{n_1}\sigma_i\bm{u}_i\bm{v}_i^\top.
\end{align*}
Here, $\sigma_1^\star \geq \dots \geq \sigma_{n_1}^\star \geq 0$ (resp.~$\sigma_1 \geq \dots \geq \sigma_{n_1} \geq 0$) represent the singular values of $\bm{M}^\star$ (resp.~$\bm{M}$), $u_i^\star$ (resp.~$u_i$) denotes the left singular vector associated with the singular value $\sigma_i^\star$ (resp.~$\sigma_i$), and $v_i^\star$ (resp.~$v_i$) denotes the left singular vector associated with $\sigma_i^\star$ (resp.~$\sigma_i$). We let $\bm{U}^\star = [u_1^\star, \dots, u_r^\star] \in \bbR^{n_1 \times r}$ (resp.~$\bm{U} = [u_1, \dots, u_r] \in \bbR^{n_1 \times r}$) denote the rank-$r$ leading singular subspace of $\bm{M}^\star$ (resp.~$\bm{M}$).
If $$\sigma_r^\star - \sigma_{r+1}^\star > 2\|\bm{E}\|,$$ then we have
\begin{align*}
	\left\|\left(\bm{U}\bm{U}^\top - \bm{U}^\star\bm{U}^{\star\top}\right)\bm{M}^\star\right\| \leq \frac{4\sigma_r^\star\left\|\bm{E}\right\|}{\sigma_r^\star - \sigma_{r+1}^\star}.
\end{align*}
\end{lemma}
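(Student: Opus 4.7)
The plan is to bound $(\bm{U}\bm{U}^\top - \bm{U}^\star\bm{U}^{\star\top})\bm{M}^\star$ by splitting it into two pieces, each controlled by a sin-$\Theta$ distance between the leading subspaces of $\bm{M}$ and $\bm{M}^\star$, rather than using a crude bound that would introduce a spurious $\sigma_1^\star$ factor. Writing $\bm{P}=\bm{U}\bm{U}^\top$ and $\bm{P}^\star=\bm{U}^\star\bm{U}^{\star\top}$, I would start from the identity
\begin{align*}
	(\bm{P}-\bm{P}^\star)\bm{M}^\star \;=\; \bm{P}(\bm{I}-\bm{P}^\star)\bm{M}^\star \;-\; (\bm{I}-\bm{P})\bm{P}^\star\bm{M}^\star,
\end{align*}
which is immediate by expansion. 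For the first term, I would observe that $(\bm{I}-\bm{P}^\star)\bm{M}^\star = \bm{U}^\star_{r+1:}\bm{\Sigma}^\star_{r+1:}\bm{V}^{\star\top}_{r+1:}$ has spectral norm $\sigma_{r+1}^\star$ and that $\|\bm{P}\bm{U}^\star_{r+1:}\| = \|\sin\Theta(\bm{U},\bm{U}^\star)\|$, giving $\|\bm{P}(\bm{I}-\bm{P}^\star)\bm{M}^\star\| \leq \sigma_{r+1}^\star\,\|\sin\Theta(\bm{U},\bm{U}^\star)\|$.

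For the second term, I would rewrite $\bm{P}^\star\bm{M}^\star = \bm{M}^\star\bm{V}^\star_{1:r}\bm{V}^{\star\top}_{1:r}$ and reduce the task to bounding $\|(\bm{I}-\bm{P})\bm{M}^\star\bm{V}^\star_{1:r}\|$. Substituting $\bm{M}^\star = \bm{M} - \bm{E}$ yields
\begin{align*}
	(\bm{I}-\bm{P})\bm{M}^\star\bm{V}^\star_{1:r} \;=\; (\bm{I}-\bm{P})\bm{M}\bm{V}^\star_{1:r} \;-\; (\bm{I}-\bm{P})\bm{E}\bm{V}^\star_{1:r}.
\end{align*}
Since $(\bm{I}-\bm{P})\bm{M} = \sum_{i>r}\sigma_i\bm{u}_i\bm{v}_i^\top$, a direct computation yields $\|(\bm{I}-\bm{P})\bm{M}\bm{V}^\star_{1:r}\| \leq \sigma_{r+1}\|\sin\Theta(\bm{V},\bm{V}^\star)\|$, while the second piece is trivially at most $\|\bm{E}\|$. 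The key point here, and the main conceptual obstacle, is precisely this reduction: the naive attempt to bound $\|(\bm{I}-\bm{P})\bm{U}^\star_{1:r}\bm{\Sigma}_r^\star\|$ by $\sigma_1^\star\|\sin\Theta\|$ is too wasteful, whereas routing through $\bm{M}\bm{V}^\star_{1:r}$ lets the singular value $\sigma_{r+1}$ (rather than $\sigma_1^\star$) appear naturally.

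At this point I would invoke Wedin's $\sin\Theta$ theorem together with Weyl's inequality $\sigma_r \geq \sigma_r^\star - \|\bm{E}\|$. Under the gap condition $\sigma_r^\star-\sigma_{r+1}^\star > 2\|\bm{E}\|$, this yields $\max\{\|\sin\Theta(\bm{U},\bm{U}^\star)\|,\|\sin\Theta(\bm{V},\bm{V}^\star)\|\} \leq 2\|\bm{E}\|/(\sigma_r^\star-\sigma_{r+1}^\star)$, as well as $\sigma_{r+1} \leq \sigma_{r+1}^\star + \|\bm{E}\|$. Combining everything gives
\begin{align*}
	\|(\bm{P}-\bm{P}^\star)\bm{M}^\star\| \;\leq\; \frac{2\sigma_{r+1}^\star\|\bm{E}\|}{\sigma_r^\star-\sigma_{r+1}^\star} + \frac{2(\sigma_{r+1}^\star+\|\bm{E}\|)\|\bm{E}\|}{\sigma_r^\star-\sigma_{r+1}^\star} + \|\bm{E}\|.
\end{align*}

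The last step is routine algebra: placing the three terms over the common denominator $\sigma_r^\star-\sigma_{r+1}^\star$, using $2\|\bm{E}\|^2 \leq \|\bm{E}\|(\sigma_r^\star-\sigma_{r+1}^\star)$ coming from the gap, and then bounding $\sigma_{r+1}^\star \leq \sigma_r^\star$, the numerator collapses to $2\|\bm{E}\|(\sigma_r^\star+\sigma_{r+1}^\star) \leq 4\sigma_r^\star\|\bm{E}\|$, yielding the claimed bound $4\sigma_r^\star\|\bm{E}\|/(\sigma_r^\star-\sigma_{r+1}^\star)$.
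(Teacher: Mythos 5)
Your proof is correct and follows essentially the same approach as the paper's: the decomposition $(\bm{P}-\bm{P}^\star)\bm{M}^\star = \bm{P}(\bm{I}-\bm{P}^\star)\bm{M}^\star - (\bm{I}-\bm{P})\bm{P}^\star\bm{M}^\star$ is identical to the paper's $-\mathcal{P}_{\bm{U}_\perp}\mathcal{P}_{\bm{U}^\star}\bm{M}^\star + \bm{U}\bm{U}^\top\bm{U}_\perp^\star\bm{U}_\perp^{\star\top}\bm{M}^\star$, each piece is bounded by the same quantities $\sigma_{r+1}^\star\|\sin\Theta(\bm{U},\bm{U}^\star)\|$ and $\sigma_{r+1}\|\sin\Theta(\bm{V},\bm{V}^\star)\| + \|\bm{E}\|$, and both conclude via Wedin and Weyl. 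The only cosmetic difference is that you derive the bound on $(\bm{I}-\bm{P})\bm{P}^\star\bm{M}^\star$ directly by rewriting $\bm{P}^\star\bm{M}^\star = \bm{M}^\star\bm{V}^\star\bm{V}^{\star\top}$ and substituting $\bm{M}^\star = \bm{M} - \bm{E}$, whereas the paper reaches the same expression by invoking a cited identity for $\bm{U}_\perp^\top\bm{U}^\star$ that secretly encodes the same substitution (and requires $\bm{\Sigma}^\star$ invertible along the way, which your route avoids); this is a marginal simplification but not a genuinely different argument.
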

\begin{proof}\label{proof:lm_1}
	We define
	\begin{align*}
		\bm{\Sigma} &:= {\sf diag}\left(\sigma_1, \dots, \sigma_r\right),~\qquad\qquad~\bm{\Sigma}_{\perp} := {\sf diag}\left(\sigma_{r+1}, \dots, \sigma_{n_1}\right),\notag\\
		\bm{V} &:= \left[\bm{v}_1, \dots, \bm{v}_r\right] \in \bbR^{n_2 \times r},~\qquad~\bm{V}_{\perp} := \left[\bm{v}_{r+1}, \dots, \bm{v}_{n_2}\right] \in \bbR^{n_2 \times (n_2 - r)},
	\end{align*}
	and define $\bm{\Sigma}^\star, \bm{\Sigma}_{\perp}^\star, \bm{V}^\star, \bm{V}_{\perp}^\star$ similarly. 
	In view of \citet[Eqn. (2.27)]{chen2021spectral}, we have
	\begin{align*}
		\mathcal{P}_{\bm{U}_{\perp}}\mathcal{P}_{\bm{U}^\star}\bm{M}^\star &= \bm{U}_{\perp}\left(\bm{U}_{\perp}\bm{U}^\star\right)\left(\bm{U}^{\star\top}\bm{M}^\star\right)\\
		&= \bm{U}_{\perp}\left(\bm{\Sigma}_{\perp}\bm{V}_{\perp}^\top\bm{V}^\star\bm{\Sigma}^{\star-1} - \bm{U}_{\perp}^\top\bm{E}\bm{V}^\star\bm{\Sigma}^{\star-1}\right)\bm{\Sigma}^{\star}\bm{V}^{\star\top}\\
		&= \bm{U}_{\perp}\bm{\Sigma}_{\perp}\bm{V}_{\perp}^\top\bm{V}^\star\bm{V}^{\star\top} - \bm{U}_{\perp}^\top\bm{E}\bm{V}^\star\bm{V}^{\star\top}.
	\end{align*}
    Recognizing that
    \begin{align*}
    	\bm{U}\bm{U}^\top - \bm{U}^\star\bm{U}^{\star\top} &= \left(\bm{U}\bm{U}^\top\bm{U}^\star - \bm{U}^\star\right)\bm{U}^{\star\top} + \left(\bm{U}\bm{U}^\top - \bm{U}\bm{U}^\top\bm{U}^\star\bm{U}^{\star\top}\right)\\
    	&= -\mathcal{P}_{\bm{U}_{\perp}}\mathcal{P}_{\bm{U}^\star} + \bm{U}\bm{U}^\top\bm{U}_{\perp}^{\star}\bm{U}_{\perp}^{\star\top},
    \end{align*}
    we have
    \begin{align}
    		\left\|\left(\bm{U}\bm{U}^\top - \bm{U}^\star\bm{U}^{\star\top}\right)\bm{M}^\star\right\| &\leq \left\|\mathcal{P}_{\bm{U}_{\perp}}\mathcal{P}_{\bm{U}^\star}\bm{M}^\star\right\| + \left\|\bm{U}\bm{U}^\top\bm{U}_{\perp}^{\star}\bm{U}_{\perp}^{\star\top}\bm{M}^\star\right\|\notag\\
    		&\leq \left\|\bm{U}_{\perp}\bm{\Sigma}_{\perp}\bm{V}_{\perp}^\top\bm{V}^\star\bm{V}^{\star\top}\right\| + \left\|\bm{U}_{\perp}^\top\bm{E}\bm{V}^\star\bm{V}^{\star\top}\right\| + \left\|\bm{U}\bm{U}^\top\bm{U}_{\perp}^{\star}\bm{\Sigma}_{\perp}^{\star}\bm{V}_{\perp}^{\star\top}\right\|\notag\\
    		&\leq \left\|\bm{\Sigma}_{\perp}\right\|\left\|\bm{V}_{\perp}^\top\bm{V}^\star\right\| + \left\|\bm{E}\right\| + \left\|\bm{U}_{\perp}^{\star\top}\bm{U}\right\|\left\|\bm{\Sigma}_{\perp}^{\star}\right\|\notag\\
    		&\stackrel{(a)}{\leq} \sigma_{r+1}\frac{2\left\|\bm{E}\right\|}{\sigma_r^\star - \sigma_{r+1}^\star} + \left\|\bm{E}\right\| + \sigma_{r+1}^\star\frac{2\left\|\bm{E}\right\|}{\sigma_r^\star - \sigma_{r+1}^\star}\notag\\
    		&\stackrel{(b)}{\leq} \left(\sigma_{r+1}^\star + \left\|\bm{E}\right\|\right)\frac{2\left\|\bm{E}\right\|}{\sigma_r^\star - \sigma_{r+1}^\star} + \left\|\bm{E}\right\| + \sigma_{r+1}^\star\frac{2\left\|\bm{E}\right\|}{\sigma_r^\star - \sigma_{r+1}^\star}\notag\\
    		&\stackrel{(c)}{\leq} \frac{4\sigma_r^\star\left\|\bm{E}\right\|}{\sigma_r^\star - \sigma_{r+1}^\star}.
    \end{align}
    Here, (a) comes from \cite[Eqn. (2.26a)]{chen2021spectral}, (b) makes use of Weyl's inequality, and (c) holds due to the assumption $\sigma_r^\star - \sigma_{r+1}^\star > 2\|\bm{E}\|$.
\end{proof}
\begin{lemma}\label{lm:noise}
	Suppose that Assumption \ref{assump:noise} holds. 
	We let $\mathcal{A}$ denote the following set:
	\begin{align}\label{def:set_U}
		\mathcal{U} = \left\{\left(\bm{U}_1, \bm{U}_2, \bm{U}_3\right): \bm{U}_i \in \mathbb{R}^{n_i \times r_i}, \left\|\bm{U}_i\right\| \leq 1, \left\|\bm{U}_i\right\|_{2,\infty} \leq \sqrt{\frac{\mu_i r_i}{n_i}}, i \in [3]\right\}
	\end{align}
    and we define
    \begin{align*}
    	n = \max_{1 \leq i \leq 3} n_i,~\quad~\text{and}~\quad~r = \max_{1 \leq i \leq 3} r_i.
    \end{align*}
    If $n_1n_2n_3 \geq r^4n^2$, then
	with probability exceeding $1 - O(n^{-10})$, the following inequality holds:
    		\begin{align}
    		\sup_{\left(\bm{U}_1, \bm{U}_2, \bm{U}_3\right) \in \mathcal{U}}\left\|\bm{U}_1^\top\mathcal{M}_1\left(\bm{\mathcal{E}}\right)\left(\bm{U}_3 \otimes \bm{U}_2\right)\right\| &\lesssim \omega_{\sf max}\sqrt{n\mu_1\mu_2\mu_3r^3\log n},\label{ineq168}
    	\end{align}
    If, furthermore, the $E_{i,j,k}$'s are $\omega_{\sf max}$-sub-Gaussian, then with probability exceeding $1 - e^{-Cn}$, one has
    \begin{align}\label{ineq169}
    	\sup_{\left(\bm{U}_1, \bm{U}_2, \bm{U}_3\right) \in \mathcal{U}}\left\|\bm{U}_1^\top\mathcal{M}_1\left(\bm{\mathcal{E}}\right)\left(\bm{U}_3 \otimes \bm{U}_2\right)\right\| \leq \sqrt{nr}\omega_{\sf max}.
    \end{align}
\end{lemma}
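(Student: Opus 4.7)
The plan is to bound both suprema by an $\epsilon$-net argument over $\mathcal{U}$ combined with a pointwise concentration inequality---Bernstein for \eqref{ineq168} and a sub-Gaussian tail for \eqref{ineq169}. For \eqref{ineq168}, I first use Assumption~\ref{assump:noise} to truncate: the event $\{|E_{ijk}|\le B\ \text{for all } (i,j,k)\}$ has probability at least $1-n^{-20}$, so I may condition and treat $\bm{\mathcal{E}}$ as entrywise bounded by $B$ with variance essentially unchanged.

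For fixed $(\bm{U}_1,\bm{U}_2,\bm{U}_3)\in\mathcal{U}$ and unit vectors $\bm{x}\in S^{r_1-1}$, $\bm{y}=\mathrm{vec}(\bm{Y})\in S^{r_2 r_3-1}$, the bilinear form $\bm{x}^\top\bm{U}_1^\top\bm{E}_1(\bm{U}_3\otimes\bm{U}_2)\bm{y}=\sum_{i,j,k}E_{ijk}\alpha_{ijk}$ has coefficients $\alpha_{ijk}=(\bm{U}_1\bm{x})_i(\bm{U}_2\bm{Y}\bm{U}_3^\top)_{jk}$ satisfying $|\alpha_{ijk}|\le \sqrt{\mu_1\mu_2\mu_3 r^3/(n_1n_2n_3)}$ (using the row-norm bounds on the $\bm{U}_i$'s and $\|\bm{Y}\|\le\|\bm{Y}\|_F=1$) and $\sum_{i,j,k}\omega_{ijk}^2\alpha_{ijk}^2\le\omega_{\sf max}^2$. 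Bernstein's inequality then yields a tail of the form $\bbP(|\,\cdot\,|>t)\le 2\exp\bigl(-c\min\{t^2/\omega_{\sf max}^2,\;t\sqrt{n_1n_2n_3}/(B\sqrt{\mu_1\mu_2\mu_3 r^3})\}\bigr)$. I next cover each $\mathcal{U}_i$ and each sphere with $(1/6)$-nets in the operator/Euclidean norm; their combined log-cardinality is at most a constant times $n_1r_1+n_2r_2+n_3r_3+r_1+r_2 r_3\lesssim nr$. Taking $t\asymp \omega_{\sf max}\sqrt{n\mu_1\mu_2\mu_3 r^3\log n}$, both arguments of the Bernstein minimum exceed $nr\log n$: the Gaussian-regime argument is immediate (since $\mu_i\ge1$ and $r\ge1$), while the large-deviation argument is controlled using $B\le C_{\sf b}\omega_{\sf max}(n_1n_2n_3)^{1/4}/\log n$ together with the hypothesis $n_1n_2n_3\ge r^4n^2$, which forces $(n_1n_2n_3)^{1/4}\ge r\sqrt{n}$ and absorbs the range term with a power of $\log n$ to spare. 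A union bound over the net then delivers the target estimate on all net points with failure probability $O(n^{-10})$.

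To pass from the net to $\mathcal{U}$ I use multilinearity: for any $(\bm{U}_i)\in\mathcal{U}$ and net point $(\hat{\bm{U}}_i)\in\mathcal{U}$ with $\|\bm{U}_i-\hat{\bm{U}}_i\|\le 1/6$, the difference $\bm{U}_1^\top\bm{E}_1(\bm{U}_3\otimes\bm{U}_2)-\hat{\bm{U}}_1^\top\bm{E}_1(\hat{\bm{U}}_3\otimes\hat{\bm{U}}_2)$ telescopes into three error terms, each of the form $\bm{V}_1^\top\bm{E}_1(\bm{V}_3\otimes\bm{V}_2)$ in which one factor is a $\bm{\Delta}_j=\bm{U}_j-\hat{\bm{U}}_j$ of operator norm at most $1/6$ and the other two lie in $\mathcal{U}$. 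Bounding each by $(1/6)\,\tilde T$, where $\tilde T:=\sup\|\bm{V}_1^\top\bm{E}_1(\bm{V}_3\otimes\bm{V}_2)\|$ with $\bm{V}_i$ ranging over unit operator-norm balls of $\bbR^{n_i\times r_i}$ (no incoherence), an auxiliary and entirely parallel net/Bernstein computation gives $\tilde T\lesssim\omega_{\sf max}\sqrt{nr}$, which is dominated by the target, yielding \eqref{ineq168}. Inequality \eqref{ineq169} follows by the same skeleton but without truncation: the sub-Gaussian assumption replaces Bernstein by a pure tail $\exp(-ct^2/\omega_{\sf max}^2)$, and choosing $t=C\omega_{\sf max}\sqrt{nr}$ lets a union bound over the $\exp(O(nr))$-size net absorb the exponent and leave failure probability at most $e^{-Cn}$.

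The main technical obstacle is the delicate interplay between the Bernstein range term, the truncation level dictated by Assumption~\ref{assump:noise}, and the hypothesis $n_1n_2n_3\ge r^4n^2$: these three inputs conspire to render the large-deviation regime inactive and produce the unusual scaling $\omega_{\sf max}\sqrt{n\mu_1\mu_2\mu_3 r^3\log n}$. A subsidiary nuisance is the centering bias introduced by truncation, but since $\bbE[E_{ijk}\mathbbm{1}\{|E_{ijk}|>B\}]$ is negligible under the tail condition of Assumption~\ref{assump:noise}, it contributes a lower-order term easily absorbed into the final bound.
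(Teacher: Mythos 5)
Your overall strategy---truncation, Bernstein's inequality pointwise, and an $\epsilon$-net over $\mathcal{U}$---is the same as the paper's, and your per-point Bernstein calculation (with range bounded by $B\sqrt{\mu_1\mu_2\mu_3 r^3/(n_1n_2n_3)}$, variance proxy $\omega_{\sf max}^2$, and the hypothesis $n_1n_2n_3\ge r^4n^2$ absorbing the range term) is correct and matches \eqref{ineq166}. However, there is a genuine gap in your passage from the net to the full supremum.

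You cover each $\mathcal{U}_i$ by a $(1/6)$-net in \emph{operator norm}, telescope, and claim each error term is at most $\tfrac16\,\widetilde T$, where $\widetilde T$ is the analogous supremum over plain operator-norm balls \emph{with no incoherence constraint}. You then assert $\widetilde T\lesssim\omega_{\sf max}\sqrt{nr}$. This is false under Assumption~\ref{assump:noise} alone: after truncation the entries are merely bounded by $B$, hence $B$-sub-Gaussian rather than $\omega_{\sf max}$-sub-Gaussian, and $B$ may be as large as $C_{\sf b}\omega_{\sf max}(n_1n_2n_3)^{1/4}/\log n$. Without the $\ell_{2,\infty}$ constraint the Bernstein range term becomes $\asymp B$ (the coefficients $\alpha_{ijk}$ are no longer $O(\sqrt{\mu_1\mu_2\mu_3 r^3/(n_1n_2n_3)})$), and the resulting bound is $\widetilde T\lesssim\omega_{\sf max} r\sqrt{nr\log n}+Bnr\log n$, whose second term vastly exceeds your target. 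Indeed the claim $\widetilde T\lesssim\omega_{\sf max}\sqrt{nr}$ is precisely \eqref{ineq169}, which holds \emph{only} under the additional $\omega_{\sf max}$-sub-Gaussian hypothesis; invoking it inside the proof of \eqref{ineq168} is circular.

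The fix---which is what the paper actually does---is to retain the incoherence structure throughout the net-to-supremum step. Build the net in $\ell_{2,\infty}$ norm with fine resolution $\tfrac14\sqrt{1/n_i}$ (not merely operator-norm resolution $1/6$). Then each difference $\bm{\Delta}_j=\bm{U}_j-\bm{U}_j'$ satisfies simultaneously $\|\bm{\Delta}_j\|_{2,\infty}\le\tfrac14\sqrt{1/n_i}\le\tfrac14\sqrt{\mu_j r_j/n_j}$ and $\|\bm{\Delta}_j\|\le\sqrt{n_j}\|\bm{\Delta}_j\|_{2,\infty}\le\tfrac14$, so $\bm{\Delta}_j\in\tfrac14\mathcal{U}_j$. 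The telescoped error terms then lie inside scaled copies of $\mathcal{U}$, so their operator norms are controlled by the supremum $A$ over $\mathcal{U}$ itself (not by a lossier auxiliary $\widetilde T$), yielding a self-bounding relation of the form $A\le\alpha A + B$ with $\alpha<1$, whence $A\lesssim B$. With your operator-norm-only net, $\|\bm{\Delta}_j\|_{2,\infty}$ can be as large as $2\sqrt{\mu_j r_j/n_j}$, so the best you can say is $\bm{\Delta}_j\in 2\mathcal{U}_j$; the resulting inequality is $A\le cA+B$ with $c\ge 1$, which is vacuous. The log-cardinality of the finer net is still $O(n_i r_i\log n)$, and the per-point Bernstein bound supplies a matching $e^{-Cnr\log n}$ tail, so the union bound closes.
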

\begin{proof}[Proof of Lemma \ref{lm:noise}]
	\eqref{ineq169} can be proved by simply combining \citet[Lemma A.2]{zhou2022optimal} (or Lemma 8.2 presented in its arxiv version) and the standard epsilon-net technique used in the proof of \citet[Lemma 5]{zhang2018tensor} and we omit the details here for the sake of brevity.
	
	\paragraph{Proving \eqref{ineq168}: the bounded noise case.}To prove \eqref{ineq168}, we first consider the bounded noise case, i.e., $|E_{i,j,k}| \leq B$ holds for all $(i,j,k) \in [n_1] \times [n_2] \times [n_3]$. 
	For any fixed $\left(\overline{\bm{U}}_1, \overline{\bm{U}}_2, \overline{\bm{U}}_3\right) \in \mathcal{U}$, note that 
	\begin{align*}
		\overline{\bm{U}}_1^\top\mathcal{M}_1\left(\bm{\mathcal{E}}\right)\left(\overline{\bm{U}}_3 \otimes \overline{\bm{U}}_2\right) = \sum_{i \in [n_1], j \in [n_2n_3]}\left(\mathcal{M}_1\left(\bm{\mathcal{E}}\right)\right)_{i,j}\left(\overline{\bm{U}}_1\right)_{i,:}^\top\left(\overline{\bm{U}}_3 \otimes \overline{\bm{U}}_2\right)_{j,:}
	\end{align*}
is a sum of independent zero-mean matrices. In addition, we have
\begin{align*}
	L &:= \max_{i \in [n_1], j \in [n_2n_3]}\left\|\left(\mathcal{M}_1\left(\bm{\mathcal{E}}\right)\right)_{i,j}\left(\overline{\bm{U}}_1\right)_{i,:}^\top\left(\overline{\bm{U}}_3 \otimes \overline{\bm{U}}_2\right)_{j,:}\right\| \leq B\prod_{i=1}^{3}\left\|\overline{\bm{U}}_i\right\|_{2,\infty} \leq B\sqrt{\frac{\mu_1\mu_2\mu_3r_1r_2r_3}{n_1n_2n_3}},\\
	V &:= \max\bigg\{\bigg\|\sum_{i \in [n_1], j \in [n_2n_3]}\bbE\left[\left(\mathcal{M}_1\left(\bm{\mathcal{E}}\right)\right)_{i,j}^2\right]\left\|\left(\overline{\bm{U}}_3 \otimes \overline{\bm{U}}_2\right)_{j,:}\right\|_2^2\left(\overline{\bm{U}}_1\right)_{i,:}^\top\left(\overline{\bm{U}}_1\right)_{i,:}\bigg\|,\\&\hspace{1.6cm}\bigg\|\sum_{i \in [n_1], j \in [n_2n_3]}\bbE\left[\left(\mathcal{M}_1\left(\bm{\mathcal{E}}\right)\right)_{i,j}^2\right]\left\|\left(\overline{\bm{U}}_1\right)_{i,:}\right\|_2^2\left(\overline{\bm{U}}_3 \otimes \overline{\bm{U}}_2\right)_{j,:}^\top\left(\overline{\bm{U}}_3 \otimes \overline{\bm{U}}_2\right)_{j,:}\bigg\|\bigg\}\\
	&\leq \max\left\{\omega_{\sf max}^2\left\|\left(\overline{\bm{U}}_3 \otimes \overline{\bm{U}}_2\right)\right\|_{\rm F}^2\left\|\overline{\bm{U}}_1\overline{\bm{U}}_1^\top\right\|, \omega_{\sf max}^2\left\|\overline{\bm{U}}_1\right\|_{\rm F}^2\left\|\left(\overline{\bm{U}}_3 \otimes \overline{\bm{U}}_2\right)\left(\overline{\bm{U}}_3 \otimes \overline{\bm{U}}_2\right)^\top\right\|\right\}\\
	&\lesssim \omega_{\sf max}^2\max\left\{r_2r_3, r_1\right\} \leq \omega_{\sf max}^2r^2.
\end{align*}
In view of the matrix Bernstein inequality, with probability exceeding $1 - e^{-Cnr\log n}$,
\begin{align}\label{ineq166}
	\left\|\overline{\bm{U}}_1^\top\mathcal{M}_1\left(\bm{\mathcal{E}}\right)\left(\overline{\bm{U}}_3 \otimes \overline{\bm{U}}_2\right)\right\| &\lesssim \sqrt{Vnr\log n} + Lnr\log n\notag\\ &\lesssim \omega_{\sf max}\sqrt{nr^3\log n} + \omega_{\sf max}\frac{\left(n_1n_2n_3\right)^{1/4}}{\log n} \sqrt{\frac{\mu_1\mu_2\mu_3r_1r_2r_3}{n_1n_2n_3}}nr\log n\notag\\
	&\leq \omega_{\sf max}\sqrt{n\mu_1\mu_2\mu_3r^3\log n}.
\end{align}
Here, the last line makes use of $n_1n_2n_3 \geq r^4n^2$.

 Repeating a similar argument as in \citet[Lemma 5.2]{vershynin2010introduction} yields that: there exists a set $\mathcal{B}_i \subset \mathcal{D}_i = \{\bm{x}: \bm{x} \in \bbR^{r_i}, \left\|\bm{x}\right\|_2 \leq \sqrt{\mu_ir_i/n_i}\}$ with cardinality at most $(1 + 8\sqrt{\mu_i r_i})^{r_i}$ such that for any $\bm{x} \in \mathcal{D}_i$, one can find $\bm{x}' \in \mathcal{B}_i$ such that 
 \begin{align*}
 	\|\bm{x} - \bm{x}'\|_2 \leq \frac{1}{4}\sqrt{\frac{1}{n_i}}.
 \end{align*}
 $\|\bm{x} - \bm{x}'\|_2 \leq c\sqrt{1/n_i}$. As a direct consequence, for any $\bm{U}_i \in \mathcal{U}_i := \left\{\bm{U} \in \mathbb{R}^{n_i \times r_i}: \left\|\bm{U}\right\| \leq 1, \left\|\bm{U}\right\|_{2,\infty} \leq \sqrt{\frac{\mu_i r_i}{n_i}}\right\}$, one can find $\bm{U}_i' \in \mathcal{F}_i := \{\bm{U}  \in \mathbb{R}^{n_i \times r_i}: \bm{U}_{j,:}^\top \in \mathcal{B}_i, \forall j \in [n_i]\}$ such that
 \begin{align*}
 	\left\|\bm{U}_i - \bm{U}_i'\right\|_{2,\infty} \leq \frac{1}{4}\sqrt{\frac{1}{n_i}}.
 \end{align*}
Let $\mathcal{U}_i'$ denote the following set:
\begin{align}\label{def:U_i'}
	\mathcal{U}_i' := \left\{\bm{U}' \in \bbR^{n_i \times r_i}: \bm{U}' \in \mathcal{F}_i, \inf_{\bm{U} \in \mathcal{U}_i}\left\|\bm{U} - \bm{U}'\right\|_{2,\infty} \leq \frac{1}{4}\sqrt{\frac{1}{n_i}}\right\}.
\end{align}
Then we can verify the following three properties:
\begin{subequations}
	\begin{align}
		\left|\mathcal{U}_i'\right| &\leq \left|\mathcal{F}_i\right| = \left|\mathcal{B}_i\right|^{n_i} \leq (1 + 8\sqrt{\mu_i r_i})^{n_ir_i} \leq n_i^{n_ir_i} \leq e^{n_ir_i\log n},\label{ineq163a}\\
		\forall \bm{U}_i &\in \mathcal{U}_i,~\quad~\exists \bm{U}_i' \in \mathcal{U}_i'~\quad~\text{s.t.}~\quad~\left\|\bm{U}_i - \bm{U}_i'\right\|_{2,\infty} \leq \frac{1}{4}\sqrt{\frac{1}{n_i}}~\quad~\text{and}~\quad~\left\|\bm{U}_i - \bm{U}_i'\right\| \leq \frac{1}{4},\label{ineq163b}\\
		\forall \bm{U}_i' &\in \mathcal{U}_i',~\quad~\exists \bm{U}_i \in \mathcal{U}_i~\quad~\text{s.t.}~\quad~\left\|\bm{U}_i - \bm{U}_i'\right\|_{2,\infty} \leq \frac{1}{4}\sqrt{\frac{1}{n_i}}~\quad~\text{and}~\quad~\left\|\bm{U}_i - \bm{U}_i'\right\| \leq \frac{1}{4}.\label{ineq163c}
	\end{align}
\end{subequations}
Here, the last two inequalities make use of $\|\bm{U}_i - \bm{U}_i'\| \leq \sqrt{n_i}\|\bm{U}_i - \bm{U}_i'\|_{2,\infty}$.

We define $$A = \sup_{\left(\bm{U}_1, \bm{U}_2, \bm{U}_3\right) \in \mathcal{U}}\left\|\bm{U}_1^\top\mathcal{M}_1\left(\bm{\mathcal{E}}\right)\left(\bm{U}_3 \otimes \bm{U}_2\right)\right\|~\quad~\text{and}~\quad~B = \sup_{\left(\bm{U}_1', \bm{U}_2', \bm{U}_3'\right) \in \mathcal{U}_1' \times \mathcal{U}_2' \times \mathcal{U}_3'}\left\|\bm{U}_1^{'\top}\mathcal{M}_1\left(\bm{\mathcal{E}}\right)\left(\bm{U}_3' \otimes \bm{U}_2'\right)\right\|.$$ For any $(\bm{U}_1, \bm{U}_2, \bm{U}_3) \in \mathcal{U} = \mathcal{U}_1 \times \mathcal{U}_2 \times \mathcal{U}_3$, we know from \eqref{ineq163b} and \eqref{ineq163c} that there exist $(\bm{U}_1', \bm{U}_2', \bm{U}_3') \in \mathcal{U}_1' \times \mathcal{U}_2' \times \mathcal{U}_3'$ such that
\begin{subequations}
	\begin{align}
		\bm{U}_i - \bm{U}_i' &\in \frac{1}{4}\mathcal{U}_i,~\quad~i \in [3],\label{ineq164a}\\
		\bm{U}_i' &\in \frac{5}{4}\mathcal{U}_i,~\quad~i \in [3].\label{ineq164b}
	\end{align}
\end{subequations}
The triangle inequality, \eqref{ineq164a} and \eqref{ineq164b} together show that
\begin{align*}
	\left\|\bm{U}_1^\top\mathcal{M}_1\left(\bm{\mathcal{E}}\right)\left(\bm{U}_3 \otimes \bm{U}_2\right)\right\| &\leq \left\|\left(\bm{U}_1 - \bm{U}_1'\right)^\top\mathcal{M}_1\left(\bm{\mathcal{E}}\right)\left(\bm{U}_3 \otimes \bm{U}_2\right)\right\| + \left\| \bm{U}_1^{'\top}\mathcal{M}_1\left(\bm{\mathcal{E}}\right)\left(\bm{U}_3 \otimes \left(\bm{U}_2 - \bm{U}_2'\right)\right)\right\|\notag\\
	&\quad + \left\| \bm{U}_1^{'\top}\mathcal{M}_1\left(\bm{\mathcal{E}}\right)\left(\left(\bm{U}_3 - \bm{U}_3'\right) \otimes \bm{U}_2'\right)\right\| + \left\|\bm{U}_1^{'\top}\mathcal{M}_1\left(\bm{\mathcal{E}}\right)\left(\bm{U}_3' \otimes \bm{U}_2'\right)\right\|\notag\\
	&\leq \frac{1}{4}A + \frac{1}{4}\cdot \frac{5}{4}A + \frac{1}{4}\cdot \frac{5}{4}\cdot \frac{5}{4}A + B\notag\\
	&\leq \frac{61}{64}A + B,
\end{align*}
which implies
\begin{align}\label{ineq165}
	A \leq \frac{64}{3}B.
\end{align}
Furthermore, \eqref{ineq166}, \eqref{ineq163a}, \eqref{ineq164b} and the union bound together imply that with probability exceeding $1 - e^{-Cnr\log n}\cdot \prod_ie^{n_ir_i\log n} \geq 1 - e^{-C'nr\log n}$,
\begin{align}\label{ineq167}
	B \lesssim \left(\frac{5}{4}\right)^3\omega_{\sf max}\sqrt{n\mu_1\mu_2\mu_3r^3\log n} \asymp \omega_{\sf max}\sqrt{n\mu_1\mu_2\mu_3r^3\log n}.
\end{align}
Putting \eqref{ineq165} and \eqref{ineq167} together, we arrive at
\begin{align*}
	\sup_{\left(\bm{U}_1, \bm{U}_2, \bm{U}_3\right) \in \mathcal{U}}\left\|\bm{U}_1^\top\mathcal{M}_1\left(\bm{\mathcal{E}}\right)\left(\bm{U}_3 \otimes \bm{U}_2\right)\right\| = A \lesssim \omega_{\sf max}\sqrt{n\mu_1\mu_2\mu_3r^3\log n}
\end{align*}
 with probability  exceeding $1 - e^{-C'nr\log n}$.

\paragraph{Proving \eqref{ineq168}: the general case.} For the general case where the noise matrix $\bm{E}$ satisfies Assumption \ref{assump:noise_matrix}, we can prove \eqref{ineq168} by repeating a similar argument as in \citet[Section B.4.2]{zhou2023deflated}.
\end{proof}

\bibliographystyle{apalike}
\bibliography{reference}

\begin{thebibliography}{}

\bibitem[Abbe, 2017]{abbe2017community}
Abbe, E. (2017).
\newblock Community detection and stochastic block models: recent developments.
\newblock {\em The Journal of Machine Learning Research}, 18(1):6446--6531.

\bibitem[Abbe et~al., 2015]{abbe2015exact}
Abbe, E., Bandeira, A.~S., and Hall, G. (2015).
\newblock Exact recovery in the stochastic block model.
\newblock {\em IEEE Transactions on information theory}, 62(1):471--487.

\bibitem[Abbe et~al., 2022]{abbe2022lp}
Abbe, E., Fan, J., and Wang, K. (2022).
\newblock An $\ell_p$ theory of {PCA} and spectral clustering.
\newblock {\em The Annals of Statistics}, 50(4):2359--2385.

\bibitem[Abbe et~al., 2020]{abbe2020entrywise}
Abbe, E., Fan, J., Wang, K., and Zhong, Y. (2020).
\newblock Entrywise eigenvector analysis of random matrices with low expected
  rank.
\newblock {\em Annals of statistics}, 48(3):1452.

\bibitem[Abbe and Sandon, 2015]{abbe2015community}
Abbe, E. and Sandon, C. (2015).
\newblock Community detection in general stochastic block models: Fundamental
  limits and efficient algorithms for recovery.
\newblock In {\em Annual Symposium on Foundations of Computer Science}, pages
  670--688. IEEE.

\bibitem[Agterberg and Zhang, 2022]{agterberg2022estimating}
Agterberg, J. and Zhang, A. (2022).
\newblock Estimating higher-order mixed memberships via the $\ell_{2, \infty}$
  tensor perturbation bound.
\newblock {\em arXiv preprint arXiv:2212.08642}.

\bibitem[Amini and Levina, 2018]{amini2018semidefinite}
Amini, A.~A. and Levina, E. (2018).
\newblock On semidefinite relaxations for the block model.

\bibitem[Anandkumar et~al., 2017]{anandkumar2017homotopy}
Anandkumar, A., Deng, Y., Ge, R., and Mobahi, H. (2017).
\newblock Homotopy analysis for tensor pca.
\newblock In {\em Conference on Learning Theory}, pages 79--104. PMLR.

\bibitem[Arous et~al., 2019]{arous2019landscape}
Arous, G.~B., Mei, S., Montanari, A., and Nica, M. (2019).
\newblock The landscape of the spiked tensor model.
\newblock {\em Communications on Pure and Applied Mathematics},
  72(11):2282--2330.

\bibitem[Arthur and Vassilvitskii, 2007]{arthur2007k}
Arthur, D. and Vassilvitskii, S. (2007).
\newblock K-means++: the advantages of careful seeding.
\newblock In {\em Proceedings of the eighteenth annual ACM-SIAM symposium on
  Discrete algorithms}, pages 1027--1035.

\bibitem[Bahmani et~al., 2012]{bahmani2012scalable}
Bahmani, B., Moseley, B., Vattani, A., Kumar, R., and Vassilvitskii, S. (2012).
\newblock Scalable k-means++.
\newblock {\em Proceedings of the VLDB Endowment}, 5(7):622--633.

\bibitem[Bandeira et~al., 2017]{bandeira2017tightness}
Bandeira, A.~S., Boumal, N., and Singer, A. (2017).
\newblock Tightness of the maximum likelihood semidefinite relaxation for
  angular synchronization.
\newblock {\em Mathematical Programming}, 163:145--167.

\bibitem[Bi et~al., 2018]{bi2018multilayer}
Bi, X., Qu, A., and Shen, X. (2018).
\newblock Multilayer tensor factorization with applications to recommender
  systems.
\newblock {\em The Annals of Statistics}, 46(6B):3308--3333.

\bibitem[Bi et~al., 2021]{bi2021tensors}
Bi, X., Tang, X., Yuan, Y., Zhang, Y., and Qu, A. (2021).
\newblock Tensors in statistics.
\newblock {\em Annual review of statistics and its application}, 8:345--368.

\bibitem[Boucheron et~al., 2013]{boucheron2013concentration}
Boucheron, S., Lugosi, G., and Massart, P. (2013).
\newblock {\em Concentration inequalities: A nonasymptotic theory of
  independence}.
\newblock Oxford university press.

\bibitem[Cai et~al., 2021]{cai2021subspace}
Cai, C., Li, G., Chi, Y., Poor, H.~V., and Chen, Y. (2021).
\newblock Subspace estimation from unbalanced and incomplete data matrices:
  $\ell_{2,\infty}$ statistical guarantees.
\newblock {\em The Annals of Statistics}, 49(2):944--967.

\bibitem[Cai et~al., 2022a]{cai2022nonconvex}
Cai, C., Li, G., Poor, H.~V., and Chen, Y. (2022a).
\newblock Nonconvex low-rank tensor completion from noisy data.
\newblock {\em Operations Research}, 70(2):1219--1237.

\bibitem[Cai et~al., 2022b]{cai2022uncertainty}
Cai, C., Poor, H.~V., and Chen, Y. (2022b).
\newblock Uncertainty quantification for nonconvex tensor completion:
  Confidence intervals, heteroscedasticity and optimality.
\newblock {\em IEEE Transactions on Information Theory}, 69(1):407--452.

\bibitem[Cai and Li, 2015]{cai2015robust}
Cai, T.~T. and Li, X. (2015).
\newblock Robust and computationally feasible community detection in the
  presence of arbitrary outlier nodes.
\newblock {\em The Annals of Statistics}, 43(3):1027--1059.

\bibitem[Cai and Zhang, 2018]{cai2018rate}
Cai, T.~T. and Zhang, A. (2018).
\newblock Rate-optimal perturbation bounds for singular subspaces with
  applications to high-dimensional statistics.
\newblock {\em The Annals of Statistics}, 46(1):60--89.

\bibitem[Celentano et~al., 2023]{celentano2023local}
Celentano, M., Fan, Z., and Mei, S. (2023).
\newblock Local convexity of the {TAP} free energy and {AMP} convergence for
  $z_2$-synchronization.
\newblock {\em The Annals of Statistics}, 51(2):519--546.

\bibitem[Chen and Yang, 2021]{chen2021cutoff}
Chen, X. and Yang, Y. (2021).
\newblock Cutoff for exact recovery of {Gaussian} mixture models.
\newblock {\em IEEE Transactions on Information Theory}, 67(6):4223--4238.

\bibitem[Chen and Cand{\`e}s, 2018]{chen2018projected}
Chen, Y. and Cand{\`e}s, E.~J. (2018).
\newblock The projected power method: An efficient algorithm for joint
  alignment from pairwise differences.
\newblock {\em Communications on Pure and Applied Mathematics},
  71(8):1648--1714.

\bibitem[Chen et~al., 2021a]{chen2021spectral}
Chen, Y., Chi, Y., Fan, J., and Ma, C. (2021a).
\newblock Spectral methods for data science: A statistical perspective.
\newblock {\em Foundations and Trends{\textregistered} in Machine Learning},
  14(5):566--806.

\bibitem[Chen et~al., 2020]{chen2020noisy}
Chen, Y., Chi, Y., Fan, J., Ma, C., and Yan, Y. (2020).
\newblock Noisy matrix completion: Understanding statistical guarantees for
  convex relaxation via nonconvex optimization.
\newblock {\em SIAM journal on optimization}, 30(4):3098--3121.

\bibitem[Chen et~al., 2019a]{chen2019spectral}
Chen, Y., Fan, J., Ma, C., and Wang, K. (2019a).
\newblock Spectral method and regularized {MLE} are both optimal for top-{K}
  ranking.
\newblock {\em Annals of statistics}, 47(4):2204.

\bibitem[Chen et~al., 2019b]{chen2019inference}
Chen, Y., Fan, J., Ma, C., and Yan, Y. (2019b).
\newblock Inference and uncertainty quantification for noisy matrix completion.
\newblock {\em Proceedings of the National Academy of Sciences},
  116(46):22931--22937.

\bibitem[Chen et~al., 2021b]{chen2021bridging}
Chen, Y., Fan, J., Ma, C., and Yan, Y. (2021b).
\newblock Bridging convex and nonconvex optimization in robust {PCA}: Noise,
  outliers and missing data.
\newblock {\em The Annals of Statistics}, 49(5):2948--2971.

\bibitem[Chen et~al., 2023]{chen2021convex}
Chen, Y., Fan, J., Wang, B., and Yan, Y. (2023).
\newblock Convex and nonconvex optimization are both minimax-optimal for noisy
  blind deconvolution under random designs.
\newblock {\em Journal of the American Statistical Association},
  118(542):858--868.

\bibitem[Chen et~al., 2016]{chen2016community}
Chen, Y., Kamath, G., Suh, C., and Tse, D. (2016).
\newblock Community recovery in graphs with locality.
\newblock In {\em International conference on machine learning}, pages
  689--698. PMLR.

\bibitem[Chi et~al., 2020]{chi2020provable}
Chi, E.~C., Gaines, B.~R., Sun, W.~W., Zhou, H., and Yang, J. (2020).
\newblock Provable convex co-clustering of tensors.
\newblock {\em The Journal of Machine Learning Research}, 21(1):8792--8849.

\bibitem[Chin et~al., 2015]{chin2015stochastic}
Chin, P., Rao, A., and Vu, V. (2015).
\newblock Stochastic block model and community detection in sparse graphs: A
  spectral algorithm with optimal rate of recovery.
\newblock In {\em Conference on Learning Theory}, pages 391--423. PMLR.

\bibitem[Cichocki et~al., 2015]{cichocki2015tensor}
Cichocki, A., Mandic, D., De~Lathauwer, L., Zhou, G., Zhao, Q., Caiafa, C., and
  Phan, H.~A. (2015).
\newblock Tensor decompositions for signal processing applications: From
  two-way to multiway component analysis.
\newblock {\em IEEE signal processing magazine}, 32(2):145--163.

\bibitem[De~Lathauwer et~al., 2000]{de2000best}
De~Lathauwer, L., De~Moor, B., and Vandewalle, J. (2000).
\newblock On the best rank-$1$ and rank-$(r_1, r_2,..., r_n)$ approximation of
  higher-order tensors.
\newblock {\em SIAM journal on Matrix Analysis and Applications},
  21(4):1324--1342.

\bibitem[Deng et~al., 2023]{deng2023correlation}
Deng, Y., Tang, X., and Qu, A. (2023).
\newblock Correlation tensor decomposition and its application in spatial
  imaging data.
\newblock {\em Journal of the American Statistical Association},
  118(541):440--456.

\bibitem[Deshpande et~al., 2017]{deshpande2017asymptotic}
Deshpande, Y., Abbe, E., and Montanari, A. (2017).
\newblock Asymptotic mutual information for the balanced binary stochastic
  block model.
\newblock {\em Information and Inference: A Journal of the IMA}, 6(2):125--170.

\bibitem[Florescu and Perkins, 2016]{florescu2016spectral}
Florescu, L. and Perkins, W. (2016).
\newblock Spectral thresholds in the bipartite stochastic block model.
\newblock In {\em Conference on Learning Theory}, pages 943--959. PMLR.

\bibitem[Fu and Dong, 2016]{fu20163d}
Fu, Y. and Dong, W. (2016).
\newblock {3D} magnetic resonance image denoising using low-rank tensor
  approximation.
\newblock {\em Neurocomputing}, 195:30--39.

\bibitem[Gao et~al., 2017]{gao2017achieving}
Gao, C., Ma, Z., Zhang, A.~Y., and Zhou, H.~H. (2017).
\newblock Achieving optimal misclassification proportion in stochastic block
  models.
\newblock {\em The Journal of Machine Learning Research}, 18(1):1980--2024.

\bibitem[Gao and Zhang, 2021]{gao2021exact}
Gao, C. and Zhang, A.~Y. (2021).
\newblock Exact minimax estimation for phase synchronization.
\newblock {\em IEEE Transactions on Information Theory}, 67(12):8236--8247.

\bibitem[Gu{\'e}don and Vershynin, 2016]{guedon2016community}
Gu{\'e}don, O. and Vershynin, R. (2016).
\newblock Community detection in sparse networks via grothendieck’s
  inequality.
\newblock {\em Probability Theory and Related Fields}, 165(3-4):1025--1049.

\bibitem[Hajek et~al., 2016a]{hajek2016achieving}
Hajek, B., Wu, Y., and Xu, J. (2016a).
\newblock Achieving exact cluster recovery threshold via semidefinite
  programming.
\newblock {\em IEEE Transactions on Information Theory}, 62(5):2788--2797.

\bibitem[Hajek et~al., 2016b]{hajek2016achievingE}
Hajek, B., Wu, Y., and Xu, J. (2016b).
\newblock Achieving exact cluster recovery threshold via semidefinite
  programming: Extensions.
\newblock {\em IEEE Transactions on Information Theory}, 62(10):5918--5937.

\bibitem[Han et~al., 2022a]{han2022exact}
Han, R., Luo, Y., Wang, M., and Zhang, A.~R. (2022a).
\newblock Exact clustering in tensor block model: Statistical optimality and
  computational limit.
\newblock {\em Journal of the Royal Statistical Society Series B},
  84(5):1666--1698.

\bibitem[Han et~al., 2022b]{han2022optimal}
Han, R., Willett, R., and Zhang, A.~R. (2022b).
\newblock An optimal statistical and computational framework for generalized
  tensor estimation.
\newblock {\em The Annals of Statistics}, 50(1):1--29.

\bibitem[Han et~al., 2023]{han2023eigen}
Han, X., Tong, X., and Fan, Y. (2023).
\newblock Eigen selection in spectral clustering: a theory-guided practice.
\newblock {\em Journal of the American Statistical Association},
  118(541):109--121.

\bibitem[Holland et~al., 1983]{holland1983stochastic}
Holland, P.~W., Laskey, K.~B., and Leinhardt, S. (1983).
\newblock Stochastic blockmodels: First steps.
\newblock {\em Social networks}, 5(2):109--137.

\bibitem[Hopkins et~al., 2015]{hopkins2015tensor}
Hopkins, S.~B., Shi, J., and Steurer, D. (2015).
\newblock Tensor principal component analysis via sum-of-square proofs.
\newblock In {\em Conference on Learning Theory}, pages 956--1006. PMLR.

\bibitem[Hu and Wang, 2023]{hu2023multiway}
Hu, J. and Wang, M. (2023).
\newblock Multiway spherical clustering via degree-corrected tensor block
  models.
\newblock {\em IEEE Transactions on Information Theory}.

\bibitem[Javanmard et~al., 2016]{javanmard2016phase}
Javanmard, A., Montanari, A., and Ricci-Tersenghi, F. (2016).
\newblock Phase transitions in semidefinite relaxations.
\newblock {\em Proceedings of the National Academy of Sciences},
  113(16):E2218--E2223.

\bibitem[Jegelka et~al., 2009]{jegelka2009approximation}
Jegelka, S., Sra, S., and Banerjee, A. (2009).
\newblock Approximation algorithms for tensor clustering.
\newblock In {\em International Conference on Algorithmic Learning Theory},
  pages 368--383. Springer.

\bibitem[Kannan and Vempala, 2009]{kannan2009spectral}
Kannan, R. and Vempala, S. (2009).
\newblock Spectral algorithms.
\newblock {\em Foundations and Trends{\textregistered} in Theoretical Computer
  Science}, 4(3--4):157--288.

\bibitem[Ke and Wang, 2022]{ke2022optimal}
Ke, Z.~T. and Wang, J. (2022).
\newblock Optimal network membership estimation under severe degree
  heterogeneity.
\newblock {\em arXiv preprint arXiv:2204.12087}.

\bibitem[Lei et~al., 2020]{lei2020consistent}
Lei, J., Chen, K., and Lynch, B. (2020).
\newblock Consistent community detection in multi-layer network data.
\newblock {\em Biometrika}, 107(1):61--73.

\bibitem[Lei and Rinaldo, 2015]{lei2015consistency}
Lei, J. and Rinaldo, A. (2015).
\newblock Consistency of spectral clustering in stochastic block models.
\newblock {\em The Annals of Statistics}, 43(1):215--237.

\bibitem[Lei, 2019]{lei2019unified}
Lei, L. (2019).
\newblock Unified $\ell_{2\rightarrow \infty}$ eigenspace perturbation theory
  for symmetric random matrices.
\newblock {\em arXiv preprint arXiv:1909.04798}.

\bibitem[Li et~al., 2023]{li2023approximate}
Li, G., Fan, W., and Wei, Y. (2023).
\newblock Approximate message passing from random initialization with
  applications to $z_2$ synchronization.
\newblock {\em Proceedings of the National Academy of Sciences (PNAS)},
  120(31).

\bibitem[Li and Wei, 2022]{li2022non}
Li, G. and Wei, Y. (2022).
\newblock A non-asymptotic framework for approximate message passing in spiked
  models.
\newblock {\em arXiv preprint arXiv:2208.03313}.

\bibitem[Li and Li, 2010]{li2010tensor}
Li, N. and Li, B. (2010).
\newblock Tensor completion for on-board compression of hyperspectral images.
\newblock In {\em 2010 IEEE International Conference on Image Processing},
  pages 517--520. IEEE.

\bibitem[Li et~al., 2021]{li2021convex}
Li, X., Chen, Y., and Xu, J. (2021).
\newblock Convex relaxation methods for community detection.
\newblock {\em Statistical science}, 36(1):2--15.

\bibitem[Li et~al., 2020]{li2020birds}
Li, X., Li, Y., Ling, S., Strohmer, T., and Wei, K. (2020).
\newblock When do birds of a feather flock together? $k$-means, proximity, and
  conic programming.
\newblock {\em Mathematical Programming}, 179:295--341.

\bibitem[Ling, 2022]{ling2022near}
Ling, S. (2022).
\newblock Near-optimal performance bounds for orthogonal and permutation group
  synchronization via spectral methods.
\newblock {\em Applied and Computational Harmonic Analysis}, 60:20--52.

\bibitem[Liu and Moitra, 2020]{liu2020tensor}
Liu, A. and Moitra, A. (2020).
\newblock Tensor completion made practical.
\newblock {\em Advances in Neural Information Processing Systems},
  33:18905--18916.

\bibitem[L{\"o}ffler et~al., 2021]{loffler2021optimality}
L{\"o}ffler, M., Zhang, A.~Y., and Zhou, H.~H. (2021).
\newblock Optimality of spectral clustering in the {Gaussian} mixture model.
\newblock {\em The Annals of Statistics}, 49(5):2506--2530.

\bibitem[Lounici, 2014]{lounici2014high}
Lounici, K. (2014).
\newblock High-dimensional covariance matrix estimation with missing
  observations.
\newblock {\em Bernoulli}, 20(3):1029--1058.

\bibitem[Lu and Zhou, 2016]{lu2016statistical}
Lu, Y. and Zhou, H.~H. (2016).
\newblock Statistical and computational guarantees of {Lloyd's} algorithm and
  its variants.
\newblock {\em arXiv preprint arXiv:1612.02099}.

\bibitem[Lyu and Xia, 2022]{lyu2022optimal}
Lyu, Z. and Xia, D. (2022).
\newblock Optimal clustering by lloyd algorithm for low-rank mixture model.
\newblock {\em arXiv preprint arXiv:2207.04600}.

\bibitem[Ma et~al., 2020]{ma2020implicit}
Ma, C., Wang, K., Chi, Y., and Chen, Y. (2020).
\newblock Implicit regularization in nonconvex statistical estimation: Gradient
  descent converges linearly for phase retrieval, matrix completion, and blind
  deconvolution.
\newblock {\em Foundations of Computational Mathematics}, 20(3):451--632.

\bibitem[Mai et~al., 2022]{mai2022doubly}
Mai, Q., Zhang, X., Pan, Y., and Deng, K. (2022).
\newblock A doubly enhanced em algorithm for model-based tensor clustering.
\newblock {\em Journal of the American Statistical Association},
  117(540):2120--2134.

\bibitem[Milligan and Cooper, 1986]{milligan1986study}
Milligan, G.~W. and Cooper, M.~C. (1986).
\newblock A study of the comparability of external criteria for hierarchical
  cluster analysis.
\newblock {\em Multivariate behavioral research}, 21(4):441--458.

\bibitem[Montanari and Sun, 2018]{montanari2018spectral}
Montanari, A. and Sun, N. (2018).
\newblock Spectral algorithms for tensor completion.
\newblock {\em Communications on Pure and Applied Mathematics},
  71(11):2381--2425.

\bibitem[Mossel et~al., 2014]{mossel2014belief}
Mossel, E., Neeman, J., and Sly, A. (2014).
\newblock Belief propagation, robust reconstruction and optimal recovery of
  block models.
\newblock In {\em Conference on Learning Theory}, pages 356--370. PMLR.

\bibitem[Mossel et~al., 2015]{mossel2015reconstruction}
Mossel, E., Neeman, J., and Sly, A. (2015).
\newblock Reconstruction and estimation in the planted partition model.
\newblock {\em Probability Theory and Related Fields}, 162:431--461.

\bibitem[Nasiri et~al., 2014]{nasiri2014fuzzy}
Nasiri, M., Rezghi, M., and Minaei, B. (2014).
\newblock Fuzzy dynamic tensor decomposition algorithm for recommender system.
\newblock {\em UCT Journal of Research in Science, Engineering and Technology},
  2(2):52--55.

\bibitem[Ndaoud, 2022]{ndaoud2022sharp}
Ndaoud, M. (2022).
\newblock Sharp optimal recovery in the two component {Gaussian} mixture model.
\newblock {\em The Annals of Statistics}, 50(4):2096--2126.

\bibitem[Richard and Montanari, 2014]{richard2014statistical}
Richard, E. and Montanari, A. (2014).
\newblock A statistical model for tensor {PCA}.
\newblock In {\em Advances in Neural Information Processing Systems}, pages
  2897--2905.

\bibitem[Rohe et~al., 2011]{rohe2011spectral}
Rohe, K., Chatterjee, S., and Yu, B. (2011).
\newblock Spectral clustering and the high-dimensional stochastic blockmodel.
\newblock {\em The Annals of Statistics}, 39(4):1878--1915.

\bibitem[Sidiropoulos et~al., 2017]{sidiropoulos2017tensor}
Sidiropoulos, N.~D., De~Lathauwer, L., Fu, X., Huang, K., Papalexakis, E.~E.,
  and Faloutsos, C. (2017).
\newblock Tensor decomposition for signal processing and machine learning.
\newblock {\em IEEE Transactions on signal processing}, 65(13):3551--3582.

\bibitem[Singer, 2011]{singer2011angular}
Singer, A. (2011).
\newblock Angular synchronization by eigenvectors and semidefinite programming.
\newblock {\em Applied and computational harmonic analysis}, 30(1):20--36.

\bibitem[Sun and Li, 2019]{sun2019dynamic}
Sun, W.~W. and Li, L. (2019).
\newblock Dynamic tensor clustering.
\newblock {\em Journal of the American Statistical Association},
  114(528):1894--1907.

\bibitem[Tong et~al., 2022]{tong2022scaling}
Tong, T., Ma, C., Prater-Bennette, A., Tripp, E., and Chi, Y. (2022).
\newblock Scaling and scalability: Provable nonconvex low-rank tensor
  estimation from incomplete measurements.
\newblock {\em Journal of Machine Learning Research}, 23(163):1--77.

\bibitem[Vershynin, 2010]{vershynin2010introduction}
Vershynin, R. (2010).
\newblock Introduction to the non-asymptotic analysis of random matrices.
\newblock {\em arXiv preprint arXiv:1011.3027}.

\bibitem[Vershynin, 2018]{vershynin2018high}
Vershynin, R. (2018).
\newblock {\em High-dimensional probability: An introduction with applications
  in data science}, volume~47.
\newblock Cambridge university press.

\bibitem[Von~Luxburg, 2007]{von2007tutorial}
Von~Luxburg, U. (2007).
\newblock A tutorial on spectral clustering.
\newblock {\em Statistics and computing}, 17:395--416.

\bibitem[Wang et~al., 2019]{wang2019three}
Wang, M., Fischer, J., and Song, Y.~S. (2019).
\newblock Three-way clustering of multi-tissue multi-individual gene expression
  data using semi-nonnegative tensor decomposition.
\newblock {\em The annals of applied statistics}, 13(2):1103.

\bibitem[Wang and Zeng, 2019]{wang2019multiway}
Wang, M. and Zeng, Y. (2019).
\newblock Multiway clustering via tensor block models.
\newblock {\em Advances in neural information processing systems}, 32.

\bibitem[Wozniak et~al., 2007]{wozniak2007neurocognitive}
Wozniak, J.~R., Krach, L., Ward, E., Mueller, B.~A., Muetzel, R., Schnoebelen,
  S., Kiragu, A., and Lim, K.~O. (2007).
\newblock Neurocognitive and neuroimaging correlates of pediatric traumatic
  brain injury: a diffusion tensor imaging (dti) study.
\newblock {\em Archives of Clinical Neuropsychology}, 22(5):555--568.

\bibitem[Wu et~al., 2019]{wu2019essential}
Wu, J., Lin, Z., and Zha, H. (2019).
\newblock Essential tensor learning for multi-view spectral clustering.
\newblock {\em IEEE Transactions on Image Processing}, 28(12):5910--5922.

\bibitem[Xia, 2021]{xia2021normal}
Xia, D. (2021).
\newblock Normal approximation and confidence region of singular subspaces.
\newblock {\em Electronic Journal of Statistics}, 15(2):3798--3851.

\bibitem[Xia et~al., 2021]{xia2021statistically}
Xia, D., Yuan, M., and Zhang, C.-H. (2021).
\newblock Statistically optimal and computationally efficient low rank tensor
  completion from noisy entries.
\newblock {\em The Annals of Statistics}, 49(1).

\bibitem[Xia et~al., 2022]{xia2022inference}
Xia, D., Zhang, A.~R., and Zhou, Y. (2022).
\newblock Inference for low-rank tensors—no need to debias.
\newblock {\em The Annals of Statistics}, 50(2):1220--1245.

\bibitem[Yan et~al., 2021]{yan2021inference}
Yan, Y., Chen, Y., and Fan, J. (2021).
\newblock Inference for heteroskedastic {PCA} with missing data.
\newblock {\em arXiv preprint arXiv:2107.12365}.

\bibitem[Yang and Ma, 2022]{yang2022optimal}
Yang, Y. and Ma, C. (2022).
\newblock Optimal tuning-free convex relaxation for noisy matrix completion.
\newblock {\em arXiv preprint arXiv:2207.05802}.

\bibitem[Yuan and Zhang, 2016]{yuan2016tensor}
Yuan, M. and Zhang, C.-H. (2016).
\newblock On tensor completion via nuclear norm minimization.
\newblock {\em Foundations of Computational Mathematics}, 16(4):1031--1068.

\bibitem[Zhang and Xia, 2018]{zhang2018tensor}
Zhang, A. and Xia, D. (2018).
\newblock Tensor {SVD}: Statistical and computational limits.
\newblock {\em IEEE Transactions on Information Theory}, 64(11):7311--7338.

\bibitem[Zhang et~al., 2022]{zhang2022heteroskedastic}
Zhang, A.~R., Cai, T.~T., and Wu, Y. (2022).
\newblock Heteroskedastic {PCA}: Algorithm, optimality, and applications.
\newblock {\em The Annals of Statistics}, 50(1):53--80.

\bibitem[Zhang and Zhou, 2020]{zhang2020non}
Zhang, A.~R. and Zhou, Y. (2020).
\newblock On the non-asymptotic and sharp lower tail bounds of random
  variables.
\newblock {\em Stat}, 9(1):e314.

\bibitem[Zhang, 2023]{zhang2023fundamental}
Zhang, A.~Y. (2023).
\newblock Fundamental limits of spectral clustering in stochastic block models.
\newblock {\em arXiv preprint arXiv:2301.09289}.

\bibitem[Zhang and Zhou, 2016]{zhang2016minimax}
Zhang, A.~Y. and Zhou, H.~H. (2016).
\newblock Minimax rates of community detection in stochastic block models.
\newblock {\em The Annals of Statistics}, 44(5):2252--2280.

\bibitem[Zhang and Zhou, 2022]{zhang2022leave}
Zhang, A.~Y. and Zhou, H.~H. (2022).
\newblock Leave-one-out singular subspace perturbation analysis for spectral
  clustering.
\newblock {\em arXiv preprint arXiv:2205.14855}.

\bibitem[Zhang et~al., 2020]{zhang2020denoising}
Zhang, C., Han, R., Zhang, A.~R., and Voyles, P.~M. (2020).
\newblock Denoising atomic resolution 4d scanning transmission electron
  microscopy data with tensor singular value decomposition.
\newblock {\em Ultramicroscopy}, 219:113123.

\bibitem[Zhong and Boumal, 2018]{zhong2018near}
Zhong, Y. and Boumal, N. (2018).
\newblock Near-optimal bounds for phase synchronization.
\newblock {\em SIAM Journal on Optimization}, 28(2):989--1016.

\bibitem[Zhou et~al., 2013]{zhou2013tensor}
Zhou, H., Li, L., and Zhu, H. (2013).
\newblock Tensor regression with applications in neuroimaging data analysis.
\newblock {\em Journal of the American Statistical Association},
  108(502):540--552.

\bibitem[Zhou and Chen, 2023]{zhou2023deflated}
Zhou, Y. and Chen, Y. (2023).
\newblock Deflated {HeteroPCA}: Overcoming the curse of ill-conditioning in
  heteroskedastic {PCA}.
\newblock {\em arXiv preprint arXiv:2303.06198}.

\bibitem[Zhou et~al., 2022]{zhou2022optimal}
Zhou, Y., Zhang, A.~R., Zheng, L., and Wang, Y. (2022).
\newblock Optimal high-order tensor {SVD} via tensor-train orthogonal
  iteration.
\newblock {\em IEEE Transactions on Information Theory}, 68(6):3991--4019.

\end{thebibliography}
\end{document}